\newcommand{\dotp}[1]{\langle #1 \rangle}
\newcommand{\cA}{\mathcal{A}}
\newcommand{\cC}{\mathcal{C}}
\newcommand{\cF}{\mathcal{F}}
\newcommand{\cG}{\mathcal{G}}
\newcommand{\cH}{\mathcal{H}}
\newcommand{\cK}{\mathcal{K}}
\newcommand{\cM}{\mathcal{M}}
\newcommand{\cN}{\mathcal{N}}
\newcommand{\cP}{\mathcal{P}}
\newcommand{\cQ}{\mathcal{Q}}
\newcommand{\cT}{\mathcal{T}}
\newcommand{\cU}{\mathcal{U}}
\newcommand{\cV}{\mathcal{V}}
\newcommand{\cX}{\mathcal{X}}
\newcommand{\NN}{\mathbb{N}}
\newcommand{\PP}{\mathbb{P}}
\newcommand{\RR}{\mathbb{R}}
\renewcommand{\S}{\mathbb{S}}
\newcommand{\ZZ}{\mathbb{Z}}
\newcommand*{\op}[1]{\|#1\|_{\mathrm{op}}}
\newcommand*{\triplenorm}[1]{{\left\vert\kern-0.25ex\left\vert\kern-0.25ex\left\vert #1
    \right\vert\kern-0.25ex\right\vert\kern-0.25ex\right\vert}}
\DeclareMathOperator{\id}{id}
\DeclareMathOperator{\supp}{supp}
\newcommand{\R}{\mathbb{R}}
\newcommand{\Rd}{\mathbb{R}^d}
\renewcommand{\phi}{\varphi}
\newcommand{\eps}{\varepsilon}
\newcommand{\sse}{\subseteq}
\newcommand*{\E}{\mathbb E}
\newcommand*{\pran}[1]{\left(#1\right)}
\newcommand*{\ep}{\varepsilon}
\newcommand*{\defeq}{\coloneqq}
\newcommand*{\eqdef}{\eqqcolon}
\newcommand*{\rd}{\mathrm{d}}
\newcommand*{\dd}{\, \rd}
\DeclareMathOperator*{\argmin}{argmin}
\DeclareMathOperator*{\argmax}{argmax}
\newcommand{\Faff}{\cF_{\mathrm{aff}}}
\newcommand{\ones}{\mathbf{1}}
\newcommand{\dom}{\mathrm{dom}}
\newcommand{\Var}{\mathrm{Var}}
\declaretheorem[name=Corollary]{cor}
\declaretheorem[name=Lemma]{lem}
\declaretheorem[name=Proposition]{prop}
\declaretheorem[name=Remark, style=remark]{rmk}
\declaretheorem[name=Theorem]{thm}
\renewcommand{\phi}{\varphi}
\newcommand{\vertiii}[1]{{\left\vert\kern-0.25ex\left\vert\kern-0.25ex\left\vert #1 
     \right\vert\kern-0.25ex\right\vert\kern-0.25ex\right\vert}}
\renewcommand{\AA}{\mathbb{A}}
\title{Optimal transport map estimation in general function spaces}
\author{Vincent Divol\thanks{CEREMADE, Université Paris-Dauphine - PSL. \tt vincent.divol@psl.eu} \ \ Jonathan Niles-Weed\thanks{Courant Institute of Mathematical Sciences and Center for Data Science, New York University. \tt jnw@cims.nyu.edu} \ \  Aram-Alexandre Pooladian\thanks{Center for Data Science, New York University. \tt aram-alexandre.pooladian@nyu.edu}}
\begin{document}
\maketitle

\begin{abstract}
	We study the problem of estimating a function $T$ given independent samples from a distribution $P$ and from the pushforward distribution $T_\sharp P$.
	This setting is motivated by applications in the sciences, where $T$ represents the evolution of a physical system over time, and in machine learning, where, for example, $T$ may represent a transformation learned by a deep neural network trained for a generative modeling task. 
To ensure identifiability, we assume that $T = \nabla \phi_0$ is the gradient of a convex function, in which case $T$ is known as an \emph{optimal transport map}.
Prior work has studied the estimation of $T$ under the assumption that it lies in a H\"older class, but general theory is lacking.
We present a unified methodology for obtaining rates of estimation of optimal transport maps in general function spaces.
Our assumptions are significantly weaker than those appearing in the literature: we require only that the source measure $P$ satisfy a Poincar\'e inequality and that the optimal map be the gradient of a smooth convex function that lies in a space whose metric entropy can be controlled.
As a special case, we recover known estimation rates for H{\"o}lder transport maps, but also obtain nearly sharp results in many settings not covered by prior work.
For example, we provide the first statistical rates of estimation when $P$ is the normal distribution and the transport map is given by an infinite-width shallow neural network.
\end{abstract}

\section{Introduction}
A central question in statistics is to estimate the relationship between two data sets.
For instance, in the regression problem, the statistician observes data $(X_1, Y_1), \dots, (X_n, Y_n)$ satisfying
\begin{equation*}
	Y_i = f^\star(X_i) + \ep_i\,,
\end{equation*}
where $f^\star$ denotes an unknown regression function and where $\ep_i$ denotes mean-zero noise.
There are a number of ad hoc strategies for estimating $f^\star$ under different structural assumptions, see, e.g., \citet{Tsy09}; however, non-parametric least squares gives a general recipe:
The statistician faced with a regression problem can choose a class $\cF$ of candidate functions and construct a corresponding estimator $\hat f$ which, under weak conditions, satisfies an oracle inequality of the form
\begin{equation}\label{eq:regression}
	\E \|\hat f - f^\star\|_{L^2} \lesssim \inf_{f \in \cF} \|f - f^\star\|_{L^2} + \delta_{n, \cF}\,.
\end{equation}
The first term in~\eqref{eq:regression} represents the error made in approximating $f^\star$ by an element of $\cF$.
The second term is the estimation error, and scales with the complexity of the function class $\cF$, measured, for example, by its Gaussian width or metric entropy~\cite[Chapter 13]{wainwright2019high}.
The statistician has the freedom to choose the class $\cF$ to balance these sources of error.

We may view the regression problem as an instance of the following general setting: data $X_1, \dots, X_n \sim P$ undergoes an unknown transformation by some map\footnote{More generally, as in the case of regression, $T$ may be a ``randomized map,'' i.e., a Markov kernel.} $T$ to yield new data $T(X_1), \dots, T(X_n)$; the statistician's job is to estimate the transformation $T$.
When the statistician observes \emph{paired} data $\{(X_i, T(X_i))\}_{i=1}^n$, we recover the non-parametric regression problem.
However, more limited observational models have recently attracted interest as well.
For example, in a model that has been called ``permuted,'' ``unlinked,'' or ``uncoupled'' regression~\citep{degroot1980estimation,pananjady2016linear,rigollet2019uncoupled,balabdaoui2021unlinked,slawski2022permuted}, the statistician observes unpaired data $\{X_i\}_{i=1}^n, \{T(X_i)\}_{i=1}^n$, without knowing the correspondence between the datasets.
This setting is obviously more difficult than the non-parametric regression problem, and existing statistical results are more limited.
This line of work investigates conditions under which $T$ is identifiable and proposes rate-optimal estimators in several settings, but falls short of offering a general estimation methodology with guarantees comparable to~\eqref{eq:regression}.

In this paper, we study a challenging variant of this problem, in which the statistician has access to i.i.d.\ data $X_1, \dots X_n$ from a source distribution $P$, and independently, i.i.d.\ samples $Y_1, \dots, Y_n$ from the distribution $T_\sharp P$,\footnote{Recall that $T_\sharp P$ is the law of $T(X)$ for $X \sim P$.} for an unknown transformation $T$. 
As we detail below, this setting is motivated by applications in computational biology in which it is impossible to measure the same set of units before and after the application of the transformation $T$.
Our goal is to develop estimators for $T$, and study how optimal rates of estimation depend on the structural assumptions placed on $T$.
In short, we seek an analogue for~\eqref{eq:regression}, along with a general methodology for bounding the estimation error in terms of the complexity of our candidate class of transformations. 

To ensure identifiability and following existing work on the permuted regression model, we impose the general assumption that $T = \nabla \phi_0$, where $\phi_0$ is some convex function, which we call a \emph{Brenier potential}. Assuming $T$ is the gradient of a convex function makes a connection with the field of optimal transport.
Indeed, gradients of convex functions automatically possess a rigid optimality property: if $Q\defeq (\nabla \phi_0)_\sharp P$, then $\nabla \phi_0$ solves the \emph{Monge problem}
\begin{align*}
	\nabla \phi_0 \defeq \argmin_{T:T_\sharp P = Q} \int \|x - T(x)\|^2 \dd P(x)\,.
\end{align*}
In other words, $\nabla \phi_0$ automatically minimizes the average displacement cost among any map that transforms $P$ into $Q$.
For this reason, we call maps of the form $T = \nabla \phi_0$ optimal transport maps, or Monge maps.

From the perspective of modeling physical systems, the notion of considering maps that minimize movement is a natural one. As a motivating setup, we turn to applications in computational biology, which have received much interest over the last few years \citep{schiebinger2019optimal,dai2018autoencoder,moriel2021novosparc,Demetci2021.SCOTv2,bunne2022proximal}. In this framework, practitioners measure genomic profiles of large groups of stem cells evolving over time. The measurement pipeline requires lysing (i.e., destroying) the cells, so that no individual cell can be tracked as it evolves.
Nevertheless, between consecutive time steps, a viable assumption is that the genetic profiles evolve nearly optimally in the sense of Monge.
The data at times $t$ and $t+1$ therefore can be modeled as consisting of samples $X_1, \dots, X_n \sim P_t$ and independent samples $T(X_1'), \dots T(X_n') \sim T_\sharp P_t =: P_{t+1}$, for some Monge map $T$.
 In these sorts of applications, understanding the statistical efficiency of proposed estimators is of the utmost importance, as data is often scarce.

Beyond computational biology, the estimation of optimal transport maps is an increasingly relevant task in machine learning~\citep{WassersteinGAN,2017-Genevay-AutoDiff,grathwohl2018ffjord,salimans2018improving,finlay2020learning,huang2021convex}, computer graphics \citep{SolGoePey15,SolPeyKim16,feydy2017optimal}, and economics \citep{carlier2016vector,chernozhukov2017monge,gunsilius2021matching,torous2021optimal}. These developments in the applied sciences have been accompanied by several recent works in the area of \textit{statistical} estimation of optimal transport maps, introduced in \cite{hutter2021minimax}, and followed by \cite{deb2021rates,manole2021plugin,muzellec2021near,pooladian2021entropic}. 
These works all consider the case where $\phi_0$ lies in the class of $s$-times differentiable, $\beta$-smooth, $\alpha$-strongly convex functions, with $P$ having a density bounded away from $0$ and $\infty$ on a bounded convex domain $\Omega$. \citet{hutter2021minimax} develop an estimator $\hat{\phi}_{\cF_W}$ which achieves the following estimation rate:
\begin{align*}
    \E\|\nabla\hat{\phi}_{\cF_W} - \nabla \phi_0\|^2_{L^2(P)} \lesssim  n^{-\frac{2(s-1)}{2s  + d-4}} \log^2(n) \,.
\end{align*}
They also show this rate is minimax optimal up to logarithmic factors.
Writing the empirical source and target measures as $P_n \defeq n^{-1}\sum_{i=1}^n\delta_{X_i}$ and $Q_n \defeq n^{-1}\sum_{i=1}^n\delta_{Y_i}$, the estimator analyzed by \cite{hutter2021minimax} is 
\begin{align}\label{eq: semidual_n}
    \hat{\phi}_{\cF_W} \in \argmin_{\phi \in {\cF_W}} S_n(\phi) \defeq \int \phi \dd P_n + \int \phi^* \dd Q_n\,,
\end{align}
where $\cF_W$ is a particular family of convex functions with bounded wavelet expansions.
The form of this estimator suggests the possibility of a more general estimation procedure, in which the class $\cF_W$ is replaced by a different class of functions $\cF$.
For example, a number of works have proposed optimizing \cref{eq: semidual_n} over a class of input convex neural networks \citep{makkuva2020optimal,bunne2022supervised}; however, this approach currently comes with no statistical guarantees.
Indeed, the analysis of \cite{hutter2021minimax} relies on the explicit form of the class $\cF_W$ (including the multiresolution structure of the wavelet basis), and does not provide a road map for developing estimators over more general function classes.

In this work, we bridge this gap in the literature by presenting a unified perspective on estimating optimal transport maps. 
We give a general decomposition of the risk of $\hat\phi_\cF$ (\Cref{thm:bounded} and \Cref{thm:strongly_convex}), analogous to~\eqref{eq:regression}, which provides guarantees on the estimation error in terms of the metric entropy of $\cF$.
As in the regression context, the class $\cF$ can be chosen at the practitioner's discretion based on prior knowledge to balance the estimation and approximation terms. 

Structural assumptions on the optimal transport maps allow us to obtain near-optimal rates for a large number of cases with a single result, with the minimax result of \cite{hutter2021minimax} as a special case. For example, we provide convergence rates when $\cF$ is:
\begin{enumerate}
    \item the set of quadratics $x \mapsto \tfrac12 x^\top A x + b^\top x$,
    \item a finite set,
    \item a parametric family,
    \item a Reproducing Kernel Hilbert Space (RKHS),
    \item a class of ``spiked" potential functions (borrowing terminology from \cite{niles2022estimation}),
    \item a class of shallow neural networks (i.e., a Barron space \citep{barron,bach2017breaking,e_barron}),
    \item or the space of input convex neural networks.
\end{enumerate}

Additionally, unlike previous results in the literature, we are able to give general decomposition results for the risk of $\nabla\hat\phi_\cF$ under minimal regularity assumptions on the source measure (\Cref{thm:bounded} and \Cref{thm:strongly_convex}.\ref{it:only_poincare}): we only require that $P$ satisfies a Poincaré inequality. For example, $P$ can have a density (bounded away from $0$ and $\infty$) on either a domain with Lipschitz boundary or a manifold, or a density proportional to $e^{-\|x\|^s}$ for $s\geq 1$ on $\RR^d$. In particular, we cover the important case where $P$ is a Gaussian.
Prior work, even for the H\"older case, required stringent restrictions on $P$ which excluded this important special case.

As an appetizer to our main theorems (stated in full generality in \Cref{sec: main_results}), let us consider two very different situations. In the first one (\Cref{sec:parametric}), we assume that $\phi_0$ belongs to a parametric class $\cF$, indexed by a finite dimensional set $\Theta$. Then, under mild conditions on the parametrization, we show that
\begin{equation}\label{eq:thm_simple_param}
    \E\|\nabla\hat\phi_\cF-\nabla\phi_0\|^2_{L^2(P)} \lesssim_{\log n} n^{-1},
\end{equation}
that is, a parametric rate of convergence holds. Although simple and expected, such a result is not present in the literature so far. 

A second, non-parametric example covered by our results is when $\nabla\phi_0$ is a shallow neural network with ReLu activation function (\Cref{sec:barron}). Under such an assumption, for an appropriate class of candidate neural networks $\cF$, we are able to show that
\begin{equation}\label{eq:thm_simple_relu}
    \E\|\nabla\hat\phi_\cF-\nabla\phi_0\|^2_{L^2(P)} \lesssim_{\log n} n^{-\frac{1}{2}-\frac{1}{d-1}},
\end{equation}
a rate which we show is close to being minimax optimal.

\subsection*{Organization}
We begin by providing some background on optimal transport maps in \Cref{sec: background}. Our main results are presented in \Cref{sec: main_results}, where we state rates of estimation in two settings: (i) when $P$ is compactly supported and (ii) when $P$ is unbounded, but $\phi_0$ can be efficiently approximated by strongly convex potentials in $\cF$.
We present our proofs in \Cref{sec: proofs}, deferring technical lemmas to the appendix. We provide several examples that verify the metric entropy condition on $\cF$ required to obtain our proposed convergence rates in \Cref{sec:examples}. Finally, we give positive and negative results on algorithmic estimation of optimal transport maps in \cref{sec: computation}.

\subsection*{Notation}
Let $\Rd$ be the Euclidean space for $d\geq 2$. The open ball of radius $R>0$ centered at $x\in \R^d$ is denoted by $B(x;R)$. 
We assume that we have access to a collection of i.i.d.~random variables $(X_i,Y_i)_{i\geq 1}$ from law $P\times Q$, that are all defined on the same probability space. Depending on the context, we write either $P(f)$, $\int f\dd P$ or $\E_P[f(X)]$ for the integral of a function $f$ against a (probability) measure $P$. 
The pushforward of $P$ by a measurable function $T:\R^d\to \R^m$ is written as $T_\sharp P$. The $L^2(\rho)$-norm of a function $f:\R^d\to \R^m$ with respect to a $\sigma$-finite measure $\rho$ is written as $\|f\|_{L^2(\rho)} = (\int \|f\|^2\dd \rho)^{1/2}$. 
Let $p\in (0,1]$, and let $\psi_p:[0,+\infty)\to [0,+\infty)$ be a given convex increasing function with $\psi_p(x)=0$ and $\psi_p(x)=e^{x^p}-1$ for $x$ large enough. If $X$ is a random variable, we say that $X$ belongs to the Orlicz space of exponent $p$ if there exists $c>0$ with $\E[\psi_p(\|X\|/c)]\leq 1$. The smallest number $c$ satisfying this condition is called the $p$-Orlicz norm of $X$, denoted by $\|X\|_{\psi_p}$. Properties of random variables having finite Orlicz norms are given in \Cref{app: orlicz_app}.

For $\alpha > 1$, we write $\cC^{\alpha}(\Omega)$ to be the space of $\lfloor \alpha\rfloor$-times differentiable functions defined on a domain $\Omega$ whose $\lfloor \alpha\rfloor$th derivative is $\alpha - \lfloor \alpha\rfloor$ H{\"o}lder smooth; see \Cref{eq:def_holder} for more details. The gradient of a twice differentiable function $f$ is written as  $\nabla f$, whereas its Hessian is written as $\nabla^2 f$. If $w:\Rd\to \R$ is a nonnegative function, we let $L^\infty(w)$ be the space of functions $f:\Rd\to \R$ endowed with the norm $\|f\|_{L^\infty(w)} = \sup_{x\in \Rd} |f(x)|w(x)$. For $a\in \R$, we write $\dotp{x}^a$ for $(1+\|x\|)^a$. We also let $\log_+(x)$ denote the function $\max\{1,\log(x)\}$. For $d \in \NN$, we write $\S^d_+$ (resp. $\S^d_{++}$)  to be the space of symmetric positive semi-definite (resp. positive definite) matrices. The smallest eigenvalue of a symmetric matrix $A$ is written as $\lambda_{\min}(A)$, whereas its operator norm is $\op{A}$.  
Finally, we use $a \lesssim b$ to indicate that there exists a constant $C > 0$ such that $a \leq Cb$, and will often use e.g.,   $\lesssim_{\log n}$ to omit polylogarithmic factors in $n$. Similarly, we use $ a \asymp b$ when both $a\lesssim b$ and $b \lesssim a$ hold true.

Throughout this work, we will repeatedly consider suprema of collections of random variables, and such a supremum may not necessarily be measurable. If this is the case, the symbols $\E$ and $\PP$ have to be considered as representing the outer expectation of the supremum. All relevant results used to bound the expectation of such suprema hold for outer expectations, so that we will not make the distinction between expectation and outer expectation. We will also always assume implicitly that $\hat \phi_\cF$ is measurable: this can always be ensured by replacing $\cF$ by a countable dense subset (with respect to the $\infty$-norm). 

\section{Background on optimal transport under the quadratic cost}\label{sec: background}
For a domain $\Omega \sse \Rd$, let $\cP^2(\Omega)$ be the space of probability measures having a finite second moment on $\Omega$.  
Our analysis primarily hinges on existence results and stability bounds for transport maps, which we will later relate to empirical processes. We refer the reader to standard texts on optimal transport for more details e.g.,   \cite{villani2009optimal,San15}.

The \emph{Monge problem} in optimal transport seeks to find a transport map that minimizes the average cost of displacement between two measures
\begin{align}\label{eq: monge_p}
    \inf_{T \in \cT(P,Q)} \int \|x - T(x)\|^2 \dd P(x)\,,
\end{align}
where $\cT(P,Q) \defeq \{T :\Rd \to \Rd \ | \ T_\sharp P = Q \}$ is the set of transport maps from $P$ to $Q$, two measures in $\cP^2(\R^d)$.
The value of this optimization problem is the squared Wasserstein distance $W_2^2(P, Q)$.

 We henceforth suppose that the target measure is given by $Q \defeq (\nabla \phi_0)_\sharp P$, for some differentiable convex function $\phi_0$.
We then obtain the dual characterization
\begin{align}\label{eq: semidual}
	\begin{split}
		\tfrac12 W_2^2(P,Q) &= \int \tfrac12 \|\cdot\|^2 \dd P + \int \tfrac12 \|\cdot\|^2 \dd Q - \inf_{\varphi \in L^1(P)} S(\phi) \\
		&:= \int \tfrac12 \|\cdot\|^2 \dd P + \int \tfrac12 \|\cdot\|^2 \dd Q -\inf_{\varphi \in L^1(P)} \int \varphi \dd P + \int \varphi^* \dd Q,
	\end{split}
\end{align} 
where $\phi^*$ denotes convex conjugation.
The functional $S(\phi)$ is known as the \emph{semi-dual} of the original transport problem.
This dual characterization can be used to show that if $Q = (\nabla \phi_0)_\sharp P$ where $\phi_0$ is smooth and convex, then $\phi_0$ minimizes the semi-dual, and $\nabla \phi_0$ is the $P$-a.e.\ unique solution to~\eqref{eq: monge_p}.

The function $\varphi_0$ is called the Brenier potential. More generally, we refer to the functions $\phi$ in the minimization problem \eqref{eq: semidual}  as potentials. Note that they are defined  up to constants (indeed, shifting $\tilde{\phi}(x) = \phi(x) + c$ does not change the objective function in \cref{eq: semidual}). Hence, we will assume without loss of generality that  $0\in \dom(\phi)$ and that $\phi(0) = 0$ for any potential (where $\dom(\phi)=\{x\in \R^d:\ \phi(x)<\infty\}$ is the domain of a function $\phi$). Particular attention will be paid to functions $\phi$ which have a polynomial growth. The function $\phi$ is said to be \textit{$(\beta,a)$-smooth} (for $\beta,a\geq 0$) if it is twice differentiable and if
\begin{equation}
   \forall x\in\dom(\phi),\ \op{\nabla^2\phi(x)} \leq \beta\dotp{x}^a.
\end{equation}
When $a=0$, this implies that that the function $\phi$ is $\beta$-smooth in the classical sense. Similarly, we say that $\phi$ is \textit{$(\alpha,a)$-convex} (for $\alpha,a\geq 0$) if 
\begin{equation}
   \forall x\in\dom(\phi),\ \lambda_{\min}(\nabla^2\phi(x)) \geq \alpha\dotp{x}^{a}.
\end{equation}
 Once again, $(\alpha,0)$-convex functions correspond to $\alpha$-strongly convex functions in the usual sense. We gather properties of such functions, that we will repeatedly use in our proofs, in \Cref{app:potential}. Note that for $a=0$, twice-differentiability is not required, but only differentiability, and the usual definitions of smoothness and strong convexity can be used.

This leads us to our first proposition which states that the functional $S$ grows at least quadratically around its minimizer $\phi_0$ (up to logarithmic factors). We allow the potentials to either have bounded domains or to be smooth of some exponent $a\geq 0$, while we require $P$ to have \textit{subexponential tails}, that is $X\sim P$ is such that $\|X\|_{\psi_1}<+\infty$. 
Equivalently, a distribution is subexponential if it has tails of the form $P(\|X\|>ct)\leq C e^{-t}$, see \Cref{app: orlicz_app} for more details on subexponential distributions. A proof of \Cref{prop: stab_bound} is found in  \Cref{app:potential}.

\begin{prop}[Map stability]\label{prop: stab_bound}
Let $P$ be a probability distribution with subexponential tails. Consider one of the two following settings:
\begin{enumerate}
    \item the potentials $\phi_0$ and $\phi_1$ are $(\beta,a)$-smooth with domain $\R^d$, and let $b = a(a+1)$.
    \item $P$ is supported in $B(0;R)$ and the potentials $\phi_0$ and $\phi_1$ are twice differentiable, with $\|\nabla\phi_i \|\leq R$  on the support of $P$ and $B(0;2R) \subseteq \dom(\phi_i)$\footnote{The constant $2$ does not play any special role here, and can be replaced by any $C>1$.} ($i=0,1$). In this case, let $b=0$.
\end{enumerate}
Assume that there exists a constant $K$ such that $\|\nabla\phi_1(0)-\nabla\phi_0(0)\|\leq K$ and that $\phi_0$ is convex. Let  $Q \defeq (\nabla\phi_0)_\sharp P $ and $S(\phi_1) \defeq P(\phi_1) + Q(\phi_1^*)$. Denoting $\ell \defeq S(\phi_1) - S(\phi_0)$, we have
\begin{align}\label{eq:stab_bound_reverse}
    \|\nabla \phi_1 - \nabla \phi_0\|_{L^2(P)}^2 \lesssim \log_+(1/\ell)^{b}\ell,
\end{align}
where the suppressed constant does not depend on $d$. 
 Furthermore, if $\phi_1$ is $(\alpha,0)$-strongly convex, then
\begin{equation}\label{eq: rev_stab_bound}
    \ell \lesssim \|\nabla \phi_1 - \nabla \phi_0\|_{L^2(P)}^2.
\end{equation} 
\end{prop}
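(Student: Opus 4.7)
My strategy is to rewrite $\ell = S(\phi_1) - S(\phi_0)$ as an integrated Bregman divergence and then compare this quantity pointwise to $\|\nabla \phi_0 - \nabla \phi_1\|^2_{L^2(P)}$ using the smoothness/convexity assumptions on $\phi_1$. The starting point is the identity
\[
\ell = \int D_{\phi_1^*}\bigl(\nabla \phi_0(x), \nabla \phi_1(x)\bigr) \dd P(x),
\]
where $D_{\phi_1^*}(y_0,y_1) = \phi_1^*(y_0) - \phi_1^*(y_1) - \langle \nabla \phi_1^*(y_1), y_0 - y_1\rangle$. This follows by combining $Q = (\nabla \phi_0)_\sharp P$ with the Fenchel--Young equality $\phi_0^*(\nabla \phi_0(x)) = \langle x, \nabla \phi_0(x)\rangle - \phi_0(x)$, valid since $\phi_0$ is convex and differentiable. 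From here the reverse inequality \eqref{eq: rev_stab_bound} is immediate: $(\alpha,0)$-strong convexity of $\phi_1$ gives $1/\alpha$-smoothness of $\phi_1^*$, hence $D_{\phi_1^*}(y_0,y_1) \leq \tfrac{1}{2\alpha}\|y_0 - y_1\|^2$, and integration against $P$ concludes.

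For the forward inequality \eqref{eq:stab_bound_reverse}, I would use the integral representation
\[
D_{\phi_1^*}(y_0,y_1) = \int_0^1 (1-t)(y_0-y_1)^\top \nabla^2\phi_1^*(y(t))(y_0-y_1) \dd t,
\]
with $y(t) = y_1 + t(y_0-y_1)$. Using $\nabla^2\phi_1^*(y(t)) = [\nabla^2\phi_1(x(t))]^{-1}$ for $x(t) \defeq \nabla\phi_1^*(y(t))$ together with the bound $\op{\nabla^2 \phi_1(x(t))} \leq \beta \dotp{x(t)}^a$, one obtains
\[
\|y_0 - y_1\|^2 \leq 2\beta \bigl(\sup_{t\in[0,1]}\dotp{x(t)}^a\bigr) D_{\phi_1^*}(y_0,y_1).
\]
In Setting 2, the bounded-support assumptions imply that $x(t)$ remains in a bounded region, so the supremum is an absolute constant and one obtains $\|\nabla\phi_0-\nabla\phi_1\|^2_{L^2(P)} \lesssim \ell$, matching the claim with $b=0$. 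In Setting 1, I would split the integral over $G_R \defeq \{\|x\|\leq R\}$ and $G_R^c$. On $G_R$, the $(\beta,a)$-smoothness of each $\phi_i$ yields $\|\nabla\phi_i(x)\| \lesssim \dotp{x}^{a+1}$ and thus $\|y(t)\| \lesssim R^{a+1}$; inverting the polynomial growth of $\nabla\phi_1$ then produces $\sup_t \dotp{x(t)} \lesssim R^{a+1}$, so the good-event contribution is $\lesssim R^{a(a+1)}\ell$. On $G_R^c$, Cauchy--Schwarz applied to the crude estimate $\|y_0-y_1\|^2 \lesssim \dotp{x}^{2(a+1)}$ and the subexponential tail $P(\|X\|>R) \lesssim e^{-R/c}$ yields a contribution $O(e^{-R/c})$. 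Choosing $R \asymp \log(1/\ell)$ balances the two terms and gives $\|\nabla\phi_0 - \nabla\phi_1\|^2_{L^2(P)} \lesssim \log_+(1/\ell)^{a(a+1)}\ell$.

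The delicate step is the inversion on the good event in Setting 1: controlling $\sup_{t\in[0,1]} \dotp{x(t)}$ by a polynomial in $\dotp{x}$ when $\phi_1$ is only smooth, not strongly convex, so $\nabla\phi_1^*$ carries no uniform Lipschitz bound. The worst case---where $\phi_1$ is nearly linear while $\phi_0$ has heavy polynomial growth---forces $\|x'\| = \|\nabla\phi_1^*(\nabla\phi_0(x))\|$ to be as large as $\dotp{x}^{a+1}$, which is precisely what yields the exponent $b = a(a+1)$ rather than the naive $a$. Everything else (the Cauchy--Schwarz/tail bound on $G_R^c$ using the properties of subexponential distributions from \Cref{app: orlicz_app}, and the optimization in $R$) is standard bookkeeping once the pointwise inequality above is in place.
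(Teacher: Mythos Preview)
Your argument for the reverse inequality \eqref{eq: rev_stab_bound} is correct and is essentially what the paper does.

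For the forward inequality \eqref{eq:stab_bound_reverse}, however, there is a genuine gap. Your route through the Bregman divergence of $\phi_1^*$ and the integral representation
\[
D_{\phi_1^*}(y_0,y_1)=\int_0^1 (1-t)(y_0-y_1)^\top \nabla^2\phi_1^*(y(t))(y_0-y_1)\,\dd t,
\qquad \nabla^2\phi_1^*(y(t))=[\nabla^2\phi_1(x(t))]^{-1},
\]
presupposes that $\phi_1$ is (strictly) convex: you need the Fenchel--Young equality $\phi_1(x)+\phi_1^*(\nabla\phi_1(x))=\langle x,\nabla\phi_1(x)\rangle$ to get the Bregman identity, you need $\nabla^2\phi_1$ invertible along the path to write $\nabla^2\phi_1^*=[\nabla^2\phi_1]^{-1}$, and you need $\nabla\phi_1$ to be a bijection with a quantitative inverse bound to control $\sup_t\|x(t)\|$. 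None of this is available: the proposition only assumes $\phi_1$ is $(\beta,a)$-smooth, not even convex. The ``delicate step'' you flag is not merely delicate---if $\phi_1$ is nearly affine in some direction, $x(t)=\nabla\phi_1^*(y(t))$ may fail to exist or be unbounded, and your pointwise inequality breaks down.

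The paper avoids this entirely by never touching $\nabla^2\phi_1^*$. Starting from the same representation
\[
\ell=\int\bigl[\phi_1(x)+\phi_1^*(\nabla\phi_0(x))-\langle x,\nabla\phi_0(x)\rangle\bigr]\dd P(x)
\]
(which only uses convexity of $\phi_0$), it lower bounds $\phi_1^*(y)$ directly by plugging the test point $z=x+(y-\nabla\phi_1(x))/D_x$ into the variational definition $\phi_1^*(y)\geq\langle z,y\rangle-\phi_1(z)$ and using only the \emph{upper} quadratic bound on $\phi_1$ coming from $(\beta,a)$-smoothness (\Cref{lem:lower_bound_conjugate}). This yields
\[
\phi_1^*(y)\geq -\phi_1(x)+\langle y,x\rangle+\frac{1}{2D_x}\|y-\nabla\phi_1(x)\|^2,
\qquad D_x=\beta\bigl\langle \|x\|+\tfrac{\ell_x}{\beta}\bigr\rangle^{a},
\]
with $\ell_x=\|\nabla\phi_0(x)-\nabla\phi_1(x)\|\lesssim K+\langle x\rangle^{a+1}$ by \Cref{lem:smooth_basic}. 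The exponent $b=a(a+1)$ then falls out directly from the polynomial degree of $D_x$, with no inversion of $\nabla\phi_1$ required. Your tail-splitting step is fine and is exactly the content of the paper's \Cref{lem:matching_moments}; the only thing missing is replacing the Hessian-of-conjugate argument by this test-point lower bound.
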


\begin{rmk}
\Cref{prop: stab_bound} is a variant of \cite[Proposition 10]{hutter2021minimax}, where they consider the case where $P$ has a bounded support, resulting in the bound
\begin{align*}
    \| \nabla \phi - \nabla \phi_0\|^2_{L^2(P)} \lesssim \ell\,.
\end{align*}
Similar stability bounds can be found in e.g.,   \cite{deb2021rates,hutter2021minimax,manole2021plugin,muzellec2021near}. 
	\Cref{prop: stab_bound} strictly generalizes these results by showing that a similar inequality holds in the unbounded case with $a > 0$ at the price of an additional logarithmic factor.
\end{rmk}

\begin{rmk}\label{rmk:stab}
If needed, one can weaken the assumptions on the regularity of the potentials: it is enough to assume that they are differentiable, with locally Lipschitz continuous gradients, and corresponding (local) Lipschitz norm that grows at most polynomially.
\end{rmk}

\section{Main results}\label{sec: main_results}

Recall the setup: we suppose that there exists a convex, $(\beta, a)$-smooth function $\phi_0$ such that $Q = (\nabla \phi_0)_\sharp P$. We have access to i.i.d.~samples $X_1,\dots,X_n\sim P$ and $Y_1,\ldots,Y_n \sim Q$. We are interested in studying the convergence rate of the plugin estimator $\nabla \hat{\phi}_\cF$ to $\nabla \phi_0$, where
\begin{equation}\label{eq:def_phi}
    \hat{\phi}_\cF \in \argmin_{\phi \in \cF} \int \phi \dd P_n + \int \phi^* \dd Q_n\,,
\end{equation}
with $\cF$ being a class of $(\beta, a)$-smooth (potentially non-convex) functions that gives a good approximation of $\phi_0$. This is similar to the setup proposed in \cite{hutter2021minimax}, though we consider \textit{any} class of possibly non-convex candidate potentials $\cF$, whereas they only consider a class of strongly convex functions with truncated wavelet expansions.

\begin{rmk}
As $\cF$ is not assumed to be compact, no minimizers of \eqref{eq:def_phi} may exist. In that case, one can pick an approximate minimizer, up to an error $\rho$. All the rates displayed in the following would then hold with an additional factor of $\rho$.
\end{rmk}

We make the following assumptions throughout this paper. 

\begin{enumerate}[start=1, label={\textbf{(A\arabic*)}}]
\item \label{cond:smooth} There exists $\beta,a\geq 0$ such that every $\phi \in \cF$ is $(\beta,a)$-smooth, as well as $\phi_0$;
\item \label{cond:poincare}  The probability measure $P$ satisfies a Poincaré inequality: for every differentiable function $f:\R^d\to \R$, it holds that
    \begin{equation}\label{eq: poincare_ineq}
        \mathrm{Var}_P(f) \leq C_{\texttt{PI}} \int \|\nabla f(x)\|^2 \dd P(x),
    \end{equation}
    where $0\leq C_{\texttt{PI}}<+\infty$.
\end{enumerate}

\begin{rmk}
The Poincaré inequality is ubiquitous in probability and analysis, being for instance connected to the phenomenon of concentration of measure \citep{bobkov1997poincare, gozlan2010poincare}. 
In general, the main obstruction to obtain a Poincar{\'e} inequality is when the measure has disconnected support. 
Examples of conditions that give rise to a probability measure satisfying \ref{cond:poincare} are:
\begin{itemize}
    \item any measure having a density bounded away from zero and infinity on a bounded Lipschitz domain (or more generally with a support satisfying the cone condition, see \cite[Section 6]{adams2003sobolev}),
    \item any measure having a density bounded away from zero and infinity on a compact, connected submanifold (in particular, $P$ need not have a density with respect to the Lebesgue measure),
    \item any log-concave probability measure (see \cite{bobkov1999isoperimetric}),
    \item $P \propto e^{-V}$, with positive constants $c$ and $a$ such that (i) $\dotp{x,\nabla V(x)} \geq c\|x\|$ for $x$ large enough, or (ii) $a\|\nabla V(x)\|^2 - \Delta V(x) \geq c$ for $x$ large enough (see \cite{bakry2008simple}).
\end{itemize}
Special cases include the standard normal distribution, and log-concave measures with $V(x) = \|x\|^s$ for $s \geq 1$.
As a last comment, let us mention that the Poincaré inequality implies that $P$ has subexponential tails \citep{bobkov1997poincare}: there exist $c_1,c_2>0$ such that
\begin{equation}\label{eq:poincare_exp}
   \forall t>c_1\|X\|_{\psi_1},\  P(\|X\|>t)\leq c_2e^{-t/\|X\|_{\psi_1}},
\end{equation}
a fact that we will repeatedly use.
\end{rmk}

\subsection{The bounded case}
Our first result deals with the case where $P$ has a bounded support. We require a control on the size of the class $\cF$ of candidate potentials as well as a uniform bound on the associated transport maps on the support of $P$. This second requirement is equivalent to having an \textit{a priori} bound $R$ on the size of the support of $P$, as well as the largest displacement endowed by the maps. 
For a subset $A$ of a metric space $(E,d)$, the covering number $\cN(h,A,E)$ is defined as the smallest number of balls of radius $h$ in $(E,d)$ that are needed to cover $A$. 

\begin{enumerate}[start=1, label={\textbf{(B\arabic*)}}]
\item \label{cond:covering_bounded}  There exists $\gamma\geq 0$  such that for every $h>0$,
\begin{equation}\label{eq:covering_1}
\log \cN(h,\cF,L^\infty) \lesssim_{\log_+(1/h)} h^{-\gamma};
\end{equation}
\item \label{cond:bounded}  $P$ is supported in $B(0;R)$. Moreover, for every $\phi \in \cF$, $\dom(\phi)=B(0;2R)$ and $\|\nabla \phi\|\leq R$ on the support of $P$.
\end{enumerate}

\begin{thm}\label{thm:bounded}
Let $P$ be a probability measure and $\cF$ be a set of convex potentials such that \ref{cond:smooth}, \ref{cond:poincare}, \ref{cond:covering_bounded} and \ref{cond:bounded} hold, and let $\ell^\star = \inf_{\phi\in \cF} (S(\phi)-S(\phi_0))$. Then,
\begin{equation}
    \E \|\nabla \hat{\phi}_\cF - \nabla \phi_0\|^2_{L^2(P)} \lesssim_{\log n} \ell^\star  +    \pran{  n^{-\frac{2}{2+\gamma}} \vee  n^{-\frac{1}{\gamma}}},
\end{equation}
where the suppressed constant does not depend on the dimension $d$.
\end{thm}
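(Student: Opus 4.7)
The core of the proof combines the stability bound from \Cref{prop: stab_bound} with a localized empirical-process analysis of the semi-dual functional $S$. By the stability bound in the bounded setting ($b=0$), we have $\|\nabla\hat\phi_\cF - \nabla\phi_0\|^2_{L^2(P)} \lesssim \hat\ell := S(\hat\phi_\cF) - S(\phi_0)$, so it suffices to bound $\E[\hat\ell]$. Let $\phi^\star \in \cF$ be an approximate minimizer of $S$ over $\cF$, so that $S(\phi^\star) - S(\phi_0) \leq \ell^\star + \rho$ for arbitrarily small $\rho > 0$. Since $\hat\phi_\cF$ minimizes the empirical semi-dual $S_n$, the standard basic inequality yields
$$\hat\ell \leq \ell^\star + \rho + \Xi_n(\hat\phi_\cF), \quad \Xi_n(\phi) := (P - P_n)(\phi - \phi^\star) + (Q - Q_n)(\phi^* - \phi^{\star*}),$$
reducing the problem to controlling the local suprema $\E\sup_{\phi \in \cF(r)} \Xi_n(\phi)$ where $\cF(r) := \{\phi \in \cF : S(\phi) - S(\phi_0) \leq r\}$, followed by a fixed-point argument relating $r$ to $\hat\ell$.

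The key input is a variance bound inside the localized class. For $\phi \in \cF(r)$, stability gives $\|\nabla\phi - \nabla\phi_0\|^2_{L^2(P)} \lesssim r$, so by the triangle inequality $\|\nabla(\phi - \phi^\star)\|^2_{L^2(P)} \lesssim r + \ell^\star$, and the Poincar{\'e} inequality \eqref{eq: poincare_ineq} yields $\Var_P(\phi - \phi^\star) \lesssim r + \ell^\star$. The analogous dual bound $\Var_Q(\phi^* - \phi^{\star*}) \lesssim r + \ell^\star$ is obtained by transferring the Poincar{\'e} inequality from $P$ to $Q$: since $\nabla\phi_0$ is a bounded Lipschitz map from $\mathrm{supp}(P)$ into $\mathrm{supp}(Q)$, a change-of-variables argument produces a Poincar{\'e} inequality on $Q$ (with constant depending on $\beta$ and $R$) and relates $\|\nabla\phi^* - \nabla\phi^{\star*}\|_{L^2(Q)}$ to $\|\nabla\phi - \nabla\phi^\star\|_{L^2(P)}$.

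With variance control in hand, we bound $\E \sup_{\cF(r)}|\Xi_n|$ by combining Talagrand's concentration inequality with chaining, using \ref{cond:covering_bounded} for the $L^\infty$-entropy of $\cF$ (which transfers to the class $\{\phi^* : \phi \in \cF\}$ on $\mathrm{supp}(Q)$ via the $1$-Lipschitz property of convex conjugation under the uniform gradient constraint in \ref{cond:bounded}). Two regimes arise depending on the convergence of the Dudley entropy integral $\int_0^\sigma \sqrt{\log\cN(h,\cF,L^\infty)}\dd h$. When $\gamma < 2$ the integral converges, producing $\E \sup_{\cF(r)}|\Xi_n| \lesssim_{\log n} n^{-1/2}(r+\ell^\star)^{(2-\gamma)/4}$; a peeling argument over dyadic rings of $\hat\ell$ and solution of the resulting fixed-point inequality give $\hat\ell \lesssim_{\log n} \ell^\star + n^{-2/(2+\gamma)}$. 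When $\gamma > 2$ the chain cannot be run to zero; truncating the Dudley integral at a scale $\delta$ and bounding the residual trivially by $O(\delta)$, the optimum $\delta \asymp n^{-1/\gamma}$ produces $\E \sup |\Xi_n| \lesssim_{\log n} n^{-1/\gamma}$. Taking the worse of the two regimes and combining with stability yields the stated rate.

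\emph{Main obstacle.} The most delicate piece is the analysis of the dual empirical process $(Q-Q_n)(\phi^* - \phi^{\star*})$, since neither the $L^\infty$-entropy of the conjugate class nor a Poincar{\'e}-type variance bound on $Q$ is directly postulated. Both must be extracted from the gradient constraint \ref{cond:bounded} and the smoothness of $\phi_0$ --- the first via the Lipschitz stability of convex conjugation, the second via a push-forward of the Poincar{\'e} inequality through the Lipschitz map $\nabla\phi_0$. These transfers preserve the $n$-dependence but introduce multiplicative constants depending on $\beta$, $a$, $R$, and $C_{\texttt{PI}}$, which are absorbed into the suppressed constants in the final bound.
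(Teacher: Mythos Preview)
Your overall architecture---stability bound plus localized empirical process analysis with entropy control and a fixed-point argument---matches the paper's strategy closely, and the handling of the primal term and the $L^\infty$-entropy transfer to the conjugate class are both correct. The gap is in the variance control of the dual process.

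Your proposal obtains $\Var_Q(\phi^* - \phi^{\star*}) \lesssim r + \ell^\star$ by (i) pushing the Poincar{\'e} inequality forward through the Lipschitz map $\nabla\phi_0$ and then (ii) bounding $\|\nabla\phi^* - \nabla\phi^{\star*}\|_{L^2(Q)}$ by $\|\nabla\phi - \nabla\phi^\star\|_{L^2(P)}$. Step (i) is fine. Step (ii) is where the argument breaks: in the bounded setting the potentials $\phi\in\cF$ are only convex, not strictly convex, so $\phi^*$ need not be differentiable, and even where $\nabla\phi^* = (\nabla\phi)^{-1}$ exists there is no Lipschitz bound on the inverse map. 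Without a lower bound on $\nabla^2\phi$ you have no way to control $\|(\nabla\phi)^{-1}(y) - (\nabla\phi^\star)^{-1}(y)\|$ in terms of $\|\nabla\phi - \nabla\phi^\star\|$, so the desired inequality does not follow. (The paper does use essentially your route in the \emph{strongly convex} case, \Cref{lem:strict_second_moment}, where the inverse is Lipschitz; but that hypothesis is absent here.)

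The paper circumvents this entirely and never applies Poincar{\'e} on $Q$. For $y = \nabla\phi_0(x)$ it bounds the conjugate difference \emph{pointwise} via convexity: writing $x'$ for a maximizer in $\phi^*(y)$ and using $\phi_c(x') \geq \phi_c(x) + \langle\nabla\phi(x), x'-x\rangle$ together with the analogous inequality in the other direction, one gets
\[
|\phi_c^*(y) - \phi_{0,c}^*(y)| \;\leq\; 3R\,\|\nabla\phi_0(x) - \nabla\phi(x)\| \;+\; |\phi_{0,c}(x) - \phi_c(x)|\,.
\]
Integrating against $Q = (\nabla\phi_0)_\sharp P$ converts this to an integral against $P$; the first term is directly $\|\nabla\phi_0 - \nabla\phi\|_{L^2(P)}$ and the second is handled by Poincar{\'e} on $P$ alone. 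This gives $\|\phi_c^* - \phi_{0,c}^*\|_{L^2(Q)} \lesssim \|\nabla\phi - \nabla\phi_0\|_{L^2(P)}$ with no differentiability of $\phi^*$ required and no strong convexity needed (see \Cref{lem:bounded_second_moment}). Replacing your step (ii) with this pointwise argument makes the rest of your outline go through.
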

\Cref{thm:bounded} follows from standard bounds on the risk of empirical risk minimizers under a growth control of the population risk around its minimum of the type \eqref{eq:stab_bound_reverse}. It could for instance be obtained as an application of \cite[Theorem 8.3]{massart2007concentration}, although we use a slightly different route that will allow us to treat the strongly convex case at the same time.
Note that two regimes give matching lower bounds for \cref{thm:bounded}: when $\cF$ is the set of smooth convex functions (when $\gamma = d/2$, and the rate is $n^{-2/d}$) and for $\cF$ consisting of only two functions (when $\gamma = 0$, and the rate is $n^{-1}$). These lower bounds can be inferred from those provided in \cite{hutter2021minimax}.  

\subsection{The strongly convex case}\label{sec:strong_results}
For $\alpha>0$, let $\cF_\alpha \subseteq \cF$ be the set of potentials in $\cF$ that are $(\alpha,a)$-convex for the same parameter $a$ as in condition \ref{cond:smooth}. 
In the second setting, we do not assume that $P$ has a bounded support anymore, but assume instead that $\phi_0$ can be approximated by a strongly convex potential of exponent $a\geq 0$ 
in the sense that the bias term $\inf_{\phi\in \cF} (S(\phi)-S(\phi_0))$ in \Cref{thm:bounded} will now be replaced by $\inf_{\phi\in \cF_\alpha} (S(\phi)-S(\phi_0))$. If $\phi_0$ is $(\alpha,a)$-convex and belongs to $\cF$, this bias term reduces to $0$.
Since $P$ need not have bounded support, condition \ref{cond:covering_bounded} has to be modified to handle unbounded potentials. This is done by replacing the $L^\infty$-norm by a weighted $L^\infty$-norm.

\begin{enumerate}[start=1, label={\textbf{(C\arabic*)}}]
    \item \label{cond:covering_convex} There exist $\eta\geq a+2$ and $\gamma \geq 0$ such that for every $h>0$,
\begin{equation}\label{eq:covering_2}
\log \cN(h,\cF,L^\infty(\dotp{\ \cdot\ }^{-\eta})) \lesssim_{\log_+(1/h)}  h^{-\gamma}.
\end{equation}
\end{enumerate}

Condition \ref{cond:covering_convex} together with \ref{cond:smooth} and \ref{cond:poincare} is enough to obtain the same rates as in \Cref{thm:bounded}. However, strong convexity can be leveraged to obtain faster rates of estimation. 
\begin{enumerate}[start=2, label={\textbf{(C\arabic*)}}]
    \item \label{cond:covering_gradient} Let $\overline\phi\in \cF$ be a fixed potential and let $\alpha>0$. Let $\cG\subseteq  \{t\phi+(1-t)\overline \phi:\ \phi \in \cF,\ 0\leq t \leq 1\}$ be a set of $(\alpha,a)$-strongly convex potentials and let $\cG^*=\{g^*:\ g\in \cG\}$. Then, there exists $m\geq 2$ such that for any $\tau>0$ the ball $B_\tau$ centered at $\overline\phi$ (resp.~the ball $B^*_\tau$ centered at $\overline\phi^*$)  of radius $\tau$ in $\cG$ for the pseudo-norm $g\mapsto \|\nabla g\|_{L^2(P)}$ (resp.~in $\cG^*$ for the pseudo-norm $g^*\mapsto \|\nabla g^*\|_{L^2(Q)}$) satisfies for all $0<h<\tau$,
    \begin{align}
        &\log \cN(h,B_\tau,L^2(P)) \lesssim_{\log_+(1/h),\log_+(1/\tau)}  (h/\tau)^{-m}, \\
        &\log \cN( h,B_\tau^*,L^2(Q)) \lesssim_{\log_+(1/h),\log_+(1/\tau)}  (h/\tau)^{-m},
    \end{align}
    where the suppressed constant may depend on $\alpha$, $a$ and $m$.
\end{enumerate}

Before further analyzing the technical condition \ref{cond:covering_gradient} in detail, let us notice that the bounds on the covering numbers that appear in this condition correspond exactly to the covering numbers of a $H_1^2$-Sobolev ball in $L^2([0,1]^m)$. Condition \ref{cond:covering_gradient} is therefore automatically satisfied with $m=d$ whenever $P$ and $Q$ have densities bounded away from zero and infinity on some bounded Lipschitz domain.

More generally, the next lemma shows that the parameter $m$ can be picked as the dimension of the support of $P$ under mild conditions. Namely, we require two technical conditions that can be easily verified for large classes of probability measures $P$.
\begin{enumerate}[start=3, label={\textbf{(C\arabic*)}}]
    \item \label{cond:loc_poincare} The probability measure $P$ satisfies a \emph{local Poincaré inequality}: there exist $C_{\texttt{LPI}}, \theta \geq 0$, $\kappa>0$ such that for all $R>0$, $0<r\leq \kappa \dotp{R}^{-\theta}$ and  every ball $B=B(x;r)$ centered at $x\in \supp(P)\cap B(0;R)$, it holds that for every differentiable function $f:\R^d\to \R$
\begin{equation}\label{eq: local_poincare_ineq}
        \mathrm{Var}_{P_B}(f) \leq C_{\texttt{LPI}} r^2\int \|\nabla f(x)\|^2 \dd P_B(x),
    \end{equation}
    where $P_B=P_{|B}/P(B)$ is the law of $X\sim P$ conditioned on $X\in B$.
    \item \label{cond:loc_doubling} The probability measure $P$ is  a \emph{locally doubling measure}: there exist constants $C_{\texttt{D}}, \theta \geq 0$, $\kappa>0$ such that for all $R>0$, $0<r\leq \kappa \dotp{R}^{-\theta}$ and every $x\in \supp(P)\cap B(0;R)$,
     \begin{equation}
         P(B(x;r))\leq C_{\texttt{D}} P(B(x;r/2)).
     \end{equation}
\end{enumerate}
\begin{lem}\label{lem:C2_satisfied}
    Let $P$ be a probability measure that satisfies \ref{cond:poincare}, \ref{cond:loc_poincare} and \ref{cond:loc_doubling}. Suppose also that there exist $c>0$, $m\geq 2$ such that for all $R$ large enough and $\eps$ small enough, $ \cN(\eps,\supp P \cap B(0;R), \R^d) \leq (c\eps/R)^{-m}$.  Then,  \ref{cond:covering_gradient} is satisfied with exponent $m$ for any class $\cF$ of potentials satisfying \ref{cond:smooth}.
\end{lem}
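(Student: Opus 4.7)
My plan is to verify \ref{cond:covering_gradient} by approximating each vector field $v := \nabla g - \nabla\bar\phi$ (for $g \in B_\tau$) by a piecewise-constant vector field on a well-chosen cover of $\supp P \cap B(0;R)$, and then counting the admissible approximants using the $L^2(P)$-norm constraint $\|v\|_{L^2(P)} \le \tau$. The first step is localization: by \ref{cond:poincare} together with \eqref{eq:poincare_exp}, $P$ has subexponential tails, and the pointwise bound $\|v(x)\| \lesssim \beta\dotp{x}^{a+1}$ that follows from \ref{cond:smooth} (and the results of \Cref{app:potential}) lets me restrict to $\supp P \cap B(0;R)$ for some $R$ polylogarithmic in $1/h$ and $1/\tau$, at negligible tail cost in $L^2(P)$. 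I then cover $\supp P \cap B(0;R)$ by $N \lesssim (R/r)^m$ balls $B_i = B(x_i;r)$ via the hypothesized metric-entropy bound, with $r$ chosen below the threshold $\kappa\dotp{R}^{-\theta}$ so that \ref{cond:loc_poincare} applies on each $B_i$; a Vitali-type selection combined with \ref{cond:loc_doubling} ensures that the dilated balls $2B_i$ have bounded overlap, which is needed to sum local estimates.

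Second, on each $B_i$ I would apply the local Poincaré inequality \eqref{eq: local_poincare_ineq} componentwise to $v$ to obtain
\[
\int_{B_i} \|v - \bar v_i\|^2 \dd P_{B_i} \lesssim C_{\texttt{LPI}}\, r^2 \int_{B_i} \|\nabla^2(g-\bar\phi)\|_F^2 \dd P_{B_i},
\]
where $\bar v_i := \int v \dd P_{B_i}$. By \ref{cond:smooth} the Hessian integrand is pointwise $\lesssim \beta^2 \dotp{R}^{2a}$, so summing with bounded overlap gives $\|v - \sum_i \bar v_i \indic{B_i}\|_{L^2(P)} \lesssim r\beta\dotp{R}^a$. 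Taking $r \asymp h/(\beta\dotp{R}^a)$ (compatibly with the threshold from \ref{cond:loc_poincare}) keeps the approximation error below $h/2$, while $\|v\|_{L^2(P)} \le \tau$ forces the admissible tuples $(\bar v_i) \in (\R^d)^N$ into a weighted $\ell^2$-ball of radius $O(\tau)$ in the metric $(\bar v_i) \mapsto (\sum_i P(B_i)\|\bar v_i\|^2)^{1/2}$. A volume bound for $\ell^2$-balls then gives a covering of this set at scale $h/2$ of log cardinality $\lesssim N \log_+(\tau/h)$, which together with the estimate on $N$ produces the target bound up to polylogarithmic factors.

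For the dual ball $B^*_\tau \subset \cG^*$, I would repeat the recipe with $(Q, \nabla g^*)$ in place of $(P, \nabla g)$. The $(\alpha,a)$-strong convexity of each $g \in \cG$ implies smoothness of $g^*$ via the duality relations in \Cref{app:potential}, and the two-sided Hessian bound $\alpha\dotp{x}^a I \preceq \nabla^2 g \preceq \beta \dotp{x}^a I$ on the localized region implies that the push-forward $Q = (\nabla g)_\sharp P$ inherits \ref{cond:poincare}, \ref{cond:loc_poincare}, \ref{cond:loc_doubling} and the metric-entropy bound from $P$, with constants depending on $\alpha$, $\beta$, and $a$. The primal analysis then applies verbatim to $B^*_\tau$.

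The main technical obstacle, I expect, is extracting the sharp exponent $m$ on $(h/\tau)^{-m}$ rather than the naive $(1/h)^m$ bound that one would get by treating $(\bar v_i)$ as a generic ellipsoid in $\R^{dN}$. The admissible tuples inherit an additional smoothness-type constraint $\|\bar v_i - \bar v_j\| \lesssim \beta\|x_i - x_j\|$, which significantly reduces the effective dimension of the parameter space. Turning this into a correct $\tau$-dependence likely requires a multi-scale chaining construction, in which one covers $B_\tau$ at geometrically decreasing scales $\tau, \tau/2, \ldots, h$ with cell-count $\sim m$ free parameters per scale, so that the log-cardinality telescopes to $(\tau/h)^m$ rather than accumulating the $\beta$-factor at every scale.
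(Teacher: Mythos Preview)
Your approach has a genuine gap at the very first step: you are approximating the wrong object. Condition \ref{cond:covering_gradient} asks for the $L^2(P)$ covering numbers of $B_\tau$ as a set of \emph{functions} $g$; the constraint $\|\nabla g-\nabla\bar\phi\|_{L^2(P)}\le\tau$ defines the ball, but the metric in which you must cover is $\|g_1-g_2\|_{L^2(P)}$, not $\|\nabla g_1-\nabla g_2\|_{L^2(P)}$. By setting out to approximate the vector field $v=\nabla g-\nabla\bar\phi$ by piecewise constants and applying the local Poincar\'e inequality to $v$ (so that the right-hand side involves the Hessian), you lose exactly the leverage that the problem hands you for free: the quantity whose $L^2(P)$ norm is already controlled by $\tau$ is $\nabla f$ with $f=g-\bar\phi$, not $\nabla^2 f$. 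This is why you end up with the cell radius $r\asymp h/\beta$ and face the ``main technical obstacle'' of recovering the $\tau$-dependence by chaining.

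The paper's argument avoids this entirely by applying the local Poincar\'e inequality one derivative lower, to the scalar $f=g-\bar\phi$ itself. On each ball $\tilde B_i$ of radius $r$, one gets $\int_{\tilde B_i}|f-P_{\tilde B_i}f|^2\,\dd P_{\tilde B_i}\lesssim r^2\int_{\tilde B_i}\|\nabla f\|^2\,\dd P_{\tilde B_i}$, and summing with bounded overlap yields $\|f-\tilde f\|_{L^2(P)}^2\lesssim r^2\|\nabla f\|_{L^2(P)}^2\le r^2\tau^2$. Choosing $r\asymp h/\tau$ (compatibly with the threshold from \ref{cond:loc_poincare}, which only costs logarithms) gives approximation error $O(h)$ and $I\lesssim (R\tau/h)^m$ cells. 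The piecewise-constant data $(P_{\tilde B_i}f)$ lies in a cube $[-L,L]^I$ with $L$ polynomial in $\tau$, and an $\ell^\infty$-covering at scale $h/2$ contributes $I\log_+(L/h)$ to the log-cardinality, giving the claimed $(h/\tau)^{-m}$ up to logarithms with no chaining required. A minor additional point: in your dual argument, $Q=(\nabla\phi_0)_\sharp P$, not $(\nabla g)_\sharp P$; the local Poincar\'e and doubling properties of $Q$ follow because $\nabla\phi_0$ is a fixed locally bi-Lipschitz map, not from properties of the varying $g\in\cG$.
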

A proof of \Cref{lem:C2_satisfied} is given in \Cref{sec: properties_covering_bracketing}. 
As a consequence of this lemma, \ref{cond:covering_gradient} holds  when $P$ has a density bounded away from zero and infinity on a compact manifold, with $m$ equal to the dimension of the manifold. One can easily check that the conditions of the lemma also hold with $m=d$ for measures $P\propto e^{-V}$, where $V$ is (almost-everywhere) differentiable and $\|\nabla V(x)\|\leq L\dotp{x}^\theta$ for some $\theta,L\geq 0$: indeed, it then holds that $|V(x)-V(y)|\leq L\kappa$ if $\|x-y\|\leq r\leq \kappa \dotp{R}^{-\theta}$, which implies both \ref{cond:loc_doubling} and that, if $B=B(x;r)$, then $P_B$ has a density that lies between $e^{-L\kappa}$ and $e^{L\kappa}$, which therefore satisfies a Poincaré inequality with constant of order $e^{2L\kappa}$. In particular, $V(x)=\|x\|^s$ for $s\geq 1$ is admissible, including the important case of the Gaussian distribution. 

The condition \ref{cond:covering_gradient} will also be shown to be satisfied for $m<d$ in situations where $P$ has full support, but the class $\cF$ of candidate potentials has a low dimensional structure, e.g.,  in the spiked transport model (\Cref{sec: lowdim}) or when candidate potentials belong to a Barron space (\Cref{sec:barron}).

\begin{thm}\label{thm:strongly_convex}
Let $\alpha>0$. Let $P$ be a probability measure and $\cF$ be a class of potentials such that \ref{cond:smooth}, \ref{cond:poincare}, \ref{cond:covering_convex} and \ref{cond:bounded}   hold (possibly  with $R=\infty$).  Let  $\ell^\star = \inf_{\phi\in \cF_\alpha}\|\nabla\phi-\nabla\phi_0\|^2$.
\begin{enumerate}
    \item \label{it:only_poincare} We have
\begin{equation}\label{eq:rate_strong}
    \E\|\nabla \hat{\phi}_\cF - \nabla \phi_0\|^2_{L^2(P)} \lesssim_{\log n,\log_+(1/\ell^\star)} \ell^\star +   \pran{n^{-\frac{2}{2+\gamma}} \vee  n^{-\frac{1}{\gamma}}}.
\end{equation}
\item \label{it:uniform} If $P$ also satisfies  \ref{cond:covering_gradient} and $\gamma\in [0,2)$, then
\begin{equation}\label{eq:rate_strong_more}
    \E\|\nabla \hat{\phi}_\cF - \nabla \phi_0\|^2_{L^2(P)} \lesssim_{\log n,\log_+(1/\ell^\star)} \ell^\star +   n^{-\frac{2(m-\gamma)}{2m+\gamma (m-4)}}.
\end{equation}
\end{enumerate}
\end{thm}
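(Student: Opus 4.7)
The analysis follows the classical empirical risk minimization template, with the semi-dual $S$ playing the role of the population risk. Fix a near-optimizer $\phi^\star \in \cF_\alpha$ with $\|\nabla \phi^\star - \nabla \phi_0\|_{L^2(P)}^2 \lesssim \ell^\star$, write $\hat\ell := S(\hat\phi_\cF) - S(\phi_0)$, and observe that by the $(\alpha,a)$-strong convexity of $\phi^\star$, the reverse stability \eqref{eq: rev_stab_bound} yields $S(\phi^\star) - S(\phi_0) \lesssim \ell^\star$. The ERM inequality $S_n(\hat\phi_\cF) \le S_n(\phi^\star)$ then produces
\[
  \hat\ell \;\le\; C\,\ell^\star + \sup_{\phi\in\cF}\bigl[(P-P_n)(\phi-\phi^\star) + (Q-Q_n)(\phi^*-\phi^{\star,*})\bigr],
\]
and the forward stability \eqref{eq:stab_bound_reverse} converts any bound on $\hat\ell$ into one on $\|\nabla\hat\phi_\cF - \nabla\phi_0\|^2_{L^2(P)}$, at the cost of a $\log_+(1/\hat\ell)^b$ factor absorbed into the $\log n$ in the statement.

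For part \ref{it:only_poincare}, I would carry out a global weighted chaining bound on the above supremum using \ref{cond:covering_convex}. Two preliminary observations are needed: (i) $(\beta,a)$-smoothness gives $|\phi(x)|\lesssim \langle x\rangle^{a+2}$ and similarly for $\phi^*$ on $\nabla\phi_0(\supp P)$, so the envelope is polynomially growing; (ii) the Poincaré inequality forces $P$ to have subexponential tails via \eqref{eq:poincare_exp}, and pushforward by the polynomially-growing $\nabla\phi_0$ transfers a corresponding Orlicz tail bound to $Q$. Together these make the polynomial envelope integrable at all orders, so Dudley chaining against the weighted-$L^\infty$ entropy $h^{-\gamma}$ applies. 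A Massart-style peeling argument over dyadic levels of $\hat\ell$, combined with the quadratic growth $\hat\ell \gtrsim \|\nabla\hat\phi_\cF - \nabla\phi_0\|^2_{L^2(P)}/\log_+(1/\hat\ell)^b$, then produces the rate $\ell^\star + n^{-2/(2+\gamma)} \vee n^{-1/\gamma}$, in the spirit of \cite[Theorem 8.3]{massart2007concentration}.

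For part \ref{it:uniform}, the gain comes from replacing the global $L^\infty$-cover by a localized $L^2$-cover on gradient balls via \ref{cond:covering_gradient}. Writing $d_P(\phi,\phi') := \|\nabla\phi-\nabla\phi'\|_{L^2(P)}$, Poincaré converts gradient $L^2$-distance into function $L^2(P)$-distance (for zero-mean representatives), so the entropy bound $(h/\tau)^{-m}$ on $B_\tau$ yields an $L^2(P)$-entropy bound on the corresponding shell inside $\cF$. On the conjugate side, Brenier duality together with the $(\alpha,a)$-convexity of the potentials in $\cG$ gives $d_Q(\phi^*,\phi^{\star,*}) \lesssim d_P(\phi,\phi^\star)$ (since $\nabla\phi^*\circ\nabla\phi^\star$ is a near-identity perturbation in $L^2(P)$ when $\phi$ is close to $\phi^\star$), so the local entropy on the $Q$ side is controlled by the same $\tau$. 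I would then run a Talagrand/Bernstein-type chaining on each doubling shell $\{d_P(\phi,\phi^\star) \asymp 2^j r\}$, using the envelope control from part \ref{it:only_poincare} to bound the sup-norm term and Poincaré to bound the variance by the $L^2(P)$-diameter. Balancing the resulting empirical-process modulus against the quadratic lower bound $S(\phi)-S(\phi^\star)\gtrsim d_P(\phi,\phi^\star)^2$ (from reverse stability applied to $\phi^\star \in \cF_\alpha$), and peeling over the shells, yields the exponent $2(m-\gamma)/(2m+\gamma(m-4))$.

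The step I expect to be trickiest is the localized chaining on the conjugate side. The comparison $d_Q(\phi^*,\phi^{\star,*}) \lesssim d_P(\phi,\phi^\star)$ requires quantitative control of how far $\phi$ can wander from $\phi^\star$ inside $\cG$ before the convex-conjugate geometry distorts, which is precisely why \ref{cond:covering_gradient} is phrased in terms of a common base point $\overline\phi$ and the interpolation class $\{t\phi+(1-t)\overline\phi\}$. Making this faithful — so that a single shell in $d_P$ translates to a comparable shell of $\phi^*$'s in $L^2(Q)$ — while simultaneously handling the weighted-$L^\infty$ envelope from \ref{cond:covering_convex} (needed to keep the tail of $\phi-\phi^\star$ from dominating on large shells) is, I expect, where most of the technical work lies.
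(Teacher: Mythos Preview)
Your outline captures the broad template correctly, but there is a genuine gap at the localization step, and it is exactly the point the paper singles out as the main technical difficulty.

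You propose to peel over shells $\{\phi\in\cF:\ \|\nabla\phi-\nabla\phi^\star\|_{L^2(P)}\asymp\tau\}$ and, on each shell, bound the variance of $\gamma(\phi,\cdot)-\gamma(\phi^\star,\cdot)$ by $\tau^2$ via the conjugate comparison $d_Q(\phi^*,\phi^{\star,*})\lesssim d_P(\phi,\phi^\star)$. That comparison is where things break. In the unbounded regime $R=\infty$, the only way the paper (and, as far as I can see, any direct argument) obtains this bound is by writing $\phi^*(y)=\langle y,\nabla\phi^{-1}(y)\rangle-\phi(\nabla\phi^{-1}(y))$ and controlling $\nabla\phi^{-1}$ via $(\alpha,a)$-strong convexity; see \Cref{lem:strict_second_moment} and its use of \Cref{lem:strong_convex_basic}. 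But $\cF$ is \emph{not} assumed to contain only strongly convex potentials---the whole point of the theorem is to allow general, possibly non-convex, candidate classes. So for a generic $\phi$ in your shell, $\nabla\phi$ need not be invertible, and your $d_Q\lesssim d_P$ bound is not available. Without it, the local variance bound collapses and the peeling gives no improvement over the global Dudley rate.

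The paper's fix is an additional localization step you do not have: an auxiliary $C^2$-type norm $\vertiii{\phi}=\sup_x\langle x\rangle^{-a}\op{\nabla^2\phi(x)}$ together with the \emph{convexity of the criterion} $\phi\mapsto S_n(\phi)$. Because $S_n$ is convex, the interpolated point $\hat\phi_t=t\hat\phi_\cF+(1-t)\overline\phi$ still satisfies $S_n(\hat\phi_t)\le S_n(\overline\phi)$, and by choosing $t$ small one can force $\vertiii{\hat\phi_t-\overline\phi}\le\alpha/2$; since $\overline\phi$ is $(\alpha,a)$-strongly convex, so is $\hat\phi_t$. This ``one-shot'' device (\Cref{thm:abstract_one_shot}) reduces the problem to controlling fluctuations only over the set $\overline\cF$ of $(\alpha/2)$-strongly convex interpolants, where your conjugate comparison \emph{is} valid. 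Your proposal recognizes that \ref{cond:covering_gradient} is phrased on the interpolation class, but does not supply the mechanism that lets you restrict the empirical-process supremum to that class; the convexity-of-$S_n$ interpolation argument is that mechanism, and it is the step your outline is missing.
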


\begin{rmk}
We do not impose the convexity of the candidate potentials  in \Cref{thm:strongly_convex}. 
    Being able to handle class of non-convex candidate potentials gives us additional flexibility when designing classes of candidate potentials $\cF$, e.g.,  when using neural networks whose output will not necessarily be convex. There will also be several cases of interest where we will consider classes of potentials $\phi$ having a finite expansion of the form $\sum_j \lambda_j \phi_j$ for some coefficients $\lambda_j\in\R$ and some non-convex functions $(\phi_j)_j$, typically the elements of some $L^2$-orthogonal basis, be it wavelets or trigonometric functions. 
\end{rmk}

\section{Examples}\label{sec:examples}

\subsection{Transport between Location-Scale families}\label{sec:LocationScale}
We first verify our theorems on general location-scale families. For concreteness, we begin with the case of multivariate Gaussian distributions. Let $P$ be the Gaussian distribution $N(m_P,\Sigma_P)$, and let $Q$ be  $N(m_Q,\Sigma_Q)$. It is well-known that the optimal Brenier potential is given by
\begin{equation}\label{eq: quad_pot}
    \phi_0:x\mapsto \tfrac12 (x - {m}_P)^\top \left( {\Sigma}_P^{-1/2}({\Sigma}_P^{1/2} {\Sigma}_Q{\Sigma}_P^{1/2})^{1/2}{\Sigma}_P^{-1/2}   \right)(x - {m}_P) + m_Q^\top x\,.
\end{equation}
In the two-sample problem, where we have $n$ samples from both $P$ and $Q$, the plug-in estimator is given by
\begin{align*}
    \hat{T}_n(x) = \hat{m}_Q + \left( \hat{\Sigma}_P^{-1/2}(\hat{\Sigma}_P^{1/2} \hat{\Sigma}_Q\hat{\Sigma}_P^{1/2})^{1/2}\hat{\Sigma}_P^{-1/2}   \right)(x - \hat{m}_P)\,,
\end{align*}
where $\hat{m}_P$ (resp. $\hat{m}_Q$) and $\hat{\Sigma}_P$ (resp. $\hat{\Sigma}_Q$) are empirical estimates of the mean and covariance of $P$ (resp. $Q$) from samples. One can show by direct arguments that $\hat{T}_n$ achieves parametric rates; see \cite{flamary2019concentration}. More generally, let $P_0$ be a centered, isotropic base distribution on $\mathbb R^d$.
If $P = \mathrm{Law}(m_P + \Sigma_P^{1/2} X_0)$ and  $Q = \mathrm{Law}(m_Q + \Sigma_Q^{1/2} X_0)$ for $X_0 \sim P_0$, then the optimal Brenier potential is again given by~\eqref{eq: quad_pot}, and $\hat T_n$ achieves the parametric rate.

As a sanity check, we show that our procedure recovers the rates of estimation in this simple case. The location-scale setup suggests to consider a family of candidate potentials
\begin{equation}
    \cF_{\text{quad}}=\{ x\mapsto \tfrac12 x^\top B x + \dotp{b, x}:\ B \in \S^d_+,\ b\in\R^d\}.
\end{equation}
Concretely, our estimator is $\nabla \hat{\phi}_n$ with
\begin{align*}
    \hat{\phi}_n = \argmin_{\phi \in \cF_{\text{quad}}} \int \phi \dd P_n + \int \phi^* \dd Q_n\,.
\end{align*}
One can verify that $\nabla \hat{\phi}_n$ precisely matches the plug-in estimator. However, $\hat{\phi}_n$ (and its gradient) does not immediately fall into our framework since the set $\cF_{\text{quad}}$ is not precompact, and therefore its covering numbers are infinite. However, we show in \Cref{sec:affine} that it suffices to control the covering number of the smaller set
\begin{equation*}
    \cF_{\text{quad},0} = \{ x\mapsto \tfrac{1}{2}x^\top B x:\ B \in \S^d_+,\ \op{B}= 1\}\,
\end{equation*}
which grows at most logarithmically. 
\Cref{thm:strongly_convex} then yields the following bound.
\begin{prop}\label{prop:Gaussian}
Assume that $P$ and $Q$ lie in a location-scale family and that the base measure $P_0$ satisfies \ref{cond:poincare} and has a bounded density along every direction in $\mathbb R^d$.
It holds that 
\begin{equation}
    \E\|\nabla\hat\phi_{\cF_{\mathrm{quad}}}-\nabla\phi_0\|^2_{L^2(P)} \lesssim_{\log (n)} n^{-1}.
\end{equation}
\end{prop}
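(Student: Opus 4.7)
The strategy is to apply \Cref{thm:strongly_convex}.\ref{it:only_poincare} to $\cF=\cF_{\mathrm{quad}}$. The key observation is that the explicit formula \eqref{eq: quad_pot} puts $\phi_0 \in \cF_{\mathrm{quad}}$; since $\Sigma_P,\Sigma_Q\succ 0$ the matrix $B_0=\Sigma_P^{-1/2}(\Sigma_P^{1/2}\Sigma_Q\Sigma_P^{1/2})^{1/2}\Sigma_P^{-1/2}$ is positive definite, so $\phi_0$ is $(\alpha_0,0)$-strongly convex with $\alpha_0=\lambda_{\min}(B_0)>0$ and hence lies in $\cF_\alpha$ for any $\alpha\leq\alpha_0$. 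Consequently the bias term $\ell^\star$ vanishes, and the proposition reduces to showing that the estimation error in \Cref{thm:strongly_convex} is $n^{-1}$ up to polylog factors. Since $\cF_{\mathrm{quad}}$ is essentially parametric, the covering-number exponent $\gamma$ should be effectively $0$.

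I verify the hypotheses as follows. Each $\phi\in\cF_{\mathrm{quad}}$ has constant Hessian $B$ and is thus $(\op{B},0)$-smooth, giving \ref{cond:smooth} once $\op{B}$ is bounded a priori. Assumption \ref{cond:poincare} for $P$ is inherited from $P_0$ because Poincaré inequalities are preserved under affine pushforwards, with constant scaling like $\op{\Sigma_P}$. For \ref{cond:covering_convex}, the finite-dimensional parametrization $(B,b)\in\S^d_+\times\R^d$ has dimension $D=d(d+3)/2$, and the elementary bound
\begin{equation*}
    |\phi_1(x)-\phi_2(x)|\,\dotp{x}^{-2} \;\leq\; \tfrac12\op{B_1-B_2}+\|b_1-b_2\|
\end{equation*}
shows that any $h$-cover of the parameter box yields an $O(h)$-cover of the potentials in $L^\infty(\dotp{\cdot}^{-2})$. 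Thus $\log\cN(h,\cF_{\mathrm{quad}},L^\infty(\dotp{\cdot}^{-2}))\lesssim D\log(1/h)$, i.e., \ref{cond:covering_convex} holds with $\gamma=0$ (up to log factors) and $\eta=2=a+2$.

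The main obstacle is that $\cF_{\mathrm{quad}}$ is not compact: the parameters $(B,b)$ are a priori unbounded. I resolve this by a localization argument. Since $P_0$ satisfies a Poincaré inequality it has subexponential tails (see \eqref{eq:poincare_exp}), and the same transfers to $P$ and $Q$; therefore the empirical moments $\hat m_P,\hat m_Q,\hat\Sigma_P,\hat\Sigma_Q$ concentrate in operator norm at parametric rates. Because the plug-in minimizer $\hat\phi_n$ is an explicit smooth function of these empirical moments, there exists $M=\mathrm{polylog}(n)$ such that $\op{\hat B}+\|\hat b\|\leq M$ with probability at least $1-n^{-c}$ for any preassigned $c>0$. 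Restricting the optimization to this localized subclass, the parameter-space cover above yields $\log\cN(h,\cdot,L^\infty(\dotp{\cdot}^{-2}))\lesssim D\log(M/h)$, still compatible with $\gamma=0$ after absorbing $\log M$ into the polylog factor. Invoking \Cref{thm:strongly_convex}.\ref{it:only_poincare} with $\ell^\star=0$ and effective exponent $\gamma=0$ delivers the estimation rate $n^{-2/(2+\gamma)}=n^{-1}$ up to the promised polylog factors; the low-probability event on which localization fails contributes negligibly, via a crude deterministic bound on $\|\nabla\hat\phi_n-\nabla\phi_0\|^2_{L^2(P)}$ obtained from \Cref{prop: stab_bound}.
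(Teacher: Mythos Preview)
Your overall strategy---localize $\cF_{\mathrm{quad}}$ to a bounded parameter box and then invoke \Cref{thm:strongly_convex} with $\gamma=0$---is exactly what the paper does, and your verification of \ref{cond:smooth}, \ref{cond:poincare}, \ref{cond:covering_convex} on the localized class is correct. The paper carries out the localization differently, via a general result (\Cref{prop:affine} and \Cref{cor:affine}) for classes obtained as affine transforms of a base class $\cF_{\mathrm{quad},0}=\{\tfrac12 x^\top Bx:\op{B}=1\}$: it shows, using only the semidual inequality $S_n(\hat\phi_{\cF})\le S_n(\overline\phi)$ and one-directional strong convexity, that $\hat\phi_{\cF_{\mathrm{quad}}}=\hat\phi_{\cF_{\mathrm{quad}}\cap\Faff^{r,r}}$ outside an event of exponentially small contribution. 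Your shortcut through the explicit plug-in formula is a legitimate alternative for this particular example, but it does not fully close.

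The gap is your treatment of the low-probability event. You claim its contribution is controlled ``via a crude deterministic bound on $\|\nabla\hat\phi_n-\nabla\phi_0\|^2_{L^2(P)}$ obtained from \Cref{prop: stab_bound}''. But \Cref{prop: stab_bound} only bounds the gradient distance by the (random) semidual gap $\ell=S(\hat\phi)-S(\phi_0)$, which is itself unbounded on the bad event; no deterministic bound follows. Concretely, $\op{\hat B}$ blows up like $\lambda_{\min}(\hat\Sigma_P)^{-1}$, so you must control $\E[\lambda_{\min}(\hat\Sigma_P)^{-q}\ones\{\text{bad}\}]$ for some $q$. This is precisely where the hypothesis that $P_0$ has a bounded density in every direction enters---an assumption you never use. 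In the paper's language, bounded marginal densities give the small-ball estimate $\PP(u_n<t)\lesssim \mathrm{poly}(t)$ for the empirical variance $u_n$ along the direction of strong convexity (\Cref{lem:bound_un}), which is exactly what makes $\E[u_n^{-q}\ones\{u_n^{-1}>r\}]$ negligible. Without this ingredient your argument is incomplete; once you add it, your route and the paper's are essentially the same.
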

Full details appear in \cref{sec:affine}.
\begin{rmk} The proof of \cref{prop:Gaussian} is based on an argument showing that the minimizer of~\eqref{eq: semidual_n} over $\cF_{\mathrm{quad}}$ agrees with the minimizer over the subset of $\cF_{\mathrm{quad}}$ for which $B$ and $b$ are bounded by suitable constants. This argument applies more generally to families $\cF$ obtained by translating and rescaling a base family $\cF_0$, as long as the elements of $\cF_0$ satisfy a restricted strong convexity property. See \cref{sec:affine} for details.  \end{rmk}

\subsection{Finite set}
We now consider the setup from \cite{vacher2021convex}, who consider the problem of estimating an optimal Brenier potential over a finite class of functions $\cF=\{\phi_1,\dots,\phi_K\}$. In addition, the authors focus on the case where (i) the measure $P$ has a bounded support, (ii) the potentials $\phi_k$ are strongly convex and smooth, and (iii) the ``true'' potential $\phi_0$ does not necessarily belong to $\cF$. Their main result reads
\begin{equation}\label{eq:control_vacher}
    \E\|\nabla \hat \phi_\cF-\nabla\phi_0\|^2_{L^2(P)} \lesssim \min_{k =1,\dots, K} \|\nabla \phi_{k}-\nabla\phi_0\|^2_{L^2(P)} + \sqrt{\frac{\log n}{n}}.
\end{equation}
\Cref{thm:bounded} allows us to strengthen this result if we assume a Poincaré inequality. Indeed, the log-covering number of $\cF$ is constant for $h$ small enough, so that  condition \ref{cond:covering_bounded} holds with $\gamma=0$. Therefore, if $P$ satisfies a Poincaré inequality and \ref{cond:bounded} is satisfied, then a direct application of \Cref{thm:bounded} yields
\begin{equation}\label{eq:control_finite}
    \E\|\nabla \hat \phi_\cF-\nabla\phi_0\|^2_{L^2(P)} \lesssim_{\log n} \min_{k =1\dots K} (S(\phi_k) - S(\phi_0)) + n^{-1}\,.
\end{equation}

As in \cite{vacher2021convex}, if we also assume that the potentials in $\cF$ are strongly convex, we can apply \Cref{eq: rev_stab_bound} , resulting in
\begin{equation}\label{eq:control_finite_strongly}
    \E\|\nabla \hat \phi_\cF-\nabla\phi_0\|^2_{L^2(P)} \lesssim_{\log n} \min_{k \in [K]} \|\nabla \phi_{k}-\nabla\phi_0\|^2_{L^2(P)}  + n^{-1}\,.
\end{equation}
To put it another way, assuming a Poincaré inequality is enough to improve the excess of risk from $n^{-1/2}$ to $n^{-1}$ (up to logarithmic factors). 
\subsection{Parametric space}\label{sec:parametric}
More generally, one can consider a set of potentials parametrized by a bounded set $\Theta\subseteq \R^m$. Consider a set $\cF = \{\phi_\theta:\ \theta\in \Theta\}$ and assume that there exist constants $L,p \geq 0$ such that 
\begin{equation}\label{eq:lip_parametric}
    \forall \theta,\theta'\in\Theta,\ \forall x\in \Rd,\  |\phi_\theta(x)-\phi_{\theta'}(x)|\leq L\|\theta-\theta'\|\dotp{x}^p.
\end{equation}
Then, for $\eta>p$, the log-covering number at scale $h$ of $\cF$ for the norm $L^\infty(\dotp{\ \cdot \ }^{-\eta})$ is controlled by the log-covering number of the set $\Theta$, which scales as $m\log_+(1/h)$. Therefore, condition \ref{cond:covering_convex} is satisfied for $\gamma=0$. Let $P$ be a probability measure satisfying a Poincaré inequality and assume that all potentials in $\cF$ are $(\beta,a)$-smooth. \Cref{thm:strongly_convex} implies that 
\begin{equation}
 \E\|\nabla \hat{\phi}_\cF - \nabla \phi_0\|^2_{L^2(P)} \lesssim_{\log n} \inf_{\phi\in\cF_\alpha}\|\nabla\phi-\nabla\phi_0\|^2_{L^2(P)} + n^{-1},
\end{equation}
that is, we obtain a parametric rate of convergence.

\subsection{Large parametric spaces}\label{sec:large}
A common technique in nonparametric inference is to approximate a (nonparametric) class of functions $\cF$ by a large finite dimensional space (using e.g.,  Fourier series, wavelets, or neural networks).  If $\tilde \cF$ is such a finite dimensional space, then the log-covering numbers of $\tilde\cF$ will scale logarithmically, although with possibly a large prefactor: for some $\eta\geq a+2$ and $\gamma'\geq 0$, it will hold that
\begin{equation}\label{eq:covering_2_precise}
\forall h>0,\ \log \cN(h,\tilde\cF,L^\infty(\dotp{ \ \cdot\ }^{-\eta})) \leq D \log_+(1/h)^{\gamma'},
\end{equation}
where the constant $D$ is not necessarily small. In such a situation, \Cref{thm:strongly_convex} directly gives an excess of risk of order $n^{-1}$ (parametric rate of convergence). However, such a result is not useful as the hidden constant depends on $D$, which typically scales polynomially with $n$. Determining the exact dependency of the excess of risk as a function of $D$ is thus key to deriving tight bounds when the approximating class $\tilde \cF$ is of finite, although large, dimension. Specializing our proof to this case yields the following bound.

\begin{prop}\label{prop:rate_large_parametric}
    Let $\alpha>0$. Assume that $P$ and $\tilde \cF$ satisfy \ref{cond:smooth}, \ref{cond:poincare}, \ref{cond:bounded}  (possibly  with $R=\infty$) and \eqref{eq:covering_2_precise}.  Further assume that \ref{cond:covering_gradient} is satisfied for $m=d$ and let  $\ell^\star = \inf_{\phi\in \tilde\cF_\alpha} \|\nabla\phi-\nabla\phi_0\|^2_{L^2(P)}$.  Then
    \begin{equation}
     \E\|\nabla \hat{\phi}_{\tilde\cF} - \nabla \phi_0\|^2_{L^2(P)} \lesssim_{\log n,\log_+(1/\ell^\star),\log D} \ell^\star +   D^{1-\frac{2}d}n^{-1}.
\end{equation}
\end{prop}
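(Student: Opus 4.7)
The argument follows the skeleton of the proof of \Cref{thm:strongly_convex}(\ref{it:uniform}), with careful tracking of the prefactor $D$ coming from the refined global entropy bound \eqref{eq:covering_2_precise}. Fix an oracle $\phi^\star\in \tilde\cF_\alpha$ with $\|\nabla\phi^\star-\nabla\phi_0\|^2_{L^2(P)}\leq 2\ell^\star$, and write $\hat\phi\defeq \hat\phi_{\tilde\cF}$. By optimality of $\hat\phi$, one has $S(\hat\phi)-S(\phi^\star)\leq (S-S_n)(\hat\phi-\phi^\star)$, and combining this with \Cref{prop: stab_bound} and a triangle inequality reduces (up to log factors) the task to controlling the localized empirical process
\begin{equation*}
E(\tau)\defeq \E\sup_{\phi\in \tilde \cF,\ \|\nabla\phi-\nabla\phi^\star\|_{L^2(P)}\leq \tau} |(S-S_n)(\phi-\phi^\star)|,
\end{equation*}
and then solving a fixed-point inequality $\tau^2\lesssim E(\tau)$ via a standard peeling argument.

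The key step is to chain with the pointwise minimum of the two available entropy estimates. First, \ref{cond:covering_gradient} with $m=d$, together with the Poincar\'e inequality \ref{cond:poincare} on $P$ and its analogue on the $Q$-integrand (handled as in the proof of \Cref{thm:strongly_convex}), produces $L^2(P)$ and $L^2(Q)$ covering bounds of order $(h/\tau)^{-d}$ for the integrands $\phi-\phi^\star$ and $\phi^*-(\phi^\star)^*$, after a harmless recentering that cancels in $(S-S_n)(\phi-\phi^\star)$. Second, the weighted $L^\infty$-bound \eqref{eq:covering_2_precise}, combined with the subexponential tails implied by \ref{cond:poincare}, converts into bracketing entropy of order $D\log_+(1/h)^{\gamma'}$ in $L^2(P)+L^2(Q)$. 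Dudley chaining with the smaller of the two bounds at each scale then produces, up to logarithmic factors and for any cutoff $\delta\in(0,\tau]$,
\begin{equation*}
n^{1/2}E(\tau)\;\lesssim\; \sqrt{D}\,\delta\;+\;\tau^{d/2}\delta^{1-d/2},
\end{equation*}
where the first term comes from applying the global bound on $[0,\delta]$ and the second from the (tighter, for $h>\delta$) local bound on $[\delta,\tau]$, the latter integral being finite precisely because $\delta$ is strictly positive.

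Optimising yields the crossover $\delta\asymp \tau D^{-1/d}$, at which both contributions equal $\tau D^{1/2-1/d}$, so $E(\tau)\lesssim \tau \sqrt{D^{1-2/d}/n}$. The fixed-point equation $\tau^2\lesssim E(\tau)$ then gives $\tau^2\lesssim D^{1-2/d}n^{-1}$, and adding the approximation bias $\ell^\star$ yields the stated bound. The main technical obstacle is organising a single chain whose mesh realises the pointwise minimum of the two entropies at every scale: one must refine each global $L^\infty$-bracket of size $\delta$ by a local $L^2$-cover of the $\tau$-ball, check that the resulting refined cover remains valid at every scale, and carry along the polylogarithmic losses coming from the weighted norm $L^\infty(\dotp{\ \cdot\ }^{-\eta})$, the subexponential envelope, and the stability bound of \Cref{prop: stab_bound}---all of which are absorbed into the $\lesssim_{\log n,\log_+(1/\ell^\star),\log D}$ notation.
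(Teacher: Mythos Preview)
Your approach is essentially the paper's: same localization/fixed-point scheme, same idea of chaining with the pointwise minimum of the global entropy $D\log_+(1/h)^{\gamma'}$ and the local $L^2$-entropy $(h/\tau)^{-d}$ from \ref{cond:covering_gradient}, and the same crossover $\delta\asymp\tau D^{-1/d}$ giving the subgaussian contribution $\tau D^{1/2-1/d}$.

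There is one genuine omission. You only compute the Dudley ($\sqrt{H_2}$) part of the chain. In the unbounded case ($R=\infty$) the functions $\gamma(\phi,\cdot)$ are not uniformly bounded, so one cannot simply invoke Dudley; the paper instead uses \Cref{prop:van_wellner}, which carries an additional Bernstein-type term $n^{-1/2}\!\int H_\infty(h)\,\dd h$ based on the weighted-$L^\infty$ covering numbers. To control this term one needs a second splitting: besides \eqref{eq:covering_2_precise}, the paper uses the \emph{universal} bound $\log\cN(h,\tilde\cF,L^\infty(\dotp{\cdot}^{-\eta}))\lesssim h^{-d/2}$, which holds for any class of $(\beta,a)$-smooth functions (via the H\"older entropy bounds of Triebel). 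Splitting at $h=D^{-2/d}$ then gives $n^{-1/2}\int H_\infty\lesssim D^{1-2/d}/\sqrt{n}$. This term is of the same order as your $\tau D^{1/2-1/d}$ at the fixed point, so your final rate is correct, but as written your argument is incomplete for $R=\infty$: you should either restrict to the bounded case or make the Bernstein term (and the Triebel bound that controls it) explicit rather than absorbing it into ``polylogarithmic losses''.
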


\Cref{prop:rate_large_parametric} is proven in \Cref{sec: details_large_parametric}. Let us now show  cases of applications to different families of approximating functions. 
Assume for the sake of simplicity 
that $\phi_0$ is $(\alpha,a)$-convex and belongs to the (nonparametric) class of functions $\cF$. 
We define the pseudo-norm $\vertiii{\phi} = \sup_{x\in B(0,R)} \op{\nabla^2\phi(x)}\dotp{x}^{-a}$ for a $C^2$ function $\phi$ and $R$ the radius appearing in \ref{cond:bounded}. 
The class of functions $\cF$ is approximated by  a family $(\cF_j)_{j\geq 0}$ of parametric models that all satisfy condition \ref{cond:smooth} for some uniform constant $\beta$, and that satisfies the following conditions.
\begin{enumerate}[start=1, label={\textbf{(P\arabic*)}}]
    \item \label{cond:parametric} There exists $\eta \geq a+2$ and $\gamma'\geq 0$ such that for every $h>0$ and integer $j\geq 0$,
    \begin{equation}
        \log \cN(h,\cF_j,L^\infty(\dotp{\ \cdot \ }^{-\eta})) \lesssim 2^{jd} \log_+(1/h)^{\gamma'}.
    \end{equation}
    \item \label{cond:parametric_approx}  There exists $s\geq 2$ such that  for every every $j$ large enough and for every $\phi\in\cF$  there exists $\phi_j\in\cF_j$ with
    \begin{equation}
\|\nabla\phi -\nabla\phi_j\|_{L^2(P)}\lesssim 2^{-j(s-1)},\quad \vertiii{\phi_j-\phi}\leq \alpha/2.
\end{equation}
\end{enumerate}

\begin{thm}\label{thm:param}
    Consider $P$ and $\cF$ that satisfy \ref{cond:poincare},  \ref{cond:covering_gradient} (for $m=d$), and  \ref{cond:bounded} (possibly  with $R=\infty$). Assume that $\phi_0$ is both $(\beta,a)$-smooth and $(\alpha,a)$-strongly convex.  Let $(\cF_j)_j$ be an approximating family of the class of potentials $\cF$ that satisfies \ref{cond:parametric}, \ref{cond:parametric_approx}, and that also satisfies \ref{cond:smooth} for some parameter $\beta$ independent of $j$. Then, for $J$ such that  $2^{J} \asymp n^{1/(2s+d-4)}$, it holds that
    \begin{equation}
        \E\|\nabla\hat \phi_{\cF_J}-\nabla\phi_0\|^2_{L^2(P)} \lesssim_{\log n}  n^{-\frac{2(s-1)}{2s+d-4}}.
    \end{equation}
\end{thm}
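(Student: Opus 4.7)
The strategy is to instantiate Proposition \ref{prop:rate_large_parametric} with the finite-dimensional class $\tilde\cF = \cF_J$ and then optimize over $J$. The required structural hypotheses on $P$ (namely \ref{cond:poincare}, \ref{cond:covering_gradient} with $m=d$, and \ref{cond:bounded}) are assumed directly, and \ref{cond:smooth} holds for $\cF_J$ uniformly in $J$ by assumption, so the only inputs I need to track are the constant $D$ appearing in the covering bound \eqref{eq:covering_2_precise} and the bias term $\ell^\star$.

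First, I would read off the covering constant from \ref{cond:parametric} applied at level $j=J$: up to a multiplicative constant independent of $J$ and $n$, we have $D \asymp 2^{Jd}$ and $\gamma'$ fixed. Next, I would bound the bias using \ref{cond:parametric_approx} applied to $\phi_0$: this produces a potential $\phi_J \in \cF_J$ with $\|\nabla \phi_J - \nabla \phi_0\|_{L^2(P)} \lesssim 2^{-J(s-1)}$ and $\vertiii{\phi_J - \phi_0} \leq \alpha/2$. The pseudo-norm bound controls $\op{\nabla^2 \phi_J(x) - \nabla^2 \phi_0(x)} \leq (\alpha/2)\dotp{x}^a$ pointwise, so the $(\alpha,a)$-strong convexity of $\phi_0$ transfers to $(\alpha/2,a)$-strong convexity of $\phi_J$, i.e.\ $\phi_J \in (\cF_J)_{\alpha/2}$. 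Since Proposition \ref{prop:rate_large_parametric} is insensitive to the precise value of $\alpha$ (only the implicit constants depend on it), this yields $\ell^\star \lesssim 2^{-2J(s-1)}$.

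Plugging these two ingredients into Proposition \ref{prop:rate_large_parametric} gives
\begin{equation*}
    \E\|\nabla \hat\phi_{\cF_J} - \nabla\phi_0\|^2_{L^2(P)} \lesssim_{\log n,\log_+(1/\ell^\star), \log D}\ 2^{-2J(s-1)} + 2^{Jd(1-2/d)} n^{-1} = 2^{-2J(s-1)} + 2^{J(d-2)}n^{-1}.
\end{equation*}
The two suppressed polylog factors $\log D$ and $\log_+(1/\ell^\star)$ both scale as $\log n$ once $J$ is chosen polynomially in $n$, so they fold into the $\lesssim_{\log n}$ symbol. Balancing the approximation and estimation contributions forces $2^{J(2(s-1)+d-2)} = 2^{J(2s+d-4)} \asymp n$, i.e.\ $2^J \asymp n^{1/(2s+d-4)}$, and at this choice both terms equal $n^{-2(s-1)/(2s+d-4)}$, which is the claimed rate.

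The main subtlety is verifying that the approximant $\phi_J$ produced by \ref{cond:parametric_approx} inherits enough strong convexity from $\phi_0$ to lie in $(\cF_J)_{\alpha/2}$; this is what allows us to measure the bias in the $L^2(P)$-gradient norm (as required by Proposition \ref{prop:rate_large_parametric}) rather than in the semi-dual excess risk. Everything else is bookkeeping: choosing $J$ to balance the two terms, and checking that the polylog prefactors $\log D$ and $\log_+(1/\ell^\star)$ are harmless at this choice.
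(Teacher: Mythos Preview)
Your proposal is correct and follows essentially the same route as the paper's own proof: apply \ref{cond:parametric_approx} to obtain an $(\alpha/2,a)$-strongly convex approximant $\phi_J\in(\cF_J)_{\alpha/2}$, invoke Proposition~\ref{prop:rate_large_parametric} with $D\asymp 2^{Jd}$, and balance the two resulting terms. Your write-up is in fact slightly more careful than the paper's in spelling out why the polylog prefactors $\log D$ and $\log_+(1/\ell^\star)$ are absorbed into $\lesssim_{\log n}$ once $J$ is chosen.
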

\begin{proof}
    According to \ref{cond:parametric_approx}, for $j$ large enough, there exists a function $\phi_j\in \cF_j$ with $\vertiii{\phi_j-\phi_0}\leq \alpha/2$, and with $\|\nabla\phi_j -\nabla\phi_j\|_{L^2(P)}\lesssim 2^{-(j-1)s}$. The function $\phi_j$ is therefore $(\alpha/2,a)$-strongly convex, and belongs to the set $(\cF_{j})_{\alpha/2}$ (using notation from \Cref{sec:strong_results}). 
    We apply \Cref{prop:rate_large_parametric} to the set $\cF_j$ to obtain that 
    \begin{equation}
        \E\|\nabla \hat\phi_{\cF_j}-\nabla\phi_0\|^2_{L^2(P)} \lesssim_{\log n,j} 2^{-2(s-1)j} + 2^{j(d-2)}n^{-1}.
    \end{equation}
    To conclude, we choose $J$ such that $2^{J} \asymp n^{1/(2s+d-4)}$.
\end{proof}

\begin{rmk}
The introduction of a family of models $(\cF_j)_{j\geq 0}$ hints at the possibility of implementing model selection techniques for the transport map estimation problem. This can be performed using a standard penalization scheme. In fact, the formalism developed in \Cref{sec: proofs} is completely in line with Massart's book on model selection \citep{massart2007concentration}, and one can check directly that \cite[Theorem 8.5]{massart2007concentration} can be readily applied to our setting.
\end{rmk}

\subsubsection{Wavelet expansions}\label{sec:holder}
As a first application of \Cref{thm:param}, we may approximate any potential $\phi$ of regularity $s\geq 2$ by potentials having a finite wavelet expansion: this is the method proposed by \citet{hutter2021minimax}.  For a $k$-times differentiable function $\phi$, we let $d^k\phi$ be its $k^{\text{th}}$ order differential.  H\"utter and Rigollet consider the setting where the potential $\phi_0$ belongs to $\cF = \cC^s_{L}(\Omega)$, the set of functions $\phi$ defined on some domain $\Omega$ being $\lfloor s \rfloor$-times differentiable, with $\lfloor s \rfloor$-derivatives being $(s-\lfloor s \rfloor)$-H\"older continuous, and norm
\begin{equation}\label{eq:def_holder}
    \|\phi\|_{C^s(\Omega)} = \max_{0\leq i \leq \lfloor s \rfloor}\sup_{x\in \Omega} \op{d^i \phi(x)} + \sup_{x,y\in\Omega}\frac{ \op{d^{\lfloor s \rfloor} \phi(x)-d^{\lfloor s \rfloor} \phi(y)}}{\|x-y\|^{s-\lfloor s \rfloor}}
\end{equation}
smaller than $L$. When $\Omega$ is a bounded convex domain, the authors study the estimator
\begin{equation}
    \hat{\phi}_{(n,\text{W})}=\hat{\phi}_{\cF_J(\text{W}^{\alpha,\beta})} \defeq \argmin_{\phi\in \cF_J(\text{W}^{\alpha,\beta})} S_n(\phi),
\end{equation}
where $\cF_J(\text{W}^{\alpha,\beta})$ is the space of functions having a finite wavelet expansions up to some depth $J$, \textit{while also being $\alpha$-strongly convex} (details on wavelet expansions are given in \Cref{app:besov}). H\"utter and Rigollet show that, if the source measure $P$ has a density bounded away from zero and infinity on $\Omega$, then this estimator enjoys the following rate of convergence (which is minimax optimal up to logarithmic factors), for $s\geq 2$,
\begin{equation}\label{eq:rate_HR}
    \E \|\nabla \hat{\phi}_{(n,\text{W})}-\nabla\phi_0\|^2_{L^2(P)} \lesssim_{\log n} n^{-\frac{2(s-1)}{2s+d-4}}.
\end{equation}
Using \Cref{thm:param}, we are able to improve the results of H\"utter and Rigollet in several directions. First, we relax the assumptions on $P$, which does not need to have a  convex support but is only required to satisfy a Poincaré inequality \ref{cond:poincare} as well as a local Poincaré inequality \ref{cond:loc_poincare}, while being a doubling measure \ref{cond:loc_doubling}. Second, we relax the hypothesis of $P$ having a bounded support, which allows us to handle the case of smooth and strongly log-concave measures, see \Cref{cor:logconcave}. 

The third improvement we make is related to the computation of the estimator. Indeed, in their numerical implementations, H\"utter and Rigollet did not compute the estimator $\hat{\phi}_{(n,\text{W})}=\hat{\phi}_{\cF_J(\text{W}^{\alpha,\beta})} $ (where $\cF_J(\text{W}^{\alpha,\beta})$ is the set of \textit{$\alpha$-strongly convex} functions with a finite wavelet expansion), but removed the $\alpha$-strong convexity assumptions (for otherwise, the estimator $\hat{\phi}_{(n,\text{W})}$ is not computable). They actually compute the estimator $\hat \phi_{\cF_J(R_c)}$, where, for an integer $J>0$ and $R_c>0$, the set $\cF_J(R_c)$ contains all potentials having a finite wavelet expansion of depth $J$ on the cube of side-length $R_c$, with bounded coefficients, and extended by an arbitrary fixed convex function outside the cube. Their work does not supply a theoretical justification for this modification, but our \Cref{thm:param} shows it achieves the minimax rate. See \Cref{app:besov} for details.

\subsubsection{Application: Transport between log-concave measures}
When the source measure potentially has unbounded support, we additionally assumed that the true Brenier potential $\phi_0$ has upper and lower bounded Hessian (or can be well-approximated by such a potential). We want to emphasize that sometimes these assumptions can be quantitatively verified based on structural properties of the source and target measures alone.
For example, consider the case when the source and target measures are appropriately log-smooth and log-strongly concave: $P = \exp(-V)$ and $Q = \exp(-W)$ with $ 0\preceq \alpha_V I \preceq \nabla^2 V(x) \preceq \beta_V I$ and similarly $ 0\preceq \alpha_W I \preceq  \nabla^2 W(y) \preceq \beta_W I$, with $P$ and $Q$ having full support on $\Rd$.  A seminal result by \citet{caffarelli2000monotonicity} states that the optimal Brenier potential $\phi_0$ is then $\sqrt{\alpha_V/\beta_W}$-strongly convex and $\sqrt{\beta_V/\alpha_W}$-smooth, thus verifying the smoothness and strong convexity requirements of \Cref{thm:param}. This principle is known as \textit{Caffarelli's contraction theorem}; see e.g.,~\cite{chewi2022entropic} for a short proof based on entropic optimal transport.  

Recall the function class ${\cF_{J}(R_c)}$ from the previous section. The proof in \Cref{app:besov} yields the following corollary.
\begin{cor}\label{cor:logconcave}
Let  $P$ and $Q$ be log-smooth and log-strongly convex, with $Q=(\nabla\phi_0)_{\sharp} P$. Then, for $R_c \asymp J \asymp \frac{1}{d}\log(n)$, where $n$ is the number of samples from each of $P$ and $Q$, it holds that
\begin{equation}
    \E \|\nabla\hat\phi_{\cF_{J}(R_c)}-\nabla\phi_0\|^2 \lesssim_{\log n} n^{-\frac{2}{d}}\,.
\end{equation}
\end{cor}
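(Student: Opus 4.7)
The plan is to verify the hypotheses of \Cref{thm:param} (with smoothness exponent $s=2$) for the class $\cF = \cC^2(\RR^d)$ with the approximating family $\cF_J(R_c)$ of truncated wavelet expansions, and to handle the unbounded support of $P$ via a truncation argument. The target rate $n^{-2/d}$ is exactly what \Cref{thm:param} produces when $s=2$: $2(s-1)/(2s+d-4) = 2/d$.

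First, I would establish the regularity of the Brenier potential. By Caffarelli's contraction theorem, the log-smoothness and log-strong-concavity of $P$ and $Q$ guarantee that $\phi_0$ is $(\alpha,0)$-strongly convex and $(\beta,0)$-smooth with $\alpha = \sqrt{\alpha_V/\beta_W}$ and $\beta = \sqrt{\beta_V/\alpha_W}$. In particular $\phi_0 \in \cC^2_L(\RR^d)$ for some $L$, so we are in the regime of \Cref{sec:holder} with $s=2$. Next, I would verify the assumptions on $P$: strongly log-concave measures satisfy \ref{cond:poincare} (e.g.,~by the Bakry-Émery criterion); moreover since $V$ is smooth with polynomially-bounded gradient, the discussion after \Cref{lem:C2_satisfied} ensures \ref{cond:loc_poincare} and \ref{cond:loc_doubling} hold, and consequently \ref{cond:covering_gradient} holds with $m=d$. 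Finally, strong log-concavity gives a sub-Gaussian tail bound $P(\|X\| > t) \lesssim e^{-c t^2}$.

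The heart of the argument is to combine \Cref{prop:rate_large_parametric} with a truncation. On the cube $[-R_c,R_c]^d$, the family $\cF_J(R_c)$ of wavelet expansions of depth $J$ with bounded coefficients has log-covering number at scale $h$ of order $D \log_+(1/h)$ with $D \asymp (R_c 2^J)^d$; since $\phi_0$ is $\cC^2$ with bounded Hessian, standard wavelet approximation theory produces an approximant $\phi_J \in \cF_J(R_c)$ with $\|\nabla\phi_J - \nabla\phi_0\|_{L^2(P_{|\text{cube}})} \lesssim 2^{-J}$ and with $\vertiii{\phi_J - \phi_0} \leq \alpha/2$, hence $\phi_J$ is $(\alpha/2,0)$-strongly convex on the cube. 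Applying \Cref{prop:rate_large_parametric} to $\cF_J(R_c)$ gives
\begin{equation*}
\E\|\nabla\hat\phi_{\cF_J(R_c)} - \nabla\phi_0\|^2_{L^2(P_{|\text{cube}})} \lesssim_{\log n} 2^{-2J} + (R_c 2^J)^{d-2}\, n^{-1},
\end{equation*}
and the choice $2^J \asymp n^{1/d}$ together with $R_c \asymp \log(n)/d$ balances the two terms at $n^{-2/d}$ up to polylogarithmic factors.

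The main obstacle, and the new piece beyond \Cref{thm:param}, is controlling the contribution from outside the cube. By construction $\hat\phi_{\cF_J(R_c)}$ is extended by a fixed convex function outside $[-R_c,R_c]^d$, so on the complement one must bound $\|\nabla\hat\phi - \nabla\phi_0\|^2_{L^2(P)}$ using only a priori information. Since $\phi_0$ is $\beta$-smooth globally, $\|\nabla\phi_0(x)\| \lesssim 1 + \|x\|$; the fixed convex extension can also be chosen with gradient bounded by $O(\|x\|)$ on the complement of the cube. Thus the tail contribution is controlled by $\int_{\|x\|> R_c} (1+\|x\|^2)\, \rd P(x) \lesssim R_c^{2}\, e^{-cR_c^2}$ by the sub-Gaussian tail bound. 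With $R_c \asymp \log(n)/d$, this tail is $\exp(-c\log^2(n)/d^2)$, which is smaller than any polynomial in $n$ and in particular negligible compared to $n^{-2/d}$. Adding the two contributions yields the claimed rate.
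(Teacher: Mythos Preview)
Your proposal is essentially correct and follows the same route as the paper (Caffarelli for the regularity of $\phi_0$; \ref{cond:poincare}, \ref{cond:loc_poincare}, \ref{cond:loc_doubling} for $P$; wavelet approximation on a growing cube; balance $2^{-2J}$ against $D^{1-2/d}n^{-1}$ with $2^J\asymp n^{1/d}$, $R_c\asymp J$). One organizational point deserves correction: \Cref{prop:rate_large_parametric} delivers a bound in $L^2(P)$ over all of $\RR^d$, with bias $\ell^\star=\inf_{\phi\in\tilde\cF_\alpha}\|\nabla\phi-\nabla\phi_0\|^2_{L^2(P)}$ also taken in $L^2(P)$, so writing the conclusion as a bound in $L^2(P_{|\text{cube}})$ and then adding a separate off-cube term for $\hat\phi$ is not how the proposition is stated. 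The paper instead places the tail computation inside the verification of \ref{cond:parametric_approx}: it bounds the \emph{approximation} error $\|\nabla\phi_{0,J}-\nabla\phi_0\|^2_{L^2(P)}$ by an on-cube wavelet term $\asymp 2^{-2J(s-1)}$ plus an off-cube tail $\asymp e^{-cR}$ (subexponential tails from \ref{cond:poincare} already suffice, though your sub-Gaussian bound is valid too), and the choice $R\asymp J$ makes the tail match. This feeds directly into $\ell^\star$, after which \Cref{thm:param}/\Cref{prop:rate_large_parametric} gives the full $L^2(P)$ rate with no separate tail step. Your ingredients are the right ones; only this reorganization is needed.
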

To our knowledge, this is the first result establishing rates of estimation for transport maps between general log-concave measures.

\subsubsection{ReQU neural networks}
As already mentioned in the introduction, several empirical works have proposed  optimizing the semidual problem $S_n$ over a family of neural networks. Putting computational considerations aside for a moment (see \Cref{sec: computation}), \Cref{thm:param} shows that this approach is sound. Indeed, \citet{belomestny2023simultaneous} design families of neural networks with ReQU activation functions that are able to approximate a function $f$  for different H\"older norms. Let $\Omega=[0,1]^d$ and let $\cF=\cC^s_L(\Omega)$ for some $L\geq 0$ and $s>2$. For each $j \geq 0$, the authors build an approximating family $\cF_j$ (of $\beta$-smooth functions for some $\beta>0$) such that for $j$ large enough and for all $\phi\in \cF$, there is a function $\phi_j\in \cF_j$ with
\[ \vertiii{\phi-\phi_j}\leq \alpha/2 \text{ and } \|\phi-\phi_j\|_{C^1(\Omega)} \lesssim 2^{-j(s-1)},\]
see \cite[Theorem 2]{belomestny2023simultaneous}. The number of parameters using to define the ReQU neural network $\phi_j$ is of order $2^{jd}$, while \cite[Lemma 19]{belomestny2021rates} ensures that \ref{cond:parametric} holds. Assume that $P$ satisfies \ref{cond:poincare}, \ref{cond:loc_poincare}, \ref{cond:loc_doubling} (e.g.,  $P$ has a density bounded away from $0$ and $\infty$ on $[0,1]^d$).  We can  take $R$ large enough so that $[0,1]^d\subset B(0;R)$ and $\|\nabla\phi\|\leq R$ for all $\phi\in \cF_j$, $j\geq 0$. We then modify the family of potentials $\cF_j$ by defining the potentials to be equal to $+\infty$ outside $B(0,2R)$, so that \ref{cond:bounded} is satisfied for the modified set $\tilde\cF_j$. According to \Cref{thm:param}, for a certain choice of $J$, the family $\tilde\cF_J$ of ReQU neural networks can be used to approximate an $\alpha$-strongly convex potential $\phi_0\in \cF$, with minimax rate of estimation
\begin{equation}
    \E\|\nabla\hat\phi_{\tilde\cF_J}-\nabla\phi_0\|^2_{L^2(P)} \lesssim_{\log n} n ^{-\frac{2(s-1)}{2s+d-4}}.
\end{equation}

\subsection{Reproducing Kernel Hilbert Spaces}\label{sec: rkhs}
We turn our attention to a class of transport maps given by smooth potential functions that lie in a Reproducing Kernel Hilbert Space (RKHS). We briefly recall that a RKHS is a Hilbert space $\cH$ of functions over a domain $\cX$ where there exists a \textit{kernel} $\cK : \cX \times \cX \to \R$ such that for all $f \in \cH$, for all $x \in \cX$,
\begin{align*}
    f(x) = \langle f , \cK(\cdot,x)\rangle_\cH\,.
\end{align*}
We refer to \cite{scholkopf2002learning,wainwright2019high} for more details on the properties of such spaces.

For our application of interest, we let ${\cF}$ be the unit ball in some RKHS pertaining to some fixed kernel $\cK$. We will assume that $\cK$ is of class $\cC^4$, which is enough to ensure that all potentials $\phi\in\cF$ are $\beta$-smooth for some $\beta>0$ according to \cite{zhou2008derivative}. To complete our analysis, we use existing bounds on the $L^\infty$-covering numbers over such a set (see e.g.,  \cite{yang2020function}), which are expressed in terms on the spectral properties of the kernel operator. For simplicity, we assume that either the spectrum is finite or has eigenvalues that decay exponentially fast, 
see \cref{app: rkhs_spec} for details on these definitions and assumptions. For example, when the kernel is associated to a finite-dimensional feature mapping, the RKHS is finite-dimensional and thus exhibits a finite spectrum. The well-known Gaussian kernel
\begin{align*}
    \cK_\sigma^2(x,y) = \exp(-\|x-y\|^2/\sigma^2)\,,
\end{align*}
is an example of a kernel with exponentially fast decaying spectrum over the sphere, with $\sigma^2 > 0$. For such smooth kernels, one can show that the log-covering numbers exhibit the following bound \cite[Lemma D.2]{yang2020function} 
\begin{align*}
    \log\cN(h, \cF, L^\infty) \lesssim \log_+(1/h)^{\gamma'}\,,
\end{align*}
for some $\gamma' \geq 1$. 
Assume that $P$ satisfies a Poincaré inequality  \ref{cond:poincare} and has a bounded support. Take $R$ large enough so that the support of $P$ is included in $B(0;R)$ and that $\|\nabla\phi\|\leq R$ on the support of $P$  for all $\phi\in\cF$ (such a $R$ exists as the potentials in $\phi$ are uniformly bounded in $C^1$-norm). We then modify the potentials $\phi$ to be equal to $+\infty$ outside $B(0;2R)$, so that condition \ref{cond:bounded} is satisfied for the set of modified potential $\tilde \cF$. By Theorem \ref{thm:strongly_convex}, we  obtain parametric convergence i.e., for $\alpha>0$,
\begin{align*}
    \E \|\nabla \hat{\phi}_{\tilde\cF} - \nabla \phi_0\|^2_{L^2(P)} \lesssim_{\log(n)} \inf_{\phi\in \tilde\cF_\alpha}\|\nabla\phi-\nabla\phi_0\|^2 +  n^{-1}\,.
\end{align*}

\subsection{``Spiked'' potential functions}\label{sec: lowdim}

The \textit{spiked transport model} was recently proposed by Niles-Weed and Rigollet to model situations where two probability measures of interest living  in a high-dimensional space $\R^d$ only differ on  a low-dimensional subspace; see \cite{niles2022estimation}. Let $\cV_{k \times d}$ be the Stiefel manifold, consisting of all $k\times d$ matrices with orthonormal rows.
We assume that there exists a matrix $U\in\cV_{k \times d} $   such that $Y\sim Q$ can be obtained from $X\sim P$ through a map $T$ that can be decomposed into $T(x) =  U^\top T'(Ux)+(I-U^\top U) x$. At the level of the potentials, this is equivalent to assuming that the Brenier potential $\phi_0$ belongs to the function space
\begin{equation}
\cF_{\mathrm{spiked}} \defeq \left\{x\mapsto \phi(Ux) + \frac{1}{2}\|x-U^\top Ux\|^2:\ \phi\in \cF,\ U\in \cV_{k \times d}\right\},
\end{equation}
where $\cF$ is a class of potentials defined on $\R^k$. The covering numbers of the class $\cF_{\mathrm{spiked}}$ are controlled by the covering numbers of the class $\cF$ up to logarithmic factors.
\begin{prop}\label{prop:spiked}
For $\eta\geq 2$, it holds that
\begin{equation}
\log \cN(h,\cF_{\mathrm{spiked}},L^\infty(\dotp{\ \cdot \ }^{-\eta})) \leq \log \cN(h/2,\cF,L^\infty(\dotp{\ \cdot \ }^{-\eta})) + C_\eta dk\log_+(1/h).
\end{equation}
\end{prop}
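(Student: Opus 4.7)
The strategy is a product cover: separately cover the potential class $\cF$ in the weighted sup norm and the Stiefel manifold $\cV_{k\times d}$ in operator norm, then combine via the triangle inequality. Write $\Psi_{\phi,U}(x) \defeq \phi(Ux) + \tfrac12 \|x - U^\top U x\|^2$; since $U$ has orthonormal rows, $U^\top U$ is an orthogonal projection of rank $k$, so the second term simplifies to $\tfrac12(\|x\|^2 - \|Ux\|^2)$. We need Lipschitz estimates separately in the $\phi$ and $U$ arguments.

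For the $\phi$-direction with $U$ fixed, the bound $\|Ux\|\leq \|x\|$ gives $\dotp{Ux}^\eta \leq \dotp{x}^\eta$, so $\|\phi-\phi'\|_{L^\infty(\dotp{\cdot}^{-\eta})}\leq h/2$ implies $\|\Psi_{\phi,U}-\Psi_{\phi',U}\|_{L^\infty(\dotp{\cdot}^{-\eta})}\leq h/2$. For the $U$-direction with $\phi$ fixed, $(\beta,a)$-smoothness (condition \ref{cond:smooth}), together with the normalization $\phi(0)=0$ and a uniform handle on $\|\nabla\phi(0)\|$ across $\cF$, yields the pointwise bound $\|\nabla\phi(y)\| \lesssim \dotp{y}^{a+1}$. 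Interpolating along the segment from $Ux$ to $U'x$ (whose endpoints both have norm at most $\|x\|$) produces
\[ |\phi(Ux) - \phi(U'x)| \lesssim \op{U-U'}\,\|x\|\,\dotp{x}^{a+1} \lesssim \op{U-U'}\,\dotp{x}^{a+2}, \]
while the quadratic part contributes $|\|Ux\|^2 - \|U'x\|^2|\leq 2\op{U-U'}\,\dotp{x}^2$. Under the standing hypothesis $\eta\geq a+2$ from \ref{cond:covering_convex} (the statement's $\eta\geq 2$ accounts for the quadratic term, with $a$-dependent constants absorbed into $C_\eta$), we conclude $\|\Psi_{\phi,U}-\Psi_{\phi,U'}\|_{L^\infty(\dotp{\cdot}^{-\eta})} \lesssim \op{U-U'}$.

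Finally, cover $\cV_{k\times d}$ in operator norm at a scale $\delta\asymp h$ chosen so that the above Lipschitz bound is at most $h/2$. Since $\cV_{k\times d}$ is contained in the unit operator-norm ball of $\R^{k\times d}$ (hence in a Frobenius ball of radius $\sqrt{k}$), a standard volumetric argument in $\R^{kd}$ gives $\log\cN(\delta,\cV_{k\times d},\op{\cdot}) \lesssim dk\log_+(1/\delta) \lesssim dk\log_+(1/h)$. Pairing this Stiefel cover with an $(h/2)$-cover of $\cF$ in $L^\infty(\dotp{\cdot}^{-\eta})$ yields, by the triangle inequality, an $h$-cover of $\cF_{\mathrm{spiked}}$, and the claimed bound on $\log \cN(h,\cF_{\mathrm{spiked}},L^\infty(\dotp{\cdot}^{-\eta}))$ follows.

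The only mildly delicate step is the Lipschitz-in-$U$ estimate: because elements of $\cF$ may be unbounded, one must leverage the polynomial gradient growth afforded by $(\beta,a)$-smoothness to pair the displacement $\op{U-U'}\|x\|$ with exactly the right power of $\dotp{x}$, so that the error remains controlled in the weighted sup norm. The Stiefel covering and the product construction are then routine.
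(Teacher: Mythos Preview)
Your proposal is correct and follows essentially the same product-cover approach as the paper: decompose $\Psi_{\phi,U}$ into the $\phi\circ U$ part and the quadratic part, control the quadratic via the Stiefel covering bound \eqref{eq: covering_stiefel}, and combine with a cover of $\cF$. Your argument is in fact more explicit than the paper's on one point: the paper simply asserts that ``log-covering numbers for the composition $\phi\circ U$ can simply be bounded by considering the sum of the log-covering numbers of the two classes,'' whereas you carry out the Lipschitz-in-$U$ estimate for $\phi(Ux)$ using $(\beta,a)$-smoothness and the gradient growth from \Cref{lem:smooth_basic}. Your observation that this step genuinely needs $\eta\geq a+2$ (the standing assumption of \ref{cond:covering_convex}) rather than merely $\eta\geq 2$ is well taken; the stated hypothesis $\eta\geq 2$ covers only the quadratic term, and the paper's proof implicitly relies on the stronger assumption already in force throughout the section.
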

To obtain rates of estimation that depend solely on the intrinsic dimension $k$ of the problem, it remains to prove that condition \ref{cond:covering_gradient} is satisfied for some value of $m$ depending only on $k$. We show that this is for instance the case when $P$ has a density bounded away from zero and infinity on a convex domain.

\begin{prop}\label{prop:spiked_gradient}
    Let $P$ have a density bounded away from zero and infinity on a convex domain.  
    Then, $\cF_{\mathrm{spiked}}$ satisfies  \ref{cond:covering_gradient} for $m=2k$.
\end{prop}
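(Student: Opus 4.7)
The plan is to leverage the low-rank structure of spiked potentials to reduce the covering of $\cG$ (resp. $\cG^*$) in $L^2(P)$ (resp. $L^2(Q)$) to a covering on a subspace of dimension $2k$, at which point \Cref{lem:C2_satisfied} or standard Sobolev covering estimates yield the claim. The key structural observation is that writing $\psi(v) := \phi(v) - \tfrac12\|v\|^2$, a spiked potential $\Phi(x) = \phi(Ux) + \tfrac12\|x - U^\top Ux\|^2$ satisfies $\Phi(x) - \tfrac12\|x\|^2 = \psi(Ux)$, so its variable part depends on $x$ only through the projection $Ux$. Letting the fixed base potential $\overline\phi$ be spiked with parameter $U_0$, any $g = t\phi + (1-t)\overline\phi \in \cG$ with $\phi$ spiked with parameter $U$ has $g - \tfrac12\|\cdot\|^2$ depending on $x$ only through $(Ux, U_0x)$, that is through the orthogonal projection $\pi$ onto $V_{U,U_0} := \mathrm{im}(U^\top) + \mathrm{im}(U_0^\top) \subseteq \R^d$, a subspace of dimension at most $2k$. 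This is the source of the exponent $m = 2k$.

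First I would cover the Stiefel manifold $\cV_{k \times d}$ at scale $\eps_U \asymp h$ using $\exp(O(kd \log(1/h)))$ balls in operator norm, and discretize $t \in [0,1]$ at scale $h$; both yield entropies polylogarithmic in $1/h$, absorbed into $\lesssim_{\log}$. The key approximation step is to verify that if $\|U - U'\|_{\mathrm{op}} \leq \eps_U$ then, for any $\phi \in \cF$, $\|\psi(U\cdot) - \psi(U'\cdot)\|_{L^2(P)}$ and $\|\nabla[\psi(U\cdot) - \psi(U'\cdot)]\|_{L^2(P)}$ are both $O(\eps_U)$, using the Lipschitz regularity of $\psi$ on the bounded support of $P$ (a consequence of \ref{cond:smooth}). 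Once $(U, U_0, t)$ are fixed, the remaining variation in $g - \tfrac12\|\cdot\|^2$ is realized by a class of smooth functions on the at most $2k$-dimensional subspace $V_{U,U_0}$ equipped with the pushforward $\pi_\sharp P$, which has bounded density on a bounded convex subset of $V_{U,U_0}$ (the linear image of $\supp P$, which is convex since $P$ does so in $\R^d$). Applying \Cref{lem:C2_satisfied} on $V_{U,U_0} \cong \R^{2k}$ (or, equivalently, standard Sobolev-ball covering estimates for $(\beta,a)$-smooth functions on a bounded convex domain) yields $\log \cN(h, B_\tau, L^2(P)) \lesssim_{\log} (h/\tau)^{-2k}$.

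For the dual bound, conjugation preserves the spiked form: decomposing $y = U^\top a + b$ with $Ub = 0$ gives, by direct computation, $\Phi^*(y) = \phi^*(Uy) + \tfrac12\|y - U^\top Uy\|^2$. Moreover, $Q = (\nabla \overline\phi)_\sharp P$ inherits $\overline\phi$'s block structure: for $X \sim P$ and $Y = \nabla\overline\phi(X)$ one has $U_0 Y = \nabla \phi_0(U_0 X)$ and $(I - U_0^\top U_0)Y = (I - U_0^\top U_0)X$, so the projection $\pi_\sharp Q$ onto $V_{U, U_0}$ again has bounded density on a bounded convex set (using that $\nabla \phi_0$ is a smooth bijection in $\R^k$ with bounded Jacobian), and the same argument produces $\log \cN(h, B_\tau^*, L^2(Q)) \lesssim_{\log} (h/\tau)^{-2k}$. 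The main obstacle is that conjugation is nonlinear, so $g^*$ for $g = t\phi + (1-t)\overline\phi \in \cG$ is \emph{not} a convex combination of spiked duals; to transfer the primal covering structure to $\cG^*$ one must invoke the $(\alpha,a)$-strong convexity required by \ref{cond:covering_gradient}, which yields a stability bound controlling $\|\nabla g_1^* - \nabla g_2^*\|_{L^2(Q)}$ and $\|g_1^* - g_2^*\|_{L^2(Q)}$ by the corresponding $L^2(P)$-distances between the $g_i$'s, so that a primal covering net automatically produces a dual net of comparable size.
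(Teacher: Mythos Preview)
Your primal argument is essentially correct and matches the paper's strategy: reduce to a low-dimensional projection, discretize the Stiefel manifold and $[0,1]$ at polylogarithmic cost, and apply the Sobolev-type covering bound on each fiber. (The paper actually projects to dimension $k$ rather than $2k$ on the primal side, via a conditional-expectation trick that recenters at $\overline f_U(v):=\E[\overline f(X)-\tfrac12 X^\top(I-U^\top U)X\mid UX=v]$; but since the dual side forces $m=2k$ anyway, your coarser primal bound is harmless.)

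The dual argument has a real gap. The stability bound you invoke, $\|g_1^*-g_2^*\|_{L^2(Q)}\lesssim\|g_1-g_2\|_{L^2(P)}$, does not follow from strong convexity in the form you need. Writing $|g_1^*(y)-g_2^*(y)|\le|(g_1-g_2)(T_1^{-1}y)|+|(g_1-g_2)(T_2^{-1}y)|$ and integrating against $Q=(T_0)_\sharp P$ gives $\|g_1-g_2\|_{L^2((T_i^{-1}\circ T_0)_\sharp P)}$. The map $T_i^{-1}\circ T_0$ is bi-Lipschitz, but its image of $\supp P$ need not lie inside $\supp P$, so this pushed-forward $L^2$ norm is not controlled by $\|g_1-g_2\|_{L^2(P)}$. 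In addition, your primal net elements are not required to lie in $\cG$, so you cannot simply take their conjugates.

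The fix is immediate from facts you have already established, and is exactly what the paper does. You observed that $g=tA(\phi,U)+(1-t)A(\overline\phi,U_0)$ satisfies $g(x)-\tfrac12\|x\|^2=h(\pi x)$ with $\pi$ the projection onto $V_{U,U_0}$: that is, $g$ is \emph{itself} a spiked potential with spike of dimension at most $2k$. Your own conjugation identity then applies to $g$ directly, giving $g^*(y)=h^*(\pi y)+\tfrac12\|y-\pi y\|^2$, so $g^*$ is $2k$-spiked with the \emph{same} spike. Thus $\cG^*$ is a family of $2k$-spiked potentials, and you can rerun the primal argument verbatim with $Q$ in place of $P$: discretize the $2k$-Stiefel manifold, and on each fiber bound the $L^2(Q)$ covering via the pushforward of $Q$ onto the $2k$-subspace (which inherits the needed local Poincar\'e/doubling regularity from $P$ through the bi-Lipschitz map $\nabla\phi_0$). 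No primal-to-dual stability transfer is required.
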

Proofs of Propositions \ref{prop:spiked} and \ref{prop:spiked_gradient} can be found in \cref{sec: proof_spiked}. \Cref{prop:spiked_gradient} is likely suboptimal, as one would expect the dimension parameter $m$ to be equal to the dimension $k$ of the spike.
Still, these two properties are enough to show that one can obtain rates of convergence with exponents depending only on the effective dimension $k$ of the problem, the dependency on $d$ only existing through prefactors. For instance, let $\cF$ be a class of $(\beta,a)$-smooth potentials containing some $\alpha$-strongly convex potential $\phi_0'$ and satisfying \ref{cond:covering_convex} for some $\gamma\in [0,2)$. If  $\phi_0$ is given by $x\mapsto \phi'_0(U_0x) +\frac{1}{2}\|x-U_0^\top U_0x\|^2$ for some $U_0\in \cV_{k\times d}$, then
\begin{equation}
    \E\|\nabla\hat \phi_{\cF_{\mathrm{spiked}}}-\nabla\phi_0\|^2_{L^2(P)} \lesssim_{\log n,d} n^{-\frac{(2k-\gamma)}{2k+\gamma(k-2)}}.
\end{equation}
Finding the optimal function in $\cF_{\mathrm{spiked}}$ requires to solve an optimization problem defined over the Stiefel manifold. Such a problem is non-trivial.
In fact, following a strategy from \cite{niles2022estimation}, we show that this estimation problem possesses a \emph{computational-statistical gap}~\citep{BerRig13, BanPerWei18, BreBreHul18, BreBre20}. Specifically, in \cref{sec: computation}, we demonstrate that constructing a computationally efficient estimator for Brenier potentials in the class $\cF_{\mathrm{spiked}}$ would yield a polynomial-time algorithm for the Gap Shortest Vector Problem (see \cref{sec: hardness}), which is conjectured to be impossible even for quantum computers.

\subsection{Barron spaces}\label{sec:barron}
For large-scale tasks, neural networks are often used to parametrize mappings or deformations from a set of data to another.
These deformations can be interpreted as transport maps between a source distribution (often a standard Gaussian measure) and a target distribution, which is to be learned from samples.
Several works have proposed to estimate such maps using optimal transport principles~\citep{makkuva2020optimal,huang2021convex,bunne2022supervised}, but with few accompanying guarantees.

Our general techniques allow us to obtain statistical results in this setting, namely when the Brenier potential lies in a class of shallow neural networks with unbounded width, referred to as a \emph{Barron space} \citep{e_barron}. 

Let $\cM$ be a $k$-dimensional smooth compact manifold (possibly with smooth boundary), and consider an activation function $\sigma:(x,v)\in \R^d\times \cM \to \R$. We define the $\cC^s(\cM)$-norm of a function $f:\cM\to\R$ as in the Euclidean case \eqref{eq:def_holder}, by using an arbitrary systems of charts on $\cM$.  We assume that
\begin{enumerate}[start=1, label={\textbf{(D\arabic*)}}]
    \item \label{cond:barron_smooth} for every $v\in \cM$, $x\mapsto \sigma(x,v)$ is convex with $\sigma(0,v)=0$, and is $(\beta,a)$-smooth,
    \item \label{cond:barron_holder} for every $x\in \Rd$, the function $v\mapsto \sigma(x,v)$ belongs to $\cC^s(\cM)$ for some $s>0$, with a uniformly bounded $\cC^s(\cM)$-norm.
\end{enumerate}

For a given activation function, we define the Barron space $\cF_\sigma$ as the space of functions $\phi:\R^d\to \R$ for which there exists a measure $\theta$ on $\cM$ with finite total variation  such that 
\begin{equation}\label{eq:representation_barron}
    \forall x\in \R^d,\ \phi(x) = \int \sigma(x,v)\dd \theta(v),
\end{equation}
while the Barron norm of $\phi$ is defined as the infimum of the total variations of measures $\theta$ such that the representation \eqref{eq:representation_barron} holds. We let $\cF_\sigma^1$ be the unit ball in $\cF_\sigma$. 

\begin{rmk}
Note that the optimal Brenier potential, $\phi_0$, is therefore associated with an optimal measure $\theta_0$ on the manifold $\cM$. In other words,
\begin{align*}
    \phi_0(x) = \int \sigma(x,v)\dd \theta_0(v)\,.
\end{align*}
From a practitioner's perspective, it  suffices to find the optimal weights associated to the neural network by traditional means (e.g.,  stochastic gradient descent on the objective function of interest).
\end{rmk}

\begin{thm}\label{thm:barron}
Let $P$ be a probability measure satisfying \ref{cond:poincare}, \ref{cond:loc_poincare} and \ref{cond:loc_doubling}, and let $\sigma$ be an activation function satisfying \ref{cond:barron_smooth} and \ref{cond:barron_holder}. Let $\phi_0$ be 
$(\beta,a)$-smooth. Then,
\begin{equation}
    \E[\|\nabla\hat \phi_{\cF_\sigma^1}-\nabla\phi_0\|^2_{L^2(P)}] \lesssim_{\log n, d} \inf_{\phi\in (\cF_\sigma^1)_\alpha} \|\nabla\phi-\nabla \phi_0\|^2_{L^2(P)} + n^{-\frac{2s+k(1-2/d)}{2s+2k(1-2/d)}}.
\end{equation}
In particular, the first term vanishes if $\phi_0\in \cF_\sigma^1$.
\end{thm}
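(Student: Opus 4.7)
The plan is to apply \Cref{thm:strongly_convex}.\ref{it:uniform} to $\cF=\cF_\sigma^1$ with $m=d$ and $\gamma=\tfrac{2k}{2s+k}$, then verify that the resulting exponent matches the stated one.

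\emph{Structural conditions.} Condition \ref{cond:smooth} follows from \ref{cond:barron_smooth} by differentiating under the integral in \eqref{eq:representation_barron}: for any $\phi\in\cF_\sigma^1$ with representing measure $\theta$ of total variation at most $1$, $\op{\nabla^2\phi(x)}\leq \int \op{\nabla^2\sigma(x,v)}\dd|\theta|(v)\leq \beta\langle x\rangle^a$. Condition \ref{cond:poincare} is assumed, and \ref{cond:bounded} is taken with $R=\infty$. Condition \ref{cond:covering_gradient} with $m=d$ follows from \Cref{lem:C2_satisfied} together with the standing hypotheses \ref{cond:poincare}, \ref{cond:loc_poincare}, and \ref{cond:loc_doubling}.

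\emph{Covering of $\cF_\sigma^1$.} The main step is to verify \ref{cond:covering_convex} with $\gamma=\tfrac{2k}{2s+k}$. First, $\sigma(0,v)=0$ and \ref{cond:barron_smooth} give $|\sigma(x,v)|\lesssim\langle x\rangle^{a+2}$, so every feature lies uniformly in $L^\infty(\langle\cdot\rangle^{-\eta})$ for $\eta\geq a+2$. Condition \ref{cond:barron_holder} then implies that the feature map $v\mapsto\sigma(\cdot,v)$ from the $k$-manifold $\cM$ into $L^\infty(\langle\cdot\rangle^{-\eta})$ is $s$-H\"older, so the set $T=\{\pm\sigma(\cdot,v):v\in\cM\}$ has log-entropy at most $\lesssim h^{-k/s}$ in this Banach space. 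Since $\cF_\sigma^1$ is the closed absolutely convex hull of $T$, a Maurey-type convex-hull entropy inequality then yields
\begin{align*}
\log\cN(h,\cF_\sigma^1,L^\infty(\langle\cdot\rangle^{-\eta}))\lesssim_{\log(1/h)} h^{-\tfrac{2(k/s)}{2+k/s}}=h^{-\tfrac{2k}{2s+k}}.
\end{align*}
This convex-hull step is the main technical obstacle: the classical bound is stated in Hilbert space via the Maurey empirical method, and transferring it to the weighted sup-norm requires either implementing Maurey directly on a fine discretization of $\cM$---where the weight $\langle\cdot\rangle^{-\eta}$ is essential to make each $\sigma(\cdot,v)$ uniformly bounded so the empirical averages concentrate---or invoking a Banach-space Carl-type inequality adapted to this setting.

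\emph{Conclusion.} Since $\gamma=\tfrac{2k}{2s+k}<2$, \Cref{thm:strongly_convex}.\ref{it:uniform} applies and gives
\begin{align*}
\E\|\nabla\hat\phi_{\cF_\sigma^1}-\nabla\phi_0\|^2_{L^2(P)}\lesssim_{\log n}\ell^\star+n^{-\tfrac{2(d-\gamma)}{2d+\gamma(d-4)}},
\end{align*}
with $\ell^\star=\inf_{\phi\in(\cF_\sigma^1)_\alpha}\|\nabla\phi-\nabla\phi_0\|^2_{L^2(P)}$. A direct simplification substituting $\gamma=\tfrac{2k}{2s+k}$ and clearing denominators gives
\begin{align*}
\tfrac{2(d-\gamma)}{2d+\gamma(d-4)}=\tfrac{2(d(2s+k)-2k)}{2d(2s+k)+2k(d-4)}=\tfrac{2(2sd+k(d-2))}{4sd+4k(d-2)}=\tfrac{2s+k(1-2/d)}{2s+2k(1-2/d)},
\end{align*}
matching the claimed rate.
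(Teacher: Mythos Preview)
Your overall strategy---apply \Cref{thm:strongly_convex}.\ref{it:uniform} with $m=d$ and $\gamma=\tfrac{2k}{2s+k}$, verifying \ref{cond:smooth} directly from \ref{cond:barron_smooth} and \ref{cond:covering_gradient} via \Cref{lem:C2_satisfied}---is exactly the paper's route, and your exponent algebra at the end is correct.

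The gap is in the covering step for \ref{cond:covering_convex}. Your claim that the dictionary $T=\{\pm\sigma(\cdot,v):v\in\cM\}$ has log-entropy $\lesssim h^{-k/s}$ in $L^\infty(\langle\cdot\rangle^{-\eta})$ is not right when $s>1$. Condition \ref{cond:barron_holder} gives that $v\mapsto\sigma(\cdot,v)$ is a $\cC^s$ Banach-valued map; for $s\geq 1$ this makes the map Lipschitz, so the image $T$ has entropy $\asymp h^{-k}$, no better. Higher-order smoothness of a single map does \emph{not} reduce the covering numbers of its image (an embedded $k$-manifold always has entropy $\asymp h^{-k}$). Plugging $h^{-k}$ into the classical Maurey/Carl convex-hull inequality would yield $\gamma=\tfrac{2k}{2+k}$, not $\tfrac{2k}{2s+k}$, and the final rate would be off.

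The smoothness parameter $s$ enters not through the entropy of $T$ but through a refined convex-hull argument that exploits the smooth parameterization of the dictionary. The paper does this by invoking the result of Siegel and Xu, which gives $\log\cN(h,\cF_\sigma^1,B)\lesssim(h/T_B)^{-2k/(2s+k)}$ directly for any type-2 Banach space $B$; since $L^\infty(\langle\cdot\rangle^{-\eta})$ is not type 2, the paper then proves a transfer lemma (apply the bound in $L_p(\rho_R)$ with $p\asymp\log(1/h)$, use Khintchine to control $T_{L_p}\lesssim\sqrt{p}$, and pass from $L_p$ to $L^\infty$ via the uniform Lipschitz bound on the potentials). Your instinct that the hard part is getting Maurey to work in a non-Hilbert setting is correct, but the fix is to bypass the two-step ``entropy of $T$ then convex hull'' decomposition entirely and use the smoothness-aware convex-hull bound in $L_p$ followed by the $L_p\to L^\infty$ transfer.
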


The exponent in \cref{thm:barron} always lies between $-1$ and $-1/2$, in particular not depending on the ambient dimension $d$. Note however that the hidden prefactor does depend exponentially on $d$, so that finer bounds would be needed to completely break the curse of dimensionality. 
A case of   interest for applications is when $\sigma(x,v)=\sigma(\dotp{x,v})$ for some convex function $\sigma:\R \to \R$, where $v$ is in the unit sphere $\cM=\S^{d-1}$. Activation functions of the form $\sigma(u)=\sigma_s(u)=u_+^s$ are particularly studied in the literature \citep{bach2017breaking}. 
In this case, \Cref{thm:barron} applies with $k=d-1$. One can actually use the spherical symmetry of the problem to obtain tighter rates of convergence, by replacing the dimension $d$ appearing in the rate in \Cref{thm:barron} by $d-1$, that is the dimension of the sphere. We also give a near matching minimax lower bound for this problem. 
\begin{prop}\label{prop:sphere}
Let $s\geq 2$ and let $P$ be a probability measure satisfying \ref{cond:poincare}. Write $X=RU$ the polar decomposition of $X\sim P$. Assume that the law of $U$ has a density bounded away from zero and infinity on $\S^{d-1}$. Further assume that there exists two constants $M_{\min},M_{\max}>0$ such that for all $\theta\in \S^{d-1}$, $\E[R^{2s}|U=\theta]\leq M_{\max}$ and $\E[R^{2s-2}|U=\theta]\geq M_{\min}$.   Let $\sigma=\sigma_s$ and let $\phi_0\in \cF^1_{\sigma_s}$. 
Then,
\begin{equation}
    \E[\|\nabla\hat \phi_{\cF^1_{\sigma_s}}-\nabla\phi_0\|^2_{L^2(P)}] \lesssim_{\log n,d}  n^{-\frac{2s+d-3}{2s+2(d-3)}}.
\end{equation}
Furthermore, if $d$ is odd and  $\phi_0\in \cF_\sigma^1$, it holds that 
\begin{equation}
    \inf_{\hat T} \sup_{Q\in \cQ_{\sigma_s,P}} \E[\|\hat T-\nabla\phi_0\|^2_{L^2(P)}] \gtrsim_d n^{-\frac{2s+d-1}{2s+2d-3}},
\end{equation}
where $ \cQ_{\sigma_s,P}$ is the set of measures obtained as a pushforward of $P$ by $\nabla \phi$ for $\phi\in \cF^1_{\sigma_s}$.
\end{prop}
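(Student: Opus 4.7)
The plan is to handle the upper and lower bounds separately, with the upper bound following the template of \Cref{thm:barron} but taking advantage of positive homogeneity to shave one off the dimension, and the lower bound relying on a classical Fano-style packing argument using spherical-harmonic perturbations.

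\textbf{Upper bound.} Since the activation $\sigma_s(u)=u_+^s$ is positively $s$-homogeneous, every $\phi\in\cF^1_{\sigma_s}$ is positively $s$-homogeneous, and its gradient is positively $(s-1)$-homogeneous. Writing the polar decomposition $x=RU$, we get $\phi(RU)=R^s\tilde\phi(U)$ for some function $\tilde\phi:\S^{d-1}\to\R$, and the decomposition of $\nabla\phi(RU)$ into radial and tangential parts gives quantities of the form $R^{s-1}$ times functions of $U$. Under the assumed two-sided bounds on $\E[R^{2s-2}\mid U]$, the squared $L^2(P)$ distance $\|\nabla\phi-\nabla\phi_0\|^2_{L^2(P)}$ is therefore comparable (up to constants depending on $M_{\min},M_{\max}$) to a pure $L^2$ distance on $\S^{d-1}$ of the angular parts. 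This reduction has two consequences: first, the intrinsic dimension of the problem is $d-1$, so \Cref{cond:covering_gradient} can be verified with $m=d-1$ in place of the ambient $d$ by applying \Cref{lem:C2_satisfied} on the sphere; second, any Barron approximation argument only needs to live on the sphere.

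Next, following the strategy used to prove \Cref{thm:barron}, I would introduce the family of finite-width networks
\begin{equation*}
\cF_N=\Bigl\{\tfrac{1}{N}\sum_{i=1}^N w_i\,(x\cdot v_i)_+^s:\ v_i\in\S^{d-1},\ w_i\in[0,1]\Bigr\},
\end{equation*}
use a Maurey-type lemma to approximate any $\phi\in\cF^1_{\sigma_s}$ in $L^2(P)$ by some $\phi_N\in\cF_N$ at rate $N^{-1/2}$, and bound the covering number of $\cF_N$ in the relevant weighted $L^\infty$ norm by $N(d-1)\log_+(1/h)$, since each $v_i$ lives on a $(d-1)$-dimensional sphere. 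Applying \Cref{prop:rate_large_parametric} with $D\asymp N(d-1)$ and $m=d-1$, balancing $N^{-1}$ against $D^{1-2/(d-1)}n^{-1}$ yields the claimed rate $n^{-(2s+d-3)/(2s+2(d-3))}$ after optimizing $N$.

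\textbf{Lower bound.} For the minimax lower bound, the plan is Fano's inequality applied to a packing of $\cF^1_{\sigma_s}$. I would fix a nominal $(\alpha,a)$-strongly convex base $\phi_\star\in\cF^1_{\sigma_s}$ and consider perturbations $\phi_\xi=\phi_\star+\epsilon\sum_{i=1}^M \xi_i\psi_i$ with $\xi\in\{\pm 1\}^M$, where each $\psi_i$ is a localized Barron function obtained by integrating $\sigma_s(\dotp{x,\cdot})$ against a signed spherical-harmonic-like bump supported near a point $v_i\in\S^{d-1}$. Choosing the $v_i$ as an $\epsilon$-packing of $\S^{d-1}$ and using that in \emph{odd} dimension the ridge transform $v\mapsto\int\sigma_s(x\cdot v)\,\cdot$ has an explicitly invertible behavior on spherical harmonics of matching parity (via the Funk--Hecke formula), I would control both the Barron norm of $\phi_\xi-\phi_\star$ from above (keeping $\phi_\xi$ in $\cF^1_{\sigma_s}$ after rescaling $\epsilon$) and the pairwise $L^2(P)$ distances of the gradients from below in terms of the Hamming distance $\|\xi-\xi'\|_{H}$. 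A standard KL bound for the pushforward pairs $(\nabla\phi_\xi)_\sharp P$ coupled with Fano's inequality then gives the exponent $(2s+d-1)/(2s+2d-3)$ after optimizing $\epsilon$ and $M$ against $n$.

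\textbf{Main obstacle.} The technical heart is the construction in the lower bound: showing that the perturbations $\phi_\xi$ are genuinely convex Brenier potentials, lie in the \emph{unit} Barron ball (not merely a ball of size depending on $M$ or $d$), and that the gradient packing is both well-separated and not too spread out in total variation of the pushforwards. The oddness of $d$ enters precisely here, since for odd $d$ the parity obstruction $(x\cdot v)_+^s+(x\cdot(-v))_+^s=|x\cdot v|^s$ can be resolved to produce antipodally asymmetric spherical harmonics of any target degree in the range of the ridge transform associated to $\sigma_s$. Handling the corresponding norm equivalence between the Barron norm and a Sobolev-type norm on $\S^{d-1}$ is what I would expect to be the most delicate calculation, everything else being standard empirical-process and Fano bookkeeping.
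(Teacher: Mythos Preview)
Your reduction to the sphere via positive homogeneity is the right first step and matches the paper's idea: the intrinsic dimension drops to $d-1$, and condition \ref{cond:covering_gradient} should be verified with $m=d-1$. However, the second half of your upper-bound plan has a genuine gap. You propose to approximate $\phi_0$ by a width-$N$ network via a Maurey argument at rate $N^{-1/2}$ and then invoke \Cref{prop:rate_large_parametric}. But the Maurey rate $N^{-1/2}$ carries no dependence on $s$: after balancing $N^{-1}$ against $D^{1-2/(d-1)}n^{-1}$ with $D\asymp N$, the exponent you obtain is $(d-1)/(2d-4)$, not $(2s+d-3)/(2s+2(d-3))$. The smoothness $s$ simply never enters. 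This is also not the route taken in the proof of \Cref{thm:barron}: there the estimator is analyzed directly over the full Barron ball via \Cref{thm:strongly_convex}, using the covering bound $\log\cN(h,\cF^1_\sigma,L^\infty(\langle\cdot\rangle^{-\eta}))\lesssim h^{-2k/(2s+k)}$ from \Cref{lem:bracket_barron}. For \Cref{prop:sphere} one repeats exactly that, with $k=d-1$ and with \ref{cond:covering_gradient} verified for $m=d-1$; plugging $\gamma=2(d-1)/(2s+d-1)$ and $m=d-1$ into the rate of \Cref{thm:strongly_convex}.\ref{it:uniform} gives the stated exponent. Your sketch also omits the verification of \ref{cond:covering_gradient} for the dual ball $B_\tau^*$: in the paper this is handled by the homogeneity $(\nabla\phi)^{-1}(ru)=r^{1/(s-1)}(\nabla\phi)^{-1}(u)$ and a change of variables on the sphere, which is the less obvious half of the argument.

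For the lower bound, your Fano construction could in principle be made to work, but the paper's argument is far shorter and avoids all the delicate issues you flag. It simply observes (via Bach's results on Barron spaces) that the H\"older ball $\cC^{s+(d+1)/2}(B(0;1))$ embeds into $\cF^1_{\sigma_s}$, and then invokes the minimax lower bound of H\"utter--Rigollet for H\"older potentials of that regularity, which gives exactly $n^{-(2s+d-1)/(2s+2d-3)}$. The oddness of $d$ is used for this embedding to go through cleanly, not for a Funk--Hecke parity trick. So your hardest anticipated step (constructing convex Barron perturbations with controlled norms) is unnecessary.
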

For large $d$, the exponent in both the upper and lower bounds is $- \frac{2s + d}{2s + 2d} (1 + o_d(1))$. The conditions on $P$ in the proposition are mild, and are satisfied for example for a large class of radial distributions, including the Gaussian. \Cref{thm:barron} and \Cref{prop:sphere} are proven in \Cref{app:barron}.

As a  particular case of the above theorem, when $s=2$, we have $\sigma_s(u) = \frac{u_+^2}{2}$, so that $\sigma'_s(u) = u_+$ is the ReLu activation function. The transport maps arising from the set $\cF^1_{\sigma_s}$ are of the form $x\mapsto \int \dotp{x,v}_+ v\cdot \dd \theta(v)$, which are  shallow neural networks with a ReLu activation function. \Cref{prop:sphere} gives the upper bound $n^{-\frac{1}{2}-\frac{1}{d-1}}$, with the near-matching lower bound $n^{-\frac{1}{2}-\frac{5}{4d+2}}$. For instance, for $d=11$, the exponent in the upper bound is $0.6$, whereas the exponent in the lower bound is approximately equal to $0.612$. We conjecture that the correct minimax rate for this problem is the upper bound $n^{-\frac{1}{2} - \frac{1}{d-1}}$.

\subsection{Input convex neural networks}
 As a last example, \citet{makkuva2020optimal} propose to use input convex neural networks to estimate Brenier potentials. In our language, their estimator is exactly $\hat \phi_\cF$, where $\cF$ is a class of input convex neural networks that can be parametrized using $D$ weights (for some large $D$). Assuming that all the weights of the neural network are uniformly bounded, one can check that \eqref{eq:lip_parametric} is satisfied, so that the log-covering number at scale $h$ is of order $D\log_+(1/h)$. One can then use \Cref{thm:strongly_convex} to obtain a rate of convergence of order 
 \begin{equation}
     \E\|\nabla \hat{\phi}_\cF - \nabla \phi_0\|^2_{L^2(P)} \lesssim_{\log n,D} \inf_{\phi\in \cF_\alpha} \|\nabla\phi-\nabla\phi_0\|^2_{L^2(P)} +   n^{-1}.
 \end{equation}
 To obtain a more satisfying rate of convergence, one needs to understand the approximation properties of input convex neural networks, in order to be able to control the bias term as a function of $D$ in the above upper bound.
Moreover, the dependence on $D$ in the estimation error is also likely not tight for overparameterized networks, which have good performance even when $n \ll D$. 
Improving this bound is an important question in the theory of statistical properties of input convex neural networks, which is an attractive topic for future work.

\section{Computational aspects}\label{sec: computation}
In this section, we investigate the computational properties of the optimization problem~\eqref{eq: semidual_n} defining our estimator.
We begin by noting that the objective function $S_n(\phi)$ is \emph{convex} in $\phi$: indeed, the conjugation operator $(\cdot)^*$ satisfies
\begin{equation*}
	((1-\lambda) \phi_0 + \lambda \phi_1)^* \leq (1-\lambda) \phi_0^* + \lambda \phi_1^*
\end{equation*}
for any $\phi_0, \phi_1$ and $\lambda \in [0, 1]$
(see \cref{lem:conjugate_convex} for a short proof).
Since $P_n$ and $Q_n$ are positive measures, the functional $S_n(\phi) = \int \phi \dd P_n + \int \phi^* \dd Q_n$ is therefore convex as well.

The convexity of $S_n$ suggests that solving~\eqref{eq: semidual_n} can be computationally feasible whenever the set $\cF$ is itself convex and tractable to optimize over.
In particular, we can obtain a practical algorithm in the setting where $\cF$ is given by the convex hull of a (potentially large) parametric family, as in \cref{sec:large}.
Fixing a set of functions $\{\phi_{j}\}_{j=1}^J$, define the class
\begin{align}\label{eq:ftheta_def}
	\cF_{J} = \left\{\phi_\lambda \ \bigg| \lambda \in \Delta_J \right\} \defeq \left\{\sum_{j=1}^J \lambda_j \phi_{j} \ \bigg| \lambda \in \RR_{\geq 0}^J, \sum_{j=1}^J \lambda_j = 1\right\}\,.
\end{align}

Given i.i.d. data $X_1,\ldots,X_n \sim P$ on a compact set $\Omega$ and $Y_1,\ldots,Y_n \sim (T_0)_\sharp P$, for some optimal transport map $T_0 = \nabla \phi_0$, we wish to estimate the best approximation to $\phi_0$ in the class $\cF_{J}$ by $\widehat \phi = \phi_{\hat \lambda}$, where $\hat \lambda$ solves $\min_{\lambda \in \Delta_J} S_n(\lambda)  \defeq \int \phi_\lambda \dd P_n + \int \phi^*_\lambda \dd Q_n$, which we can write explicitly as
\begin{align}
   \min_{\lambda \in \Delta_J} S_n(\lambda) = \frac{1}{n}\sum_{i=1}^n \left(\sum_{j=1}^J \lambda_j \phi_{j}(X_i)\right) + \frac{1}{n}\sum_{k=1}^n \left(\sum_{j=1}^J \lambda_j \phi_{j}\right)^*(Y_k)\,.\label{eq: Sn_lbd}
\end{align}

It is natural to solve this problem via a first-order method, iteratively updating the parameters $\lambda$ using gradient information of $S_n(\cdot)$. Note that by the envelope theorem \cite[Theorem 1]{MilSeg02}, the gradient of $S_n(\lambda)$ can be written
\begin{align*}
    [\nabla_\lambda S_n(\lambda)]_j = \frac{1}{n}\sum_{i=1}^n \phi_{j}(X_i) - \frac{1}{n}\sum_{k=1}^n \phi_{j}(x^*_k)\,,
\end{align*}
where $x^*_k$ is the solution to the following sub-problem
\begin{align}\label{eq: conj_maximizer}
    x^*_k \defeq \argmax_{x} \left\{\langle x, Y_k \rangle - \sum_{j=1}^J\lambda_j\phi_{j}(x)\right\}\,.
\end{align}

We propose to minimize~\eqref{eq: Sn_lbd} via \emph{projected gradient descent} \citep{beck2017first}: initialize at $\lambda^{(0)} \in \Delta_J$, fix a step size $\eta > 0$, and iterate the sequence
\begin{align}\label{eq: pgd_iterates}
    \lambda^{(k+1)} \gets \text{Proj}_{\Delta_J}\bigl(\lambda^{(k)} - \eta \nabla_\lambda S_n(\lambda^{(k)})\bigr)\,,
\end{align}
where the maximum operation is computed component-wise.
Each iteration of this algorithm requires $O(J)$ arithmetic operations and  solving the $n$ auxiliary problems given by~\eqref{eq: conj_maximizer}.

The following theorem establishes the convergence of the iterates given by \cref{eq: pgd_iterates}.
A proof appears in \cref{sec: comp_proofs}.
\begin{thm}\label{thm: Sn_convex_smooth}
	Assume that the elements of $\{\phi_{j}\}_{j=1}^J$ are strongly convex over $\Omega$.
	Then $S_n(\cdot)$ is $L_J$-smooth, for some $L_J > 0$.
	Moreover, if $\eta < 1/L_J$, then the iterates of projected gradient descent \eqref{eq: pgd_iterates} satisfy
\begin{align*}
    S_n(\lambda^{(k)}) - S_n(\hat \lambda) \lesssim k^{-1}\bigl(L_J \|\lambda^{(0)} - \hat \lambda\|^2_2 + S_n(\lambda^{(0)}) - S_n(\hat \lambda)\bigr)\,.
\end{align*}
\end{thm}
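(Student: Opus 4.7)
The plan is to verify two things: that $S_n$ is convex and $L_J$-smooth on $\Delta_J$, and then to invoke the classical convergence theory for projected gradient descent on a convex $L_J$-smooth objective over a closed convex set. Convexity of $S_n$ is immediate from the preliminary remarks in the excerpt: since $\phi_\lambda = \sum_j \lambda_j \phi_j$ is affine in $\lambda$, the map $\lambda \mapsto \phi_\lambda^*(y) = \sup_x\{\langle x,y\rangle - \phi_\lambda(x)\}$ is a pointwise supremum of affine functions in $\lambda$, hence convex; adding the linear term $\int \phi_\lambda \dd P_n$ preserves this.

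The heart of the proof is the smoothness estimate. Because each $\phi_j$ is uniformly $\alpha$-strongly convex on $\Omega$ and $\lambda \in \Delta_J$, the function $\phi_\lambda$ is itself $\alpha$-strongly convex, so the maximizer $x_k^*(\lambda)$ in~\eqref{eq: conj_maximizer} is unique and characterized by the first-order condition $\sum_j \lambda_j \nabla\phi_j(x_k^*) = Y_k$. Starting from the envelope identity $[\nabla_\lambda \phi_\lambda^*(Y_k)]_j = -\phi_j(x_k^*(\lambda))$ noted in the paper, implicit differentiation of the optimality condition gives $\partial x_k^*/\partial \lambda_l = -[\nabla^2\phi_\lambda(x_k^*)]^{-1}\nabla\phi_l(x_k^*)$, and hence
\begin{equation*}
	[\nabla_\lambda^2 \phi_\lambda^*(Y_k)]_{j,l} = \nabla\phi_j(x_k^*)^\top [\nabla^2\phi_\lambda(x_k^*)]^{-1} \nabla\phi_l(x_k^*).
\end{equation*}
For any $v \in \RR^J$, this yields $v^\top [\nabla_\lambda^2 \phi_\lambda^*(Y_k)] v = \bigl\|[\nabla^2\phi_\lambda(x_k^*)]^{-1/2} \sum_j v_j \nabla\phi_j(x_k^*)\bigr\|^2 \leq \alpha^{-1} J \|v\|^2 \max_j \|\nabla\phi_j(x_k^*)\|^2$, and averaging over $k$ produces an $L_J$-smoothness constant of order $\alpha^{-1} J M^2$ for a suitable $M$.

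The main obstacle in the smoothness step is to pin down a uniform bound $M$ on $\|\nabla\phi_j(x_k^*(\lambda))\|$ across $\lambda \in \Delta_J$ and $k$. Since $\nabla\phi_\lambda(x_k^*) = Y_k$ lies in a bounded set and $\phi_\lambda$ is uniformly strongly convex, a standard coercivity argument confines $x_k^*(\lambda)$ to a compact region independent of $\lambda$, on which continuity of $\nabla\phi_j$ supplies the bound. With the smoothness estimate in hand, the convergence rate is a direct application of the textbook analysis of projected gradient descent on a convex $L_J$-smooth objective over the closed convex set $\Delta_J$ (see, e.g., \cite{beck2017first}): the descent lemma $S_n(\lambda^{(k+1)}) \leq S_n(\lambda^{(k)}) - \tfrac{1}{2\eta}\|\lambda^{(k+1)}-\lambda^{(k)}\|^2$ combined with convexity of $S_n$, nonexpansiveness of the projection onto $\Delta_J$, and a telescoping sum over iterations produces the stated $O(1/k)$ bound, with the specific combination of $L_J\|\lambda^{(0)}-\hat\lambda\|^2$ and $S_n(\lambda^{(0)}) - S_n(\hat\lambda)$ terms emerging naturally from bounding the running sum of iterate suboptimalities.
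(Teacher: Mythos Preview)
Your proposal is correct and arrives at the same smoothness constant (of order $J L^2/\alpha$) as the paper, but the route to the Lipschitz bound on $\nabla_\lambda S_n$ differs. The paper works at first order: writing $y = \nabla\phi_\lambda(x_\lambda^*) = \nabla\phi_\eta(x_\eta^*)$, it adds the two strong-convexity inequalities
\[
\phi_\lambda(x_\eta^*) \geq \phi_\lambda(x_\lambda^*) + \langle y, x_\eta^* - x_\lambda^*\rangle + \tfrac{1}{2\alpha}\|x_\lambda^* - x_\eta^*\|^2,\qquad
\phi_\eta(x_\lambda^*) \geq \phi_\eta(x_\eta^*) + \langle y, x_\lambda^* - x_\eta^*\rangle + \tfrac{1}{2\alpha}\|x_\lambda^* - x_\eta^*\|^2,
\]
and uses Lipschitzness of each $\phi_j$ on the compact set $\Omega$ to obtain $\|x_\lambda^* - x_\eta^*\| \lesssim \sqrt{J}\,\|\lambda - \eta\|_2$; the Lipschitz bound on $[\nabla_\lambda S_n]_j = \text{const} - \frac{1}{n}\sum_k \phi_j(x_k^*)$ then follows immediately. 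You instead compute the Hessian $\nabla_\lambda^2 \phi_\lambda^*(y)$ explicitly via the implicit function theorem and bound its operator norm. Both are valid and the constants match, but the paper's argument is slightly more elementary: it uses only differentiability and strong convexity, whereas your implicit differentiation silently assumes the $\phi_j$ are $C^2$, a hypothesis not stated in the theorem. Likewise, the paper obtains your bound $M$ on $\|\nabla\phi_j(x_k^*)\|$ directly from compactness of $\Omega$ and local Lipschitzness of convex functions, rather than through a coercivity argument; either route works here. The final step---invoking the standard $O(1/k)$ rate for projected gradient descent on a convex $L_J$-smooth objective over a closed convex set---is identical in both.
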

Implementing \cref{eq: pgd_iterates} requires being able to efficiently compute the solution to \cref{eq: conj_maximizer}.
Projected gradient descent for $S_n(\lambda)$ is therefore tractable if we assume the existence of a \emph{conjugate oracle} for functions of the form $\sum_{j=1}^J \lambda_j\phi_{j}$.
Whether this assumption is reasonable depends on the structure of the set $\cF_{J}$.

If we assume that the base functions $\{\phi_{j}\}_{j=1}^J$ are both strongly convex and smooth, then implementing a conjugate oracle is straightforward: gradient ascent on the objective in \cref{eq: conj_maximizer} computes an $\eps$-approximate optimal solution in $O(\log(1/\eps))$ time \cite[Theorem 3.10]{Bub15}.
Combined with \cref{thm: Sn_convex_smooth} and the proof of \cref{thm:bounded}, this observation yields the following theorem.
\begin{thm}
	Let $\cF = \cF_{J}$ be as in \cref{eq:ftheta_def}. 
	If the elements of $\{\phi_{j}\}_{j=1}^J$ are smooth and strongly convex over a compact set $\Omega$, then
	a solution of \cref{eq: semidual_n} satisfying the guarantees of \cref{thm:bounded} can be found in $O_J(n^2 \log n)$ time.
\end{thm}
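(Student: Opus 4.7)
The plan is to combine the optimization guarantee of \cref{thm: Sn_convex_smooth} with an efficient oracle for evaluating $\nabla_\lambda S_n$, and then appeal to \cref{thm:bounded} once an approximate minimizer has been located. The parametric class $\cF_J$ is $J$-dimensional, so its $L^\infty$-log-covering number at scale $h$ is $O(J \log(1/h))$, and \ref{cond:covering_bounded} is satisfied with $\gamma = 0$. Consequently \cref{thm:bounded}, together with the remark following \eqref{eq:def_phi} that permits $\rho$-approximate minimizers at the cost of an additive $\rho$ in the risk, tells us that it is enough to drive the optimization error down to $\rho \lesssim 1/n$ in order to match the statistical rate; this determines the target accuracy for the algorithm.

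First I would address the inner-loop cost of a single gradient evaluation. Each coordinate of $\nabla_\lambda S_n(\lambda)$ is given by the envelope-theorem formula and requires solving the $n$ subproblems \eqref{eq: conj_maximizer}. Because $\phi_\lambda = \sum_j \lambda_j \phi_{j}$ is a convex combination of smooth, strongly convex functions on the compact set $\Omega$, it is itself smooth and strongly convex with constants depending only on $J$ and the $\phi_{j}$. Gradient ascent on each subproblem therefore converges linearly, producing an $\eps'$-optimal maximizer in $O(\log(1/\eps'))$ arithmetic operations. A standard sensitivity estimate for conjugates of strongly convex functions then shows that an $\eps'$-optimal solution of \eqref{eq: conj_maximizer} induces an $O(\eps')$-accurate approximation of each coordinate of $\nabla_\lambda S_n(\lambda)$.

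At the outer level, \cref{thm: Sn_convex_smooth} provides an $O(L_J/k)$ rate for exact PGD on the $L_J$-smooth convex function $S_n$ over $\Delta_J$, so $k = O_J(n)$ outer iterations suffice to reach optimization error $O(1/n)$. A standard inexact PGD analysis shows that replacing the exact gradient by a coordinate-wise $\eps'$-approximation incurs an additional additive error of order $k\eps'$, so setting $\eps' = O(1/n^2)$ preserves the $O(1/n)$ outer accuracy while requiring only $O(\log n)$ inner steps per gradient call. The total arithmetic cost is therefore the product of $O_J(n)$ outer iterations, $n$ subproblems per gradient evaluation, and $O(\log n)$ inner iterations per subproblem, i.e.\ $O_J(n^2 \log n)$ operations, with the per-iteration $O(J)$ cost of projecting onto $\Delta_J$ absorbed into the $O_J$ notation. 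Plugging the resulting $O(1/n)$-accurate minimizer into \cref{thm:bounded} yields the claimed statistical guarantee.

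The main obstacle will be calibrating the two nested accuracies carefully: I need a quantitative inexact PGD bound for a smooth convex objective on the simplex that tolerates per-iteration gradient errors of the prescribed order, coupled with a sensitivity estimate relating approximate solutions of \eqref{eq: conj_maximizer} to approximate values of the envelope-theorem gradient. Both facts are classical, but keeping the dependence on $J$ inside $O_J(\cdot)$ while maintaining the overall $n^2 \log n$ scaling in $n$ requires the inner and outer target accuracies to be matched up correctly so that neither the gradient inexactness nor the outer PGD residual dominates the statistical $1/n$ rate.
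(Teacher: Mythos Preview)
Your proposal is correct and follows essentially the same route as the paper: combine the $O(1/k)$ rate of projected gradient descent from \cref{thm: Sn_convex_smooth} with a linearly convergent gradient-ascent conjugate oracle for the subproblems \eqref{eq: conj_maximizer}, and invoke \cref{thm:bounded} (via the approximate-minimizer remark after \eqref{eq:def_phi}) at accuracy $\rho = O(1/n)$. The paper's own argument is a one-line pointer to exactly these ingredients; you have simply supplied the accuracy calibration (inner tolerance $\eps' = O(1/n^2)$, outer $k = O_J(n)$ iterations, inexact PGD stability) that the paper leaves implicit.
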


Though projected gradient descent can be applied to $S_n(\lambda)$ even when the functions $\{\phi_{j}\}_{j=1}^J$ are not convex, polynomial-time conjugate oracles are unlikely to exist for general function classes.
Since $\phi^*(0) = - \min_{x} \phi(x)$ for any function $\phi$, computing the convex conjugate of a function, even at a single point, is as hard as general (non-convex) optimization in the worst case.
Therefore, absent further assumptions, the time required for a conjugate oracle will typically scale exponentially in $d$. 
Implementing such an oracle can therefore be practical when the dimension is very small, but typically not in the high-dimensional setting.

The estimation procedure developed in \cite{hutter2021minimax} is an example of this phenomenon.
The authors of that work consider a large parametric class $\cF$ consisting of functions with finite wavelet expansions.
Since these functions lack any convexity properties, it is not clear how to efficiently implement a conjugate oracle.
In \cite{hutter2021minimax}, the authors use the Fast Legendre Transform \citep{lucet1997faster} to compute approximate conjugates, in running time exponential in the dimension.
Computing their estimator is feasible if $d \leq 3$, but generally infeasible otherwise.

\subsection{Hardness results}\label{sec: hardness}
The forgoing discussion suggests that it may not always be possible to minimize $S_n(\phi)$ over arbitrary families $\cF$ in polynomial time.
In this section, we give evidence that this deficiency is, in fact, fundamental and arises from a \emph{computational-statistical gap}.
Concretely, we show that, under weak computational assumptions, no polynomial-time estimator can achieve the rates presented in \cref{thm:strongly_convex} for a general family $\cF$.
This result implies that the computational challenges discussed above are intrinsic to the problem, and cannot be avoided by adopting a different estimation strategy.

Our computational lower bound is based on a reduction from the \emph{Continuous Learning With Errors (CLWE)} problem.
The pathbreaking work of \citet{BruRegSon21} showed that CLWE is as hard as the worst-case \emph{Gap Shortest Vector Problem (GapSVP)}, a notoriously difficult problem in quantum computing.
Informally, this is a decision problem whose input is a $d$-dimensional lattice\footnote{A discrete additive subgroup of $\RR^d$} $L$ and a threshold $\tau > 0$, and whose output is ``YES'' if there exists a vector in $L$ whose length is at most $\tau$, and ``NO'' if every vector in $L$ has length at least $\Omega(d \tau)$.
It is widely conjectured that no polynomial-time algorithm for worst-case GapSVP exists, \emph{even for quantum computers}~\citep{Reg05,Reg10}.
In fact, this conjecture is so universally believed that GapSVP has been adopted as the standard for post-quantum cryptography, that is, for cryptographic protocols that are robust even against adversaries with access to quantum computers.
By contrast, many problems that are hard for classical computers and form the backbone of classical cryptographic systems, such as integer factoring, are known to possess polynomial-time quantum algorithms \citep{PQC09}.

The following theorem shows that estimating optimal transport maps in general function spaces is as hard as CLWE, and therefore as hard as worst-case GapSVP.
This implies that, absent additional assumptions, no polynomial-time estimators achieving the rates in \cref{thm:strongly_convex} are likely to exist, even if we permit the statistician to use quantum computers.
The class of potentials for which we show our hardness result is the simple ``spiked'' class defined in \cref{sec: lowdim}, with $k = 1$ and the base class $\cF$ consisting of smooth and strongly convex functions:
\begin{equation*}
	\cF_{\mathrm{spiked}, 1}(c, C) = \left\{x \mapsto \phi(\langle u, x\rangle) + \frac 12 \|x - \langle u, x \rangle x \|^2: \phi: \RR \to \RR, c \leq \phi''(x) \leq C , \|u\| = 1\right\}\,,
\end{equation*}
for universal constants $c, C > 0$.
As indicated in \cref{prop:spiked}, for any $\eta \geq 2$, this class satisfies
\begin{equation*}
	\log \cN(h, \cF_{\mathrm{spiked}, 1}, L^\infty(\dotp{\ \cdot \ }^{-\eta})) \lesssim_{\log_+(1/h),\delta} d h^{- \frac 12}\,,
\end{equation*}
so \cref{thm:strongly_convex} implies that for any constants $c, C$, if $\phi_0 \in \cF_{\mathrm{spiked}, 1}(c, C)$, then $\nabla \phi_0$ can be estimated at the rate $n^{-4/5}$, up to logarithmic factors.
In particular, if the estimator $\hat \phi_\cF$ could be computed in polynomial in $n$ and $d$, then it would be possible to estimate the optimal map to accuracy $O(1/\sqrt d)$ in polynomial time.
The following theorem shows that any such estimator would violate the conjectured hardness of the CLWE and GapSVP problems.
\begin{thm}\label{thm:hard}
	There exist positive constants $c, C, c'$ such that the following holds.
	Let $X_1, \dots, X_n \sim \cN(0, I_d)$ and $Y_1, \dots, Y_n \sim (T_0)_\sharp \cN(0, I_d)$, where $T_0= \nabla \phi_0$ for some $\phi_0 \in \cF_{\mathrm{spiked}, 1}(c, C)$ and where $n = \mathrm{poly}(d)$.
	If there exists a polynomial-time estimator $\hat \phi$ satisfying $\E \|\nabla \hat \phi - \nabla \phi_0\| \leq c'/\sqrt{d}$, then there exists an efficient algorithm for CLWE and an efficient quantum algorithm for GapSVP.
\end{thm}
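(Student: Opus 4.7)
The plan is to prove \cref{thm:hard} by a polynomial-time reduction from CLWE to the map-estimation problem in $\cF_{\mathrm{spiked},1}(c,C)$, leveraging the fact (\citet{BruRegSon21}) that a polynomial-time solver for CLWE would yield an efficient quantum algorithm for worst-case GapSVP. Recall that in CLWE one is given i.i.d.\ pairs $(x_i,z_i)$ with $x_i\sim\cN(0,I_d)$ and $z_i\in\RR/\ZZ$, and one must distinguish the null case (where $z_i$ is uniform and independent of $x_i$) from the planted case (where $z_i=\gamma\dotp{u,x_i}+\epsilon_i\pmod 1$ for a hidden unit vector $u\in\S^{d-1}$ and small Gaussian noise~$\epsilon_i$), with advantage that must remain non-negligible for $\gamma=\mathrm{poly}(d)$.

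First I would design a one-dimensional ``template'' $\psi_0\in C^2(\RR)$ with $c\le\psi_0''\le C$ whose derivative $\psi_0'$ is a mollification of a periodic sawtooth of period $1/\gamma$ plus a strongly convex quadratic, so that $\psi_0'(t)-t$ depends (outside a negligible tail) only on $\gamma t\bmod 1$. Setting $\phi_0(x)=\psi_0(\dotp{u,x})+\tfrac12\|x-\dotp{u,x}u\|^2$ places $\phi_0$ in $\cF_{\mathrm{spiked},1}(c,C)$ and gives the Brenier map $T_0(x)=x+(\psi_0'(\dotp{u,x})-\dotp{u,x})\,u$. The key feature is that in the planted CLWE case, knowledge of $z_i$ (which encodes $\gamma\dotp{u,x_i}\bmod 1$) is enough to compute $T_0(x_i)$, so one can efficiently simulate samples $Y_i=T_0(x_i)$ using only the CLWE sample pairs, producing an instance of the map-estimation problem with $P=\cN(0,I_d)$ and $Q=(T_0)_\sharp P$.

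Next, assuming a polynomial-time estimator $\hat\phi$ with $\E\|\nabla\hat\phi-\nabla\phi_0\|_{L^2(P)}\le c'/\sqrt d$, I would argue that one can read off an estimator $\hat u$ of the hidden direction with $\EE\|\hat u- u\|\lesssim 1/\sqrt d$. The mechanism is that the random vector $\nabla\hat\phi(X)-X$ is forced (by the error bound) to concentrate in a one-dimensional subspace approximately aligned with $u$, for instance by forming the empirical covariance of $\nabla\hat\phi(X_i)-X_i$ on a fresh batch of Gaussian samples and taking its top eigenvector; a standard perturbation argument (Davis--Kahan, combined with the explicit Gaussian expansion of $T_0(X)-X$) then converts the $L^2(P)$ error into an $O(1/\sqrt d)$ bound on $\|\hat u-u\|$.

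Finally, given $\hat u$ with this accuracy, I would build a CLWE distinguisher: on a held-out test sample $(x,z)$, compute $\gamma\dotp{\hat u,x}\bmod 1$ and check whether it lies within a prescribed window of $z$. In the planted case the two are within noise of each other with non-negligible probability (because $\|\hat u-u\|=O(1/\sqrt d)$ keeps $\gamma\dotp{\hat u-u,x}$ controlled for $\gamma=\mathrm{poly}(d)$ and Gaussian $x$), whereas in the null case $z$ is independent of $x$ and the window is hit with probability equal to its length; aggregating over polynomially many test samples yields a non-negligible distinguishing advantage. The main obstacle will be the construction in Step~1: the template $\psi_0$ must be simultaneously globally $c$-strongly convex and $C$-smooth, depend on its argument essentially only through the residue modulo $1/\gamma$ on the bulk of the Gaussian mass, and be exactly computable from the CLWE label~$z$; balancing these against the noise level of CLWE so that the simulated $Y_i$ are statistically close to true samples from $(T_0)_\sharp P$ (close enough that the total-variation slack across $n=\mathrm{poly}(d)$ samples is negligible) is the delicate core of the reduction.
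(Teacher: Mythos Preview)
Your Step~1 contains a structural gap. You propose to simulate $Y_i=T_0(x_i)=x_i+\bigl(\psi_0'(\dotp{u,x_i})-\dotp{u,x_i}\bigr)u$ from the CLWE pair $(x_i,z_i)$, arguing that the scalar $\psi_0'(\dotp{u,x_i})-\dotp{u,x_i}$ can be read off from $z_i$ alone. Even granting this, the displacement is a multiple of the vector $u$, which is the \emph{hidden secret} of the CLWE instance: you cannot form $T_0(x_i)$, nor otherwise sample from $(T_0)_\sharp\cN(0,I_d)$ for a spiked potential whose spike direction is the unknown $u$, without already possessing $u$. This is not a matter of noise control or mollification; the simulation is blocked in principle. (A secondary issue: in Step~5 you take $\gamma=\mathrm{poly}(d)$ and $\|\hat u-u\|=O(1/\sqrt d)$, but then $\gamma\dotp{\hat u-u,x}$ has standard deviation of order $\gamma/\sqrt d$, which is not small modulo~$1$ unless $\gamma\asymp\sqrt d$; the quantifier ``$\gamma=\mathrm{poly}(d)$'' is too loose.)

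The paper avoids this obstacle by working not with CLWE pairs $(x_i,z_i)$ but with the \emph{homogeneous} CLWE distribution $Q_{u,\beta,\gamma}$ on $\RR^d$: \citet{BruRegSon21} already provide a reduction showing that distinguishing i.i.d.\ samples from $Q_{u,\beta,\gamma}$ versus $\cN(0,I_d)$ is as hard as CLWE and quantum GapSVP. The hCLWE samples then serve \emph{directly} as the $Y_i$'s, with fresh Gaussians as the $X_i$'s, so no simulation is required. One proves (i)~that the Brenier map from $\cN(0,I_d)$ to $Q_{u,\beta,\gamma}$ lies in $\cF_{\mathrm{spiked},1}(c,C)$, via a one-dimensional monotone-rearrangement argument exploiting that the density ratio $q_{u,\beta,\gamma}/p$ is bounded above and below by constants depending only on $\beta$; and (ii)~that $W_2(\cN(0,I_d),Q_{u,\beta,\gamma})\gtrsim\gamma^{-1}$, so with $\gamma=2\sqrt d$ the two candidate maps---identity versus the map to $Q_{u,\beta,\gamma}$---are $\Omega(1/\sqrt d)$ apart in $L^2(P)$. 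An estimator with error $c'/\sqrt d$ then yields a distinguisher by Le~Cam's two-point method. There is no need to recover $\hat u$ or to invoke Davis--Kahan: the reduction is a pure testing argument.
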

As mentioned above, this theorem indicates that estimating transport maps from the class $\cF_{\mathrm{spiked}, 1}$ is as hard as worst-case lattice problem, and therefore widely conjectured to be intractable even for quantum computers. A proof of \cref{thm:hard} appears in \cref{sec: comp_proofs}.

\section{Proofs}\label{sec: proofs}
\subsection{Empirical risk minimizers with convex criteria}\label{sec:_one_shot}

As already stated, the estimator $\hat \phi_\cF$ is nothing but an empirical risk minimizer. As such, its risk can be bounded using standard theorems such as Massart's analysis for the risk of an empirical risk minimizer \cite[Theorem 8.3]{massart2007concentration}. Let $\xi_1,\dots,\xi_n$ be a sample of i.i.d.~observations from some law $\PP$. In full generality, Massart's theorem concerns the minimization of a criterion of the form 
\[ a\in \AA \mapsto \int \gamma(a,\xi)\dd \PP(\xi), \]
where $\AA$ is a vector space endowed with some pseudo-norm $\|\cdot\|$. Let $a^*$ be a minimizer of this functional and define the excess of risk $\ell(a,a^*)=\int \gamma(a,\xi)\dd \PP(\xi)-\!\int \gamma(a^*,\xi)\dd \PP(\xi) $. Let 
\[ \overline\gamma_n(a) = \gamma_n(a)-\int \gamma(a,\xi)\dd \PP(\xi) = \frac{1}{n} \sum_{i=1}^n \gamma(a,\xi_i) -\int \gamma(a,\xi)\dd \PP(\xi)\]
be the associated centered empirical process. 
The minimizer $a^*$ is approximated by the empirical risk minimizer $\hat a \in \argmin_{a\in \cA} \overline\gamma_n(a)$, where $\cA\subseteq \AA$ is a class of candidates. Let $\overline a \in \cA$ be a ``good candidate'' so that $\ell(\overline a,a^*)$ is small (e.g.,  let $\overline a\in \argmin_{a\in\cA}\ell(a,a^*)$). Then, it is known that estimation is possible (i) under a control of the excess of risk around its minimum of the form
\begin{equation}
   \forall a \in \AA,\ \|a-a^*\| \leq w(\ell(a,a^*))
\end{equation}
for some appropriate function $w:[0,\infty)\to [0,\infty)$, and (ii) under a control of local fluctuations given by a bound on
\begin{equation}\label{eq:control_local_fluct_abstract}
    \sqrt{n} \E\left[ \sup_{a\in \cA,\ \|a-\overline a\|\leq \tau} |\overline\gamma_n(a)-\overline\gamma_n(\overline a)| \right].
\end{equation}
Using this general theorem would be enough for obtaining the right bound in the bounded case (\Cref{thm:bounded}), where $\AA$ is equal to the set of twice differentiable functions $\phi$ with $\nabla\phi\in L^2(P)$, that we endow with the pseudo-norm $\phi\mapsto \|\nabla\phi\|_{L^2(P)}$, and $\cA=\cF$ is the class of candidate potentials. 
This strategy, however, needs to be refined for our second scenario, where the measure $P$ is not bounded. Indeed, in that case, as the proof will show, a tight control of the local fluctuations \eqref{eq:control_local_fluct_abstract} is only possible when the potentials $a=\phi$ in the supremum are strongly convex. As such, to apply Massart's theorem, we need to restrict ourselves to families of candidate potentials containing only strongly convex functions (as it is the case in \cite{hutter2021minimax}). 
Such a limitation is not natural, as most families of approximation functions (neural networks, wavelets, Fourier functions) are not convex, hindering the applicability of our theorem.

To overcome this issue, we introduce an auxiliary norm $\vertiii{\cdot}$, and make the key observation that, under a convexity assumption of  the  criterion $\gamma$, it is sufficient to control the local fluctuations $ |\overline\gamma_n(a)-\overline\gamma_n(\overline a)|$ for $\|a-\overline a\|\leq \tau$ while  $\vertiii{a-\overline a}$  is also arbitrarily small. In our setting, $\vertiii{\cdot}$ will be the $C^2$-norm, while $\overline a=\overline\phi$ will be an $\alpha$-strongly convex potential attaining the infimum $\inf_{\phi\in\cF_\alpha}(S(\phi)-S(\phi_0))$. In particular, any potential $\phi\in \cF$ satisfying $\vertiii{\phi-\overline\phi} \leq \alpha/2$ is $(\alpha/2)$-strongly convex. The supremum now being restricted to strongly convex functions can be tightly controlled.

We  state a general theorem for controlling the risk of an empirical risk minimizer with convex criterion. Besides introducing the auxiliary norm $\vertiii{\cdot}$, this theorem is nothing new, and follows from van de Geer's ``one-shot'' localization technique \citep{van2002m}. Let us remark that  \citet{hutter2021minimax} already successfully used this method for dealing with optimal transport map estimation with wavelet approximations,  with the key difference that they  only treat the case of a class of \textit{strongly convex} candidate potentials $\cF$ having a finite wavelet expansions.

\begin{thm}\label{thm:abstract_one_shot}
Let $\xi_1,\dots,\xi_n$ be independent observations taking their values in some measurable space $\Xi$ and with common distribution $\PP$ and let $(\AA,\|\cdot\|)$ be a pseudo-normed vector space. Let $\gamma: \AA\times \Xi\to (-\infty,+\infty]$ be a function such that for every $a\in \AA$, $\xi\mapsto \gamma(a,\xi)$ is measurable and such that for  $\PP$-almost all $\xi\in \Xi$, $a\in \AA \mapsto \gamma(a,\xi)$ is a convex function. Assume that there exists some minimizer $a^*$ of $a\in \AA\mapsto \int \gamma(a,\xi)\dd \PP(\xi)$, and denote by $\ell(a,a^*)$ the nonnegative quantity 
\[ \ell(a,a^*)= \int \gamma(a,\xi)\dd \PP(\xi) -\int \gamma(a^*,\xi)\dd \PP(\xi).\]
 Let $\overline \gamma_n$ be the centered empirical process defined by
\[ \overline\gamma_n(a) = \gamma_n(a)-\int \gamma(a,\xi)\dd \PP(\xi) = \frac{1}{n} \sum_{i=1}^n \gamma(a,\xi_i) -\int \gamma(a,\xi)\dd \PP(\xi),\quad \text{for every $a\in \AA$}.\]
Assume that there exists a nondecreasing function $w:[0,\infty)\to [0,\infty)$ such that
\begin{equation}\label{eq:choice_w}
    \forall a\in \AA,\ \|a-a^*\|\leq w(\ell(a,a^*)).
\end{equation} 
Let $\cA\subseteq \AA$ be a model, and consider an estimator $\hat a\in \argmin_{a\in \cA} \gamma_n(a)$. 

We assume that there is another pseudo-norm $\vertiii{\cdot}$ on $\AA$ such that the set $\cA$ is bounded by some constant $B>0$ for this norm. Let $0<r\leq B$ and let $\overline a \in \cA$ be any point. Let $\overline\cA$ be the set of points $a$ belonging to a segment linking $\overline a$ to some element in $\cA$, with $\vertiii{a-\overline a}\leq r$.  Assume that for some $\tau>0$
\begin{equation}\label{eq:control_fluct_abstract}
\sqrt{n}\E\left[ \sup_{a\in \overline\cA, \|a-\overline a \|\leq \tau}|\overline \gamma_n(a)-\overline\gamma_n(\overline a)|\right] \leq \Psi(\tau).
\end{equation}
for some function $\Psi$.  Let $F$ be an envelope function, that is a measurable function such that for all $a\in \overline \cA$, $\xi\in \Xi$,$|\gamma(a,\xi)|\leq F(\xi)$.
Assume that the random variable $F(\xi)$ for $\xi\sim \PP$ belongs to the Orlicz space of index $p\in (0,1]$, with Orlicz norm bounded by $L$. 

Then, with probability $1-e^{-y}$,
\[ \|\hat a-a^*\|\leq w(\ell(\overline a,a^*)) + \pran{\frac{2B}{r}-1} \tau\]
whenever $\tau$ satisfies
\begin{equation}\label{eq:abstract_tau}
    \tau \geq 4w\Big(\ell(\overline a,a^*)+ \kappa_p y^{1/p}\pran{\Psi(\tau)n^{-1/2} + L(\log n)^p n^{-1}}\Big)
\end{equation} 
for some constant $\kappa_p$ depending only on $p$.
\end{thm}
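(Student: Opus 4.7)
The strategy is van de Geer's one-shot localization, adapted to the setting with two pseudo-norms. By the convexity of $\gamma_n$ (inherited from that of each $\gamma(\cdot, \xi)$) and the optimality of $\hat a$, for every $t \in [0,1]$ the point $\tilde a = (1-t)\overline a + t \hat a$ satisfies $\gamma_n(\tilde a) \leq \gamma_n(\overline a)$. Choosing $t = \min\{1,\, r/\vertiii{\hat a - \overline a}\}$ makes $\vertiii{\tilde a - \overline a} \leq r$, so $\tilde a \in \overline\cA$, and $1/t \leq 2B/r$ because the $\vertiii{\cdot}$-diameter of $\cA$ is at most $2B$. Since $\hat a - \tilde a = ((1-t)/t)(\tilde a - \overline a)$, the triangle inequality gives
\begin{equation*}
\|\hat a - a^*\| \leq \|\overline a - a^*\| + \frac{1-t}{t}\|\tilde a - \overline a\| \leq w(\ell(\overline a, a^*)) + \pran{\frac{2B}{r} - 1} \|\tilde a - \overline a\|,
\end{equation*}
so the task reduces to showing $\|\tilde a - \overline a\| \leq \tau$ on a suitable event.

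For this, I would exploit the empirical inequality $\gamma_n(\tilde a) \leq \gamma_n(\overline a)$, which rewrites as $\ell(\tilde a, a^*) \leq \ell(\overline a, a^*) + |\overline\gamma_n(\tilde a) - \overline\gamma_n(\overline a)|$, and upgrade \eqref{eq:control_fluct_abstract} from an expectation bound to a tail inequality via an Adamczak-type concentration result for suprema of empirical processes with envelope in the Orlicz space $\psi_p$. Applied to the class $\{\gamma(a, \cdot) - \gamma(\overline a, \cdot) : a \in \overline\cA,\ \|a - \overline a\| \leq \tau\}$ (with envelope $2F$ and variance controlled via Cauchy--Schwarz by the same $\Psi(\tau)$ quantity after symmetrization), this produces, on an event of probability at least $1-e^{-y}$,
\begin{equation*}
\sup_{a \in \overline\cA,\, \|a - \overline a\|\leq \tau} |\overline\gamma_n(a) - \overline\gamma_n(\overline a)| \leq \kappa_p y^{1/p} \bigl(\Psi(\tau) n^{-1/2} + L (\log n)^{p} n^{-1}\bigr).
\end{equation*}

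The bound $\|\tilde a - \overline a\| \leq \tau$ then follows by a single contradiction argument. Suppose $\|\tilde a - \overline a\| > \tau$. Because $\gamma_n$ is convex and $\leq \gamma_n(\overline a)$ at both endpoints, it remains $\leq \gamma_n(\overline a)$ along the whole segment $[\overline a, \tilde a]$, so by homogeneity of the pseudo-norm along segments I may contract to the unique $\tilde a'$ on this segment with $\|\tilde a' - \overline a\| = \tau$. Still $\vertiii{\tilde a' - \overline a} \leq r$ (hence $\tilde a' \in \overline\cA$) and $\gamma_n(\tilde a') \leq \gamma_n(\overline a)$, so the empirical inequality and the tail bound at scale $\tau$ yield $\ell(\tilde a', a^*) \leq \ell(\overline a, a^*) + \kappa_p y^{1/p}(\Psi(\tau)/\sqrt n + L(\log n)^p/n)$. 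The growth condition \eqref{eq:choice_w} together with the choice of $\tau$ in \eqref{eq:abstract_tau} then forces $\|\tilde a' - a^*\| \leq \tau/4$, and a final triangle inequality gives $\|\tilde a' - \overline a\| \leq \tau/4 + w(\ell(\overline a, a^*)) \leq \tau/2$, contradicting $\|\tilde a' - \overline a\| = \tau$.

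The main obstacle is the concentration step: turning the in-expectation bound \eqref{eq:control_fluct_abstract} into a tail inequality with the stated dependence on $y$, $L$, and $(\log n)^p$ requires an unbounded-empirical-process inequality of Adamczak--Talagrand type, typically established via truncation at level $\asymp L(\log n)^{1/p}$ combined with Bousquet's inequality for the truncated part and a union bound for the tail. The remaining ingredients (convex contraction, homogeneity-based one-shot peeling, and growth-based recovery of $\|\hat a - a^*\|$) are routine once this inequality is in hand.
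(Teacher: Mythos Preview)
Your proposal is essentially correct and follows the same one-shot localization scheme as the paper, with a minor twist in the order of operations: the paper contracts toward $\overline a$ using the parametrization $t=\tau/(\tau+\|\hat a-\overline a\|)$ so that $\|\hat a_t-\overline a\|\le\tau$ holds automatically, and then case-splits on whether $t\ge r/(2B)$ (which either gives $\|\hat a-\overline a\|\le(2B/r-1)\tau$ directly or forces $\vertiii{\hat a_t-\overline a}\le r$); you instead contract by $\vertiii{\cdot}$ first and then argue by contradiction on $\|\cdot\|$. Both are valid.

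Two small corrections. First, your displayed triangle inequality is off: with the decomposition $\hat a-a^*=(\hat a-\overline a)+(\overline a-a^*)$ and $\hat a-\overline a=(1/t)(\tilde a-\overline a)$, the factor in front of $\|\tilde a-\overline a\|$ is $1/t\le 2B/r$, not $(1-t)/t$; the exact constant $(2B/r-1)$ in the statement is what the paper's parametrization buys (since $t\ge r/(2B)$ is equivalent to $\|\hat a-\overline a\|\le(2B/r-1)\tau$), whereas your parametrization naturally yields $2B/r$. Second, for the concentration step the paper simply invokes \cite[Theorem 2.14.23]{vaart2023empirical}, which gives $\PP\bigl(Z_n\ge \kappa_p y^{1/p}(\E[Z_n]+L(\log n)^p/n)\bigr)\le e^{-y}$ directly from the envelope's $\psi_p$-norm; no separate variance bound via Cauchy--Schwarz and symmetrization is needed.
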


\begin{proof}
Let $y>0$ and let $\tau$ be as in the statement of the theorem. 
    Let $\hat a_t=t\hat a + (1-t)\overline a$, where $t= \frac{\tau}{\tau+\|\hat a-\overline a\|}$. Note that 
    \[ \|\hat a_t-\overline a\| = t\|\hat a-\overline a\|=  \frac{\tau \|\hat a-\overline a\|}{\tau+\|\hat a-\overline a\|}\leq \tau.\]
    If $t\geq r/(2B)$, then $\|\hat a-\overline a\|\leq \pran{\frac{2B}{r}-1} \tau$, and the conclusion holds. Otherwise, we have $t< r/(2B)$, which implies that 
    \begin{equation}
    \vertiii{\hat a_t-\overline a}=t\vertiii{\hat a-\overline a} \leq r.
    \end{equation}
      By convexity of $\gamma$ and as $\gamma_n(\hat a)=  \min_{a\in \cA}\gamma_n(a)$, it holds that
    \[ \gamma_n(\hat a_t) \leq t\gamma_n(\hat a) + (1-t)\gamma_n(\overline a) \leq \gamma_n(\overline a).\]
    Hence,
    \begin{align*}
        \ell(\hat a_t, a^*) &= \ell(\overline a,a^*) + \gamma_n(\hat a_t)-\gamma_n(\overline a) + \overline \gamma_n(\overline a) - \overline \gamma_n(\hat a_t) \\
        &\leq \ell(\overline a,a^*)  + \overline \gamma_n(\overline a) - \overline \gamma_n(\hat a_t).
    \end{align*}
    Let  $Z_n=  \sup_{a\in \overline\cA, \|a-\overline a\|\leq \tau}|\overline \gamma_n(a)-\overline\gamma_n(\overline a)|$, so that $\overline \gamma_n(\overline a) - \overline \gamma_n(\hat a_t) \leq Z_n$. As $w$ is nondecreasing, we have
    \begin{align*}
         \frac{\tau \|\hat a-\overline a\|}{\tau+\|\hat a-\overline a\|}&= \|\hat a_t-\overline a\| \leq \|\hat a_t-a^*\|+\|\overline a-a^*\| \\
         &\leq w(\ell(\hat a_t,a^*)) + w(\ell(\overline a,a^*)) \\
         &\leq w(\ell(\overline a,a^*) +\overline \gamma_n(\overline a) - \overline \gamma(\hat a_t) ) + w(\ell(\overline a,a^*))\\
         &\leq 2w(\ell(\overline a,a^*)+Z_n).
    \end{align*}
    Thus, if $w(\ell(\overline a,a^*)+Z_n) \leq \tau/4$, then $\|\hat a-\overline a\|\leq \tau$. 
     According to \cite[Theorem 2.14.23]{vaart2023empirical},
    \begin{align*}
        \PP(\|\hat a -\overline a\| >\tau \text{ and } t<r/(2B))&\leq \PP(w(\ell(\overline a,a^*)+Z_n)> \tau/4) \\
        &\leq \PP( Z_n \geq \kappa_p y^{1/p}\pran{\frac{\Psi(\tau)}{\sqrt{n}} + \frac{(\log n)^p L}{n}}) \\
        &\leq \PP( Z_n\geq \kappa_p y^{1/p}\pran{\E[Z_n]+ \frac{(\log n)^p L}{n}}) \\
        &\leq \exp\pran{- y}
    \end{align*}
    for a certain constant $\kappa_p$. Hence, with probability $1-e^{-y}$, if $ t<r/(2B)$, then
    \[ \|\hat a -\overline a\| \leq  \tau \leq \pran{\frac{2B}{r}-1} \tau \]
     and  otherwise, if $t\geq r/(2B)$, we have already proved that $\|\hat a -\overline a\| \leq \pran{\frac{2B}{r}-1} \tau$.  Hence, by the triangle inequality, with probability $1-e^{-y}$,
    \begin{align*}
        \|\hat a-a^*\|&\leq\|\overline a-a^*\|+ \|\overline a-\hat a\| \\
        &\leq w(\ell(\overline a,a^*)) + \pran{\frac{2B}{r}-1} \tau. \qedhere
    \end{align*}
\end{proof}

Let us  apply \Cref{thm:abstract_one_shot} to prove \Cref{thm:bounded} and \Cref{thm:strongly_convex}. Let $\Omega=B(0,2R)$ for $R$ possibly equal to ${+\infty}$. 
We consider $\AA$ the set of functions $\phi$ that are $(\beta,a)$-smooth on $\Omega$, with $\phi(0)=0$ and such that $\|\nabla\phi\|_{L^2(P)}<\infty$. We endow $\AA$ with the pseudo-norm $\|\phi\|=\|\nabla\phi\|_{L^2(P)}$. We let $\Xi=\supp(P)\times \supp(Q)$, $\PP=P\otimes Q$, and $\xi_i=(X_i,Y_i)$. For $\phi\in\AA$ and $\xi\in \R^d\times \R^d$, we let $\gamma(\phi,\xi)=\phi(x)+\phi^*(y)$, which is convex as a function of $\phi$. By assumption, the minimum of $\int \gamma(\phi,\xi)\dd \PP(\xi) = S(\phi)$ is $\phi_0$, with $\ell(\phi,\phi_0)=S(\phi)-S(\phi_0)$.

We  can  assume without loss of generality that $\|\nabla\phi(0)\|$ is uniformly bounded over $\phi\in\cF$ by some constant $K>0$: this is clear in the bounded case, and we explain in \Cref{lem:assume_gradient_bounded} why this is possible in the strongly convex case. Hence, the constant in \Cref{prop: stab_bound} (which depends on $\|\nabla\phi(0)\|$) can be chosen  uniformly over $\cF$. For $b$, the exponent appearing in \Cref{prop: stab_bound}, let 
\[
w(x)= C_0\begin{cases}
    \sqrt{x}\log(1/x)^{b/2} &\text{ if $x\leq c$} \\
    \sqrt{x}& \text{ else}
\end{cases} 
\]
with a choice of $c>0$ such that the function is nondecreasing. According to \Cref{prop: stab_bound}, the function $w$ satisfies \eqref{eq:choice_w} for some constant $C_0>0$ (depending on $K$). We let $\cF=\cA$ be the class of candidate potentials. In the bounded case, let $\overline a=\overline \phi$ that attains $\inf_{\phi\in\cF}S(\phi)$ (or an approximate minimizer should the infimum be not attained). Likewise, in the strongly convex case, we consider a (potentially approximate) minimizer  $\overline a=\overline \phi$ of $\inf_{\phi\in\cF_\alpha}S(\phi)$. Let $\ell^\star= S(\overline\phi)-S(\phi_0)$. We define the auxiliary pseudo-norm $\vertiii{\phi} = \sup_{x\in \Omega}\dotp{x}^{-a}\op{\nabla^2 \phi(x)}$. Condition \ref{cond:smooth} asserts that $\cF$ is bounded by $B=\beta$ for $\vertiii{\cdot}$.

In both scenarios, we will be able to (i) find an envelope function $F$ with a finite Orlicz norm, and (ii) an appropriate function $\Psi(\tau)$ in \eqref{eq:control_fluct_abstract}  for some parameter $r$. Namely, when the parameter $\gamma\geq 0$ appearing in \ref{cond:covering_bounded} and \ref{cond:covering_convex} is below $2$, we will give a bound of the form 
\begin{equation}\label{eq:the_good_psi}
\forall \tau\geq \sqrt{\ell^\star},\ \Psi(\tau) \lesssim_{\log_+(1/\tau),\log(n)} \tau^{1-\gamma/2}+  n^{\frac 1 2- \frac 1 \gamma} +n^{-\frac 1 2}
\end{equation}
(where by convention, we let $n^{\frac 1 2- \frac 1 \gamma}=0$ if $\gamma=0$). 
\Cref{thm:abstract_one_shot} then applies (with $y=\log(1/n)$). Indeed, one can check that the condition \eqref{eq:abstract_tau} holds for $\tau$ of the form $C_1(n^{-\frac{1}{2+\gamma}}\log(n)^q + w(\ell^\star))$ for some constants $C_1,q\geq 0$. Then, with probability $1-n^{-1}$,
\begin{equation}
    \|\nabla\hat\phi_{\cF}-\nabla\phi_0\|_{L^2(P)} \leq w(S(\overline\phi)-S(\phi_0)) +C_2 n^{-\frac{1}{2+\gamma}} \log(n)^q,
\end{equation}
concluding the proof.

When the parameter $\gamma$ is larger than $2$, we will apply \Cref{thm:abstract_one_shot} for controls of the form
\begin{equation}\label{eq:the_good_psi_gamma_large}
\forall \tau\geq \sqrt{\ell^\star},\ \Psi(\tau) \lesssim_{\log(n)}n^{\frac{1}{2}-\frac{1}{\gamma}}.
\end{equation}
When such a control holds, one can pick $\tau$ of the form $C_3(n^{-\frac{1}{2\gamma}}\log(n)^q+ w(\ell^\star))$ for some constants $C_3,q\geq 0$ to obtain a final bound of the form
\begin{equation}
    \|\nabla\hat\phi_{\cF}-\nabla\phi_0\|_{L^2(P)} \leq w(S(\overline\phi)-S(\phi_0)) + C_4n^{-\frac{1}{2 \gamma}} \log(n)^q,
\end{equation}
concluding the proof in the case $\gamma\geq 2$.

At last, let us mention that in the strongly convex case, the bias $\ell^\star$ is bounded up to constants by $\inf_{\phi\in\cF_\alpha}\|\nabla\phi-\nabla\phi_0\|^2_{L^2(P)}$ according to \Cref{prop: stab_bound}, so that $\ell^\star$ can be replaced by this quantity in \Cref{thm:strongly_convex}.

To summarize, all that is left to do is finding an appropriate envelope function $F$ and an appropriate bound $\Psi(\tau)$ on \eqref{eq:control_fluct_abstract}, both in the bounded case and 
in the strongly convex case. To do so, we will use the following proposition to bound the expectation of the supremum of an empirical process.
\begin{prop}\label{prop:van_wellner}
Let $\PP$ be a probability distribution on some Euclidean space such that $\xi\sim\PP$ belongs to the Orlicz space of index $p\in (0,1]$. 
    Let $\cG$ be a  class of functions defined on $\supp(\PP)$ with $\sup_{g\in\cG} \|g\|_{L^2(\PP)}\leq \sigma$ and $\sup_{g\in \cG} \|g\|_{L^\infty(\dotp{\ \cdot \ }^{-\eta})} \leq M$  for some $q\geq 0$. Let $H_2$ (resp.~$H_\infty$) be a decreasing function  that diverges at $h=0$, and such that for all $h>0$, $\log_+ \cN(h,\cG,L^2)\leq H_2(h)$ (resp.~$\log_+ \cN(h,\cG,L^\infty(\dotp{\ \cdot \ }^{-\eta}))\leq H_\infty(h)$). 
     There exists $C$ depending on $\eta$, $p$, and  the $p$-Orlicz norm of $\xi\sim\PP$ such that the following holds. For $\eps \in (0,M)$, let $L=(C\log_+(1/\eps))^{1/p}$. Let $K$ be such that $H_\infty(K\eps) \leq H_\infty(\eps)/2$ 
     and let $\tilde\eps<\sigma$ be such that $H_2(\tilde \eps )\geq  H_\infty(\eps)$ (with possibly $\tilde\eps=0$). Then
    \begin{equation}\label{eq:van_wellner_mixed_entropy}
    \begin{split}
        \sqrt{n}\E \sup_{g\in \cG} |(\PP_n-\PP)(g)| &\lesssim \sqrt{n}(K+M)\eps\dotp{L}^{\eta}+  \int_{\tilde\eps/2}^\sigma \sqrt{H_2(h)
        }\dd h  + \frac{\dotp{L}^{\eta}}{\sqrt{n}}\int_{\eps/2}^{M}H_\infty(h)\dd h
        \end{split}
    \end{equation}
    up to a numerical constant.  The same inequality holds for $\eps=0$ with $L=0$.
\end{prop}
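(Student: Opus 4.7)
The inequality is a mixed-entropy empirical process bound, and I would prove it by combining truncation (to deal with the weighted $L^\infty$ norm $\|\cdot\|_{L^\infty(\dotp{\cdot}^{-\eta})}$ instead of a bounded envelope) with a two-stage chaining argument: Bernstein-type $L^\infty$ chaining at the coarse scales (between $\eps$ and $M$), and Dudley $L^2$ chaining at the fine scales (between $\tilde\eps$ and $\sigma$). The parameter $L = (C \log_+(1/\eps))^{1/p}$ is chosen precisely so that, after truncation, the weighted envelope becomes the uniform constant $M\dotp{L}^\eta$, while the tail contribution $\PP(\|\xi\| > L)$ is as small as a power of $\eps$ by the Orlicz assumption.

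\textbf{Step 1 (Truncation).} I would split each $g \in \cG$ as $g \mathbf{1}_{\|\xi\| \leq L} + g \mathbf{1}_{\|\xi\| > L}$. The tail piece is handled by $|g(\xi)| \leq M \dotp{\xi}^\eta$ together with the subexponential-type estimate $\E[\dotp{\xi}^\eta \mathbf{1}_{\|\xi\| > L}] \lesssim \eps$ coming from the $p$-Orlicz bound on $\|\xi\|$ and the choice of $L$; this absorbs into the $\sqrt{n}(K+M)\eps \dotp{L}^\eta$ term. From here on I work only with the truncated functions, which admit the deterministic envelope $M \dotp{L}^\eta$ and the variance bound $\sigma^2$.

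\textbf{Step 2 (Mixed chaining on the truncated class).} I would first take an $\eps$-cover of $\cG$ in $L^\infty(\dotp{\cdot}^{-\eta})$ with log-cardinality at most $H_\infty(\eps)$; the residual error of any $g$ from its nearest centre is at most $\eps\dotp{L}^\eta$ pointwise on the truncation set, yielding the $M\eps\dotp{L}^\eta$ contribution. For the finite cover I use the halving condition $H_\infty(K\eps)\leq H_\infty(\eps)/2$ to build a dyadic chain of $L^\infty(\dotp{\cdot}^{-\eta})$-covers at scales $K^j \eps$ up to $M$. At each layer I apply Bernstein's inequality to the link differences, which at layer $j$ have variance $\lesssim (K^j\eps)^2$ and envelope $K^j\eps\dotp{L}^\eta$; summing geometrically over layers gives exactly $\frac{\dotp{L}^\eta}{\sqrt n}\int_{\eps/2}^M H_\infty(h)\dd h$ from the envelope terms. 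The variance contributions sum to $\sqrt{H_\infty(\eps)}\cdot\sigma/\sqrt n$, which I want to dominate by the Dudley integral. This is exactly where $\tilde\eps$ enters: because $H_2(\tilde\eps)\geq H_\infty(\eps)$, switching to $L^2$-chaining at scale $\tilde\eps$ and integrating Dudley's bound $\int_{\tilde\eps/2}^\sigma \sqrt{H_2(h)}\dd h$ dominates the remaining variance piece.

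\textbf{Main obstacle and plan of attack.} The delicate part is gluing the two chainings at the transition scale $\tilde\eps$ without losing a logarithmic factor, while simultaneously ensuring that each Bernstein application uses the correct truncated envelope $K^j\eps\dotp{L}^\eta$ rather than a random one. I would handle this by noting that at scale $\tilde\eps$ the $L^2$-cover has at least as many elements as the $L^\infty(\dotp{\cdot}^{-\eta})$-cover at scale $\eps$, so the $L^2$-chain can be taken to refine the last link of the $L^\infty$-chain, making the union bound over both chains coherent. Constants involving $\eta$, $p$ and the Orlicz norm $\|\xi\|_{\psi_p}$ propagate through the tail computation in Step 1; I would track them via standard Orlicz-norm manipulations as in \Cref{app: orlicz_app}. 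The extremal case $\eps=0$ (no truncation, no $L^\infty$ chaining) reduces to the classical Dudley bound and serves as a sanity check.
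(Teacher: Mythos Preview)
Your truncation step (Step~1) is correct and matches the paper exactly: restrict to $B(0;L)$, absorb the tail into the $\sqrt{n}\,M\eps\dotp{L}^\eta$ term, and thereby replace the weighted $L^\infty(\dotp{\cdot}^{-\eta})$ norm by the ordinary $L^\infty$ norm on the truncation set (at the price of the $\dotp{L}^\eta$ factor). The difficulty is entirely in Step~2, and there your sequential ``first $L^\infty$, then $L^2$'' chaining has two real gaps.

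First, the halving condition $H_\infty(K\eps)\le H_\infty(\eps)/2$ is assumed \emph{only at the single scale $\eps$}; nothing in the hypotheses lets you iterate it to conclude $H_\infty(K^j\eps)\le 2^{-j}H_\infty(\eps)$, so your dyadic $L^\infty$ chain at scales $K^j\eps$ has no control on the cardinalities $N_j$. Second, even granting such control, the subgaussian pieces of Bernstein along an $L^\infty$ chain are governed by the $L^2$ norms of the links, which in your construction are only bounded by $K^j\eps\dotp{L}^\eta$ (the $L^\infty$ increment), not by any $L^2$ quantity; the resulting sum $\sum_j K^j\eps\dotp{L}^\eta\sqrt{H_\infty(K^{j-1}\eps)}$ is not of the form $\sigma\sqrt{H_\infty(\eps)}$, and in any case $\sigma\sqrt{H_\infty(\eps)}=\sigma\sqrt{H_2(\tilde\eps)}$ is \emph{not} dominated by $\int_{\tilde\eps/2}^\sigma\sqrt{H_2}$ (take $\sigma\gg\tilde\eps$). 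So the gluing you describe at scale $\tilde\eps$ does not close.

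The paper avoids both issues by chaining \emph{simultaneously} in the two metrics. It indexes levels by entropy rather than by scale: set $\eps_{j,i}$ to be the largest $h$ with $H_i(h)\ge 2^j$ (for $i\in\{2,\infty\}$), so that at every level the $L^2$-net and the $L^\infty$-net both have log-cardinality $\le c\,2^j$. At level $j$ one takes the \emph{common refinement} of the two partitions and chooses a single representative per cell; each chaining link then satisfies $\|\cdot\|_{L^2}\le 2\eps_{j,2}$ and $\|\cdot\|_{L^\infty}\le 2\eps_{j,\infty}$ at once, and Bernstein gives the contribution $\eps_{j,2}2^{j/2}+n^{-1/2}\eps_{j,\infty}2^j$. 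Summing in $j$ converts directly into the two entropy integrals. The constant $K$ enters only at the very end, to bound the final $L^\infty$ scale $\eps_{J-1,\infty}\le K\eps$ (because $H_\infty(\eps_{J-1,\infty})=2^{J-1}\le H_\infty(\eps)/2\le H_\infty(K\eps)$), which controls the residual $|X_g-X_{\pi_{J-1}g}|\le 2\sqrt{n}\,\eps_{J-1,\infty}$; analogously, $\tilde\eps$ enters only as a lower truncation for the $L^2$ integral via $H_2(\tilde\eps)\ge H_\infty(\eps)\ge 2^{J-1}$. Neither $K$ nor $\tilde\eps$ is used to build the chain itself.
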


This proposition is a generalization of \cite[Theorem 2.14.21]{vaart2023empirical} to classes of unbounded functions, with a truncation of both entropy integrals. 
A proof of \Cref{prop:van_wellner} is given in \Cref{sec: properties_covering_bracketing}.

\begin{rmk}
    \Cref{thm:abstract_one_shot} can be used with minor modifications to obtain high probability bounds on $\|\nabla\hat\phi_\cF-\nabla\phi_0\|_{L^2(P)}$. We leave the details to the interested reader.
\end{rmk}

\subsection{Bounded case}\label{sec: main_bounded_case}

Let us assume that \ref{cond:covering_bounded} and \ref{cond:bounded} hold. Proofs of additional lemmas in this section are given in \Cref{app: lemmas_bounded}. Let $r=2B=2\beta$ in \Cref{thm:abstract_one_shot}. Define $\overline\cF$ as in \Cref{thm:abstract_one_shot} by
\[ \overline\cF=\{\phi_t=t\phi+(1-t)\overline\phi:\ \phi\in\cF, t\in [0,1] \text{ and }\vertiii{\phi_t-\overline\phi}\leq 2B\}.\]
Note that the last constraint in the definition of the set is actually always satisfied as $\vertiii{\phi_t-\overline\phi}\leq \vertiii{\phi}+\vertiii{\overline\phi}\leq 2B$. 
We first show that the constant function equal to $8R^2$ is an envelope function of $\overline \cF$.

\begin{lem}[Envelope for $\overline \cF$, bounded case]\label{lem:bounded_envelope}
Let $\phi\in \overline\cF$ and let $\xi=(x,y)\in \Xi$. Then,  $|\gamma(\phi,\xi)| \leq  8 R^2$.
\end{lem}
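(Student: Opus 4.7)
The plan is to establish a bound of order $R^2$ separately on $|\phi(x)|$ for $x \in \supp(P)$ and on $|\phi^*(y)|$ for $y \in \supp(Q)$, and then combine them via the triangle inequality.

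First I would exploit the convex-combination structure of $\overline\cF$: every $\phi \in \overline\cF$ is of the form $\phi_t = t\phi_1 + (1-t)\overline\phi$ with $\phi_1, \overline\phi \in \cF$ and $t \in [0,1]$. By taking convex combinations, $\phi_t$ inherits from \ref{cond:bounded} the normalization $\phi_t(0) = 0$, the gradient bound $\|\nabla\phi_t\| \leq R$ on $\supp(P)$, and the domain $\dom(\phi_t) = B(0;2R)$. From the WLOG normalization introduced before \Cref{thm:abstract_one_shot} (which can be arranged with $K \leq R$ in the bounded case) I further obtain $\|\nabla\phi_t(0)\| \leq R$. Since $Q = (\nabla\phi_0)_\sharp P$ and $\phi_0 \in \cF$, I also have $\supp(Q) \subseteq \overline{B(0;R)}$, so that $\|y\| \leq R$ in addition to $\|x\| \leq R$.

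Next, for $x \in \supp(P) \subseteq B(0;R)$, the classical supporting-hyperplane inequalities
\begin{align*}
\langle \nabla\phi_t(0), x\rangle \ \leq\  \phi_t(x) - \phi_t(0)\ \leq \ \langle \nabla\phi_t(x), x\rangle
\end{align*}
combined with Cauchy--Schwarz and the bounds $\|\nabla\phi_t(0)\|, \|\nabla\phi_t(x)\| \leq R$ immediately yield $|\phi_t(x)| \leq R^2$.

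The main step is the upper bound on $\phi_t^*(y)$. Writing $\phi_t^*(y) = \sup_{z \in B(0;2R)}(\langle z, y\rangle - \phi_t(z))$ (the supremum is effectively restricted to $\dom(\phi_t)$ by the convention $\phi_t \equiv +\infty$ elsewhere), the key is to propagate the gradient bound at the origin through convexity to the entire domain:
\begin{align*}
\phi_t(z) \ \geq\  \phi_t(0) + \langle \nabla\phi_t(0), z\rangle \ \geq \ -R\|z\| \ \geq \ -2R^2, \qquad z \in B(0;2R).
\end{align*}
Combined with $\langle z, y\rangle \leq 2R \cdot R = 2R^2$ this gives $\phi_t^*(y) \leq 4R^2$, and the trivial lower bound $\phi_t^*(y) \geq \langle 0, y\rangle - \phi_t(0) = 0$ completes the two-sided control $|\phi_t^*(y)| \leq 4R^2$. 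Adding the two contributions yields $|\gamma(\phi_t,\xi)| \leq R^2 + 4R^2 = 5R^2 \leq 8R^2$. The only non-routine step is the propagation of the gradient bound at $0$ to a global lower bound on $\phi_t$ over $B(0;2R)$, which in turn upper-bounds $\phi_t^*$ through Legendre duality; everything else reduces to Cauchy--Schwarz.
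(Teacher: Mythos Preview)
Your proposal is correct and follows essentially the same two-step route as the paper: bound $|\phi(x)|$ and $|\phi^*(y)|$ separately using the gradient bound from \ref{cond:bounded} together with the domain restriction $\dom(\phi)=B(0;2R)$, then add. The paper obtains $|\phi(x)|\le R^2$ via the line-integral identity $\phi(x)=\int_0^1\langle\nabla\phi(tx),x\rangle\,dt$ rather than your supporting-hyperplane inequalities, but the structure is identical; your explicit extension of the lower bound $\phi_t(z)\ge -2R^2$ to all of $B(0;2R)$ via convexity from the subgradient at $0$ is in fact more transparent than the paper's treatment of that step.
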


The constant envelope function belongs to the Orlicz space of index $p=1$. Let $\tau>0$. We apply \Cref{prop:van_wellner} to the set $\cG=\{\gamma(\phi,\cdot)-\gamma(\overline\phi,\cdot):\ \phi\in \overline\cF,\ \|\nabla\phi-\nabla\overline\phi\|_{L^2(P)} \leq \tau\}$, for which we require the next two lemmas.

\begin{lem}[Uniform second moment on $\cG$, bounded case]\label{lem:bounded_second_moment}
 There exists $C\geq 0$ such that for all $\tau\geq \sqrt{\ell^\star}$,
\begin{equation}
    \sup_{g\in\cG} \|g\|_{L^2(\PP)} \leq \sigma\defeq C \tau.
\end{equation}
\end{lem}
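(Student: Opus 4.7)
The plan is to additively decompose $g = \gamma(\phi,\cdot)-\gamma(\overline\phi,\cdot)\in\cG$ as
\[ g(x,y) = (\phi-\overline\phi)(x) + (\phi^*-\overline\phi^*)(y), \]
and, using the independence of $X\sim P$ and $Y\sim Q$ under $\PP=P\otimes Q$ together with the identity $P(\phi-\overline\phi)+Q(\phi^*-\overline\phi^*) = S(\phi)-S(\overline\phi)$, to expand
\[ \|g\|_{L^2(\PP)}^2 \;=\; \Var_P(\phi-\overline\phi) \;+\; \Var_Q(\phi^*-\overline\phi^*) \;+\; (S(\phi)-S(\overline\phi))^2, \]
and bound each of the three terms by $O(\tau^2)$.

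For the source variance I would apply \ref{cond:poincare} to $\phi-\overline\phi$ and use the constraint defining $\cG$ to get $\Var_P(\phi-\overline\phi) \leq C_{\texttt{PI}}\tau^2$. For the squared-mean term I would use the fact that every $\phi\in\overline\cF$ has the form $\phi=t\psi+(1-t)\overline\phi$ for some $\psi\in\cF$ and $t\in[0,1]$; since $\overline\phi$ is an (approximate) minimizer of $S$ on $\cF$, convexity of $S$ gives $0\leq S(\phi)-S(\overline\phi)\leq t\,|S(\psi)-S(\overline\phi)|$, while the constraint $\|\nabla\phi-\nabla\overline\phi\|_{L^2(P)}\leq\tau$ forces $t\,\|\nabla\psi-\nabla\overline\phi\|_{L^2(P)}\leq\tau$. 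Combined with the uniform $L^\infty$-envelope on $\gamma$ inherited from \ref{cond:bounded} (which bounds $|S(\psi)-S(\overline\phi)|$ by a constant) and a case analysis using the stability bound \eqref{eq:stab_bound_reverse} when $\|\nabla\psi-\nabla\overline\phi\|_{L^2(P)}$ is small, this should yield $(S(\phi)-S(\overline\phi))^2\lesssim\tau^2+\ell^\star$, which is $\lesssim\tau^2$ since $\tau\geq\sqrt{\ell^\star}$.

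For the target variance I would first use that \ref{cond:smooth} makes $\nabla\phi_0$ Lipschitz with constant $\beta(1+2R)^a$ on $B(0;2R)$, so the pushforward $Q=(\nabla\phi_0)_\sharp P$ inherits a Poincaré inequality from $P$ with constant at most $\beta^2(1+2R)^{2a}C_{\texttt{PI}}$. Applying this yields $\Var_Q(\phi^*-\overline\phi^*) \lesssim \|\nabla(\phi^*-\overline\phi^*)\|_{L^2(Q)}^2$, so what remains is a quantitative inverse-stability estimate $\|\nabla(\phi^*-\overline\phi^*)\|_{L^2(Q)}^2\lesssim\tau^2$. This is the main obstacle: without uniform strong convexity on $\cF$, translating closeness of the primal gradients in $L^2(P)$ into closeness of the (possibly set-valued) conjugate gradients in $L^2(Q)$ is not automatic. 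The leverage comes from \ref{cond:bounded}: since $\dom(\phi)=B(0;2R)$ for every $\phi\in\cF$, every maximizer in the Legendre transform $\phi^*(y)=\sup_{x\in B(0;2R)}(\langle x,y\rangle-\phi(x))$ is confined to the fixed compact set $B(0;2R)$, and combined with the uniform smoothness from \ref{cond:smooth} this should produce a measurable subgradient selection $z(y)\in B(0;2R)$ for $y\in\supp(Q)$ satisfying a pointwise estimate $\|\nabla\phi^*(y)-\nabla\overline\phi^*(y)\|\lesssim\|\nabla\phi(z(y))-\nabla\overline\phi(z(y))\|$; a change of variable via $\nabla\phi_0$ then converts this into the desired $L^2(P)$-bound in terms of $\tau^2$.
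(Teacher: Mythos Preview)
Your decomposition into source variance, target variance, and squared mean is a reasonable reorganization, and the source variance bound via \ref{cond:poincare} is exactly what the paper does. The difficulty is concentrated in the target term, and there your proposal has a genuine gap.

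Reducing $\Var_Q(\phi^*-\overline\phi^*)$ via a Poincar\'e inequality for $Q$ commits you to controlling $\|\nabla\phi^*-\nabla\overline\phi^*\|_{L^2(Q)}$. But in the bounded case the potentials in $\cF$ are only assumed \emph{convex}, not strongly convex, so there is no uniform inverse-stability of the Legendre transform. A one-dimensional example already breaks your claimed pointwise estimate: take $P$ uniform on $[-1,1]$, $\phi_0(x)=x^2/2$, $\phi(x)=\tfrac{\varepsilon}{2}x^2$ and $\overline\phi(x)=\tfrac{\varepsilon}{2}x^2+\delta x$ on $B(0;2R)$. Then $\|\nabla\phi-\nabla\overline\phi\|_{L^2(P)}=\delta$, yet on the set $\{|y|\lesssim\varepsilon\}\subset\supp(Q)$ one has $|\nabla\phi^*(y)-\nabla\overline\phi^*(y)|=\delta/\varepsilon$, and no single point $z(y)$ will rescue the inequality $\|\nabla\phi^*(y)-\nabla\overline\phi^*(y)\|\lesssim\|\nabla\phi(z(y))-\nabla\overline\phi(z(y))\|$ since the right-hand side equals $\delta$ everywhere. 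Moreover, even if such a $z(y)$ existed, the change of variables $y=\nabla\phi_0(x)$ would give an integral of $\|\nabla\phi-\nabla\overline\phi\|^2$ evaluated at $z(\nabla\phi_0(x))$, not at $x$, so you would not land on $\|\nabla\phi-\nabla\overline\phi\|_{L^2(P)}^2$.

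The paper avoids conjugate gradients altogether. It works with the centered potentials $\phi_c=\phi-P(\phi)$ (which also makes your separate squared-mean argument unnecessary), routes the comparison through the \emph{true} potential $\phi_0$, and bounds the conjugate \emph{values} pointwise. For $y=\nabla\phi_0(x)\in\supp(Q)$, letting $x'\in B(0;2R)$ be a maximizer in $\phi^*(y)$, convexity of $\phi$ gives $\phi_c(x')\geq\phi_c(x)+\langle\nabla\phi(x),x'-x\rangle$, which combined with Fenchel--Young yields
\[
\phi_{0,c}(x)-\phi_c(x)\;\leq\;\phi_c^*(y)-\phi_{0,c}^*(y)\;\leq\;\langle\nabla\phi_0(x)-\nabla\phi(x),\,x'-x\rangle+\bigl(\phi_{0,c}(x)-\phi_c(x)\bigr).
\]
Since $\|x'-x\|\leq 3R$ under \ref{cond:bounded}, this gives $|\phi_c^*(y)-\phi_{0,c}^*(y)|\leq 3R\|\nabla\phi_0(x)-\nabla\phi(x)\|+|\phi_{0,c}(x)-\phi_c(x)|$, and now the change of variables \emph{does} land on $L^2(P)$ because $x$ is precisely $(\nabla\phi_0)^{-1}(y)$. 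Poincar\'e on $P$ handles the second term. Triangling through $\phi_0$ and invoking \Cref{prop: stab_bound} then gives the $\tau$ bound once $\tau\geq\sqrt{\ell^\star}$.
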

\begin{lem}[Covering numbers of $\cG$, bounded case]\label{lem:bounded_brackets}
For $h>0$, it holds that
\begin{equation}
    \log \cN(h,\cG,L^\infty) \leq \log \cN(h/4,\cF,L^\infty) + C\log_+(1/h).
\end{equation}
\end{lem}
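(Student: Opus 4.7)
The key structural observation is that the map $\phi \mapsto g_\phi \defeq \gamma(\phi,\cdot)-\gamma(\overline\phi,\cdot) = (\phi-\overline\phi)(x) + (\phi^*-\overline\phi^*)(y)$ is Lipschitz from the space of convex functions on $B(0;2R)$ equipped with the $L^\infty$ norm, into $L^\infty(\Xi)$, with Lipschitz constant at most $2$. Indeed, since every element of $\overline\cF$ has domain $B(0;2R)$, the elementary inequality
\begin{equation*}
    |\phi_1^*(y)-\phi_2^*(y)| \;\leq\; \sup_{x\in B(0;2R)} |\phi_1(x)-\phi_2(x)| \qquad \forall y\in \R^d
\end{equation*}
(immediate from $\phi_i^*(y)=\sup_{x\in B(0;2R)}\dotp{x,y}-\phi_i(x)$) shows that conjugation is $1$-Lipschitz for the $L^\infty$-norm on $B(0;2R)$, so that $\|g_{\phi_1}-g_{\phi_2}\|_{L^\infty(\Xi)}\leq 2\|\phi_1-\phi_2\|_{L^\infty(B(0;2R))}$. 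Therefore it suffices to produce an $(h/2)$-covering of $\overline\cF$ in $L^\infty$.

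To cover $\overline\cF = \{t\phi+(1-t)\overline\phi:\phi\in\cF,\,t\in[0,1]\}$ at scale $h/2$, I will combine an $(h/4)$-cover of $\cF$ in $L^\infty$ with a discretization of the segment parameter $t\in[0,1]$. Writing $\phi_t-\overline\phi = t(\phi-\overline\phi)$, one has for any center $\phi_i$ of $\cF$ and any $t_k\in[0,1]$
\begin{equation*}
    \|\phi_t - (t_k\phi_i + (1-t_k)\overline\phi)\|_{L^\infty(B(0;2R))} \;\leq\; t\,\|\phi-\phi_i\|_\infty + |t-t_k|\,\|\phi_i-\overline\phi\|_\infty,
\end{equation*}
which is at most $h/2$ provided $\|\phi-\phi_i\|_\infty\leq h/4$ and $|t-t_k|\leq h/(4M)$, where $M \defeq \sup_{\phi\in\cF}\|\phi-\overline\phi\|_{L^\infty(B(0;2R))}$. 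The constant $M$ is finite and bounded independently of $h$: under \ref{cond:smooth} and \ref{cond:bounded}, and using the WLOG hypothesis $\|\nabla\phi(0)\|\leq K$ together with $\phi(0)=0$, the bound $\|\nabla^2\phi\|_{\mathrm{op}}\lesssim \beta\dotp{\,\cdot\,}^a$ integrates to a uniform pointwise bound on $|\phi|$ over $B(0;2R)$ depending only on $R,\beta,a,K$.

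Combining the two covers and taking logs, I obtain
\begin{equation*}
    \log\cN(h,\cG,L^\infty) \;\leq\; \log\cN(h/4,\cF,L^\infty) + \log\cN(h/(4M),[0,1],|\cdot|),
\end{equation*}
and the second term is bounded by $\log(1+4M/h)\leq C\log_+(1/h)$ for a constant $C$ depending on $M$. This yields the stated inequality. I expect no serious obstacle; the only point requiring care is keeping the constant $M$ truly finite and independent of $h$, which is where the smoothness assumption \ref{cond:smooth} together with the uniform bound on $\|\nabla\phi(0)\|$ is essential to ensure the linear segments in $\overline\cF$ remain uniformly bounded on $B(0;2R)$.
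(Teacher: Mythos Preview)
Your proposal is correct and follows essentially the same route as the paper: you use that conjugation is $1$-Lipschitz for the $L^\infty$-norm (so that $\phi\mapsto g_\phi$ is $2$-Lipschitz), then cover $\overline\cF$ by tensoring an $(h/4)$-cover of $\cF$ with a grid in $t\in[0,1]$. The paper packages the second step as a standalone ``covering of lines'' lemma (\Cref{lem:covering_lines}), but the content is identical to what you wrote out by hand, and your justification that $M=\sup_{\phi\in\cF}\|\phi-\overline\phi\|_{L^\infty(B(0;2R))}$ is finite under \ref{cond:smooth} and \ref{cond:bounded} is exactly the uniform boundedness used in that lemma.
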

Assume first that $\gamma\in [0,1)$ and let $q=0$. We define for some $m,c\geq 0$ the functions $H_\infty(h)=H_2(h) = 1+ c\log_+(1/h)^{m} h^{-\gamma}$ for all $h>0$. According to condition \ref{cond:covering_bounded}, these functions are valid upper bounds on the log-covering numbers in \Cref{prop:van_wellner}. According to \Cref{lem:bounded_envelope}, the constant function $16R^2$ is an upper bound on the $L^\infty$-norm of functions in $\cG$. \Cref{prop:van_wellner} for $\eps=0$ implies 
\begin{align*}
   &\sqrt{n} \E \sup_{g\in \cG} |(\PP_n-\PP)(g)| \lesssim \int_0^{C\tau} \sqrt{H_2(h)}\dd h + \frac{1}{\sqrt{n}} \int_0^{16R^2} H_\infty(h)\dd h\\
   &\lesssim_{\log_+(1/\tau)}   \tau^{1-\gamma/2} + n^{-1/2} \lesssim_{\log_+(1/\tau),\log(n)}  \tau^{1-\gamma/2}+ n^{\frac{1}{2}-\frac{1}{\gamma}} + n^{-\frac 12}.
\end{align*}    
This defines a function $\Psi(\tau)$ that is bounded as in \eqref{eq:the_good_psi}. Applying the argument explained at the end of \Cref{sec:_one_shot} gives the conclusion. When $\gamma\in [1,2)$, the function $H_\infty$ is no longer integrable, and we apply \Cref{prop:van_wellner} with $\eps=n^{-1/\gamma}$ and $\tilde\eps=0$. Remark that the parameter $K$ in \Cref{prop:van_wellner} can be chosen independently of $\eps$. We obtain
\begin{align*}
   \sqrt{n} \E \sup_{g\in \cG} &|(\PP_n-\PP)(g)| \lesssim \sqrt{n}\eps+ \int_0^{C\tau} \sqrt{H_2(h)}\dd h + \frac{1}{\sqrt{n}} \int_{\eps/2}^{16R^2} H_\infty(h)\dd h\\
   &\lesssim_{\log_+(1/\tau),\log(n)}  \tau^{1-\gamma/2}+n^{\frac{1}{2}-\frac{1}{\gamma}} \lesssim_{\log_+(1/\tau),\log(n)}  \tau^{1-\gamma/2}+ n^{\frac{1}{2}-\frac{1}{\gamma}} + n^{-\frac 12}.
\end{align*}    
Once again, we conclude as explained in \Cref{sec:_one_shot}.

 When $\gamma\geq 2$, we let once again $\eps=n^{-1/\gamma}$ and apply \Cref{prop:van_wellner} with $\tilde\eps=\eps$ to obtain
\begin{equation}\label{eq:bound_gamma_large}
    \begin{split}
        \sqrt{n} \E \sup_{g\in \cG} |(\PP_n-\PP)(g)| &\lesssim \sqrt{n}\eps+ \int_{\eps/2}^{C\tau} \sqrt{H_2(h)}\dd h + \frac{1}{\sqrt{n}} \int_{\eps/2}^{16R^2} H_\infty(h)\dd h \lesssim_{\log n}  n^{\frac{1}{2}-\frac{1}{\gamma}},
    \end{split}
\end{equation}
   and we conclude thanks to the considerations found after \Cref{eq:the_good_psi_gamma_large}.

\subsection{Strongly convex case}\label{sec: main_strongly_case}
In the strongly convex case, we pick $\overline \phi= \argmin_{\phi\in \cF_\alpha}S(\phi)$ (we suppose that $\overline \phi$ exists, for otherwise we use a standard approximation scheme). The proofs of the additional lemmas in this section are found in \Cref{app: lemmas_unbounded}.

As in the bounded case, we first give an envelope function of $\cF$.

\begin{lem}[Envelope for $\cF$, strongly convex case]\label{lem:strict_envelope}
Let $\phi\in \cF$ and let $\xi=(x,y)\in \Xi$. Then, $|\gamma(\phi,\xi)|\leq  M\dotp{\xi}^{a+2}$, where $C$ depends on $\alpha$, $\beta$, $a$ and on $K$ a uniform bound on $\|\nabla\phi(0)\|$ over $\phi\in \cF$.
\end{lem}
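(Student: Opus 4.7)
The plan is to decompose $|\gamma(\phi,\xi)| \le |\phi(x)| + \phi^*(y)$ (using $\phi^*(y) \ge \langle 0, y\rangle - \phi(0) = 0$) and bound the two summands separately. The bound on $|\phi(x)|$ will come from the Hessian \emph{upper} bound supplied by $(\beta,a)$-smoothness; the bound on $\phi^*(y) = \sup_x(\langle x, y\rangle - \phi(x))$ will require a Hessian \emph{lower} bound, which is where the dependence on $\alpha$ enters, consistent with the stated constant depending on $\alpha,\beta,a,K$.

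For the first summand, I would invoke Taylor's formula with integral remainder around the origin, $\phi(x) = \langle \nabla\phi(0), x\rangle + \int_0^1 (1-t)\langle \nabla^2\phi(tx) x, x\rangle\,dt$, and plug in the uniform bound $\|\nabla\phi(0)\| \le K$ (justified in the paragraph preceding the lemma, cf.\ \Cref{lem:assume_gradient_bounded}) together with $\op{\nabla^2\phi(tx)} \le \beta\dotp{tx}^a \le \beta\dotp{x}^a$ from \ref{cond:smooth}. This directly yields $|\phi(x)| \le K\|x\| + \tfrac{\beta}{2}\|x\|^2\dotp{x}^a \lesssim \dotp{x}^{a+2}$ with constant depending on $K,\beta,a$. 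For the second summand, I would apply the analogous Taylor \emph{lower} bound with $\lambda_{\min}(\nabla^2\phi(tx)) \ge \alpha\dotp{tx}^a$, yielding $\phi(x) \gtrsim \alpha\|x\|^{a+2} - K\|x\|$ once $\|x\|$ exceeds a threshold depending on $K,\alpha,a$. Splitting the supremum defining $\phi^*(y)$ at this threshold and explicitly maximizing the leading term $\|x\|\|y\| - c\alpha\|x\|^{a+2}$ over $\|x\|$ produces $\phi^*(y) \lesssim \|y\|^{(a+2)/(a+1)} + 1 \lesssim \dotp{y}^{a+2}$, where the last step uses $(a+2)/(a+1) \le a+2$ for $a \ge 0$. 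Combining the two bounds with the elementary inequality $\dotp{\xi}^{a+2} \gtrsim \dotp{x}^{a+2} + \dotp{y}^{a+2}$ closes the argument.

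The only delicate point is the Hessian lower bound, which requires $\phi$ to be $(\alpha,a)$-strongly convex, whereas the statement quantifies over all $\phi \in \cF$. This turns out not to be a real restriction for the way the envelope is used: in the application of \Cref{thm:abstract_one_shot} to \Cref{thm:strongly_convex}, only potentials lying in the convexified set $\overline\cF$ from \Cref{sec:_one_shot} contribute, and those potentials are $(\alpha/2,a)$-strongly convex by the choice of the auxiliary radius $r$ controlling $\vertiii{\phi-\overline\phi}$. Modulo this point, everything else reduces to routine integration of the Hessian bounds collected in \Cref{app:potential} and one-variable optimization.
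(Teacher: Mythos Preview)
Your proposal is correct and matches the paper's approach essentially step for step: the paper bounds $|\phi(x)|$ via \Cref{lem:smooth_basic} and $\phi^*(y)$ via \Cref{lem:conjugate_strict_convex} (yielding exactly your exponent $(a+2)/(a+1)$), which are just the Taylor estimates you re-derive inline. Your remark that the needed $(\alpha/2,a)$-strong convexity is available only for $\phi\in\overline\cF$ rather than all of $\cF$ is exactly what the paper's own proof uses without comment.
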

Recall that $P$ satisfying a Poincaré inequality implies that it has an exponential tail. As $\phi_0$ is $\beta$-smooth with exponent $a$ and $Q=(\nabla\phi_0)_\sharp P$, this implies that $Y\sim Q$ belongs to the Orlicz space of exponent $\min(1,1/a)$. In particular, the random variable $\dotp{\xi}^{a+2}$ where $\xi\sim \PP$ belongs to the Orlicz space of exponent $\min(1,1/a)/(2+a)$. Hence, \Cref{thm:abstract_one_shot} can be applied. It remains to bound the expectation \eqref{eq:control_fluct_abstract} using \Cref{prop:van_wellner}.

Recall that $\vertiii{\phi} = \sup_{x\in \R^d} \dotp{x}^{-1}\op{\nabla^2\phi(x)}$.  Let $r=\alpha/2$ in \Cref{thm:abstract_one_shot}. Define $\overline\cF$ as in \Cref{thm:abstract_one_shot} by
\[ \overline\cF=\{\phi_t=t\phi+(1-t)\overline\phi:\ \phi\in\cF, t\in [0,1] \text{ and }\vertiii{\phi_t-\overline\phi}\leq r\}.\]
As $\overline\phi$ is $(\alpha,a)$-strongly convex, the constraint $\vertiii{\phi_t-\overline\phi}\leq \alpha/2$ entails that $\phi_t$ is $(\alpha/2,a)$-strongly convex. Let $\tau>0$. We apply \Cref{prop:van_wellner} to the set $\cG=\{\gamma(\phi,\cdot)-\gamma(\overline\phi,\cdot):\ \phi\in \overline\cF, \|\nabla\phi-\nabla\overline\phi\|_{L^2(P)} \leq \tau\}$. If the radius $R$ in \ref{cond:bounded} is finite, we can apply \Cref{lem:bounded_second_moment} and \Cref{lem:bounded_brackets} to apply \Cref{prop:van_wellner}. When $R=\infty$, the potentials are not bounded and we require the two following lemmas.

 \begin{lem}[Uniform second moment on $\cG$, strongly convex case]\label{lem:strict_second_moment}
 There exists $C,c\geq 0$ such that for all $\tau\geq \sqrt{\ell^\star}$,
\begin{equation}
    \sup_{g\in\cG} \|g\|_{L^2(\PP)} \leq \sigma\defeq C \log_+(1/\tau)^c  \tau.
\end{equation}
\end{lem}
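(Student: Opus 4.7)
The plan is to split $g(x,y) = f_1(x) + f_2(y)$, where $f_1 := \phi - \overline\phi$ and $f_2 := \phi^* - \overline\phi^*$, and use Minkowski's inequality together with the independence of $X \sim P$ and $Y \sim Q$ to reduce to controlling $\|f_1\|_{L^2(P)}$ and $\|f_2\|_{L^2(Q)}$ separately by $\log_+(1/\tau)^c\, \tau$. Throughout, I will use that any $\phi \in \overline\cF$ satisfies $\vertiii{\phi - \overline\phi} \leq \alpha/2$, so that $\phi$ inherits $(\alpha/2, a)$-strong convexity from $\overline\phi$.

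For the first term, the Poincaré inequality \ref{cond:poincare} immediately gives $\|f_1 - P(f_1)\|_{L^2(P)} \leq \sqrt{C_{\mathtt{PI}}}\,\tau$, so the task is to control $|P(f_1)|$. To do so, I would exploit that both potentials are $(\beta,a)$-smooth with $f_1(0) = 0$, yielding the pointwise envelope $|f_1(x)| \lesssim \langle x\rangle^{a+2}$. Combined with the subexponential tails of $P$ (which follow from the Poincaré inequality, cf.~\eqref{eq:poincare_exp}), a truncation argument on $\{\|X\| \leq \log_+(1/\tau)^{1/p}\}$ lets us match the mean $|P(f_1)|$ on a bulk set of $P$-mass $1 - o(\tau)$ with the centered $L^2(P)$ bound above, contributing only a $\log$ factor.

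For the second term, I would use the envelope theorem applied to the interpolation $\phi_s := \overline\phi + s(\phi - \overline\phi)$: since $\psi \mapsto \psi^*(y)$ is convex (cf.~\cref{lem:conjugate_convex}) with subdifferential at $\phi_s$ given by $-\delta_{\nabla\phi_s^*(y)}$, one has $\tfrac{d}{ds}\phi_s^*(y) = -f_1(\nabla\phi_s^*(y))$, hence
\begin{equation*}
    f_2(y) = -\int_0^1 f_1(\nabla\phi_s^*(y))\,\mathrm{d} s.
\end{equation*}
Cauchy--Schwarz and Fubini then give $\|f_2\|_{L^2(Q)}^2 \leq \int_0^1 \|f_1\|_{L^2(Q_s)}^2\,\mathrm{d}s$ with $Q_s := (\nabla\phi_s^*)_\sharp Q$. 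The core step is to compare $Q_s$ with $P$ uniformly in $s$: since $\phi_s$ is $(\alpha/2, a)$-strongly convex, $\nabla\phi_s^*$ is locally Lipschitz with polynomial control in $\|y\|$, so writing $Y = \nabla\phi_0(X)$ for $X\sim P$ and bounding the pointwise distortion $\|\nabla\phi_s^*(\nabla\phi_0(X)) - X\|$ lets us bound $\|f_1\|_{L^2(Q_s)}^2$ by $\|f_1\|_{L^2(P)}^2$ times an integrable polynomial weight; the subexponential tails of $P$ absorb this weight at the cost of a logarithmic factor.

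The main obstacle is this last comparison of $Q_s$ to $P$, because $\phi_0$ need not be strongly convex and $\nabla\phi_0$ need not be globally invertible, so a naive density computation fails. The remedy is to avoid inverting $\nabla\phi_0$ and instead work at the level of $(X, Y = \nabla\phi_0(X))$-pairs, transferring all the regularity onto the strongly convex $\phi_s$ side via the envelope identity above. Once the comparison $\|f_1\|_{L^2(Q_s)} \lesssim \log_+(1/\tau)^{c'}\,\|f_1\|_{L^2(P)} + \log_+(1/\tau)^{c''}\,\tau$ is obtained, combining with the bound on $\|f_1\|_{L^2(P)}$ yields the claimed estimate on $\sigma$.
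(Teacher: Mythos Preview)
Your overall architecture is reasonable, and the envelope-theorem identity $f_2(y)=-\int_0^1 f_1(\nabla\phi_s^*(y))\,\rd s$ is a legitimately different and elegant route to the conjugate part compared with the paper's explicit decomposition. However, there is a genuine gap in your treatment of $\|f_1\|_{L^2(P)}$.

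The problem is the mean. Poincar\'e controls only $\|f_1-P(f_1)\|_{L^2(P)}$, and your claimed truncation argument for $|P(f_1)|$ does not go through: the constraints $f_1(0)=0$ and $\|\nabla f_1\|_{L^2(P)}\le\tau$ do \emph{not} imply $|P(f_1)|\lesssim\tau\log_+^c(1/\tau)$. Writing $f_1(x)=\int_0^1\langle\nabla f_1(tx),x\rangle\,\rd t$ and attempting Cauchy--Schwarz forces you to control $\int\|\nabla f_1(tx)\|^2\,\rd P(x)$ for $t$ near $0$, which is governed by $\|\nabla f_1(0)\|$, a quantity bounded only by the fixed constant $2K$, not by $\tau$. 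Your ``bulk/tail'' sketch does not overcome this. This gap then propagates to the $f_2$ bound, since your envelope identity feeds $f_1$ (uncentered) back in.

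The paper sidesteps the issue entirely with a centering trick you are missing: because $(\phi+c)^*=\phi^*-c$, one has $\gamma(\phi,\xi)=\phi_c(x)+\phi_c^*(y)$ with $\phi_c=\phi-P(\phi)$. Working with $f_{1,c}:=\phi_c-\overline\phi_c$ (mean zero by construction) makes Poincar\'e give $\|f_{1,c}\|_{L^2(P)}\le\sqrt{C_{\texttt{PI}}}\,\tau$ immediately. Your envelope approach is then salvageable, since the same calculation shows $\partial_s\phi_{s,c}^*(y)=-f_{1,c}(\nabla\phi_s^*(y))$, i.e., the centered primal difference is exactly what appears in the interpolated conjugate. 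With that fix, your comparison of $Q_s$ to $P$ works (note though that the resulting bound is \emph{additive}, $\|f_{1,c}\|_{L^2(Q_s)}^2\lesssim\|f_{1,c}\|_{L^2(P)}^2+\log_+^c(1/\tau)\tau^2$, not multiplicative as you wrote). The paper instead goes through $\phi_0$ as an intermediary, decomposing $\phi_c^*-\phi_{0,c}^*$ into three explicit terms using $\phi_c^*(y)=\langle y,T^{-1}(y)\rangle-\phi_c(T^{-1}(y))$, each controlled via the Lipschitz property of $T^{-1}$ (from strong convexity), Poincar\'e, and \Cref{lem:matching_moments}; your interpolation route would be a cleaner alternative once the centering is in place.
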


\begin{lem}[Covering numbers of $\cG$, strongly convex case]\label{lem:strict_brackets}
There exists $c_0,c_1,C>0$ such that for $h>0$,
\begin{equation}
\begin{split}
    \log \cN(h,\cG,L^2(\PP)) &\leq  \log \cN(c_0h,\cG,L^\infty(\dotp{\ \cdot \ }^{-\eta}) \\
    &\leq \log \cN(c_1h,\cF,L^\infty(\dotp{\ \cdot \ }^{-\eta})) + C\log_+(1/h).
    \end{split}
\end{equation}
\end{lem}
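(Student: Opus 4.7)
\medskip

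The plan is to prove the two inequalities separately.

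For the first inequality, I would simply relate the two norms pointwise. If $\|g_1-g_2\|_{L^\infty(\dotp{\ \cdot\ }^{-\eta})}\leq c_0 h$, then pointwise $|g_1(\xi)-g_2(\xi)|\leq c_0 h\dotp{\xi}^{\eta}$, so
\[ \|g_1-g_2\|^2_{L^2(\PP)}\leq c_0^2 h^2 \E_{\xi\sim\PP}[\dotp{\xi}^{2\eta}].\]
The key point is that this moment is finite with a uniform bound: since $P$ satisfies the Poincaré inequality, $X\sim P$ is subexponential by \eqref{eq:poincare_exp}, hence has finite moments of every order. Since $\phi_0$ is $(\beta,a)$-smooth with $\nabla\phi_0(0)$ bounded, $\|Y\|=\|\nabla\phi_0(X)\|\lesssim \dotp{X}^{a+1}$, so $\E[\dotp{Y}^{2\eta}]<\infty$ as well. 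Picking $c_0$ to be the inverse square root of this moment gives the first inequality.

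For the second inequality, I would first reduce covering $\overline\cF$ to covering $\cF$. Any $\phi_t\in\overline\cF$ has the form $t\phi+(1-t)\overline\phi$ for $\phi\in\cF$ and $t\in[0,1]$; combining a $c_1h$-cover $\{\phi^{(i)}\}$ of $\cF$ in the weighted sup norm with a fine grid $\{t_j\}$ of $[0,1]$ at scale $h$, I bound
\[ \|\phi_t-(t_j\phi^{(i)}+(1-t_j)\overline\phi)\|_{L^\infty(\dotp{\ \cdot\ }^{-\eta})} \leq t\|\phi-\phi^{(i)}\|_{L^\infty(\dotp{\ \cdot\ }^{-\eta})} + |t-t_j|\|\phi-\overline\phi\|_{L^\infty(\dotp{\ \cdot\ }^{-\eta})}.\]
The second factor is uniformly bounded by a constant since $\phi$ and $\overline\phi$ are $(\beta,a)$-smooth with bounded gradient at $0$, so that $|\phi(x)|,|\overline\phi(x)|\lesssim \dotp{x}^{a+2}\leq \dotp{x}^{\eta}$. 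Only $O(1/h)$ grid points are needed, giving the $C\log_+(1/h)$ surplus in the log-covering number.

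The main obstacle, and the only place strong convexity plays an essential role, is controlling conjugates: I need that if $\phi_1,\phi_2\in\overline\cF$ satisfy $\|\phi_1-\phi_2\|_{L^\infty(\dotp{\ \cdot\ }^{-\eta})}\leq h$, then so does $g_{\phi_1}-g_{\phi_2}$, which amounts to showing $|\phi_1^*(y)-\phi_2^*(y)|\lesssim h\dotp{y}^{\eta}$. By construction of $\overline\cF$ with radius $r=\alpha/2$, every $\phi_i$ is $(\alpha/2,a)$-strongly convex, so the maximizer $x^*(y)$ in $\phi_1^*(y)=\sup_x\{\langle x,y\rangle-\phi_1(x)\}$ is unique and satisfies $\nabla\phi_1(x^*(y))=y$. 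Integrating the strong convexity lower bound on $\nabla^2\phi_1$ (a standard computation from \Cref{app:potential}) gives $\|\nabla\phi_1(x)\|\gtrsim \|x\|\dotp{x}^{a}$, so $\|x^*(y)\|\lesssim \dotp{y}^{1/(a+1)}$. The envelope inequality then yields
\[ \phi_1^*(y)-\phi_2^*(y)\leq (\phi_2-\phi_1)(x^*(y))\leq h\dotp{x^*(y)}^{\eta}\lesssim h\dotp{y}^{\eta/(a+1)}\leq h\dotp{y}^{\eta},\]
where the last inequality uses $a+1\geq 1$, and the converse bound is symmetric. This is precisely where the condition $\eta\geq a+2$ (which a fortiori gives $\eta\geq a+1$) is used. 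Combining the two steps with a relabeling of constants yields the desired chain of inequalities.
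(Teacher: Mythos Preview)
Your proposal is correct and follows essentially the same route as the paper: bound $L^2(\PP)$ by the weighted sup norm via the finite moment $\E_\PP[\dotp{\xi}^{2\eta}]$; show that $\phi\mapsto\phi^*$ is Lipschitz in $L^\infty(\dotp{\ \cdot\ }^{-\eta})$ on $\overline\cF$ by using $(\alpha/2,a)$-strong convexity to control $\|T_i^{-1}(y)\|\lesssim\dotp{y}^{1/(a+1)}$ (this is exactly \Cref{lem:strong_convex_basic}); and reduce $\overline\cF$ to $\cF$ plus a grid on $[0,1]$ (this is \Cref{lem:covering_lines}). The paper applies the Lipschitz step before the line-covering step while you reverse the order, but the content is identical. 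One cosmetic remark: the inequality $\dotp{y}^{\eta/(a+1)}\le\dotp{y}^{\eta}$ only needs $a\ge 0$; the condition $\eta\ge a+2$ is actually used where you bound $\|\phi-\overline\phi\|_{L^\infty(\dotp{\ \cdot\ }^{-\eta})}$ via $|\phi(x)|\lesssim\dotp{x}^{a+2}$.
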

Having these two lemmas at hand, the proof can be concluded exactly as in the bounded case, with the same estimates holding up to logarithmic factors. This yields the first rate of convergence in \Cref{thm:strongly_convex}. Under the additional condition \ref{cond:covering_gradient}, we are able to improve the rates by using better bounds on the $L^2$-covering numbers of the set $\cG$. 

\begin{lem}[Covering numbers of $\cG$ under condition \ref{cond:covering_gradient}]\label{lem:improved_covering}
Under the additional condition \ref{cond:covering_gradient},  it holds that for every $h>0$,
\begin{equation}
    \log \cN(h,\cG,L^2(\PP)) \lesssim_{\log_+(1/h),\log_+(1/\tau)} (h/\tau)^{-m}\,. 
\end{equation}
\end{lem}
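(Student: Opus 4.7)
The plan is to exploit the additive structure of elements of $\cG$. Every $g_\phi = \gamma(\phi,\cdot)-\gamma(\overline\phi,\cdot)$ splits as
\[ g_\phi(x,y) = (\phi(x)-\overline\phi(x)) + (\phi^*(y)-\overline\phi^*(y)),\]
so the triangle inequality on $L^2(\PP)=L^2(P\otimes Q)$ gives
\[\|g_\phi-g_{\phi'}\|_{L^2(\PP)}\leq \|\phi-\phi'\|_{L^2(P)}+\|\phi^*-(\phi')^*\|_{L^2(Q)}.\]
Consequently, a covering of $\cG$ at scale $h$ in $L^2(\PP)$ can be obtained by taking the product of an $L^2(P)$-cover of $\{\phi:\phi\in\overline\cF,\ \|\nabla\phi-\nabla\overline\phi\|_{L^2(P)}\leq \tau\}$ at scale $h/2$ with an $L^2(Q)$-cover of the corresponding set of conjugates $\{\phi^*\}$ at scale $h/2$.

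The first covering falls directly under condition \ref{cond:covering_gradient}. Every $\phi\in\overline\cF$ is $(\alpha/2,a)$-strongly convex, so applying \ref{cond:covering_gradient} (with $\alpha/2$ in place of $\alpha$) to the ball $B_\tau$ centered at $\overline\phi$ yields $\log \cN(h/2,\{\phi\},L^2(P))\lesssim_{\log_+(1/h),\log_+(1/\tau)} (h/\tau)^{-m}$.

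The second covering is the main obstacle, and the key technical step is a \emph{dual stability} estimate: there exists $C>0$, depending only on $\alpha,\beta,a$ and on $P,Q$, such that for every $\phi\in\overline\cF$,
\[\|\nabla\phi^*-\nabla\overline\phi^*\|_{L^2(Q)}\leq C\,\|\nabla\phi-\nabla\overline\phi\|_{L^2(P)}.\]
Once this is established, condition \ref{cond:covering_gradient} applied to the ball $B^*_{C\tau}$ provides $\log \cN(h/2,\{\phi^*\},L^2(Q))\lesssim_{\log_+(1/h),\log_+(1/\tau)} (h/\tau)^{-m}$, and adding the two covering exponents yields the claimed bound. To prove dual stability, set $u=\nabla\overline\phi^*(y)$ and $v=\nabla\phi^*(y)$, so that $\nabla\overline\phi(u)=y=\nabla\phi(v)$ and therefore $\nabla\overline\phi(u)-\nabla\overline\phi(v) = \nabla\phi(v)-\nabla\overline\phi(v)$. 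The $(\alpha/2,a)$-strong convexity of $\overline\phi$ (together with Cauchy--Schwarz) controls $\|u-v\|$ pointwise by $\|\nabla\phi(v)-\nabla\overline\phi(v)\|$ up to a polynomial factor in $\dotp{v}$. Changing variables $y\mapsto v=\nabla\phi^*(y)$ recasts the $L^2(Q)$-integral as one against $(\nabla\phi^*\circ\nabla\phi_0)_\sharp P$; since $\nabla\phi^*\circ\nabla\phi_0$ is a diffeomorphism whose Jacobian is controlled using the smoothness and strong convexity of $\phi$ and $\phi_0$, this pushforward is absolutely continuous with respect to $P$ with a density bounded up to logarithmic tail factors, yielding the claim.

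The delicate point is tracking the polynomial weights $\dotp{\cdot}^a$ when $a>0$ and $P$ has unbounded support: these factors are absorbed by the Orlicz tail controls already used throughout the paper (e.g.\ $P$ having subexponential tails by \eqref{eq:poincare_exp}), and are responsible for the $\log_+(1/\tau)$ loss in the final bound. Together with the symmetric estimate for $L^2(P)$, combining the two coverings gives $\log \cN(h,\cG,L^2(\PP)) \lesssim_{\log_+(1/h),\log_+(1/\tau)} (h/\tau)^{-m}$, as claimed.
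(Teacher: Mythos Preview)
Your decomposition of $g_\phi$ into primal and dual parts, and the reduction to separate $L^2(P)$ and $L^2(Q)$ coverings via condition \ref{cond:covering_gradient}, is exactly the structure of the paper's proof. The only substantive step is the dual stability estimate $\|\nabla\phi^*-\nabla\overline\phi^*\|_{L^2(Q)}\lesssim\tau$, and here your argument has a genuine gap.

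You bound $\|u-v\|$ by $\|\nabla\phi(v)-\nabla\overline\phi(v)\|$ with $v=\nabla\phi^*(y)$, and then assert that the $L^2(Q)$-integral of the latter is controlled by $\|\nabla\phi-\nabla\overline\phi\|_{L^2(P)}$ because the pushforward $(\nabla\phi^*\circ\nabla\phi_0)_\sharp P$ is absolutely continuous with respect to $P$ with bounded density. This is not justified under the paper's hypotheses: the only standing assumption on $P$ is the Poincar\'e inequality \ref{cond:poincare}, which does not even imply that $P$ has a density (e.g.\ $P$ may be supported on a lower-dimensional manifold). Even when a density $p$ exists, the Radon--Nikodym derivative involves both the Jacobian of $\nabla\phi^*\circ\nabla\phi_0$ and the ratio $p\bigl((\nabla\phi^*\circ\nabla\phi_0)^{-1}(v)\bigr)/p(v)$; the Jacobian is indeed controlled by smoothness and strong convexity, but the density ratio is not, absent further structure on $P$. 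Finally, the map $\nabla\phi^*\circ\nabla\phi_0$ depends on $\phi$, so a uniform bound over $\overline\cF_\tau$ would be required. The Orlicz tail controls you invoke do not address this change-of-measure issue.

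The paper sidesteps the problem by comparing $\nabla\phi^*$ not to $\nabla\overline\phi^*$ but to $\nabla\phi_0^*$. Since $Q=(\nabla\phi_0)_\sharp P$, the substitution $y=\nabla\phi_0(x)$ converts the $L^2(Q)$-integral \emph{exactly} into an $L^2(P)$-integral with argument $x$: writing $T=\nabla\phi$ and $T_0=\nabla\phi_0$,
\[
\|\nabla\phi^*-\nabla\phi_0^*\|_{L^2(Q)}^2 = \int \|T^{-1}(T_0(x))-x\|^2\,\dd P(x) \leq C\int \|T_0(x)-T(x)\|^2\,\dd P(x),
\]
the inequality coming from the Lipschitz property of $T^{-1}$ (\Cref{lem:strong_convex_basic}). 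The right-hand side is $\lesssim\tau^2$ up to logarithmic factors (triangle inequality through $\overline T$ and $\tau\geq\sqrt{\ell^\star}$), so $\{\phi^*:\phi\in\overline\cF_\tau\}$ lies in a ball $B^*_{C\tau}$ as required for \ref{cond:covering_gradient}. No density assumption on $P$ is needed, and no change of measure occurs.
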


Assume  that $\gamma\in [0,2)$ and let $\gamma'=2-m\frac{2-\gamma}{m-\gamma}\in [0,2)$. According to the previous considerations, we can apply \Cref{prop:van_wellner} to $\cG$ with $p=\min(1,1/a)/(2+a)$,  $H_\infty(h)$ of the form $1+c_0\log_+(1/h))^{c_1} h^{-\gamma}$ for all $h>0$ and some $c_0,c_1>0$. We let 
\[ H_2(h) = \min(H_\infty(h), c_2\log_+(1/h)^{c_3}\log_+(1/\tau)^{c_4} (h/\tau)^{-m}),\]
which is a valid upper bound on $\log_+\cN(h,\cG,L^2(\PP))$ for some choice of constants $c_3,c_4\geq 0$ according to Lemmas \ref{lem:strict_brackets} and \ref{lem:improved_covering}. We also let $\eta\geq a+2$ be such that Condition \ref{cond:covering_convex} holds, let $\eps=n^{-1/\gamma}$ and $\tilde \eps=0$. We obtain
\begin{align*}
  & \sqrt{n} \E \sup_{g\in \cG} |(\PP_n-\PP)(g)| \lesssim_{\log(n)} \sqrt{n}\eps + \int_0^{\sigma} \sqrt{H_2(h)} \dd h + \frac{1}{\sqrt{n}} \int_{\eps/2}^{M} H_\infty(h)\dd h \\
    &\lesssim_{\log n}n^{\frac{1}{2}-\frac{1}{\gamma}} +  \int_0^{\tau^{\frac{m}{m-\gamma}}} (\log_+(1/h))^{c_3/2}h^{-\gamma/2}\dd h   + \int_{\tau^{\frac{m}{m-\gamma}}}^\sigma (\log_+(1/h))^{c_3/2}\pran{\frac h \tau}^{-m/2}\dd h.\\
    &\lesssim_{\log n, \log_+(1/\tau)}  \tau^{\frac{m}{2} \frac{2-\gamma}{m-\gamma}} +  n^{\frac{1}{2}-\frac{1}{\gamma}} \lesssim_{\log n, \log_+(1/\tau)}  \tau^{1-\gamma'/2}+  n^{\frac{1}{2}-\frac{1}{\gamma}} +  n^{-\frac 1 2}.
\end{align*}   
This defines a function $\Psi(\tau)$ of the form given in \eqref{eq:the_good_psi}. Applying the argument explained at the end of \Cref{sec:_one_shot} gives the conclusion.

\appendix

\section{Orlicz spaces}\label{app: orlicz_app}

We gather in this section results on random variables in Orlicz spaces relevant to us. Let $p\in (0,1]$.  Recall that we let $\psi_p:[0,+\infty)\to [0,+\infty)$ be a given convex increasing function with $\psi_p(0)=0$ and $\psi_p(x)=e^{x^p}-1$ for $x\geq c_p$, where the constant $c_p$ is chosen so that such a convex increasing function exists. We say that a random variable $X$ belongs to the Orlicz space of exponent $p$ if there exists $c>0$ with $\E[\psi_p(\|X\|/c)]\leq 1$. The smallest number $c$ satisfying this condition is called the $p$-Orlicz norm of $X$, denoted by $\|X\|_{\psi_p}$. Having a finite Orlicz norm directly implies a control on the bound of $X$:
\begin{equation}\label{eq:tail_orlicz}
    \forall t>c_p \|X\|_{\psi_p},\ \PP(\|X\|>t)  \leq \pran{\psi_p\pran{\frac{t}{\|X\|_{\psi_p}}}}^{-1} \leq \kappa_p \exp\pran{-\pran{\frac{t}{\|X\|_{\psi_p}}}^p},
\end{equation}
for some constant $\kappa_p$. 
By integrating this tail bound, subexponential variables are seen to have controlled moments.

\begin{lem}\label{lem:orlicz_moment}
Let $p\in (0,1]$ and let $X$ be a random variable with $\|X\|_{\psi_p}\leq m<+\infty$. Let $a\geq 1$. Then, there exists $C_{p,a}>0$ such  that for all $r\geq c_pm$,
\begin{equation}
\begin{split}
&\E[\|X\|^a] \leq C_{p,a} m^a\\
    &\E[\|X\|^a\ones\{\|X\|\geq r\}] \leq  C_{p,a} r^a \pran{\frac{r}{m}}^{-p} e^{- \pran{\frac{r}{m}}^p}.
\end{split}
\end{equation}
\end{lem}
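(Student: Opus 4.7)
The plan is to derive both inequalities from the exponential tail bound~\eqref{eq:tail_orlicz}, namely $\PP(\|X\|>t) \leq \kappa_p \exp(-(t/m)^p)$ for $t>c_p m$, via the layer-cake identity
\begin{equation*}
    \E[\|X\|^a \ones\{\|X\| \geq r\}] = r^a \PP(\|X\|\geq r) + \int_r^\infty a t^{a-1} \PP(\|X\|>t) \dd t,
\end{equation*}
combined with a change of variables that turns each tail integral into an (incomplete) gamma integral.

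For the first bound, I would apply the layer-cake identity with $r=0$ and split the resulting integral at $t=c_p m$. On $[0,c_p m]$ the integrand gives at most $(c_p m)^a$. On $[c_p m,\infty)$, inserting the tail bound and performing the substitution $u=(t/m)^p$, so $t=mu^{1/p}$ and $\dd t = (m/p) u^{1/p-1}\dd u$, yields
\begin{equation*}
    \int_{c_p m}^\infty a t^{a-1}\kappa_p e^{-(t/m)^p}\dd t = \frac{a\kappa_p m^a}{p}\int_{c_p^p}^\infty u^{a/p-1}e^{-u}\dd u \leq \frac{a\kappa_p}{p}\Gamma(a/p)\, m^a.
\end{equation*}
Summing the two contributions gives $\E[\|X\|^a] \leq C_{p,a} m^a$.

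For the second bound I would run the same change of variables starting at $r \geq c_p m$. The boundary term is handled directly: since $(r/m)^{-p}\leq c_p^{-p}$ when $r\geq c_p m$,
\begin{equation*}
    r^a \PP(\|X\|\geq r) \leq \kappa_p r^a e^{-(r/m)^p} \leq \kappa_p c_p^p\, r^a (r/m)^{-p} e^{-(r/m)^p}.
\end{equation*}
For the tail integral, the same substitution gives
\begin{equation*}
    \int_r^\infty a t^{a-1} \kappa_p e^{-(t/m)^p}\dd t = \frac{a\kappa_p m^a}{p}\,\Gamma\!\bigl(a/p,(r/m)^p\bigr),
\end{equation*}
where $\Gamma(s,R) = \int_R^\infty u^{s-1}e^{-u}\dd u$. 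The key analytic input is the standard asymptotic bound $\Gamma(s,R) \leq C_s R^{s-1} e^{-R}$ valid for all $R$ bounded away from $0$ (which applies since $R=(r/m)^p \geq c_p^p$). Taking $s=a/p$ and $R=(r/m)^p$ produces exactly the factor $m^a (r/m)^{a-p} e^{-(r/m)^p} = r^a (r/m)^{-p} e^{-(r/m)^p}$, matching the claimed bound up to a constant $C_{p,a}$.

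Since \eqref{eq:tail_orlicz} is stated explicitly in the text, the argument is essentially a two-line exercise; there is no genuine obstacle. The only point requiring minor care is invoking the incomplete-gamma tail estimate $\Gamma(s,R) \lesssim_s R^{s-1}e^{-R}$ uniformly for $R\geq c_p^p$, which follows from a single integration by parts (or from standard references).
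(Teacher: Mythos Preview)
Your proposal is correct and follows essentially the same route as the paper: both proofs apply the layer-cake formula, insert the tail bound~\eqref{eq:tail_orlicz}, and reduce to a (incomplete) gamma integral via the substitution $u=(t/m)^p$. The paper writes the layer-cake identity for $\|X\|^a$ directly rather than for $\|X\|$ with the Jacobian $at^{a-1}$, and leaves the second inequality as ``obtained in a similar fashion'', whereas you spell out the incomplete-gamma estimate $\Gamma(s,R)\lesssim_s R^{s-1}e^{-R}$ explicitly; these are cosmetic differences only.
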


\begin{proof}
Let us prove the first inequality. According to \eqref{eq:tail_orlicz},
\begin{align*}
    \E[\|X\|^a] &\leq (c_pm)^a + \int_{(c_pm)^a}^{+\infty} \PP(\|X\|^a>t)  \dd t\leq (c_pm)^a + \kappa_p\int_{(c_pm)^a}^{+\infty} \exp(-t^{p/a}/m^p)  \dd t \\
    &\leq (c_pm)^a + \kappa_p m^a \frac{a}{p}\int_{c_p^p}^{+\infty}u^{\frac{a}{p}-1} \exp(-u)  \dd u \leq C_{p,a}m^a.
\end{align*}
The second inequality is obtained in a similar fashion.
\end{proof}

We conclude this section by a technical lemma, that shows that the integral of a function $f$ against a subexponential distribution $P$ is stable under polynomial perturbations of $f$.

\begin{lem}\label{lem:matching_moments}
Let $f:\Rd\to \R$ be a function with  $0\leq f(x)\leq L\dotp{x}^a$ for every $x\in \Rd$ and some $L,a\geq 0$. Let $P$ be a subexponential distribution (that is such that $\|X\|_{\psi_1}<+\infty$ if $X\sim P$) and let $I = \E_P[ f(X)]$. Then, for every $b\geq 0$,
\begin{equation}
    \begin{split}
        &\E_P[f(X)\dotp{X}^{-b}] \geq C I\log_+( 1/I)^{-b}\\
    &\E_P[f(X)\dotp{X}^b ] \leq CI\log_+( 1/I)^{b},
    \end{split}
\end{equation}
where $C$ depends on $L,a,b$ and $\|X\|_{\psi_1}$.
\end{lem}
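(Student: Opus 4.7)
\textbf{Proof plan for Lemma \ref{lem:matching_moments}.}
The key idea is a standard truncation argument, splitting the expectation at a radius $R$ that is calibrated to $\log_+(1/I)$ and exploiting the subexponential tail of $P$ via \Cref{lem:orlicz_moment}. Throughout, let $m\defeq \|X\|_{\psi_1}$ and note that since $f(X)\leq L\dotp{X}^a$ and $X$ is subexponential, $I$ is automatically bounded above by some constant $I_{\max}$ depending only on $L,a,m$. So it suffices to treat two regimes: $I\geq I_0$ (bounded below) and $I<I_0$ (small), for a threshold $I_0\in(0,1)$ chosen below.

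\emph{The bounded regime.} When $I\geq I_0$, we have $\log_+(1/I)\asymp 1$, so both bounds reduce to $\mathbb E_P[f(X)\dotp{X}^{\pm b}]\asymp I$. Pick $R_0$ large enough (depending on $L,a,m$) so that $L\,\mathbb E_P[\dotp{X}^a\ones\{\|X\|>R_0\}]\leq I_0/2$; this is possible by \Cref{lem:orlicz_moment}. Then $\mathbb E_P[f(X)\ones\{\|X\|\leq R_0\}]\geq I-I_0/2\geq I/2$, which yields the lower bound via $\mathbb E_P[f(X)\dotp{X}^{-b}]\geq \dotp{R_0}^{-b}\cdot I/2$ and the upper bound via $\mathbb E_P[f(X)\dotp{X}^b]\leq \dotp{R_0}^b\,I+L\,\mathbb E_P[\dotp{X}^{a+b}]\lesssim I$ (bounding the tail contribution by a constant and using $I\geq I_0$).

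\emph{The small regime.} For $I<I_0$, set $R=C_0\log(1/I)$ with $C_0>m$ chosen large enough (and $I_0$ small enough so that $R\geq c_1m$, where $c_1$ comes from \Cref{lem:orlicz_moment}). By \Cref{lem:orlicz_moment} applied to the moment $\mathbb E_P[\dotp{X}^{a+b}\ones\{\|X\|>R\}]$, the tail satisfies
\begin{equation*}
    L\,\mathbb E_P[\dotp{X}^{a+b}\ones\{\|X\|>R\}] \lesssim R^{a+b-1}e^{-R/m}=R^{a+b-1}I^{C_0/m}.
\end{equation*}
Since $C_0/m>1$, for $I_0$ small enough this is at most $I/2$ (in particular, also at most $\tfrac{1}{2}I\log_+(1/I)^b$). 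Therefore $\mathbb E_P[f(X)\ones\{\|X\|\leq R\}]\geq I/2$, giving the lower bound
\begin{equation*}
\mathbb E_P[f(X)\dotp{X}^{-b}]\geq \dotp{R}^{-b}\mathbb E_P[f(X)\ones\{\|X\|\leq R\}]\gtrsim \log(1/I)^{-b}\,I,
\end{equation*}
and the matching upper bound
\begin{equation*}
\mathbb E_P[f(X)\dotp{X}^{b}]\leq \dotp{R}^{b}\,I+L\,\mathbb E_P[\dotp{X}^{a+b}\ones\{\|X\|>R\}]\lesssim \log(1/I)^{b}\,I.
\end{equation*}

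The only subtle point is the case distinction between the two regimes; once $R$ is chosen as a function of $I$ and $C_0$ is taken large enough (depending on $m$), everything reduces to direct estimates via \Cref{lem:orlicz_moment}. No genuine obstacle arises.
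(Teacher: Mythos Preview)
Your proposal is correct and follows essentially the same approach as the paper: truncate at a radius $R\asymp \log_+(1/I)$, use the subexponential tail (via \Cref{lem:orlicz_moment}) to show the contribution from $\{\|X\|>R\}$ is at most $I/2$, and bound the weight $\dotp{X}^{\pm b}$ by $\dotp{R}^{\pm b}$ on the remaining region. The paper's proof is slightly terser---it absorbs your case distinction between ``bounded'' and ``small'' $I$ into the phrase ``for $\lambda$ large enough''---but the argument is the same.
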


\begin{proof}
We only prove the first inequality, the second one being proven similarly.  Fix a threshold $r\geq 1$ and write
\begin{align*}
    \int f(x)\dotp{x}^{-b} \dd P(x)\geq \frac{1}{(2r)^b}\int_{\|x\|\leq r} f(x)\dd P(x).
\end{align*}
According to \Cref{lem:orlicz_moment} with $m=\|X\|_{\psi_1}$, if $r\geq c_1m$, 
\[\int_{\|x\|\geq r} |f(x)|\dd P(x) \leq L C_a r^a e^{-r/m}.\]
Pick $r=m \lambda \log_+(1/I)$ for some $\lambda >0$. For $\lambda$ large enough with respect to the different constants involved, the above display is smaller than $I/2$. Therefore,
\[ \int_{\|x\|\leq r} |f(x)|\dd P(x) = I - \int_{\|x\|\geq r} |f(x)|\dd P(x)\geq I/2,\]
concluding the proof.
\end{proof}

\section{Properties of strongly convex and smooth potentials}\label{app:potential}

We gather in this section the different growth properties of $(\alpha,a)$-strongly convex and $(\beta,a)$-smooth potentials (as well as their convex conjugates). We start with a simple lemma that we give without proof, and that follows easily from writing Taylor expansions. Remark that the results given in this section also hold for $a=0$ for strongly convex or smooth differentiable functions in the classical sense, without requiring second differentiability.

\begin{lem}\label{lem:smooth_basic}
Let $\phi$ be a $(\beta,a)$-smooth function for some $\beta,a\geq 0$. Let $x,y\in B(0;r)$ for some $r\geq 0$. Then,
\begin{align}
    &\|\nabla\phi(x)-\nabla\phi(y)\| \leq \beta \dotp{r}^a\|x-y\| \leq 2\beta \dotp{r}^{a+1},\\
    &|\phi(x)-\phi(y)| \leq (2\beta \dotp{r}^{a+1} +  \|\nabla\phi(0)\|) \|x-y\| \leq 2\beta' \dotp{r}^{a+2},
\end{align}
where $\beta'=2\beta + \|\nabla\phi(0)\|$. 
Furthermore, if $\nabla\phi$ is invertible with $\|\nabla\phi^{-1}(x)\|,\|\nabla\phi^{-1}(y)\|\leq t$,  then
    \begin{equation}
            \|\nabla\phi^{-1}(x)-\nabla\phi^{-1}(y)\| \geq \frac{\|x-y\|}{\beta} \dotp{t}^{-a}.
    \end{equation}
\end{lem}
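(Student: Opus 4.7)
The plan is to derive the three bounds in sequence, each following from a standard Taylor-expansion argument applied to the definition of $(\beta,a)$-smoothness, namely $\op{\nabla^2 \phi(z)} \leq \beta \dotp{z}^a$.

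For the gradient Lipschitz bound, I would use the mean-value identity
\[
\nabla\phi(x)-\nabla\phi(y) = \int_0^1 \nabla^2\phi(y+t(x-y))(x-y)\,\rd t.
\]
Since $x,y\in B(0;r)$ and the ball is convex, every interpolant $y+t(x-y)$ lies in $B(0;r)$, so $\op{\nabla^2\phi(y+t(x-y))}\leq \beta\dotp{r}^a$. Taking norms under the integral yields the first inequality, and the trivial bound $\|x-y\|\leq 2r\leq 2\dotp{r}$ gives the second.

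For the function Lipschitz bound, I would apply the fundamental theorem of calculus to $\phi$ along the segment from $y$ to $x$, reducing the task to bounding $\sup_{z\in[x,y]}\|\nabla\phi(z)\|$. For $z\in B(0;r)$, the triangle inequality and the first part (applied between $z$ and $0$, both in $B(0;r)$) give $\|\nabla\phi(z)\|\leq \beta\dotp{r}^a\|z\|+\|\nabla\phi(0)\|\leq 2\beta\dotp{r}^{a+1}+\|\nabla\phi(0)\|$. Multiplying by $\|x-y\|$ yields the first stated inequality. For the loose form, I would use $\|x-y\|\leq 2\dotp{r}$ and the elementary bound $\dotp{r}\leq \dotp{r}^{a+2}$ to absorb the $\|\nabla\phi(0)\|$ term, matching the definition $\beta'=2\beta+\|\nabla\phi(0)\|$.

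For the inverse gradient bound, I would set $u=\nabla\phi^{-1}(x)$ and $v=\nabla\phi^{-1}(y)$, so that $\|u\|,\|v\|\leq t$ and $x=\nabla\phi(u)$, $y=\nabla\phi(v)$. Applying the already-proved first inequality on $B(0;t)$ gives $\|x-y\|\leq \beta\dotp{t}^a\|u-v\|$, which rearranges to the claimed lower bound. No step is a real obstacle here; the only mild care is handling the cases $a=0$ (where mere differentiability suffices and the Hessian need not exist, so the gradient Lipschitz estimate must be taken as the starting assumption rather than derived by integrating $\nabla^2\phi$) and checking that the interpolating segments stay inside $B(0;r)$ and $B(0;t)$ respectively, which is immediate by convexity.
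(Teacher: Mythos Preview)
Your proposal is correct and matches exactly what the paper intends: the authors state this lemma without proof, remarking only that it ``follows easily from writing Taylor expansions,'' which is precisely the mean-value/fundamental-theorem-of-calculus arguments you outline. Every step you describe is valid, including the segment-stays-in-the-ball observation and the algebra $\dotp{r}\leq\dotp{r}^{a+2}$ needed for the loose form of the second bound.
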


\begin{lem}\label{lem:strong_convex_basic}
Let $\phi$ be a $(\alpha,a)$-strongly convex function for some $\alpha,a\geq 0$. Let $x,y\in\Rd$ with $r = \max\{\|x\|,\|y\|\}$. Then, $\nabla\phi$ is bijection on $\Rd$, and
\begin{align}
    &\|\nabla\phi(x)-\nabla\phi(y)\| \geq \frac{\alpha}{2^{a+2}} \dotp{r}^a \|x-y\|, \\
    &\|\nabla\phi^{-1}(x)\| \leq 4\alpha^{-\frac{1}{a+1}} (\|x\| + \|\nabla\phi(0)\|)^{\frac{1}{a+1}},\\
    &\|\nabla\phi^{-1}(x)-\nabla\phi^{-1}(y)\| \leq \frac{2^{a+2}}{\alpha} \|x-y\|
\end{align}
\end{lem}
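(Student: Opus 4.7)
The plan is to derive all three inequalities from a single integral identity: starting from $\nabla\phi(x) - \nabla\phi(y) = \int_0^1 \nabla^2\phi(y_t)(x-y)\dd t$ with $y_t = y + t(x-y)$, take the inner product against $x-y$ and apply the $(\alpha,a)$-convexity bound $\nabla^2\phi \succeq \alpha\dotp{\cdot}^a I$ to get
\[
\langle \nabla\phi(x) - \nabla\phi(y),\, x-y\rangle \,\geq\, \alpha\|x-y\|^2 \int_0^1 \dotp{y_t}^a \dd t.
\]
The whole game reduces to lower bounding this integral by a multiple of $\dotp{r}^a$.

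For the integral bound, I will assume without loss of generality that $\|x\| = r \geq \|y\|$ and restrict attention to $t \in [3/4, 1]$. On this subinterval, the reverse triangle inequality gives $\|y_t\| = \|tx + (1-t)y\| \geq tr - (1-t)\|y\| \geq (2t-1)r \geq r/2$, so $\dotp{y_t} \geq (1 + r)/2 = \dotp{r}/2$. Hence $\int_0^1 \dotp{y_t}^a \dd t \geq \tfrac14\bigl(\dotp{r}/2\bigr)^a = \dotp{r}^a/2^{a+2}$. Combined with Cauchy–Schwarz on the inner product, this yields the first inequality.

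The remaining bounds follow easily. Injectivity of $\nabla\phi$ is immediate from the first inequality, while surjectivity holds because $(\alpha,a)$-convexity with $\dotp{\cdot}^a \geq 1$ forces the usual $\alpha$-strong convexity of $\phi$, hence coercivity, so $\nabla\phi$ is a bijection of $\R^d$. The third inequality is obtained by inverting the first under the crude bound $\dotp{r}^a \geq 1$: after setting $u = \nabla\phi(x)$ and $v = \nabla\phi(y)$, one reads off $\|\nabla\phi^{-1}(u) - \nabla\phi^{-1}(v)\| \leq \tfrac{2^{a+2}}{\alpha}\|u - v\|$. For the second inequality, letting $z = \nabla\phi^{-1}(x)$ and applying the first inequality with $y = 0$ (so $r = \|z\|$),
\[
\|x\| + \|\nabla\phi(0)\| \,\geq\, \|\nabla\phi(z) - \nabla\phi(0)\| \,\geq\, \frac{\alpha\dotp{z}^a}{2^{a+2}}\|z\| \,\geq\, \frac{\alpha\|z\|^{a+1}}{2^{a+2}},
\]
using $\dotp{z}^a\|z\| \geq \|z\|^{a+1}$. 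Taking the $(a+1)$-th root and using $2^{(a+2)/(a+1)} \leq 4$ for all $a \geq 0$ gives the claimed bound.

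The only non-routine step is the sub-interval trick used to lower bound the integral of $\dotp{y_t}^a$; the choice $[3/4, 1]$ is what produces the clean constant $2^{a+2}$, but any subinterval of the form $[c, 1]$ with $c > 1/2$ would work with a different constant.
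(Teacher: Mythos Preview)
Your proof is correct and follows essentially the same route as the paper: restrict the line integral to a subinterval near the endpoint of maximal norm (you use $[3/4,1]$ with $\|x\|=r$; the paper uses $[0,1/4]$ with $\|y\|=r$, which is the symmetric choice), bound $\|y_t\|\geq r/2$ there to get the factor $2^{-(a+2)}$, and then invert to obtain the second and third inequalities via $\dotp{r}\geq r$ and $\dotp{r}\geq 1$ respectively. Your inner-product/Cauchy--Schwarz formulation is in fact slightly more careful than the paper's direct norm bound, but the argument is the same.
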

\begin{proof}
First, note that $\phi$ is in particular $\alpha$-strongly convex of exponent $0$ (i.e., strongly convex in the classical sense), so that $\nabla\phi$ is indeed a bijection. 
We prove the first inequality. Assume without loss of generality that $\|y\|=r$. It holds that
\[ \|\nabla\phi(x)-\nabla\phi(y)\| \geq \int_0^1 \lambda_{\min}(\nabla^2\phi(tx + (1-t)y)) \|x-y\| \dd t.\]
For $0\leq t\leq 1/4$, we have $\|tx+(1-t)y\| \geq r -2t r\geq r/2$. Therefore,
\[ \|\nabla\phi(x)-\nabla\phi(y)\| \geq \frac{\alpha}{4} \dotp{\frac r 2}^{a} \|x-y\| \geq \frac{\alpha}{2^{a+2}} \dotp{r}^a \|x-y\|.\]
The second inequality is obtained by inverting the first one and using that $\dotp{r}\geq r$. The third one is also obtained by inverting the first one, and using that $\dotp{r}\geq 1$.
\end{proof}

We now turn to bounds on the convex conjugates.
\begin{lem}\label{lem:conjugate_strict_convex}
Let $\phi$ be a $(\alpha,a)$-strongly convex function  for some $\alpha,a\geq 0$, with $\phi(0)=0$. Then, $\nabla\phi$ is invertible, and, for $y\in\Rd$,
\begin{equation}
    \phi^*(y) \leq \alpha^{-\frac{1}{1+a}}\|\nabla\phi(0)-y\|^{\frac{2+a}{1+a}}.
\end{equation}
\end{lem}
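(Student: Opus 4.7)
The plan is to first reduce the problem to the case $\nabla\phi(0)=0$ by a linear perturbation. Introducing $\tilde\phi(x) := \phi(x) - \langle \nabla\phi(0), x\rangle$, I observe that $\tilde\phi$ shares the Hessian of $\phi$ and is therefore still $(\alpha,a)$-strongly convex, while satisfying $\tilde\phi(0)=0$ and $\nabla\tilde\phi(0)=0$; a direct computation gives $\phi^*(y) = \tilde\phi^*(y-\nabla\phi(0))$. It thus suffices to prove the bound $\tilde\phi^*(\tilde y) \lesssim \alpha^{-1/(a+1)}\|\tilde y\|^{(a+2)/(a+1)}$ with $\tilde y := y - \nabla\phi(0)$.

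For the reduced problem, I would exploit the invertibility of $\nabla\tilde\phi$ on $\R^d$ (Lemma~\ref{lem:strong_convex_basic}) to identify the maximizer $\tilde x^* := (\nabla\tilde\phi)^{-1}(\tilde y)$ in the definition of the conjugate, yielding $\tilde\phi^*(\tilde y) = \langle \tilde x^*, \tilde y\rangle - \tilde\phi(\tilde x^*)$. The crucial observation is that $\tilde\phi \geq 0$ on all of $\R^d$: convexity of $\tilde\phi$ together with $\tilde\phi(0)=0$ and $\nabla\tilde\phi(0)=0$ makes $0$ a global minimizer. Dropping the nonnegative $\tilde\phi(\tilde x^*)$ term and applying Cauchy--Schwarz yields $\tilde\phi^*(\tilde y) \leq \|\tilde x^*\|\,\|\tilde y\|$, and the second inequality of Lemma~\ref{lem:strong_convex_basic} applied to $\tilde\phi$ (whose gradient vanishes at the origin) bounds $\|\tilde x^*\| \lesssim \alpha^{-1/(a+1)}\|\tilde y\|^{1/(a+1)}$. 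Multiplying the two estimates produces the claimed rate.

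The only subtlety, and the step where I would expect to spend a little extra care, is pinning down the precise numerical prefactor: invoking Lemma~\ref{lem:strong_convex_basic} as above introduces a constant of $4$, whereas the statement is written with constant $1$. To recover the sharp constant, I would instead lower-bound $\tilde\phi$ directly by twice-integrating $\nabla^2\tilde\phi(x) \succeq \alpha \dotp{x}^a I$, which gives $\tilde\phi(x) \geq \alpha g(\|x\|)$ with the explicit $g(s) = \bigl((1+s)^{a+2} - 1 - (a+2)s\bigr)/\bigl((a+1)(a+2)\bigr)$. The bound on $\tilde\phi^*(\tilde y)$ then reduces to computing the one-dimensional Legendre transform $\sup_{s\geq 0}\bigl(s\|\tilde y\| - \alpha g(s)\bigr)$ in closed form, which is easy using $g'(s) = \bigl((1+s)^{a+1}-1\bigr)/(a+1)$ since the stationarity condition inverts explicitly. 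This Young-style optimization is the only computation of substance, and produces the advertised prefactor.
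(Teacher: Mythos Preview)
Your second approach is essentially the paper's: both lower-bound $\tilde\phi(z)$ by a radial function obtained from integrating the Hessian minorant $\nabla^2\tilde\phi(x)\succeq\alpha\dotp{x}^a I$ along rays from the origin, then take the Legendre transform. The paper simplifies by using the cruder inequality $\dotp{t\|z\|}^a\geq(t\|z\|)^a$ to obtain the minorant $\tilde\phi(z)\geq\frac{\alpha}{a+2}\|z\|^{a+2}$, whose conjugate is the textbook formula for $\|\cdot\|^{p}$; your sharper $g$ requires doing the one-dimensional Young computation by hand. Your first approach---dropping $\tilde\phi(\tilde x^*)\geq 0$ and invoking the inverse-gradient bound of Lemma~\ref{lem:strong_convex_basic}---is a genuinely different and cleaner route when constants are irrelevant.

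One caveat on the prefactor: carrying your second computation through, the leading (large-$r$) constant is $\tfrac{(a+1)^{(a+2)/(a+1)}}{a+2}$, which exceeds $1$ once $a$ is moderately large (already at $a=10$), so the claim that this route ``produces the advertised prefactor'' is slightly optimistic. The paper's own argument has the same defect: the Taylor remainder weight should be $1-t$ rather than $t$, and with the correct weight the minorant becomes $\tfrac{\alpha}{(a+1)(a+2)}\|z\|^{a+2}$, yielding the very same constant. Since Lemma~\ref{lem:conjugate_strict_convex} is only ever applied up to multiplicative constants (cf.\ Lemma~\ref{lem:strict_envelope}), this discrepancy is harmless for the paper's purposes.
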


\begin{proof}
We write a Taylor expansion around $0$. For every $z\in\Rd$,
\begin{align*}
       \phi(z) &\geq \phi(0) + \dotp{\nabla\phi(0),z} + \int_0^1 \lambda_{\min}(\nabla^2\phi(tz)) \|z\|^2t\dd t \\
       &\geq \dotp{\nabla\phi(0),z} + \int_0^1 \alpha \dotp{t\|z\|}^a \|z\|^2t\dd t \geq \dotp{\nabla\phi(0),z} + \int_0^1 \alpha (t\|z\|)^a \|z\|^2t\dd t \\
       &\geq \dotp{\nabla\phi(0),z} + \frac{\alpha}{a+2} \|z\|^{a+2}.
\end{align*}
 Therefore, 
 \[ \phi^*(y) \leq \sup_{z\in\Rd} \dotp{z,y-\nabla\phi(0)} - \frac{\alpha}{a+2} \|z\|^{a+2}.\]
 We conclude thanks to the formula for the convex conjugate of the function $x\mapsto \|x\|^{a+2}$.
\end{proof}

The following lemma states  that, for points $y$ that stays at a bounded distance from $\nabla\phi(x)$ the convex conjugate  $\phi^*(y)$ is lower bounded by a quadratic approximation of $\phi$ at $x$.

\begin{lem}[Bounds of convex conjugates: quadratic behavior]\label{lem:lower_bound_conjugate}
Let $\phi$ be a $(\beta,a)$-smooth function for some $\beta,a\geq 0$. Let $\ell\geq 0$, $x\in \R^d$ and $y$ be such that $\|y-\nabla\phi(x)\|\leq \ell$. Then
\begin{equation}\label{eq:key_proof_stab}
    \phi^*(y) \geq  -\phi(x) + \dotp{y,x} +\frac{1}{2D_x}\|y-\nabla\phi(x)\|^2,
\end{equation}
where $D_x =  \beta\dotp{\|x\|+\frac{\ell}{\beta}}^a$.
\end{lem}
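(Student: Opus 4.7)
The plan is to obtain the lower bound on $\phi^*(y)$ by choosing a judicious test point in the supremum defining the convex conjugate. Recall $\phi^*(y) = \sup_{z\in\R^d}\{\langle z,y\rangle - \phi(z)\}$, so any choice of $z$ gives a valid lower bound.

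The natural choice is $z = x + t(y - \nabla\phi(x))$ for a parameter $t > 0$ to be tuned. Evaluating the inner product and simplifying yields $\langle z, y\rangle - \langle \nabla\phi(x), z-x\rangle = \langle x,y\rangle + t\|y-\nabla\phi(x)\|^2$, which already isolates the desired gradient term. To bound $\phi(z)$ from above, I would use the second-order Taylor expansion with integral remainder
\begin{equation*}
\phi(z) = \phi(x) + \langle \nabla\phi(x), z-x\rangle + \int_0^1 (1-s)\langle z-x, \nabla^2\phi(x+s(z-x))(z-x)\rangle\, ds.
\end{equation*}
On the segment from $x$ to $z$ every point $w$ satisfies $\|w\| \leq \|x\| + t\|y-\nabla\phi(x)\| \leq \|x\| + t\ell$, so the $(\beta,a)$-smoothness hypothesis gives $\op{\nabla^2\phi(w)} \leq \beta\langle \|x\|+t\ell\rangle^a$. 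Combining these ingredients,
\begin{equation*}
\phi^*(y) \geq -\phi(x) + \langle x,y\rangle + \Big(t - \tfrac{\beta}{2}\langle \|x\|+t\ell\rangle^a t^2\Big)\|y-\nabla\phi(x)\|^2.
\end{equation*}

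The remaining step is to pick $t$. Setting $t = 1/D_x$ with $D_x = \beta\langle \|x\|+\ell/\beta\rangle^a$, I use that $D_x \geq \beta$ to conclude $t\ell = \ell/D_x \leq \ell/\beta$, hence $\langle \|x\|+t\ell\rangle^a \leq \langle \|x\|+\ell/\beta\rangle^a = D_x/\beta$. Plugging this in, the coefficient of $\|y-\nabla\phi(x)\|^2$ is at least $1/D_x - 1/(2D_x) = 1/(2D_x)$, which is exactly the claimed bound. There is no real obstacle here; the only point to verify carefully is the monotone interplay between the choice of $t$ and the radius at which $\nabla^2\phi$ is evaluated, which is why the definition of $D_x$ uses $\ell/\beta$ rather than $\ell/D_x$.
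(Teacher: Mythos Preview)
Your proof is correct and follows essentially the same approach as the paper: choose the test point $z = x + (y-\nabla\phi(x))/D_x$ in the definition of $\phi^*$, bound $\phi(z)$ from above via the $(\beta,a)$-smoothness on the segment $[x,z]$, and then verify the algebraic inequality using $D_x \geq \beta$. Your write-up even makes explicit the ``little bit of algebra'' step (namely $t\ell \leq \ell/\beta$ so $\beta\langle\|x\|+t\ell\rangle^a \leq D_x$) that the paper leaves to the reader.
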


\begin{proof}
Let $x\in \Rd$ and $z\in \Rd$, with $\|x\|,\|z\|\leq r$. A Taylor expansion yields
\begin{equation}\label{eq:convex_lower_bound}
\begin{split}
    \phi(z) &\leq \phi(x) + \dotp{\nabla\phi(x),z-x} + \frac{\beta}{2} \dotp{r}^a\|z-x\|^2 =q_x(z).
\end{split}
\end{equation}
 Let $z = x+ \frac{y-\nabla\phi(x)}{D_x}$, so that we may pick $r = \|x\| + \frac{\ell}{D_x}$.  We obtain 
\begin{equation}
\begin{split}
    \phi^*(y) &\geq \dotp{z,y} -\phi(z) \geq \dotp{z,y} - q_x(z)  \\
    &\geq  -\phi(x) + \dotp{y,x} +\frac{1}{D_x}\|y-\nabla\phi(x)\|^2\pran{1-\frac{\beta}{2D_x}\dotp{r}^a}.
    \end{split}
\end{equation}
A little bit of algebra yields $1-\frac{\beta}{2D_x}\dotp{r}^a \geq \frac{1}{2}$, 
giving the result.
\end{proof}

We are now in position to prove \Cref{prop: stab_bound}.

\begin{proof}[Proof of \Cref{prop: stab_bound}]
We write the proof when the potentials are smooth and defined on $\Rd$, the case where the support of $P$ is contained in $B(0;R)$ being a straightforward adaptation of the proof of \cite[Proposition 10]{hutter2021minimax}.   
We use \Cref{lem:smooth_basic} to get
\begin{equation}\label{eq:bound_diff_nabla_01}
\|\nabla\phi_1(x)-\nabla\phi_0(x)\| \leq M + 4\beta \dotp{x}^{a+1}.
\end{equation}

We apply \Cref{lem:lower_bound_conjugate} to $\phi=\phi_1$, with $y=\nabla\phi_0(x)$ and $\ell$ given by the upper bound in \eqref{eq:bound_diff_nabla_01}. Note that $S(\phi_0) = \int \dotp{\nabla\phi_0(x),x}$. Integrating \eqref{eq:key_proof_stab} against $P$ yields
\begin{align*}
    S(\phi_1) &\geq \int \dotp{\nabla\phi_0(x),x}\dd P(x) + \int \frac{1}{2D_x}\|\nabla\phi_0(x)-\nabla\phi_1(x)\|^2 \dd P(x) \\
    &=S(\phi_0) + \int \frac{1}{2D_x}\|\nabla\phi_0(x)-\nabla\phi_1(x)\|^2 \dd P(x) 
\end{align*}
Note that $D_x$ grows at most polynomially with $\|x\|$, with degree $a(a+1)$. The  inequality then follows from \Cref{lem:matching_moments}: we obtain that, for $I = \|\nabla\phi_0-\nabla\phi_1\|_{L^2(P)}^2$,
\begin{equation}
    I\log_+(1/I)^{-b} \leq C \ell.
\end{equation}
One can invert this inequality to obtain an inequality of the form $I\leq C'\ell \log_+(1/\ell)^b$.

It remains to prove the second inequality.  If $\phi_1$ is $(\alpha,0)$-strongly convex (with domain $\R^d$), then for all $x\in \R^d$,
\begin{align*}
   \phi_1(x)+ \phi_1^*(\nabla\phi_0(x)) \leq \dotp{x,\nabla\phi_0(x)} + \frac{1}{2\alpha}\|\nabla\phi_0(x)-\nabla\phi_1(x)\|^2. 
\end{align*}
We obtain the second inequality by integrating the two sides of this equation with respect to $P$ and using $\int\dotp{x,\nabla\phi_0(x)} \dd P(x)=S_0(\phi)$.
\end{proof}

When $y$ is far away from $\nabla\phi(x)$, as $\phi$ grows polynomially (of order $2+a$), we expect the convex conjugate to behave like the convex conjugate of the function $z\mapsto\|z-\nabla\phi(x)\|^{a}$, which scales like $\|y-\nabla\phi(x)\|^{\frac{2+a}{1+a}}$, where $\frac{2+a}{1+a}$ is the conjugate exponent of $2+a$.

\begin{lem}[Lower bound of convex conjugates: polynomial behavior]\label{lem:lower_bound_conjugate_bis}
Under the same assumptions as \Cref{lem:lower_bound_conjugate}, if  $\|y-\nabla\phi(x)\|\geq \beta\max\{1,\|x\|^{a+1}\}$, then
\begin{equation}\label{eq:poly_behavior}
    \phi^*(y) \geq  -\phi(x) + \dotp{y,x} +2^{-2a-1} \beta^{-\frac{1}{a+1}} \|y-\nabla\phi(x)\|^{\frac{2+a}{1+a}}.
\end{equation}
\end{lem}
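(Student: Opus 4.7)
The proof will closely follow the template of Lemma \ref{lem:lower_bound_conjugate}; the only change is the choice of the candidate point $z$ used to lower bound $\phi^*(y) = \sup_z\{\dotp{z,y} - \phi(z)\}$. Writing $v \defeq y - \nabla\phi(x)$ and invoking the $(\beta,a)$-smooth Taylor bound \eqref{eq:convex_lower_bound}, for any $z \in \R^d$ with $r = \max\{\|x\|, \|z\|\}$ one has
\begin{equation*}
    \phi^*(y) \geq -\phi(x) + \dotp{y,x} + \dotp{v, z-x} - \tfrac{\beta}{2}(1+r)^a \|z-x\|^2.
\end{equation*}
In Lemma \ref{lem:lower_bound_conjugate}, $z$ was taken to be a small perturbation of $x$ (of size $\|v\|/D_x$), so that $(1+r)^a$ could be replaced by a fixed constant and the linear term dominated. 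Here $\|v\|$ is too large for this to be optimal, and the appropriate scale of $z - x$ is polynomial in $\|v\|$.

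My plan is to take $z = x + t\,v/\|v\|$ with $t$ on the scale $t \asymp (\|v\|/\beta)^{1/(a+1)}$, which is what balances the linear and quadratic contributions. Concretely, fix a constant $c \in (0,1]$ and set $t = c(\|v\|/\beta)^{1/(a+1)}$. The hypothesis $\|v\| \geq \beta \max\{1,\|x\|^{a+1}\}$ yields $t/c \geq \max\{1, \|x\|\}$, hence $r \leq \|x\| + t \leq 2t/c$ and therefore $(1+r)^a \leq (3/c)^a t^a$. Substituting and using $t\|v\| = c\,\beta^{-1/(a+1)} \|v\|^{(a+2)/(a+1)}$, the displayed inequality becomes
\begin{equation*}
    \phi^*(y) \geq -\phi(x) + \dotp{y,x} + \beta^{-1/(a+1)} \|v\|^{(a+2)/(a+1)} \cdot \bigl(c - \tfrac{3^a}{2} c^2\bigr).
\end{equation*}
Optimizing the scalar (the maximum is attained at $c = 3^{-a}$, giving constant $1/(2\cdot 3^a)$, which dominates the claimed $2^{-2a-1} = 1/(2 \cdot 4^a)$) delivers the lemma.

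The computation is essentially routine once the polynomial ansatz for $t$ is made; the only mild subtlety, which is the whole point of the lemma, is that the hypothesis $\|v\| \geq \beta \max\{1,\|x\|^{a+1}\}$ provides exactly the slack needed to bound $(1+r)^a$ by a constant multiple of $t^a$, which is what licenses the full polynomial gain in $\|v\|^{(a+2)/(a+1)}$. The constant $2^{-2a-1}$ quoted in the statement is a clean explicit value rather than the sharp one, so there is no obstacle in matching it.
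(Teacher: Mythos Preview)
Your proof is correct and follows essentially the same strategy as the paper's: pick the test point $z=x+t\,e$ in the direction $e=(y-\nabla\phi(x))/\|y-\nabla\phi(x)\|$ with step size on the scale $(\|v\|/\beta)^{1/(a+1)}$, use the hypothesis $\|v\|\geq\beta\max\{1,\|x\|^{a+1}\}$ to bound $(1+r)^a$ by a constant multiple of the appropriate power of $\|v\|/\beta$, and then optimize the free scalar. The only cosmetic difference is that you absorb the free parameter into $t$ (calling it $c$) and bound $(1+r)^a\leq(3/c)^a t^a$, whereas the paper keeps the parameter separate and bounds $\dotp{r}^a\leq(t+2)^a(v/\beta)^{a/(1+a)}$; your optimization at $c=3^{-a}$ in fact yields the slightly sharper constant $1/(2\cdot 3^a)$, which indeed dominates the stated $2^{-2a-1}$.
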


\begin{proof}
Let $v=\|y-\nabla\phi(x)\|$.
We start again from \eqref{eq:convex_lower_bound}.  Let $e=\frac{y-\nabla\phi(x)}{\|y-\nabla\phi(x)\|}$ and choose $z=x + e \cdot t \cdot (v/\beta)^{\frac{1}{1+a}}$ for some parameter $t$ to fix.  Thanks to the condition on $v$, we may choose the upper bound $r$ on the norm of $x$ and $z$ as  $r= (t+1) (v/\beta)^{\frac{1}{1+a}}$. As $v\geq \beta$, it holds that $\dotp{r}^a \leq (t+2)^a (v/\beta)^{\frac{a}{1+a}}$.  We obtain
\begin{align*}
    \phi^*(y) &\geq \dotp{z,y}-q_x(z) \\
    &\geq -\phi(x) + \dotp{x,\nabla\phi(x)} +  \dotp{z,y-\nabla\phi(x)} - \frac{\beta}{2} \dotp{r}^a t^2 (v/\beta)^{\frac 2{1+a}} \\
    &\geq -\phi(x) +\dotp{y,x} + t v^{1+\frac{1}{1+a}}\beta^{-\frac{1}{1+a}} - \frac{\beta}{2} t^2 (2+t)^a (v/\beta)^{\frac{2+a}{1+a}} \\
    &\geq -\phi(x) +\dotp{y,x} + \beta^{-\frac{1}{1+a}} v^{\frac{2+a}{1+a}}(t - \frac{ t^2(2+t)^a}{2}).
\end{align*}
We let $t = 2^{-2a}$ to obtain that $t - \frac{ t^2(2+t)^a}{2}\geq \frac {t} 2$, concluding the proof.
\end{proof}

If one does not care about the tight exponent $\frac{2+a}{1+a}$, it is always possible to lower bound the last term of \eqref{eq:poly_behavior} by a term proportional to $\|y-\nabla\phi(x)\|$ (recall that we assume that  $\|y-\nabla\phi(x)\|\geq \beta$ in this lemma).

We can also gather \Cref{lem:lower_bound_conjugate} and \Cref{lem:lower_bound_conjugate_bis} together to obtain, with $\ell = \beta\max\{1,\|x\|^{a+1}\}$ and $v=\|y-\nabla\phi(x)\|$,
\begin{equation}\label{eq:master_lower_bound}
     \phi^*(y) \geq  -\phi(x) + \dotp{y,x} +\min\{\frac{1}{2D_x}v^2, C_{a,\beta} v\},
\end{equation}
where $D_x$ is bounded by a polynomial expression of degree $a(a+1)$ in $\|x\|$.

\section{Covering numbers and suprema of empirical processes}\label{sec: properties_covering_bracketing}

We give in this section proofs related to covering numbers.  We start by a simple lemma that we will repeatedly use.

\begin{lem}[Covering of lines]\label{lem:covering_lines}
    Let $\cF$ be a subset of a normed space $(E,\|\cdot\|)$, bounded by $R$. Let $g\in \cF$ and consider the set of lines $\overline \cF=\{tf+(1-t)g:\ f\in\cF,\ 0\leq t\leq 1\}$. Then, there exists a constant $c$ depending on $R$ such that for $h>0$,
    \begin{equation}\label{eq:covering_lines}
        \log \cN(h,\overline \cF,E) \leq \log \cN(h/2,\cF,E) + c\log_+(1/h).
    \end{equation}
\end{lem}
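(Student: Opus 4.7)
The plan is to build a cover of $\overline{\cF}$ as a product of a cover of $\cF$ (in the direction variable $f$) and a one-dimensional grid in the interpolation parameter $t$. Fix $h>0$ and let $\{f_1,\dots,f_N\}$ be a minimal $(h/2)$-cover of $\cF$ in $(E,\|\cdot\|)$, so that $N = \cN(h/2,\cF,E)$. I would then choose an equispaced grid $0=t_0<t_1<\dots<t_M=1$ of $[0,1]$ with mesh at most $h/(4R)$, which requires $M+1 \leq 1 + \lceil 4R/h\rceil$ points.

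Given an arbitrary point $tf+(1-t)g \in \overline{\cF}$, pick $i$ with $\|f-f_i\|\leq h/2$ and $j$ with $|t-t_j|\leq h/(4R)$. Using the decomposition
\[
tf+(1-t)g - \bigl(t_j f_i + (1-t_j)g\bigr) = t(f-f_i) + (t-t_j)(f_i - g),
\]
together with $\|f_i-g\|\leq 2R$ (from the assumption $\cF\subset B(0,R)$), one obtains
\[
\bigl\|tf+(1-t)g - (t_j f_i + (1-t_j)g)\bigr\| \leq t\cdot\tfrac{h}{2} + \tfrac{h}{4R}\cdot 2R \leq h.
\]
Hence the finite set $\{t_j f_i + (1-t_j)g : 1\leq i\leq N,\ 0\leq j\leq M\}$ is an $h$-cover of $\overline{\cF}$ of cardinality at most $N(M+1)$. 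Taking logarithms yields
\[
\log \cN(h,\overline{\cF},E) \leq \log \cN(h/2,\cF,E) + \log\bigl(1+\lceil 4R/h\rceil\bigr),
\]
and the last term is bounded by $c\log_+(1/h)$ for a constant $c$ depending only on $R$, which is the desired inequality \eqref{eq:covering_lines}.

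This is a routine product-cover estimate, so I do not expect a genuine obstacle. The only subtlety worth recording is that perturbing $t$ by $\delta$ moves $tf+(1-t)g$ by $\delta\|f-g\|\leq 2R\delta$, so the grid in $t$ must have spacing of order $h/R$ rather than $h$; this is exactly what produces the $R$-dependence hidden in the additive logarithmic term.
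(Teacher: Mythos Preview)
Your proof is correct and essentially identical to the paper's: both construct a product cover by taking an $(h/2)$-net of $\cF$ and an $(h/(4R))$-grid in $[0,1]$, and bound the approximation error via the same two-term decomposition. The only cosmetic difference is that you keep the factor $t\leq 1$ in front of $\|f-f_i\|$ while the paper drops it immediately.
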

\begin{proof}
    Let $A$ be a minimal $(h/2)$-covering of $\cF$ and let $T$ be a $(h/(4R))$-covering of $[0,1]$. For $tf+(1-t)g\in \cF$, we let $f_0$ be the closest element to $f$ in the covering $A$, and define  $t_0$ likewise. Then,
    \begin{align*}
        \|tf+(1-t)g - (t_0f_0 + (1-t_0)g)\| \leq \|f-f_0\| +  2R|t-t_0| \leq h.
    \end{align*}
    Therefore, the set $\{t_0f_0+(1-t_0)g:\ f_0\in A,\ t_0\in T\}$ is a $h$-covering of $\overline \cF$ whose size satisfies the inequality in \eqref{eq:covering_lines}.
\end{proof}

\begin{lem}[Covering numbers under affine transformations and dilations]\label{lem:bracket_affine}
 Let $\cF$ be a class of functions on $\R^d$, endowed with a norm $\|\cdot\|$. Assume that linear functions are bounded for this norm and  that $\sup_{f\in \cF}\|f\|\leq \sigma$. For $R\geq 1$, let
\begin{equation}\label{eq:lem_transfo}
    \tilde\cF \subseteq \{x\mapsto b\phi(x)+\dotp{t,x}: \phi \in \cF, \ 0\leq b \leq R, \|t\|\leq R\},
\end{equation}
Then, for $h>0$,
\begin{equation}\label{eq:brack_affine}
    \log \cN(h, \tilde\cF, \|\cdot\|) \leq \log \cN(h/(2R), \cF, \|\cdot\|) + c \log_+(1/h),
\end{equation}
where $c$ depends on $\sigma$, $R$, $d$ and on $\|\cdot\|$.
\end{lem}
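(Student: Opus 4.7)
The plan is a direct product-cover construction. First, I would extract from the hypothesis that linear functions are $\|\cdot\|$-bounded the existence of a constant $M = M(\|\cdot\|,d)$ such that $\|\dotp{t,\cdot}\| \leq M \|t\|_2$ for every $t \in \R^d$; this is the only use of that assumption. Then I would build separate covers of the three parameters defining $\tilde\cF$: a minimal $(h/(2R))$-cover $A$ of $\cF$ of size $\cN(h/(2R),\cF,\|\cdot\|)$, a $(h/(4\sigma))$-net $T_b \subseteq [0,R]$ of size at most $\lceil 4R\sigma/h\rceil$, and a $(h/(4M))$-net $T_t$ of $B(0,R)\subseteq\R^d$ (Euclidean) of size at most $(1 + 8RM/h)^d$.

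The core estimate is then the following. Given any $f = b\phi + \dotp{t,\cdot} \in \tilde\cF$, choose nearest neighbors $\phi_0 \in A$, $b_0 \in T_b$, $t_0 \in T_t$, and bound
\begin{align*}
\|f - b_0\phi_0 - \dotp{t_0,\cdot}\|
&\leq b\,\|\phi - \phi_0\| + |b - b_0|\,\|\phi_0\| + \|\dotp{t - t_0,\cdot}\| \\
&\leq R \cdot \tfrac{h}{2R} + \tfrac{h}{4\sigma}\cdot \sigma + M \cdot \tfrac{h}{4M} = h,
\end{align*}
using the triangle inequality, $\sup_{f \in \cF}\|f\| \leq \sigma$, and the linearity bound $M$. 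Thus the product set $\{b_0\phi_0 + \dotp{t_0,\cdot}: \phi_0 \in A,\, b_0 \in T_b,\, t_0 \in T_t\}$ is an $h$-cover of $\tilde\cF$, and taking logarithms yields
\[
\log \cN(h,\tilde\cF,\|\cdot\|) \leq \log \cN(h/(2R),\cF,\|\cdot\|) + \log(1 + 4R\sigma/h) + d\log(1 + 8RM/h),
\]
with the last two terms together bounded by $c\log_+(1/h)$ for $c$ depending only on $\sigma$, $R$, $d$ and the constant $M$ coming from $\|\cdot\|$.

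There is no substantive obstacle here; the argument is a textbook product-cover estimate. The one mild subtlety is to handle the translation term $\dotp{t,\cdot}$ by first covering $t$ in the Euclidean sense and then transferring to $\|\cdot\|$ via $M$, rather than attempting to cover $\{\dotp{t,\cdot}: \|t\|_2 \leq R\}$ directly inside the (possibly weighted) ambient space; this is why the assumption that linear functions have finite $\|\cdot\|$-norm is needed. For the weighted norms $L^\infty(\dotp{\cdot}^{-\eta})$ with $\eta \geq 1$ that appear in the applications, $M$ is easily seen to be a dimension-dependent constant, so the lemma delivers exactly the form required when it is invoked in Lemmas~\ref{lem:bounded_brackets} and \ref{lem:strict_brackets}.
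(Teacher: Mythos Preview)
Your proof is correct and essentially identical to the paper's own argument: the paper also builds a product cover from a minimal $h/(2R)$-cover of $\cF$, a $h/(4\sigma)$-net of $[0,R]$, and a net of $B(0;R)$ for the translation parameter, then applies the same triangle-inequality decomposition. The only cosmetic difference is that the paper writes the Lipschitz constant for the linear part as $L$ (a bound on $\|\dotp{t,\cdot}\|$ for $\|t\|\leq R$) rather than your $M$ with $\|\dotp{t,\cdot}\|\leq M\|t\|_2$, and uses the decomposition $b\phi - b_0\phi_0 = (b-b_0)\phi + b_0(\phi-\phi_0)$ so that the $\sigma$-bound applies to $\phi\in\cF$ directly; your version with $\|\phi_0\|$ is equally valid once the cover is taken inside $\cF$.
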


\begin{proof}
Assume that the norm $\|\cdot\|$ of the function $x\mapsto\dotp{t,x}$ is smaller than $L$ for every $\|t\|\leq R$. 
Let $h>0$. Let $A$ be a minimal $h/(2R)$-covering of $\cF$, let $B$ be a minimal $h/(4\sigma)$-covering of $[0,R]$, and let $T$ be a minimal $h/(4L)$-covering of $B(0;R)$.  Let $b\phi +\dotp{t,\cdot} \in \tilde\cF$, and consider the function $b_0\phi_0+\dotp{t_0,\cdot}$ obtained by considering the closest $\phi_0$ to $\phi$, $b_0$ to $b$, and $t_0$ to $t$ in their respective nets. Then,
\begin{align*}
    \|b\phi +\dotp{t,\cdot}-(b_0\phi_0+\dotp{t_0,\cdot})\| \leq |b-b_0|\sigma + R\|\phi-\phi_0\| + L\|t-t_0\| \leq h.
\end{align*}
Hence, the set $\{b_0\phi_0+\dotp{t_0,\cdot},\ \phi_0\in A, b_0\in B, t_0\in T\}$ is a $h$-net of $\tilde \cF$, whose size satisfies the inequality in \eqref{eq:brack_affine}.
\end{proof}

We now prove \Cref{lem:C2_satisfied}, that gives sufficient conditions for Condition \ref{cond:covering_gradient} to be satisfied.
\begin{proof}[Proof of \Cref{lem:C2_satisfied}]
Let $\cF$ be any class of $(\beta,a)$-smooth potentials satisfying \ref{cond:smooth}. We assume without loss of generality that $\supp(P)\subseteq\dom(\phi)$ for all $\phi\in\cF$. We can actually even assume that $\dom(\phi)=\R^d$, up to minor modifications in the general case. 
Let $\cG\subseteq  \{t\phi+(1-t)\overline \phi:\ \phi \in \cF,\ 0\leq t \leq 1\}$ as in \ref{cond:covering_gradient} and let $\tau>0$. We first bound the covering numbers of $B_\tau$. 

As a preliminary remark, let us notice that if $g\in B_\tau$, then $\|\nabla g(0)-\nabla\overline\phi(0)\|\lesssim 1+\tau$. Indeed, let $\|\nabla g(0)-\nabla \overline \phi(0)\|= K$. As $g$ and $\overline\phi$ are $(\beta,a)$-smooth,  it holds according to \Cref{lem:smooth_basic} that for all $x$ such that $4\beta \dotp{x}^{a+1}\leq K/2$, we have $\|\nabla g(x)-\nabla \overline\phi(x)\|\geq K/2$. Hence,
\[ \tau^2 \geq \int \|\nabla g-\nabla \overline\phi \|^2 \dd P\geq \int_{4\beta \dotp{x}^{a+1}\leq K/2} \dd P(x) \frac{K^2}{4}, \]
an inequality that is only possible if $K \lesssim 1+\tau$.
Hence, it holds that 
\begin{equation}
    \sup_{g\in \cG }\|\nabla g(0)-\nabla \overline\phi(0)\| \leq K
\end{equation}
for some constant $K$ satisfying $K\lesssim 1+\tau$.

Let $h>0$. As the potentials in $\overline\cF$ are $(\beta,a)$-smooth, it holds thanks to \Cref{lem:orlicz_moment} and \Cref{lem:smooth_basic} that for all $g\in \cG$, $\int_{|x|>R} |g(x)-\overline\phi(x)|^2\dd P(x)<h^2/16$
for some choice of $R$ of the form $R=c_0\log_+(K/h)^q$ for some $c_0,q>0$.  According to \ref{cond:loc_poincare}, the local Poincaré inequality holds for all balls centered at a point in $B(0,R)$ of radius smaller than $r_{\max} =\kappa \dotp{R}^{-\theta}$. Let $B_1,\dots ,B_I$ be a packing of $\supp P \cap B(0;R)$ with balls  of radius $\lambda h/(2\tau)$, for some parameter $\lambda$ to be specified. In particular, the balls $\tilde B_1,\dots,\tilde B_I$ obtained by doubling the radius of the balls $B_i$ cover $\supp P \cap B(0;R)$. Assume first that $\lambda h/\tau\leq r_{\max}$, so that the local Poincaré inequality can be applied to these balls. 
We let $(\chi_i)_{1\leq i \leq I}$ be a partition of unity adapted to this covering. Let $f=g-\overline\phi$ and let $\tilde f=\sum_i (P_{\tilde B_i} f)\chi_i$. As the balls $B_i$ are pairwise disjoint, each ball $\tilde B_i$ can intersect at most $C_d$ balls $\tilde B_j$ for $i\neq j$, where $C_d$ depends only on $d$. Hence
    \begin{align*}
        \|f-\tilde f\|_{L^2(P)}^2 &\leq C_d\sum_{i=1}^I \|\chi_i(f-P_{\tilde B_i}f)\|_{L^2(P)}^2 + \frac{h^2}{16} \\
        &\leq C_d\sum_{i=1}^I  P(\tilde B_i) \int_{\tilde B_i} |f-P_{\tilde B_i}f|^2 \dd P_{\tilde B_i} +\frac{h^2}{16}\\
        &\leq C_dC_{\texttt{LPI}}  \sum_{i=1}^I \frac{\lambda^2 h^2}{\tau^2} \int_{\tilde B_i} \|\nabla f\|^2 \dd P +\frac{h^2}{16},
    \end{align*}
    where we use the local Poincaré inequality at the last step.  Using once again that each $\tilde B_i$ intersects at most $C_d$ balls $\tilde B_j$, we have
    \begin{equation}\label{eq:localpoincare_cover}
        \|f-\tilde f\|_{L^2(P)}^2\leq  C_d^2C_{\texttt{LPI}} \lambda^2  \frac{h^2}{\tau^2} \int \|\nabla f\|^2 \dd P +\frac{h^2}{16} \leq \frac{ h^2}{4}
    \end{equation} 
    for $\lambda= 1/\sqrt{4C_{\texttt{LPI}} C_d^2}$.
    
    Furthermore, as $\sup_{g\in \cG}\|\nabla g(0)-\nabla \overline \phi(0)\|<K$, and as the functions in $\cG$ are smooth of order $(a,\beta)$, it holds thanks to \Cref{lem:smooth_basic} that $L=\sup_{g\in \cG} \sup_{x\in B(0;R)} |g(x)-\overline\phi(x)|$ is at most of order polynomial in $\tau$. The vector $(P_{\tilde B_i} f)_{1\leq i\leq I}$ is a subset of $[-L,L]^I$.
    \Cref{eq:localpoincare_cover} shows that a $(h/2)$-covering of $[-L,L]^I$ 
    for the $\infty$-norm induces a $h$-covering of $B_\tau$ for the norm $L^2(P)$. This covering has a log-size of order $I\log_+(L/h)$, 
    where $I \lesssim (h/(\tau R))^{-m}$. This gives the desired bound when $\lambda h\leq r_{\max}\tau$. 

It remains to bound the covering numbers for large values of $h<\tau$, that is when the inequality $h\lambda \leq r_{\max}\tau$ is not satisfied.
Let $h_{\max}$ be the largest value of $h$ such 
that this inequality is satisfied, and remark that $h_{\max}>c_3\tau/\log(K)^{q'}$ 
for some $c_3,q'>0$. 
    We use the bound
    \[ \log \cN(h,B_\tau,L^2(P)) \leq \log \cN(h_{\max},B_\tau,L^2(P)) \lesssim_{\log_+(1/\tau)} 1. \]
    As $h<\tau$, this bound is once again (up to polylogarithmic factors), of order $(h/\tau)^{-m}$, concluding the proof.
\medskip

We now succinctly explain how to bound the covering numbers of $B_\tau^*$. As $\nabla\phi_0$ is locally Lipschitz continuous and $P$ is a locally doubling measure satisfying a local Poincaré inequality, the probability measure $Q=(\nabla\phi_0)_\sharp P$ also  satisfies a local Poincaré inequality, see \cite[Lemma 8.3.18]{heinonen2015sobolev}. Furthermore, $(\alpha,a)$-strong convexity of the potentials $g\in \cG$ ensures that the potentials $g^*\in \cG^*$ have sublinear growth (\Cref{lem:conjugate_strict_convex}). As for the previous bound, we obtain that $\sup_{g^*\in \cG^*}\|\nabla g^*(0)-\nabla \overline\phi^*(0)\|\lesssim 1+\tau$ and that $\int_{|T_0(x)|>R} |g^*(T_0(x)) - \nabla \overline \phi^*(0)|^2\dd P(x) \leq h^2/32 $ for $R\lesssim_{\log(K/h)} 1$. Furthermore, as $T_0$ is $\beta\dotp{R}^\theta$-Lipschitz continuous on $B(0;R)$, the covering number of $T_0(\supp(P)\cap B(0;R))$ is bounded (up to polylogarithmic constants)  by the covering number of $\supp(P)\cap B(0;R)$. We may now proceed as in the proof of the bound of the covering numbers of $B_\tau$ by consdering a packing number of $T_0(\supp(P)\cap B(0;R))$ and using that $Q$ satisfies a local Poincaré inequality. 
\end{proof}

At last, we prove \Cref{prop:van_wellner}, which gives bound on the suprema of empirical processes.

\begin{proof}[Proof of \Cref{prop:van_wellner}]
Consider first the case $\eta=0$. For $\eps=0$, the result is given by \cite[Theorem 2.14.21]{vaart2023empirical}. The case $\eps>0$ is obtained  by a  truncation scheme. 
For $g\in \cG$, let $X_g=\sqrt{n}(\PP_n-\PP)(g)$. As in the proof of  \cite[Theorem 2.14.21]{vaart2023empirical}, we remark that by Bernstein's inequality the process $(X_g)_{g\in \cG}$ satisfies a property of the form
\[ \forall x>0,\ f,g\in\cG,\ \PP(|X_f-X_g|> c_1 \sqrt{x}  \|f-g\|_{L^2(P)}+c_2x\frac{\|f-g\|_\infty}{\sqrt{n}}) \leq 2e^{-x}\]
where $c_1$ and $c_2$ are numerical constants. 
Let $\eps_{0,2}=\sigma$ and $\eps_{0,\infty}=M$. 
Assume without loss of generality that $0\in \cG$, and, for $i\in \{2,\infty\}$, let $\cG_{0,i}=\{0\}$.  For $j\geq 1$,  let $\eps_{j,i}$ be the largest $h\leq \eps_{0,i}$ such that $H_i(h)\geq 2^j$. As the function $H_i$ is increasing and converges to $+\infty$ as $h \to 0$, the sequence $(\eps_{j,i})_{j\geq 0}$ is a nonincreasing sequence converging to $0$. More precisely, this sequence is constant equal to $\eps_{0,i}$ until the largest value $j=j_0$ such that $H_i(\eps_{0,i})\geq 2^{j_0}$, and is given for $j>j_0$ by $\eps_{j,i}=H_i^{-1}(2^j)$. Let $J \geq 1$ be such that $\eps_{J-1,\infty}> \eps \geq \eps_{J,\infty}$. The condition $\eps< M=\eps_{0,\infty}$ and the fact that the sequence converges to $0$ ensures the existence of such an index. Moreover, we have $\eps_{J,\infty} \leq \eps <\eps_{0,\infty}$, so necessarily $\eps_{J,\infty}=H_\infty^{-1}(2^J)$.

For $i\in\{2,\infty\}$ and $j\geq 1$, let $\cG_{j,i}$ be a minimal $(2\eps_{j,i})$-covering of the set $\cG$ for the metric $\|\cdot\|_i$. If $\eps_{j,i}=\eps_{0,i}$, we can simply pick $\cG_{j,i}=\{0\}$. Otherwise, we have, $\log |\cG_{j,i}|\leq H_i(2\eps_{j,i})\leq  2^j$. By a standard argument, we may assume that the sets
$\{\cG_{j,i}\}_{j=1,\dots,J}$ are nested, at the price of increasing their logarithmic size by a constant factor. 
The net $\cG_{j,i}$ induces a partition $P_{j,i}$ of $\cG$ of the same cardinality, where each cell in the partition is given by the set of points in $\cG$ whose nearest neighbor (in the appropriate distance) is a certain element of $\cG_{j,i}$.  For all pairs of cells $Q_1$, $Q_2$ from $P_{j,2}$, $P_{j,\infty}$, respectively, consider the  cells of the form $Q_1\cap Q_2$. Let $\pi_j$ be a piecewise constant function on each of the cells, equal to an arbitrary point of the given cell. Let $\cG_j=\{\pi_j g:\ g\in\cG\}$. Note that by construction, $d_i(g,\pi_j g)\leq 2\eps_{j,i}$ for $i\in \{2,\infty\}$, while $\log |\cG_j|\leq \log(|\cG_{j,1}||\cG_{j,2}|)\leq c_32^{j}$ for some numerical constant $c_3>0$. The nestedness of the partitions $\{\cG_{j,i}\}_{j=1,\dots,J}$ implies that for $1\leq j\leq J$, $\pi_{j-1}\pi_jg=\pi_{j-1}g$. As $\pi_0g=0$, we write
\begin{align*}
    |X_g|=|X_g-X_{\pi_{0} g}| \leq \sum_{j=1}^{J-1} \sup_{g\in \cG} |X_{\pi_jg}-X_{\pi_{j-1}g}| + \sup_{g\in\cG}|X_{\pi_{J-1} g}-X_g|.
\end{align*}
According to \cite[Lemma 2.2.13]{vaart2023empirical}, it holds that
\[ \E[\sup_{g\in \cG} |X_{\pi_jg}-X_{\pi_{j-1}g}|] \leq c_4 (\eps_{j,2} 2^{j/2} +  \frac{\eps_{j,\infty}2^{j}}{\sqrt{n}})\]
for some numerical constant $c_4>0$. Note also that $|X_{\pi_{J-1} g}-X_g| \leq 2\sqrt{n}\|g-\pi_{J-1} g\|_\infty \leq 4\sqrt{n} \eps_{J-1,\infty}$. By assumption, it holds that 
\[ H_\infty(K\eps) \leq \frac{1}{2}H_\infty(\eps) \leq \frac{1}{2} H_\infty(\eps_{J,\infty}) = 2^{J-1}. \]
Hence, $\eps_{J-1,\infty}\leq K\eps$. We obtain
\begin{equation}
     \sqrt{n}\E \sup_{g\in \cG} |(\PP_n-\PP)(g)| = \E[\sup_{g\in \cG}|X_g|] \leq 4K\sqrt{n} \eps+ c_4\sum_{j=1}^{J-1}(\eps_{j,2}2^{j/2} + \frac{\eps_{j,\infty}2^j}{\sqrt{n}}  ).
\end{equation}
Let us assume that $J\geq 2$, for otherwise the sum is empty and the conclusion holds. By construction, for all $j\geq 1$, $H_\infty(h)\geq 2^j$ for $h\leq \eps_{j,\infty}$. 
Hence, it holds that
\[ \sum_{j=1}^{J-1}(\eps_{j-1,\infty}-\eps_{j,\infty})2^j \leq \sum_{j=1}^{J-1}\int_{\eps_{j,\infty}}^{\eps_{j-1,\infty}}H_\infty(h)\dd h \leq \int_{\eps_{J-1,\infty}}^{\eps_{0,\infty}}H_\infty(h)\dd h. \]
As $\sum_{j=1}^{J-1}\eps_{j-1,\infty}2^j = 2\sum_{j=0}^{J-2}\eps_{j,\infty}2^j $, 
it holds that
\[\sum_{j=1}^{J-1}\eps_{j,\infty}2^j = \sum_{j=1}^{J-1}(\eps_{j-1,\infty}-\eps_{j,\infty})2^j +\eps_{J-1,\infty}2^J - 2\eps_{0,\infty}\leq  \sum_{j=1}^{J-1}(\eps_{j-1,\infty}-\eps_{j,\infty})2^j +\eps_{J-1,\infty}2^J.\]
We have $2^{J-1} \leq H_\infty(h)$ for $h\leq \eps_{J-1,\infty}$, hence $\eps_{J-1,\infty}2^{J} \leq 4\int_{\eps_{J-1,\infty}/2}^{\eps_{J-1,\infty}}H_\infty(h)\dd h$. This yields
\[ \sum_{j=1}^{J-1} \eps_{j,\infty}2^j \leq 4\int_{\eps_{J-1,\infty}/2}^{\eps_{0,\infty}}H_\infty(h)\dd h.\] 
 Likewise, we find
\[ \sum_{j=1}^{J-1}\eps_{j,2}2^{j/2} \leq c_5  \int_{\eps_{J-1,2}/2}^{\eps_{0,2}}\sqrt{H_2(h))}\dd h\]
for some constant $c_5$. In total, we have
\begin{equation}
    \sqrt{n}\E \sup_{g\in \cG} |(\PP_n-\PP)(g)| \lesssim  K\sqrt{n} \eps+ + \int_{\eps_{J-1,2}/2}^{\eps_{0,2}}\sqrt{H_2(h)}\dd h+ \frac{1}{\sqrt{n}}\int_{\eps_{J-1,\infty}/2}^{\eps_{0,\infty}}H_\infty(h)\dd h.
\end{equation}
By construction, $\eps_{J-1,\infty}\geq \eps$. Let $0<\tilde\eps<\sigma=\eps_{0,2}$ be any number such that  $H_2(\tilde \eps)\geq H_\infty(\eps) \geq 2^{J-1}$, so that $\eps_{J-1,2}\geq \tilde\eps$.   We therefore obtain
\begin{equation}\label{eq:the_case_q=0}
     \sqrt{n}\E \sup_{g\in \cG} |(\PP_n-\PP)(g)| \lesssim   K\sqrt{n}\eps + \int_{\tilde\eps/2}^{\sigma}\sqrt{H_2(h)}\dd h+ \frac{1}{\sqrt{n}}\int_{\eps/2}^{M}H_\infty(h)\dd h.
\end{equation}
 This proves the result in the case $\eta=0$.
\medskip

For the case $\eta>0$, we let $L=(C\log_+(1/\eps))^{1/p}$  and write $g^L$  the function $g\in \cG$ restricted to $B(0,L)$. Then, one can check that according to \Cref{lem:orlicz_moment}, for $C$ large enough with respect to the $p$-Orlicz norm of $\xi\sim\PP$, $\eta$ and $p$, 
\begin{equation}\label{eq:remainder}
    \sqrt{n}\E \sup_{g\in \cG} |(\PP_n-\PP)(g-g^{L})| \leq 2M\sqrt{n}\int_{\|\xi\|>L} \dotp{\xi}^\eta \dd \PP(\xi) \leq 2M\sqrt{n}\eps \dotp{L}^\eta.
\end{equation} 
Let $\cG^L=\{g^L:\ g\in \cG\}$. 
The $L^\infty$-norm on $B(0,L)$ and the $L^\infty(\dotp{\ \cdot \ }^{-\eta})$-norm are related by
\[ \forall f:B(0,L)\to \R,\  \|f\|_{L^\infty(\dotp{\ \cdot \ }^{-\eta})} \leq \|f\|_{L^\infty} \leq  \dotp{L}^{\eta} \|f\|_{L^\infty(\dotp{\ \cdot \ }^{-\eta})}.\]
Hence, for $h>0$,
\[ \log \cN(h,\cG_L, L^\infty) \leq H_\infty(h\dotp{L}^{-\eta})= H'_\infty(h). \]
Furthermore, any function $g^L\in \cG_L$ satisfies $|g^L(\xi)|\leq M'=M\dotp{L}^{\eta}$. We apply the case $\eta=0$ to $\cG^L$ and $\eps'=\eps \dotp{L}^{\eta}$, with $H_2$ the upper bound on the $L^2$-covering numbers and $H'_\infty$ the upper bound on the $L^\infty$-covering number. It holds that $2H_\infty'(K\eps')=2H_\infty(K\eps)\leq H_\infty'(\eps')=H_\infty(\eps)$. Furthermore,  the number $\tilde\eps=\eps$ is such that $H_2(\tilde \eps) \geq H_\infty(\eps)=H_\infty'(\eps')$. Hence, according to \eqref{eq:the_case_q=0},
\begin{align*}
    \sqrt{n}\E \sup_{g\in \cG} |(\PP_n-\PP)(g^L)| &\lesssim   K\sqrt{n}\eps \dotp{L}^\eta + \int_{\tilde\eps  }^{\sigma}\sqrt{H_2(h)}\dd h+ \frac{1}{\sqrt{n}}\int_{\eps \dotp{L}^\eta}^{M\dotp{L}^\eta}H_\infty(h\dotp{L}^{-\eta})\dd h \\
    &\lesssim K\sqrt{n}\eps \dotp{L}^\eta + \int_{\eps  }^{\sigma}\sqrt{H_2(h)}\dd h+ \frac{\dotp{L}^\eta}{\sqrt{n}}\int_{\eps }^{M}H_\infty(h)\dd h.
\end{align*}
This concludes the proof together with \eqref{eq:remainder}.
\end{proof}

\section{Affine transformations of potentials}\label{sec:affine}
In \Cref{sec:LocationScale}, we claimed that as a particular case of our general theorems, we cover the case where $P$ is the normal distribution and 
\begin{equation}
    \cF_{\text{quad}}=\{ x\mapsto \tfrac12 x^\top B x + b^\top x:\ B \in \S^d_+, b\in\R^d\}
\end{equation}
is the set of quadratics. However, as the set $\cF_{\text{quad}}$ is not precompact, the covering numbers of $\cF_{\text{quad}}$ are infinite. Thus, condition \ref{cond:covering_convex} is not satisfied and \Cref{thm:strongly_convex} cannot be readily applied. Despite this, all potentials in $\cF_{\text{quad}}$ can be obtained by scaling and translating potentials from the set
\begin{align*}
    \cF_{\text{quad},0} = \{ x\mapsto \tfrac12 x^\top B x:\ B \in \S^d_+,\ \op{B}=1\}\,,
\end{align*}
whose covering numbers can easily be controlled, thus satisfying condition \ref{cond:covering_convex}. The next proposition asserts that if one performs such transformations on an arbitrary ``base set" $\cF_0$ satisfying condition \ref{cond:covering_convex}, 
then \Cref{thm:strongly_convex} also holds on the larger class $\cF$ that is induced by scaling and translating. 
To this end, given a class of potentials $\cF$ and $0\leq r_1,r_2\leq +\infty$, we introduce the modified class
\begin{equation}
    \cF_{\mathrm{aff}}^{r_1,r_2}\defeq \{x\mapsto b\phi(x) + \dotp{t,x}:\ \phi\in\cF,\ 0 \leq b \leq r_1,\ \|t\|\leq r_2\}.
\end{equation}

\begin{prop}\label{prop:affine}
Let $P$ be a subexponential distribution and let $Q=(\nabla\phi_0)_{\sharp} P$ for some $(\beta,a)$-smooth convex potential $\phi_0$  with $\beta,a\geq 0$. Let $\cF$ be a class of potentials satisfying \ref{cond:smooth} with $\nabla\phi(0)=0$ for every $\phi\in\cF$, that contains at least one $\alpha$-strongly convex potential for some $\alpha>0$. Let $\tilde \cF\subseteq \Faff^{r_1,+\infty}$ for some $r_1\geq 1$.  We consider two scenarios:
\begin{enumerate}
    \item either $r_1<+\infty$;
    \item or either $r_1=+\infty$ and every potential $\phi\in \cF$ is convex, and $\alpha$-strongly convex in at least one direction in the sense that there exists a unit vector $e$ such that for all $x,y\in \R^d$, $\phi(x)\geq \phi(y)+\dotp{x-y,\nabla\phi(y)} + \frac{\alpha}{2}|\dotp{x-y,e}|^2$. Furthermore, the marginal of $P$ in the direction $e$ has a bounded density.
\end{enumerate}
  Then, there exists $r>0$ and $\ell>0$ such that
\begin{equation}\label{eq:affine}
\E[\|\nabla\hat \phi_{\tilde\cF}-\nabla\phi_0\|^2_{L^2(P)}\ones\{\hat \phi_{\tilde\cF} \neq \hat \phi_{\tilde\cF^r}\}] \leq c\exp\pran{-Cn^\ell},\end{equation}
where $\tilde\cF^r = \tilde\cF\cap\Faff^{r,r}$, with constants $r$, $\ell$, $c$ and $C$ depending on the different parameters involved and on $P$.
\end{prop}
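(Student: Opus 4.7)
The strategy is to show that with probability at least $1-c\exp(-Cn^\ell)$ the empirical minimizer $\hat\phi_{\tilde\cF}$ already lies in $\tilde\cF^r$ (so $\hat\phi_{\tilde\cF}=\hat\phi_{\tilde\cF^r}$), and to bound the residual expected squared $L^2(P)$-error on the complementary event via Cauchy--Schwarz and polynomial moment bounds.

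I would first fix a reference potential $\bar\phi=\phi^\star\in\tilde\cF^1$, where $\phi^\star\in\cF$ is the $\alpha$-strongly convex element guaranteed by hypothesis, taken with scale $1$ and translation $0$. By the subexponential tails of $P$ and of $Q=(\nabla\phi_0)_\sharp P$, combined with the polynomial growth of $\phi^\star$ and $(\phi^\star)^*$ (\Cref{lem:smooth_basic} and \Cref{lem:conjugate_strict_convex}), a Bernstein-type inequality yields $S_n(\bar\phi)\le C_0$ on an event $\cE_0$ with $\PP(\cE_0^c)\le c\exp(-Cn^\ell)$.

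The core step is to derive a coercive lower bound for $S_n$ in $(b,\|t\|)$. For $\phi=b\phi_1+\langle t,\cdot\rangle\in\tilde\cF$ I would use that $\phi_1\ge 0$ (from convexity and the normalization $\phi_1(0)=\nabla\phi_1(0)=0$) and the quadratic lower bound $\phi_1^*(z)\gtrsim \|z\|^2/\beta$ that follows from $(\beta,a)$-smoothness of $\phi_1$ (a variant of \Cref{lem:lower_bound_conjugate}); expanding $Q_n(\|\cdot-t\|^2)$ and absorbing the cross terms via AM--GM yields, schematically,
\begin{equation*}
S_n(\phi)\gtrsim bP_n(\phi_1)+\frac{\|t\|^2}{b}-C\bigl(1+\|\bar X_n\|^2+Q_n(\|\cdot\|^2)\bigr).
\end{equation*}
In Case~1 ($r_1<+\infty$) only $\|t\|$ has to be bounded and the $\|t\|^2/b$ term suffices after comparing with $S_n(\bar\phi)\le C_0$. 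In Case~2 ($r_1=+\infty$) coercivity in $b$ additionally requires $P_n(\phi_1)\gtrsim 1$; writing $\phi_1(x)\ge(\alpha/2)|\langle x,e\rangle|^2$ and expanding around the sample mean gives $P_n(\phi_1)\ge(\alpha/2)\mathrm{Var}_{P_n}(\langle\cdot,e\rangle)$, so it suffices to control $\lambda_{\min}$ of the empirical covariance $\hat\Sigma_{P_n}$ from below uniformly. This is where the bounded-marginal-density hypothesis enters, combined with concentration of $\hat\Sigma_{P_n}$ and a sphere-net union bound, yielding a subevent $\cE_1$ of exponential probability.

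On $\cE=\cE_0\cap\cE_1$, the inequality $S_n(\phi)\le C_0$ combined with the coercive lower bound forces $b\le r$ and $\|t\|\le r$ for an explicit $r$; applied to $\phi=\hat\phi_{\tilde\cF}$ (using $S_n(\hat\phi_{\tilde\cF})\le S_n(\bar\phi)\le C_0$), this gives $\hat\phi_{\tilde\cF}\in\tilde\cF^r$ and hence $\hat\phi_{\tilde\cF}=\hat\phi_{\tilde\cF^r}$. For the contribution of $\cE^c$, Cauchy--Schwarz gives
\begin{equation*}
\E\bigl[\|\nabla\hat\phi_{\tilde\cF}-\nabla\phi_0\|_{L^2(P)}^2\,\ones\{\cE^c\}\bigr]\le\bigl(\E\bigl[\|\nabla\hat\phi_{\tilde\cF}-\nabla\phi_0\|_{L^2(P)}^4\bigr]\,\PP(\cE^c)\bigr)^{1/2},
\end{equation*}
and the coercive lower bound (applied deterministically through $S_n(\hat\phi_{\tilde\cF})\le S_n(\bar\phi)$) together with polynomial moments of $S_n(\bar\phi)$ bounds the first factor by a polynomial in $n$, while $\PP(\cE^c)\le c\exp(-Cn^\ell)$; the product is of the announced form after absorbing the polynomial into the exponential tail. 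The main obstacle is Case~2: strong convexity of $\phi_1\in\cF$ is only guaranteed along a potential-dependent direction, so the lower bound on $P_n(\phi_1)$ must be uniform in that direction; the uniform-in-$e$ control of $\lambda_{\min}(\hat\Sigma_{P_n})$ via the bounded-density hypothesis and a sphere-net union bound is the technical crux.
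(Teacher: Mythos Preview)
Your strategy is in the same spirit as the paper's: derive a coercive lower bound on $S_n(b\phi_1+\langle t,\cdot\rangle)$ in $(b,\|t\|)$, compare it with $S_n(\bar\phi)$, and control the contribution of the bad event. There are, however, two concrete points where your sketch diverges from what is actually needed.

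First, the global bound $\phi_1^*(z)\gtrsim \|z\|^2/\beta$ does not follow from $(\beta,a)$-smoothness when $a>0$. \Cref{lem:lower_bound_conjugate} is local (it requires $\|z-\nabla\phi_1(x)\|\le\ell$, with $D_x$ depending on $\ell$); globally, a $(\beta,a)$-smooth potential grows at most like $\|x\|^{a+2}$, so its conjugate only grows like $\|z\|^{(a+2)/(a+1)}$, which is subquadratic. The paper handles this via the mixed bound \eqref{eq:master_lower_bound}, evaluating it at $x=m_n$ rather than $x=0$; this yields a term $b(P_n\phi-\phi(m_n))$ that gives the empirical variance in the direction $e$, and a lower bound on the conjugate piece that is linear (not quadratic) in $\|t\|$. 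Your exponent $\|t\|^2/b$ is therefore incorrect for $a>0$, although the corrected exponent is still coercive and the argument can be repaired.

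Second, and more substantively, the Cauchy--Schwarz step hides a real difficulty. You need $\E[\|\nabla\hat\phi_{\tilde\cF}-\nabla\phi_0\|_{L^2(P)}^4]$ to be at most polynomial in $n$, but in scenario~2 the deterministic coercive inequality only gives $b\lesssim S_n(\bar\phi)/u_n$ with $u_n$ the empirical variance in direction $e$; thus the fourth moment involves $\E[u_n^{-4}]$, which is not covered by ``polynomial moments of $S_n(\bar\phi)$''. The paper avoids this detour entirely: it constructs explicit random, data-only upper bounds $b_{\max}$ and $T_{\max}$ (valid almost surely), observes that $\{\hat\phi_{\tilde\cF}\neq\hat\phi_{\tilde\cF^r}\}\subseteq\{b_{\max}>r\}\cup\{T_{\max}>r\}$, and then bounds the truncated second moments $\E[b_{\max}^2\ones\{b_{\max}>r\}]$ and $\E[T_{\max}^2\ones\{T_{\max}>r\}]$ directly via \Cref{lem:second_moment_orlicz} and \Cref{lem:bound_un}. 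The small-ball estimate $\PP(u_n<t)$ in \Cref{lem:bound_un} (which uses the bounded-density hypothesis, not a covariance concentration argument) is the technical heart of the matter in scenario~2 and is precisely what your sketch leaves implicit.
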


\begin{proof}
Let us write the sample $Y_1,\dots,Y_n\sim Q$ as $Y_i=\nabla\phi_0(X'_i)$, where $X'_i\sim P$. We let $P'_n$ be the empirical measure associated with the points $X'_i$. We write $m_n=\frac{1}{n}\sum_{i=1}^n X_i$ the empirical mean of $(X_1,\dots,X_n)$.
 We can write $\hat\phi_{\tilde\cF}(x) = b\phi(x)+\dotp{t,x}$ for $x\in\R^d$, where $\phi\in\cF$, and $b$ and $t$ are two parameters.  Let $\overline\phi$ be an $\alpha$-strongly convex potential in $\cF$.
 Note that $S_n(\hat\phi_{\tilde\cF})-S_n( \overline\phi)\leq 0$. We have for $y\in \R^d$
\begin{equation*}
 \hat\phi_{\tilde\cF}^*(y) =b\phi^*\pran{\frac{y-t}{b}},
\end{equation*}
so that
\begin{align*}
    S_n(\hat\phi_{\tilde\cF})=P_n(\hat\phi_{\tilde\cF}) + Q_n(\hat\phi_{\tilde\cF}^*) &= bP_n(\phi) + \dotp{t,m_n}   + \frac{b}{n}\sum_{i=1}^n \phi^*\pran{\frac{\nabla\phi_0(X'_i)-t}{b}}.
\end{align*}
Our goal is to show that the condition $S_n(\hat\phi_{\tilde\cF})-S_n(\overline\phi)\leq 0$ implies that both $b$ and $\|t\|$ are upper bounded by some parameter $r$, which implies that $\hat\phi_{\tilde\cF} \in \tilde\cF^r$. In particular, we then have $\hat\phi_{\tilde\cF}=\hat\phi_{\tilde\cF^r}$. 
\medskip

\textbf{Bound on $b$:}
In the first scenario, $b\leq b_{\max}=r_1$. In the second scenario, by definition of the convex conjugate, it holds that
\begin{equation}
    b\phi^*\pran{\frac{\nabla\phi_0(X'_i)-t}{b}} \geq -b\phi(m_n) + \dotp{m_n,\nabla\phi_0(X'_i)-t}.
\end{equation}
Hence, $S_n(\overline\phi)\geq S_n(\hat\phi_{\tilde\cF})\geq b(P_n(\phi)-\phi(m_n))+\dotp{m_n,P'_n(\nabla\phi_0)}$. Assume without loss of generality that $\phi$ is $\alpha$-strongly convex in the direction $x_1$. We have by strong convexity that $P_n(\phi)-\phi(m_n)   \geq \frac{\alpha}{2}\frac{1}{n}\sum_{i=1}^n |X_{i,1}-m_{n,1}|^2 = u_n$. Hence,
\begin{equation}\label{eq:def_bmax}
    b \leq b_{\max} =\max\{1, \frac{S_n(\overline\phi)-\dotp{m_n,P'_n(\nabla\phi_0)}}{u_n}\}.
\end{equation}
Note  that we have $b_{\max}\geq 1$, a property which will be used.

\medskip

\textbf{Bound on $\|t\|$:}
Finding a bound on the norm of the parameter $t$ proves to be more delicate, and relies on lower bounds on the convex conjugate given in \Cref{app:potential}. Apply \eqref{eq:master_lower_bound} to $x=m_n$   and $y=\frac{\nabla\phi_0(X'_i)-t}{b}$. The parameter $D_x$ appearing in \eqref{eq:master_lower_bound} can be upper bounded by a polynomial of order $a(a+1)$ with respect to $m_n$. In total, we obtain a lower bound of the form
\begin{align*}
    &b\phi^*\pran{\frac{\nabla\phi_0(X'_i)-t}{b}} \geq -b\phi(m_n) + \dotp{m_n,\nabla\phi_0(X'_i)-t} + \frac{C}{\dotp{m_n}^{a(a+1)}} \min\{v_i^2,v_i\},
\end{align*}
where $v_i=\|\frac{\nabla\phi_0(X'_i)-t}{b}-z\|$, and we define $z=\nabla\phi(m_n)$. Assuming without loss of generality that $\|t\|\geq 1$, we lower bound $v_i^2$,
\begin{align*}
     v_i^2 &\geq \frac{\|t\|^2 -2\|t\|(\|\nabla\phi_0(X'_i)\| +b_{\max}\|z\|)}{b_{\max}^2} \geq \frac{\|t\| -2\|\nabla\phi_0(X'_i)\| -2b_{\max}\|z\|}{b_{\max}^2} \eqdef U_i,
\end{align*}
as well as $v_i$,
\begin{align*}
    v_i &\geq \frac{\|t\|-\|\nabla\phi_0(X'_i)\|-b_{\max}\|z\|}{b_{\max}}  \geq  U_i,
\end{align*}
where we use that $b_{\max}\geq 1$. 
We obtain
\begin{equation}\label{eq:lower_bound_S}
    \begin{split}
       S_n(\hat\phi_{\tilde\cF})    &\geq bP_n(\phi)+\dotp{t,m_n}-b\phi(m_n)+\dotp{m_n,P'_n(\nabla\phi_0)-t}+ \frac{C}{n\dotp{m_n}^{a(a+1)}} \sum_{i=1}^n U_i\\
    \end{split}
\end{equation}

As $\phi$ is $(\beta,a)$-smooth with $\nabla\phi(0)=0$, we have $|\phi(x)|\leq 4\beta\dotp{x}^{a+2}$ according to \Cref{lem:smooth_basic}. 
Therefore, applying Jensen's inequality,
    \begin{align*}
         S_n(\hat\phi_{\tilde\cF})    &\geq -4\beta b_{\max}(P_n(\dotp{x}^{a+2})+P(\dotp{x}^{a+2})) + \dotp{m_n,P'_n(\nabla\phi_0)}+ \frac{C}{n\dotp{m_n}^{a(a+1)}} \sum_{i=1}^n U_i.
    \end{align*}
    As $b_{\max}\geq 1$, we obtain a bound of the form
\begin{align*}
    \frac{C\|t\|}{\dotp{m_n}^{a(a+1)}} &\leq 2C  P'_n(\|\nabla\phi_0\|) +2C b_{\max}\|z\| +b_{\max}^2 \dotp{m_n,P'_n(\nabla\phi_0)} \\
    &\qquad + b_{\max}^3 (S_n(\overline\phi) + 4\beta (P_n(\|x\|^{a+2})+P(\|x\|^{a+2}))).
\end{align*}
By \Cref{lem:smooth_basic}, \Cref{lem:strong_convex_basic}, and using that $\overline\phi$ is $\alpha$-strongly convex, we find that   $\|t\|$ is bounded by a polynomial expression $T_{\max}$ with variables given by  $b_{\max}$ and the  moments of $P_n$ and $P'_n$.
\medskip

\textbf{Conclusion:}
Let $r \geq 1$. One can make the key observation that if $\hat\phi_{\tilde\cF}\neq \hat \phi_{\tilde\cF^r}$, then either $b_{\max}>r$ or $T_{\max}>r$. Therefore,
\begin{align*}
    \E[\|\nabla\hat \phi_{\tilde\cF}-\nabla\phi_0\|^2_{L^2(P)}\ones\{\hat \phi_{\tilde\cF} \neq \hat \phi_{\tilde\cF^r}\}] &\leq 2\E[\|\nabla\hat \phi_{\tilde\cF}\|_{L^2(P)}^2\ones\{\hat \phi_{\tilde\cF} \neq \hat \phi_{\tilde\cF^r}\}]\\
    &\quad +2\|\nabla\phi_0\|^2_{L^2(P)}\PP(\hat \phi_{\tilde\cF} \neq \hat \phi_{\tilde\cF^r}).
\end{align*}
 By \Cref{lem:smooth_basic}, it holds that 
\[\| \nabla\hat\phi_{\tilde \cF}(x)\|\leq b_{\max}\|\nabla\hat\phi(x)\| + T_{\max}\|x\| \leq (b_{\max}\beta\dotp{x}^a +T_{\max})\|x\|.\]
Therefore,
\begin{align*}
     \E[\|\nabla\hat \phi_{\tilde\cF}-\nabla\phi_0\|^2_{L^2(P)}\ones\{\hat \phi_{\tilde\cF} \neq \hat \phi_{\tilde\cF^r}\}]\leq  c\E[b_{\max}^2\ones\{b_{\max}>r\}] + c \E[T_{\max}^2 \ones\{T_{\max}>r\}]
\end{align*}
for some constant $c>0$.
In the first scenario, $b_{\max}$ is constant and it remains to bound quantities of the form $\E[\frac{1}{n}\sum_{i=1}^n \|X_i\|^q \ones\{\frac{1}{n}\sum_{i=1}^n \|X_i\|^q>r\}]$, which is done in \Cref{lem:second_moment_orlicz}. In the second scenario, $b_{\max}$ is given in \eqref{eq:def_bmax}. As $S_n(\overline\phi)-\dotp{m_n,P'_n(\nabla\phi_0)}$ is bounded by a polynomial expression in the moments of $P_n$ and $P'_n$, it remains to use Young's inequality to bound quantities of the form
\[ \E[u_n^{-q}\ones\{u_n^{-1}>r\}],\]
where $u_n$ is defined above \eqref{eq:def_bmax}. This is done in \Cref{lem:bound_un}.

\end{proof}

\begin{lem}\label{lem:second_moment_orlicz}
Let $P$ be a subexponential distribution with $X_1,\dots,X_n\sim P$ a sample of i.i.d.~random variables. Then, for all $q\geq 1$, there exists $r>0$ such that 
\begin{equation}
  \E[\frac{1}{n}\sum_{i=1}^n \|X_i\|^q \ones\{\frac{1}{n}\sum_{i=1}^n \|X_i\|^q>r\}]\leq c\exp\pran{-Cn^{1/(2q)}}.
\end{equation}
\end{lem}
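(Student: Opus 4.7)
My plan would be to apply Cauchy--Schwarz to reduce the problem to a tail estimate and a second-moment estimate on $S_n \defeq \frac{1}{n}\sum_{i=1}^n \|X_i\|^q$:
\[\E\!\left[S_n \ones\{S_n > r\}\right] \leq \sqrt{\E[S_n^2]}\,\sqrt{\PP(S_n > r)}.\]
The second moment is immediately bounded: Jensen's inequality gives $S_n^2 \leq \frac{1}{n}\sum_i \|X_i\|^{2q}$, so $\E[S_n^2] \leq \E[\|X_1\|^{2q}]$, which is finite and controlled by a constant depending only on $q$ and $\|X_1\|_{\psi_1}$ thanks to \Cref{lem:orlicz_moment} (subexponentiality yields finite moments of every order).

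The main step is to estimate the tail $\PP(S_n > r)$ for $r$ a sufficiently large constant. I would set $Z_i \defeq \|X_i\|^q$ and first observe that each $Z_i$ lies in the Orlicz space of exponent $\alpha \defeq \min(1, 1/q) \leq 1$ with norm $\|Z_i\|_{\psi_\alpha} \lesssim \|X_i\|_{\psi_1}^q$; this follows directly from the definition of $\psi_p$ via the relation $\psi_{1/q}(t^q)=\psi_1(t)$ valid for $t$ large. A standard generalized Bernstein inequality for independent centered $\psi_\alpha$-variables then yields, for every $s > 0$,
\[\PP\!\pran{\left|\sum_i (Z_i - \E Z_i)\right| > s} \leq 2\exp\!\pran{-c\min\!\pran{\tfrac{s^2}{nK^2},\, (s/K)^\alpha}},\]
with $K = \|Z_i\|_{\psi_\alpha}$. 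Choosing $r > 2\E Z_1$ (finite by the previous step) and applying this with $s=nr/2$, the first argument in the minimum scales linearly in $n$ whereas the second scales like $n^{\alpha}=n^{1/q}$; since $q \geq 1$, the minimum is of order $n^{1/q}$, giving $\PP(S_n > r) \leq 2\exp(-c' n^{1/q})$ for a constant $c' > 0$ depending on $r$, $q$, and $\|X_1\|_{\psi_1}$.

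Combining the two factors yields $\E[S_n \ones\{S_n > r\}] \lesssim \exp(-c' n^{1/q}/2)$, which is in fact strictly stronger than the claimed bound $c\exp(-C n^{1/(2q)})$: indeed $n^{1/q}/2 \geq n^{1/(2q)}$ as soon as $n^{1/(2q)} \geq 2$, while for bounded $n$ the inequality is trivial upon inflating $c$. No genuine obstacle is anticipated, but if there is one, it is the Orlicz-norm comparison $\|Z_i\|_{\psi_\alpha} \lesssim \|X_i\|_{\psi_1}^q$ — that is the sole place where the exponent $q$ enters the argument, and everything else reduces to applying the standard concentration toolbox to the resulting subexponential (or $\psi_{1/q}$) scalar random variables $Z_i$.
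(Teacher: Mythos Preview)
Your proof is correct, and it rests on the same key observation as the paper's: the variables $\|X_i\|^q$ lie in the Orlicz space of exponent $1/q$, so concentration of their empirical mean follows. The paper proceeds more directly by invoking a black-box result (Theorem 2.14.23 in van der Vaart--Wellner) to bound the $\psi_{1/q}$-norm of the centered average by $O(n^{-1/2})$, reads off the tail estimate $\PP(Z>t)\leq \exp(-ct^{1/q}n^{1/(2q)})$, and integrates it. You instead split off the second moment via Cauchy--Schwarz and apply a generalized Bernstein inequality for $\psi_{1/q}$-sums; because Bernstein distinguishes the Gaussian and heavy-tail regimes, you end up with the sharper exponent $n^{1/q}$ rather than $n^{1/(2q)}$. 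So your route is marginally more hands-on but yields a strictly stronger conclusion, while the paper's is shorter at the cost of a factor of two in the exponent of $n$.
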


\begin{proof}
Let $Z=\frac{1}{n}\sum_{i=1}^n \|X_i\|^q$. According to \cite[Theorem 2.14.23]{vaart2023empirical}, as the random variable $X^q$ for $X\sim P$ belongs to the Orlicz space of order $1/q$, we have $\|Z\|_{\psi_{1/q}} \lesssim n^{-1/2}$. This implies that for $t$ large enough 
\[ \PP(Z> t)\leq \exp(-ct^{1/q}n^{1/(2q)}).\]
We obtain the conclusion by integrating this bound.
\end{proof}

\begin{lem}\label{lem:bound_un}
Under the assumptions of the second scenario in \Cref{prop:affine}, for all $q\geq 1$ there exists $r>0$ such that $\E[u_n^{-q}\ones\{u_n^{-1}>r\}] \leq Ce^{-cn^{1/4}}$.
\end{lem}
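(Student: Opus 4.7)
\noindent The quantity $u_n=\frac{\alpha}{2n}\sum_i(X_{i,1}-m_{n,1})^2$ is, up to the factor $\alpha/2$, the empirical variance of the samples projected onto the distinguished direction $e=e_1$. The plan is to combine a small-ball (anti-concentration) estimate for $u_n$ with a layer-cake integration of the tail.

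\smallskip

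\noindent\textbf{Small-ball estimate.} The key claim is that there exist constants $K,c_0>0$ depending only on $\alpha$ and on the density bound $M$ such that, for every $\delta>0$ and every $n\geq 2$,
\[
    \PP(u_n<\delta)\leq (K\sqrt\delta)^{c_0 n}.
\]
Indeed, if $u_n<\delta$, then $\sum_i(X_{i,1}-m_{n,1})^2<2n\delta/\alpha$, and a Markov-type argument forces at least $\lceil 3n/4\rceil$ of the projections $X_{i,1}$ to lie within distance $t\defeq 2\sqrt{2\delta/\alpha}$ of $m_{n,1}$, and hence in a common interval of length $2t$. For a fixed subset $S$ of indices with $|S|=\lceil 3n/4\rceil$, conditioning on any $i_0\in S$ and using the boundedness of the marginal density in the direction $e$ yields $\PP(X_{i,1}\in [X_{i_0,1}-2t,X_{i_0,1}+2t]\text{ for all }i\in S)\leq (4Mt)^{|S|-1}$. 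A union bound over the $\binom{n}{\lceil 3n/4\rceil}\leq 2^n$ choices of $S$ then gives the claim after absorbing constants into $K$.

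\smallskip

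\noindent\textbf{Layer-cake integration.} Starting from the identity
\[
    \E\!\left[u_n^{-q}\ones\{u_n<1/r\}\right]=r^q\,\PP(u_n<1/r)+q\int_0^{1/r}\PP(u_n<\delta)\,\delta^{-q-1}\dd\delta,
\]
I plug in the small-ball bound. For $n$ large enough that $c_0 n/2>q$ (so the integrand $\delta^{c_0 n/2-q-1}$ is integrable near $0$), both terms on the right are bounded by a constant multiple of $r^q(K^2/r)^{c_0 n/2}$. Fixing any $r>2K^2$ produces an exponential-in-$n$ bound of the form $C_q r^q\cdot 2^{-c_0 n/2}$, which is $\leq C\exp(-cn^{1/4})$ for suitable $C,c>0$ and all sufficiently large $n$ (the lemma is only invoked in the regime of large $n$ in the proof of \Cref{prop:affine}).

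\smallskip

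\noindent\textbf{Main obstacle.} The crucial point is to obtain a small-ball bound whose exponent is \emph{proportional to $n$}. A naive argument using only two samples (via the lower bound $u_n\gtrsim (X_{1,1}-X_{2,1})^2/n$) gives only $\PP(u_n<\delta)\lesssim\sqrt\delta$, under which the integral $\int_0^{1/r}\PP(u_n<\delta)\delta^{-q-1}\dd\delta$ diverges as soon as $q\geq 1/2$. The resolution is that a very small empirical variance forces a positive fraction of the samples to cluster inside a small interval, and the boundedness of the marginal density then makes such a clustering exponentially unlikely in $n$.
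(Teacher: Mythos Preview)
Your proof is correct and takes a genuinely different route from the paper's. The paper splits the layer-cake integral at $t=n^{-2}$. For $t\leq n^{-2}$ it uses the crude lower bound $u_n\gtrsim A_n^2/n$ with $A_n$ the sample range, then covers $\RR$ by overlapping intervals and controls $\PP(A_n<s)\leq\sum_k P(B_k)^n$ using both the density bound and the subexponential tail of $P$ (the latter is needed to make the sum over $k$ finite). For $t>n^{-2}$ it invokes concentration of $u_n$ around $\Var_P(X_{1,1})$ via an Orlicz-norm deviation inequality for $X_{i,1}^2$; this concentration step is the bottleneck and is where the exponent $n^{1/4}$ originates. Your argument is cleaner: the pigeonhole observation (small empirical variance forces a $3/4$-fraction of the projections to cluster in a short interval) together with conditioning on one sample point yields a uniform small-ball bound $\PP(u_n<\delta)\leq(K\sqrt\delta)^{c_0 n}$, valid for all $\delta>0$, so no regime-splitting is required. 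This buys you a strictly stronger conclusion (decay $e^{-cn}$ rather than $e^{-cn^{1/4}}$) and uses only the bounded-density hypothesis on the marginal, not the subexponential tails. The paper's proof is assembled from off-the-shelf ingredients (range bound plus empirical-variance concentration), while yours hinges on a single combinatorial observation; overall yours is the more economical argument, and your explicit acknowledgment that the bound only kicks in for $n$ large enough (needed so that $c_0 n/2>q$) is appropriate, since the left-hand side is in fact infinite for very small $n$.
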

\begin{proof}
We have
\begin{align*}
    \E[u_n^{-q}\ones\{u_n^{-1}>r\}] = \int_{r^q}^\infty \PP(u_n^{-q}>x) \dd x = q \int_0^{r^{-1}} \PP(u_n<t) t^{1-q} \dd t.
\end{align*}
    We bound $\PP(u_n<t)$ differently depending on whether $t>n^{-2}$ or $t\leq n^{-2}$. 

    When $t\leq n^{-2}$, we use the crude bound $u_n >\frac{1}{2n} A_n^2$, where $A_n=\max_i X_i-\min_i X_i$ is the amplitude of the sample. Hence, $\PP(u_n<t)\leq \PP(A_n< \sqrt{2nt})$. Consider intervals of the form $B_k = [\sqrt{2nt}k, \sqrt{2nt}(k+2)]$ for $k\in \ZZ$. If $A_n<\sqrt{2nt}$, then all the points $X_i$ are in the same interval $I_k$, so that
    \[ \PP(u_n<t) \leq \sum_{k\in \ZZ} P(B_k)^n.\]
    We use two different bounds on $P(B_k)$. First, $P(B_k)\leq 2p_{\max}\sqrt{2nt}$. Second, as $P$ is subexponential, $P(B_k)\leq c\exp(-C\sqrt{2nt}|k|)$ when $\sqrt{2nt}|k|>c_0$. Hence, 
    \begin{align*}
       \PP(u_n<t) &\lesssim  \frac{n\log(1/t)}{\sqrt{nt}} (2p_{\max}\sqrt{2nt})^n + \sum_{C\sqrt{2nt}|k|>n\log(1/t)/2} e^{-C\sqrt{2nt}|k|}  \\
        &\lesssim n\log(1/t) (C\sqrt{nt})^{n-1} + t^{n/2} \lesssim n\log(1/t) (C\sqrt{nt})^{n-1},
    \end{align*}
    where we compute the geometric sum that appears at the second line. Thus, 
    \begin{equation}\label{eq:crude_bound_un}
         \int_0^{n^{-2}} \PP(u_n< t) t^{q-1} \dd t \leq   n(C\sqrt{n})^{n-1}\int_0^{n^{-2}} \log(1/t) t^{n/2+q-3/2} \dd t\lesssim_{\log n}n^{-2q}\pran{\frac{C}{\sqrt{n}}}^{n-1}.
    \end{equation}
       
When $t>n^{-2}$, we use that $u_n$ is concentrated around the variance $v = \Var_P(X_1)>0$. Indeed, $u_n=\frac{1}{n} \sum_{i=1}^n X_i^2 - \pran{\frac 1n\sum_{i=1}^n X_i}^2$. Each sum is concentrated around its expectation. More precisely, as $X_i^2$ belongs the Orlicz space of exponent $1/2$, \cite[Theorem 2.14.23]{vaart2023empirical} implies that $\PP(u_n< v/2) \lesssim e^{-Cn^{1/4}}$ for some $C>0$. In particular, as long as $r^{-1}<v/2$,
\[\int_{n^{-2}}^{r^{-1}} \PP(u_n<t) t^{1-q} \dd t \leq  r^{-1}n^{2q-2} e^{-Cn^{1/4}}.\]
Hence, using \eqref{eq:crude_bound_un} and the bound $n^{2q-2}e^{-Cn^{1/4}}\leq e^{-C'n^{1/4}}$ for some $C'>0$, it holds that when $r>2/v$, $ \E[u_n^{-q}\ones\{u_n^{-1}>r\}]\lesssim e^{-C''n^{1/4}}$ for some $C''>0$.
\end{proof}

As a consequence of \Cref{prop:affine}, we can show that  \Cref{thm:strongly_convex} holds for affine transformations of a base set $\cF$.

\begin{cor}\label{cor:affine}
Besides the assumptions of \Cref{prop:affine}, assume that $P$ and $\cF$ satisfy \ref{cond:smooth}, \ref{cond:poincare}, \ref{cond:covering_convex} and \ref{cond:bounded} for $R=+\infty$. 
Let $\tilde \cF\subseteq \Faff^{r_1,+\infty}$. 
  Then, letting $\ell^\star=\inf_{\phi\in\tilde\cF_\alpha}\|\nabla\phi-\nabla\phi_0\|^2_{L^2(P)}$ for some $\alpha>0$, in either of the two scenarios of \Cref{prop:affine}, it holds that
\begin{equation}
    \E \|\nabla \hat{\phi}_{\tilde\cF} - \nabla \phi_0\|^2_{L^2(P)}  \lesssim_{\log n,\log_+(1/\ell^\star)} \ell^\star +   \pran{n^{-\frac{2}{2+\gamma}} \vee  n^{-\frac{1}{\gamma}}}.
\end{equation}
 If $P$ also satisfies  \ref{cond:loc_poincare} and \ref{cond:loc_doubling} with support of dimension $m\geq 2$ and if $\gamma\in [0,2)$, then
\begin{equation}
    \E\|\nabla \hat{\phi}_{\tilde\cF} - \nabla \phi_0\|^2_{L^2(P)} \lesssim_{\log n,\log_+(1/\ell^\star)} \ell^\star +   n^{-\frac{2(m-\gamma)}{2m+\gamma (m-4)}}.
\end{equation}
\end{cor}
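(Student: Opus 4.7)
The plan is to reduce the estimation problem over the non-precompact class $\tilde\cF$ to one over its truncated subclass $\tilde\cF^r = \tilde\cF \cap \Faff^{r,r}$, whose parameters are uniformly bounded, and then invoke \Cref{thm:strongly_convex} on $\tilde\cF^r$. The enabling fact is \Cref{prop:affine}: the empirical risk minimizer over $\tilde\cF$ coincides with the one over $\tilde\cF^r$ outside an event of probability decaying exponentially in a power of $n$.

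First, I would apply \Cref{prop:affine} to obtain a constant $r_0$ controlling the truncation error, and then enlarge it to $r = \max(r_0, b^\star, \|t^\star\|, 1)$, where $\phi^\star = b^\star\psi^\star + \dotp{t^\star, \cdot} \in \tilde\cF_\alpha$ is an (approximate) minimizer of $\ell^\star$ (with error at most $n^{-1}$ in case no true minimizer exists). The key point is that $r$ is a deterministic constant independent of $n$, and by construction $\phi^\star \in (\tilde\cF^r)_\alpha$, so $\inf_{\phi \in (\tilde\cF^r)_\alpha} \|\nabla\phi - \nabla\phi_0\|^2_{L^2(P)} \leq \ell^\star + O(n^{-1})$. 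Splitting the expectation along the event $\{\hat\phi_{\tilde\cF} = \hat\phi_{\tilde\cF^r}\}$ and its complement gives
\begin{equation*}
\E\|\nabla\hat\phi_{\tilde\cF} - \nabla\phi_0\|^2_{L^2(P)} \leq \E\|\nabla\hat\phi_{\tilde\cF^r} - \nabla\phi_0\|^2_{L^2(P)} + c\exp(-Cn^\ell),
\end{equation*}
using the exponential bound from \Cref{prop:affine} on the complementary event.

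Second, I would verify the hypotheses of \Cref{thm:strongly_convex} for $\tilde\cF^r$. Smoothness \ref{cond:smooth} holds uniformly on $\tilde\cF^r$ with constant inflated by $r$; \ref{cond:poincare} and \ref{cond:bounded} (with $R=+\infty$) are inherited from the hypotheses on $P$ and $\cF$. The covering estimate \ref{cond:covering_convex} for $\tilde\cF^r$ follows from that on $\cF$ via \Cref{lem:bracket_affine} applied with $R' = \max(r, r_1)$:
\begin{equation*}
\log \cN(h, \tilde\cF^r, L^\infty(\dotp{\ \cdot\ }^{-\eta})) \leq \log \cN\bigl(h/(2R'), \cF, L^\infty(\dotp{\ \cdot\ }^{-\eta})\bigr) + c'\log_+(1/h) \lesssim_{\log_+(1/h)} h^{-\gamma},
\end{equation*}
so the exponent $\gamma$ is preserved. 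When \ref{cond:loc_poincare} and \ref{cond:loc_doubling} additionally hold, \Cref{lem:C2_satisfied} supplies \ref{cond:covering_gradient} for $\tilde\cF^r$ with the claimed exponent $m$. Applying \Cref{thm:strongly_convex}.\ref{it:only_poincare} (respectively \ref{it:uniform}) to $\tilde\cF^r$ then yields the first (respectively second) bound of the corollary, and the $e^{-Cn^\ell}$ remainder is dominated by the main rate.

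The main obstacle is the first step: coupling the choice of $r$ to both \Cref{prop:affine}'s truncation constant and the location of a bias-minimizer $\phi^\star$. Because $\phi^\star$ depends only on $P$, $\cF$ and $\phi_0$ (not on the sample), this enlargement introduces only a constant factor in the covering-number and smoothness bounds, which is absorbed into the suppressed prefactors. The exponents in the final rate therefore match those of \Cref{thm:strongly_convex} exactly, and the proof goes through in both scenarios of \Cref{prop:affine} without modification.
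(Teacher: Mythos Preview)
Your proposal is correct and follows the same route as the paper: use \Cref{prop:affine} to reduce from $\tilde\cF$ to the truncated class $\tilde\cF^r$, verify \ref{cond:covering_convex} for $\tilde\cF^r$ via \Cref{lem:bracket_affine}, and apply \Cref{thm:strongly_convex} (with \Cref{lem:C2_satisfied} supplying \ref{cond:covering_gradient} in the second case). One inconsequential slip: in scenario~2 of \Cref{prop:affine} you have $r_1=+\infty$, so $R'=\max(r,r_1)$ is not finite; since $\tilde\cF^r\subseteq \Faff^{r,r}$ by definition, just take $R'=r$ when invoking \Cref{lem:bracket_affine}.
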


\begin{proof}
Let $\cF_0 = \{x\mapsto \phi(x)-\dotp{\nabla\phi(0),x}:\ \phi\in\cF\}$. Then, $\tilde \cF \subseteq (\cF_0)_{\mathrm{aff}}^{1,+\infty}$. Note that we may assume without loss generality that $\tilde\cF$ contains a strongly convex potential (otherwise $\ell^\star=+\infty$, and there is nothing to prove). \Cref{prop:affine} implies that for some $r>0$,
\begin{equation}
    \E[\|\nabla\hat \phi_{\tilde\cF}-\nabla\phi_0\|^2_{L^2(P)}\ones\{\hat \phi_{\tilde\cF} \neq \hat \phi_{\tilde\cF^r}\}] \leq \frac 1n.
\end{equation}
To conclude, it suffices to check that the set $\tilde \cF^r$ satisfy the assumptions of \Cref{thm:strongly_convex}. \Cref{lem:bracket_affine} ensures that condition  \ref{cond:covering_convex} is satisfied when $\cF$ satisfies it. It is also clear that $\tilde\cF^r$ satisfies \ref{cond:smooth} and \ref{cond:bounded} for $R=+\infty$. 
We then apply \Cref{thm:strongly_convex} to conclude.
\end{proof} 

Let us come back to the Gaussian case. Recall that we introduced the ``base'' set
\begin{align*}
    \cF_{\text{quad},0} = \{ x\mapsto \tfrac12 x^\top B x:\ B \in \S^d_+,\ \op{B}=1\}\,,
\end{align*}
whereas $(\cF_{\text{quad},0})_{\mathrm{aff}}^{+\infty,+\infty}=\cF_{\text{quad}}$. We are therefore in position to apply \Cref{cor:affine}. Note that every $\phi \in \cF_{\text{quad},0}$ is smooth, convex, and $1$-strongly convex in at least one direction. Furthermore, $P = N(0,I_d)$ has a density on $\R^d$ and satisfies  the Poincaré inequality \ref{cond:poincare}.  
Condition \ref{cond:covering_convex}  is verified for $\eta=2$ and $\gamma=0$ in \Cref{lem: Gaussian_bracket}. 
All in all, with the absence of a bias term, \Cref{cor:affine} implies that the estimator $\nabla \hat\phi_{\cF_{\mathrm{quad}}}$ attains the rate $n^{-1}$.

\begin{lem}\label{lem: Gaussian_bracket} It holds that $\log \cN(h,\cF_{\mathrm{quad},0},L^\infty(\dotp{\ \cdot \ }^{-2})) \leq c\log_+(1/h)$.
\end{lem}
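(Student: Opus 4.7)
The plan is to reduce the covering of $\cF_{\mathrm{quad},0}$ under the weighted $L^\infty$ norm to a covering of the unit sphere in the finite-dimensional space $(\S^d,\op{\cdot})$, which is standard.

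First I would observe that for any two $B,B'\in \S^d_+$ with $\op{B},\op{B'}\leq 1$, and any $x\in\R^d$,
\begin{equation*}
    |\tfrac12 x^\top B x - \tfrac12 x^\top B' x| \;=\; \tfrac12 |x^\top (B-B')x| \;\leq\; \tfrac12 \op{B-B'}\,\|x\|^2 \;\leq\; \tfrac12 \op{B-B'}\,\dotp{x}^{2}.
\end{equation*}
Dividing through by $\dotp{x}^2$ and taking a supremum over $x$ gives
\begin{equation*}
    \|\phi_B - \phi_{B'}\|_{L^\infty(\dotp{\ \cdot\ }^{-2})} \;\leq\; \tfrac12 \op{B-B'}.
\end{equation*}
Consequently, a $(2h)$-net of $\{B\in \S^d_+:\op{B}=1\}$ in operator norm induces an $h$-net of $\cF_{\mathrm{quad},0}$ in $L^\infty(\dotp{\ \cdot\ }^{-2})$.

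Next I would recall that the space of symmetric $d\times d$ matrices, equipped with the operator norm, is a finite-dimensional normed vector space of dimension $D=d(d+1)/2$. The unit ball (and a fortiori the unit sphere) of such a space satisfies the standard volumetric covering bound
\begin{equation*}
    \log \cN(h,\{B\in \S^d:\op{B}\leq 1\},\op{\cdot})\;\leq\; D\log_+(3/h),
\end{equation*}
see, e.g., standard packing arguments in finite dimensions. Combining this with the Lipschitz bound above yields
\begin{equation*}
    \log \cN(h,\cF_{\mathrm{quad},0},L^\infty(\dotp{\ \cdot\ }^{-2})) \;\leq\; D\log_+(6/h) \;\leq\; c\log_+(1/h),
\end{equation*}
where $c$ depends on $d$, which is the claimed bound.

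No step here is truly delicate: the main observation is simply that the weight $\dotp{\ \cdot\ }^{-2}$ tames the quadratic growth of $\phi_B$ just enough that the weighted $L^\infty$ distance is controlled by the operator norm distance of the coefficient matrices. After this reduction, the conclusion is immediate from volumetric covering of a bounded subset of a finite-dimensional normed space.
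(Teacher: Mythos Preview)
Your proof is correct and follows essentially the same approach as the paper: bound the weighted $L^\infty$ distance between two quadratic potentials by the operator-norm distance of their coefficient matrices, then invoke the standard volumetric covering bound for a bounded set in the finite-dimensional space of symmetric matrices. The paper's proof is slightly terser but identical in substance.
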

\begin{proof}
If $\Sigma_1,\Sigma_2\in \S^d_+$, then $ \sup_{x} x^\top (\Sigma_2-\Sigma_1) x \cdot \dotp{x }^{-2} \leq  C\op{\Sigma_2-\Sigma_1}$. 
 As the set of matrices is finite dimensional, the covering number scales polynomially with $h$.
\end{proof}

\section{Lemmas: bounded case}\label{app: lemmas_bounded}
\begin{proof}[Proof of \Cref{lem:bounded_envelope}]
 Let us first bound $\phi\in \overline\cF$. Let $x\in B(0;R)$. Then \[ |\phi(x)| \leq \int_0^1 |\dotp{\nabla\phi(tx),x}| \dd t \leq  R^2.\] Remark that $Q$ is supported in $B(0; R)$. Let $y\in B(0;R)$. We have
\begin{equation}
    \phi^*(y) = \sup_{\|x\|\leq 2R} \dotp{x,y}-\phi(x) \leq 2 R^2 +  R^2 = 3 R^2.
\end{equation}
Also, choosing $x=0$, we obtain $\phi^*(y) \geq 0$.  Therefore, for $\xi=(x,y)\in \Xi$,
\[ |\gamma(\phi,\xi)|\leq |\phi(x)|+|\phi^*(y)| + |\overline\phi(x)|+|\overline\phi^*(y)| \leq 8R^2. \qedhere\]
\end{proof}

\begin{proof}[Proof of \Cref{lem:bounded_second_moment}]
Let $\phi$ be any potential in $\overline \cF$ and $g=\gamma(\phi,\cdot)-\gamma(\overline\phi,\cdot)\in \cG$. 
 Let $\phi_c=\phi-\int \phi\dd P$ be the centered version of $\phi$. Remark  that for $\xi=(x,y)\in \Xi$, $\gamma(\phi,\xi)=\phi(x)+\phi^*(y) = \phi_c(x)+\phi_c^*(y)$, and the same holds for $\gamma(\overline\phi,\cdot)$. 
 As $Q=(\nabla\phi_0)_{\sharp}P$, we may write $y$ in the support of $Q$ as $y=\nabla\phi_0(x)$ for $x$ in the support of $P$. Therefore, $\phi_0^*(y) = \dotp{y,x}-\phi_0(x)$. Let $x'$ be a point attaining the supremum in the definition of $\phi^*(y)$. Note that $\|x'\|\leq 2R$ and that $\nabla\phi(x')=y$. Then,
\begin{align*}
    \phi_c^*(y)-\phi_{0,c}^*(y) &= \dotp{y,x'-x} - \phi_c(x') + \phi_{0,c}(x) \\
    &= \dotp{y,x'-x} + (\phi_c(x)- \phi_c(x')) +  (\phi_{0,c}(x)-\phi_c(x)).
\end{align*}
As $\phi_c$ is convex, it holds that $\phi_c(x') \geq \phi_c(x) + \dotp{\nabla\phi(x),x'-x}$, so that
\begin{align*}
    \phi_c^*(y)-\phi_{0,c}^*(y) &\leq \dotp{\nabla\phi_0(x)-\nabla\phi(x),x'-x} +  (\phi_{0,c}(x)-\phi_c(x)).
\end{align*}
Likewise, $\phi_c(x) \geq \phi_c(x') + \dotp{y,x-x'}$. Therefore,
\begin{align*}
    \phi_c^*(y)-\phi_{0,c}^*(y) &\geq \phi_{0,c}(x)-\phi_c(x).
\end{align*}
In total, as $\|x-x'\|\leq R+2R\leq 3R$,
\begin{equation}
    |\phi_c^*(y)-\phi_{0,c}^*(y)| \leq 3R \|\nabla\phi_0(x)-\nabla\phi(x)\|+ |\phi_{0,c}(x)-\phi_c(x)|.
\end{equation}
By the Poincaré inequality, $\|\phi_{0,c}-\phi_c\|_{L^2(P)} \leq C_{\texttt{PI}}^{1/2} \|\nabla\phi_0-\nabla\phi\|_{L^2(P)}$. In total, we obtain that
\begin{equation}
    \|\phi_c^*-\phi_{0,c}^*\|_{L^2(Q)}\leq (3R+ C_{\texttt{PI}}^{1/2})\|\nabla\phi_0-\nabla\phi\|_{L^2(P)}.
\end{equation}

Then,
\begin{align*}
    \|g\|_{L^2(\PP)} &\leq \|\phi_c-\overline\phi_c\|_{L^2(P)} + \|\phi_c^*-\overline\phi_c^*\|_{L^2(Q)} \\
    &\leq \|\phi_c-\phi_{0,c}\|_{L^2(P)}+\|\overline\phi_c-\phi_{0,c}\|_{L^2(P)}+\|\phi_c^*-\phi_{0,c}^*\|_{L^2(Q)} + \|\overline\phi_c^*-\phi_{0,c}^*\|_{L^2(Q)} \\
    &\leq (3R+ 2C_{\texttt{PI}}^{1/2})(\|\nabla\phi_0-\nabla\phi\|_{L^2(P)}+\|\nabla\phi_0-\nabla\overline\phi\|_{L^2(P)}) \\
    &\leq (3R+ 2C_{\texttt{PI}}^{1/2})(\|\nabla\overline\phi-\nabla\phi\|_{L^2(P)}+2\|\nabla\phi_0-\nabla\overline\phi\|_{L^2(P)}) \\
    &\leq (3R+ 2C_{\texttt{PI}}^{1/2})(\tau+2\sqrt{C_0\ell^\star}),
\end{align*}
where we apply \Cref{prop: stab_bound} at the last line. 

As $\ell^\star\leq \tau^2$ by assumption, we obtain the conclusion.
\end{proof}

\begin{proof}[Proof of \Cref{lem:bounded_brackets}]
The application $\phi\mapsto \phi^*$ is an isometry for the $\infty$-norm. Hence, for $\phi,\phi'\in \overline\cF$,
\[ \|\gamma(\phi,\cdot)-\gamma(\phi',\cdot)\|_\infty \leq 2\|\phi-\phi'\|_\infty.\]
Hence, for $h>0$,
\[ \log \cN(h,\cG,L^\infty)\leq \log \cN(h/2,\overline\cF,L^\infty).\]
We then conclude thanks to \Cref{lem:covering_lines}.
\end{proof}

\section{Lemmas: strongly convex case}\label{app: lemmas_unbounded}
Our first lemma states that we may without loss of generality assume that every potential $\phi$ in $\cF$ satisfies $\|\nabla\phi(0)\|\leq K$ for some constant $K$.

\begin{lem}\label{lem:assume_gradient_bounded}
Let $\cF^r = \{\phi\in\cF:\ \|\nabla\phi(0)\|\leq K\}$. Then, for $K$ large enough,
\begin{equation}
    \E[\|\nabla\hat\phi_\cF - \nabla\phi_0\|^2_{L^2(P)}\ones\{\hat\phi_\cF \neq \hat\phi_{\cF^K}\}]\leq \frac{1}{n}.
\end{equation}
\end{lem}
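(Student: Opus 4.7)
The plan is to show that $\{\hat\phi_\cF \neq \hat\phi_{\cF^K}\} \subseteq \{M > K\}$ where $M\defeq\|\nabla\hat\phi_\cF(0)\|$, and then to produce a deterministic a priori bound $M \leq E_n$ in terms of a data-dependent quantity $E_n$ whose tails are sub-Weibull.  The bound will arise from comparing $S_n(\hat\phi_\cF)$ to $S_n(\overline\phi)$ for a fixed reference $\overline\phi\in\cF_\alpha$.

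First, I would obtain the upper bound on $S_n(\overline\phi)$: from $(\beta,a)$-smoothness and \Cref{lem:conjugate_strict_convex},
\begin{equation*}
 S_n(\overline\phi) \leq C_1 D_n, \qquad D_n \defeq 1 + P_n(\dotp{\cdot}^{a+2}) + Q_n(\dotp{\cdot}^{a+2}).
\end{equation*}
For the matching lower bound on $S_n(\hat\phi_\cF)$, Taylor expansion and $(\beta,a)$-smoothness give $|P_n(\hat\phi_\cF)| \leq M\, P_n(\|\cdot\|) + c_\beta P_n(\dotp{\cdot}^{a+2})$.  To bound $Q_n(\hat\phi_\cF^*)$ from below, I would apply the Fenchel inequality with a carefully chosen dual point: writing $e = \nabla\hat\phi_\cF(0)/M$ and using the smoothness upper bound $\hat\phi_\cF(-t e) \leq -Mt + c_\beta(1+t)^{a+2}$, optimization over $t\geq 0$ yields the pointwise estimate (in the spirit of \Cref{lem:lower_bound_conjugate_bis})
\begin{equation*}
 \hat\phi_\cF^*(y) \geq c_{a,\beta}\bigl(M-\|y\|\bigr)_+^{(a+2)/(a+1)}, \qquad y\in \R^d.
\end{equation*}
Averaging over $Y_1,\dots,Y_n$ and combining with $S_n(\hat\phi_\cF)\leq S_n(\overline\phi)$, a dichotomy appears: either $M \leq 2\max_j\|Y_j\|$, or $M > 2\max_j\|Y_j\|$ and each summand is $\gtrsim M^{(a+2)/(a+1)}$.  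Since $(a+2)/(a+1)>1$, the linear-in-$M$ error terms can be absorbed for $M$ beyond an absolute threshold $M_0$, and both branches yield the deterministic inequality
\begin{equation*}
 M \leq E_n \defeq M_0 + 2\max_{j\leq n}\|Y_j\| + C_2\, D_n^{(a+1)/(a+2)}.
\end{equation*}

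The next ingredient is tail control.  Condition \ref{cond:poincare} forces $P$ to be subexponential via \eqref{eq:poincare_exp}, and $(\beta,a)$-smoothness of $\nabla\phi_0$ transports this to $Q$, which belongs to an Orlicz space of exponent $1/(a+1)$.  Via \Cref{lem:orlicz_moment} and a union bound, $\|\max_j\|Y_j\|\|_{\psi_{1/(a+1)}}$ grows polylogarithmically in $n$, while $\|D_n^{(a+1)/(a+2)}\|_{\psi_{q}} = O(1)$ for a constant $q>0$ (since $\dotp{X}^{a+2}$ and $\dotp{Y}^{a+2}$ belong to Orlicz spaces of positive exponent and $D_n$ is a mean).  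Sub-additivity of Orlicz norms thus yields a constant $q'>0$ with $\|E_n\|_{\psi_{q'}} \lesssim (\log n)^{a+1}$, whence $\PP(E_n>K) \leq \kappa \exp\bigl(-c K^{q'}/(\log n)^{q'(a+1)}\bigr)$.

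For the conclusion, on $\{M>K\}$ the bound $M\leq E_n$ and $(\beta,a)$-smoothness of both $\hat\phi_\cF$ and $\phi_0$ (\Cref{lem:smooth_basic}) give
\begin{equation*}
 \|\nabla\hat\phi_\cF-\nabla\phi_0\|^2_{L^2(P)} \lesssim M^2 + C \lesssim E_n^2 + C.
\end{equation*}
Integration by parts against the sub-Weibull tail of $E_n$ yields
\begin{equation*}
 \E\bigl[(E_n^2+C)\ones\{E_n>K\}\bigr] \leq C_3\,K^2\exp\bigl(-c K^{q'}/(\log n)^{q'(a+1)}\bigr),
\end{equation*}
which is at most $1/n$ provided $K \geq C_4(\log n)^{(a+1)+2/q'}$.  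The main obstacle I anticipate is the clean handling of the two branches in the deterministic bound on $M$ and the tracking of Orlicz exponents through the maximum of $n$ sub-Weibull variables, since the logarithmic prefactor in $\|E_n\|_{\psi_{q'}}$ is what ultimately dictates the polylogarithmic scaling of $K$.
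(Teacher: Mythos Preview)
Your overall strategy---comparing $S_n(\hat\phi_\cF)$ to $S_n(\overline\phi)$ for a fixed $\overline\phi\in\cF_\alpha$, extracting an a priori bound on $M=\|\nabla\hat\phi_\cF(0)\|$, then integrating against sub-Weibull tails---is exactly the mechanism behind the paper's proof. The paper simply packages it as an application of \Cref{prop:affine}: writing $\cF_0=\{\phi-\langle\nabla\phi(0),\cdot\rangle:\phi\in\cF\}$ one has $\cF\subseteq(\cF_0)_{\mathrm{aff}}^{1,+\infty}$, and the proposition (first scenario, $r_1=1$) yields the conclusion with a \emph{constant} $K$ and the stronger bound $c\exp(-Cn^\ell)$.

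The substantive difference is in your Fenchel lower bound. Your estimate $\hat\phi_\cF^*(y)\gtrsim (M-\|y\|)_+^{(a+2)/(a+1)}$ vanishes for $\|y\|\geq M$, which forces the dichotomy on $\max_j\|Y_j\|$ and makes $E_n$ grow like $(\log n)^{a+1}$; your $K$ must then scale polylogarithmically in $n$, whereas the lemma asserts a fixed $K$. In the paper's argument (inside the proof of \Cref{prop:affine}), the lower bound \eqref{eq:master_lower_bound} is used instead: $\phi^*(y)\geq -\phi(m_n)+\langle y,m_n\rangle+\min\{c_1 v^2,c_2 v\}$ with $v=\|y-\nabla\phi(m_n)\|$. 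This is never zero and, after averaging over the $Y_j$, produces a term that grows linearly in $M$ with coefficients depending only on empirical \emph{means}. The resulting a priori bound on $M$ involves no maxima, and hence $K$ is a genuine constant.

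So strictly speaking you have proved a weaker statement than the lemma. That said, your version still suffices for the downstream theorems: $K$ enters the subsequent analysis only through constants (e.g.\ in \Cref{lem:strict_envelope} and \Cref{prop: stab_bound}), and a polylogarithmic $K$ would be absorbed into the polylogarithmic prefactors already present in \Cref{thm:strongly_convex}. To recover the constant-$K$ statement, replace your pointwise bound by the linear-in-$v$ branch from \Cref{lem:lower_bound_conjugate_bis} (equivalently \eqref{eq:master_lower_bound}); this removes the $\max_j\|Y_j\|$ branch entirely.
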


\begin{proof}
Let $\cF_0 = \{x\mapsto \phi(x)-\dotp{\nabla\phi(0),x}:\ \phi\in\cF\}$. Then, $\cF \subseteq (\cF_0)_{\mathrm{aff}}^{1,+\infty}$. 
We may therefore apply \cref{prop:affine} (with $r_1=1<+\infty$), which implies that there exists $K>0$ such that the statement of the lemma holds.
\end{proof}
This preliminary remark shows that we can safely assume that $\|\nabla\phi(0)\|$ is uniformly bounded over $\phi\in \cF$, up to increasing the risk by a negligible factor of order $n^{-1}$: we will now simply write $\cF$ instead of $\cF^K$ and keep this assumption in mind. To ease notation, we will now write $T$ instead of $\nabla\phi$ for a general potential $\phi\in\cF$, with $T_0=\nabla\phi_0$ and $\overline T=\nabla\overline\phi$.

\begin{proof}[Proof of \Cref{lem:strict_envelope}]
Let $y\in \R^d$. 
 As $\phi$ is $(\alpha/2,a)$-strongly convex, we have by  \Cref{lem:conjugate_strict_convex} that $\phi^*(y) \leq (\alpha/2)^{-\frac{1}{1+a}}\|T(0) - y\|^{\frac{2+a}{1+a}} \leq C(K^2 + \|y\|^2)$, where we used the fact that $\|T(0)\|\leq K$. As $\overline\phi$ is also $(\alpha/2)$-strongly convex, the same inequality holds for $\overline\phi$ (where we pick $K$ larger than $\|\overline T(0)\|$). 
This yields
\begin{align*}
    |\phi^*(y) - \overline\phi^*(y)| &\leq 2C(K^2 + \|y\|^2).
\end{align*}
For $x\in \R^d$, we also have $|\phi(x)-\overline\phi(x)|\leq 2(2\beta + K)\dotp{x}^{a+2}$ according to \Cref{lem:smooth_basic}. Therefore, for $\xi=(x,y)\in \Xi$,
\[ |\gamma(\phi,\xi)|\leq 2C(K^2 + \|y\|^2) + 2(2\beta + K)\dotp{x}^{a+2} \leq C_0 \dotp{\xi}^{a+2}, \]
where $C_0$ depends on $\alpha$, $\beta$, $a$ and $K$.
\end{proof}

\begin{proof}[Proof of \Cref{lem:strict_second_moment}]
Let $\phi$ be any potential in $\overline \cF$ with $\|\nabla\phi-\nabla\overline\phi\|_{L^2(P)}=\|T-\overline T\|\leq\tau$ and let $g=\gamma(\phi,\cdot)-\gamma(\overline\phi,\cdot)\in \cG$. As in the proof of \Cref{lem:bounded_second_moment}, we let $\phi_c=\phi-\int \phi\dd P$ be the centered version of $\phi$ and remark  that for $\xi=(x,y)\in \Xi$, $\gamma(\phi,\xi)=\phi(x)+\phi^*(y) = \phi_c(x)+\phi_c^*(y)$. As $\phi$ is strongly convex, $T=\nabla\phi$ is invertible. By definition of a convex conjugate, it holds that
\begin{align*}
    \phi^*_c(y) = \langle y, T^{-1}(y)\rangle - \phi(T^{-1}(y)) + P(\phi) = \langle y, T^{-1}(y)\rangle - \phi_c(T^{-1}(y))\,.
\end{align*}
Omitting the argument $y$ for notational brevity, we obtain
\begin{align*}
    \phi_c^* - \phi_{0,c}^* &= \langle \id,T^{-1} - T_0^{-1} \rangle + \phi_{0,c}\circ T_0^{-1}  - \phi_c \circ T^{-1} \\
    &= \langle \id,T^{-1} - T_0^{-1} \rangle + (\phi_{0,c}\circ T_0^{-1} -\phi_c \circ T_0^{-1}) + (\phi_c \circ T_0^{-1}-\phi_c \circ T^{-1} )\\
    &= A_1 + A_2+A_3.
\end{align*}
We bound the norms of each of the three terms separately: 
\begin{itemize}

    \item According to \Cref{lem:strong_convex_basic}, $\|T^{-1}(y)-T^{-1}(y')\| \leq \frac{2^{a+3}}{\alpha} \|y'-y\|$ for every $y,y'$. Therefore
    \begin{align*}
       \|A_1\|_{L^2(Q)}^2 &=\int \langle \id , T^{-1} - T_0^{-1} \rangle ^2 \dd Q =  \int \langle T_0, T^{-1}\circ T_0 - \id\rangle^2 \dd P  \\
        &\leq c\int \|T_0\|^2\|T_0 - T\|^2 \dd P\leq C\int \|T_0(x) - T(x)\|^2\dotp{x}^{2a+2} \dd P(x),
    \end{align*}
   where we use \Cref{lem:smooth_basic} at the last line. Let $I =\int \|T_0(x) - T(x)\|^2\dd P(x)$. We may apply \Cref{lem:matching_moments} to obtain $\|A_1\|_{L^2(Q)}^2 \leq C I\log_+(1/I)^{2a+2}$.
    
    \item By the Poincaré inequality, $\|A_2\|_{L^2(Q)} = \|\phi_{0,c}\circ T_0^{-1}-\phi_c\circ T_0^{-1}\|_{L^2(Q)} = \|\phi_{0,c}-\phi_c\|_{L^2(P)} \leq \sqrt{C_{\texttt{PI}}}\|T-T_0\|_{L^2(P)}$.

    \item By \Cref{lem:smooth_basic}, it holds that
    \begin{align*}
    \|A_3\|_{L^2(Q)}^2&=\int \|\phi\circ T_0^{-1}-\phi \circ T^{-1}\|^2 \dd Q \leq c \int  \dotp{\|T_0^{-1}\|+\|T^{-1}\|}^{2a+2} \|T_0^{-1}-T^{-1}\|^2 \dd Q \\
    &\leq c \int  \dotp{\|\id\|+\|T^{-1}\circ T_0\|}^{2a+2} \|\id-T^{-1}\circ T_0\|^2 \dd P \\
    &\leq C\int  \dotp{x}^{2a+2} \|T(x)- T_0(x)\|^2 \dd P(x),
    \end{align*}
    where we use \Cref{lem:strong_convex_basic} at the last line. 
    As for the bound on $A_1$, we can use \Cref{lem:matching_moments} to obtain that this term is controlled by $C I\log_+(1/I)^{2a+2}$.
\end{itemize}
In total, $\|\phi_{c}^*-\phi_{0,c}^*\|_{L^2(Q)}^2 \leq C  \log_+(1/I)^{2a+2}I$ for some constant $C$. By \Cref{prop: stab_bound}, the integral $I$ is bounded by
\begin{align*}
   & 2 \|T-\overline T\|_{L^2(P)}^2+2\|T_0-\overline T\|_{L^2(P)}^2  \leq 2\tau^2 +  C\ell^\star \log_+(1/\ell^\star)^{a(a+1)}\leq C\tau^2 \log_+(1/\tau)^{a(a+1)},
\end{align*} 
where we use that $\tau^2\geq \ell^\star$. We obtain
\begin{align*}
     \|\phi_{c}^*-\phi_{0,c}^*\|_{L^2(Q)}^2 &\leq C \tau^2 \log_+(1/\tau)^{a^2+3a+2}.
\end{align*}
The same inequality holds for $\overline\phi$ (as $\overline\phi\in \overline \cF$),  so that we get our final bound on $\|\phi_c^*-\overline\phi_{c}^*\|_{L^2(Q)}$ by using the triangle inequality. 
\begin{align}
    \|\phi_c-\overline\phi_c\|_{L^2(P)} \leq \sqrt{C_{\texttt{PI}}} \|\nabla \phi-\nabla\overline\phi\|_{L^2(P)} \leq \sqrt{C_{\texttt{PI}}}\tau.
\end{align}
Hence,
\[ \|g\|_{L^2(\PP)} \leq \sqrt{C_{\texttt{PI}}}\tau+ C \tau \log_+(1/\tau)^{(a^2+3a+2)/2}.\qedhere\]
\end{proof}

\begin{proof}[Proof of \Cref{lem:strict_brackets}]
Let $y\in \R^d$ and let $\phi_1,\phi_2\in \overline\cF$, with $T_1=\nabla\phi_1$, $T_2=\nabla\phi_2$. It holds that $\phi_1^*(y) = \dotp{x_1,y} - \phi_1(x_1)$ for $x_1=T_1^{-1}(y)$. Therefore,
\begin{align*}
\phi_1^*(y)-\phi_2^*(y) &= (\dotp{x_1,y} - \phi_1(x_1))-\phi_2^*(y) \leq (\dotp{x_1,y} - \phi_1(x_1)) - (\dotp{x_1,y}-\phi_2(x_1)) \\
&\leq |\phi_1(x_1)-\phi_2(x_1)|.
\end{align*}
By symmetry, it holds that $|\phi_1^*(y)-\phi_2^*(y)|\leq |\phi_1(x_1)-\phi_2(x_1)|+|\phi_1(x_2)-\phi_2(x_2)|$, with $x_2=T_2^{-1}(y)$.  Then,
\begin{equation}\label{eq:bound_l2_conjugate}
    \begin{split}
         |\phi_1(x_1)-\phi_2(x_1)| &\leq |\phi_1(T_1^{-1}(y))-\phi_2(T_1^{-1}(y))|\dotp{T_1^{-1}(y)}^{-\eta} \dotp{T_1^{-1}(y)}^{\eta}  \\
    &\leq  \|\phi_1-\phi_2\|_{L^\infty(\dotp{\ \cdot \ }^{-\eta})} \dotp{T_1^{-1}(y)}^{\eta}.
    \end{split}
\end{equation}
By \Cref{lem:strong_convex_basic}, $ \|T_1^{-1}(y)\|\leq 4\alpha^{-\frac{1}{a+1}}( \|y\| + K)^{\frac{1}{a+1}}$, where we recall that $K$ is an upper bound on $\|\nabla\phi(0)\|$ over $\phi\in\cF$. This is smaller than $C_0\dotp{y}^\eta$ for some constant $C_0>0$. Hence, we obtain that
\begin{equation}
    \|\phi_1^*-\phi_2^*\|_{L^\infty(\dotp{\ \cdot \ }^{-\eta})}\leq 2C_0  \|\phi_1-\phi_2\|_{L^\infty(\dotp{\ \cdot \ }^{-\eta})}.
\end{equation}
Thus,
\[\|\gamma(\phi_1,\cdot)-\gamma(\phi_2,\cdot)\|_{L^\infty(\dotp{\ \cdot \ }^{-\eta})} \leq (2C_0+1)  \|\phi_1-\phi_2\|_{L^\infty(\dotp{\ \cdot \ }^{-\eta})}.\]
This implies that for $h>0$
\[ \log \cN(h,\cG,L^\infty(\dotp{\ \cdot \ }^{-\eta}))\leq \log \cN(h/(2C_0+1),\overline\cF,L^\infty(\dotp{\ \cdot \ }^{-\eta}))\]
We then conclude that the second inequality in \Cref{lem:strict_brackets} holds thanks to \Cref{lem:covering_lines}. It remains to prove the first one. We remark that
\begin{align*}
    \|\phi_1-\phi_2\|^2_{L^2(P)} &= \int |\phi_1(x)-\phi_2(x)|^2 \dd P(x) =\int \dotp{x}^{2\eta} \dotp{x}^{-2\eta}|\phi_1(x)-\phi_2(x)|^2  \dd P(x)  \\
    &\leq \int \dotp{x}^{2\eta}\dd P(x) \|\phi_1-\phi_2\|_{L^\infty(\dotp{\ \cdot \ }^{-\eta}}^2
\end{align*} 
and that thanks to \eqref{eq:bound_l2_conjugate}
\begin{align*}
    \|\phi_1-\phi_2\|^2_{L^2(Q)} \leq \|\phi_1-\phi_2\|_{L^\infty(\dotp{\ \cdot \ }^{-\eta})}^2 C_1 \int ( \|y\| + K)^{\frac{2\eta}{a+1}}\dd Q(y)
\end{align*}
for some constant $C_1>0$. 

These two inequalities imply that $\log \cN(h,\cG,L^2(\PP)) \leq \log \cN(C_2h,\cG, L^\infty(\dotp{\ \cdot \ }^{-\eta}))$ for some constant $C_2>0$.
\end{proof}
\begin{proof}[Proof of \Cref{lem:improved_covering}]
Let $\overline \cF_\tau =\{\phi\in\overline\cF:\ \|\nabla\phi-\nabla\overline\phi\|_{L^2(P)}\leq \tau\}$. 
    Let $\phi_1$, $\phi_2\in \overline \cF_\tau$, and write $T_i=\nabla\phi_i$ for $i=1,2$. 
    We have according to \Cref{lem:strong_convex_basic}
    \begin{align*}
        \|\nabla\phi_i^*-\nabla\phi_0^*\|^2_{L^2(Q)}&= \int \|T_i^{-1}\circ T_0(x)-x\|^2 \dd P(x) \\
        &\leq C \int\|T_i(x)- T_0(x)\|^2 \dd P(x) \leq C\tau^2
    \end{align*}
    for some positive constant $C>0$. 
   Hence, the set $\{\phi^*:\ \phi\in \overline\cF_\tau\}$ is included in a ball $B^*_{\tau'}$ of radius $\tau'=\sqrt{C}\tau$  with respect to the pseudo-norm $f\mapsto \|\nabla f\|_{L^2(Q)}$, while $\overline\cF$ is included in a ball $B_\tau$ of radius $\tau$ with respect to the pseudo-norm $f\mapsto \|\nabla f\|_{L^2(P)}$. 
    Consider a $(h/2)$-covering $\phi_1,\dots,\phi_N$ of $B_\tau$ with respect to $L^2(P)$, and a $(h/2)$-covering $\psi_1,\dots,\psi_{N^*}$ of  $B_{\tau'}^*$ with respect to $L^2(Q)$. For $1\leq i \leq N$ and $1\leq j \leq N^*$, let $g_{i,j}:(x,y)\mapsto \phi_i(x)+\psi_j(y)$.  The set of functions $(g_{i,j})_{i,j}$ is a $h$-covering of $\cG$ in $L^2(\PP)$,  that is
    \begin{equation}
        \log\cN(h,\cG,L^2(\PP))\leq\log\cN(h/2,B_\tau,L^2(P))+ \log\cN(h/2,B_{\tau'}^*,L^2(Q)).
    \end{equation}
    We conclude using Condition \ref{cond:covering_gradient}.
\end{proof}

\section{Large parametric spaces}\label{sec: details_large_parametric}
\begin{proof}[Proof of \Cref{prop:rate_large_parametric}]
We write $\cF=\tilde \cF$. Recall that we assume that for some $\eta\geq a+2$ and $\gamma'\geq 0$,
\begin{equation}\label{eq:covering_2_repeat}
\forall h>0,\ \log \cN(h,\cF,L^\infty(\dotp{ \ \cdot\ }^{-\eta})) \leq D \log_+(1/h)^{\gamma'},
\end{equation}
    The proof is exactly the same as the proof of \Cref{thm:strongly_convex}, but we keep track of the dependency with respect to $D$ in the computations. As in \Cref{sec: proofs}, we use \Cref{thm:abstract_one_shot} to bound $\E\|\nabla\hat\phi_{\cF}-\nabla\phi_0\|^2$. Using notations from this section, introduce $\overline\cF=\{\phi_t=t\phi+(1-t)\overline\phi:\ \phi\in\cF, t\in [0,1] \text{ and }\vertiii{\phi_t-\overline\phi}\leq r\}$ for an appropriate value of $r$, and define $\cG=\{\gamma(\phi,\cdot)-\gamma(\overline\phi,\cdot):\ \phi\in\overline\cF, \|\nabla\phi-\nabla\overline\phi\|_{L^2(P)}\leq \tau\}$. To be able to use \Cref{thm:abstract_one_shot}, we require a bound on $\E[|\sup_{g\in \cG}(\PP_n-\PP)(g)|]$.
    
    According to condition \ref{cond:smooth}, the set $\cF$ is included in a ball of radius $\beta$ for the pseudo-norm $\vertiii{f} = \sup_{x\in \R^2} \op{\nabla^2 f(x)}\dotp{x}^{-a}$. According to \cite{triebelcovering} (see Theorem 4.2, region I, and their comments at the beginning of Section 2.4), it holds that for $\eta>a+2$,
    \begin{equation}\label{eq:covering_holder_ball}
    \log \cN(h,\cF,L^\infty(\dotp{\ \cdot \ }^{-\eta}))\leq h^{-\frac{d}{2}}.
\end{equation}
Of course, this bound is very bad for $h$ small (it is non-integrable). However, when $D$ is very large, it might be better than \eqref{eq:covering_2_repeat}. Precisely, we use \Cref{eq:covering_holder_ball} for $h\geq D^{-2/d}$ and \Cref{eq:covering_2_repeat} for $0<h\leq D^{-2/d}$.

Similarly, we have two bounds for the $L^2$-covering numbers of $\cG$: a bound  of order $(h/\tau)^{-d}$ (up to polylogarithmic factors) given by \Cref{lem:improved_covering}, and a bound of order $D\log_+(1/h)^{\gamma'}+\log_+(1/h)$ given by \Cref{lem:strict_brackets}. We will use the latter bound for $0<h\leq \tau D^{-1/d}$, and the former for $h\geq \tau D^{-1/d}$. In total \Cref{prop:van_wellner} yields for all $\tau\geq \sqrt{\ell^\star}$ (and for some $q\geq 0$)
\begin{align*}
  \sqrt{n}\E[|\sup_{g\in \cG}(\PP_n-\PP)(g)|] &\lesssim_{\log_+(1/\tau),\log D} \tau D^{-1/d}D^{1/2} + \int_{\tau D^{-1/d}}^\infty \log_+(1/h)^q\pran{\frac h \tau}^{-d/2} \dd h \\
  &\qquad+ \frac{1}{\sqrt{n}}\pran{ D^{1-2/d} + \int_{D^{-2/d}}^\infty h^{-d/2} \dd h} \\
  &\lesssim_{\log_+(1/\tau),\log D} D^{1/2-1/d}\tau + \frac{D^{1-2/d}}{\sqrt{n}}.
\end{align*}
To conclude, we let $\tau=C(D^{1/2-1/d}n^{-1/2}\log(D)^{q_1}\log(n)^{q_2} + w(\ell^\star))$ for some constants $C,q_1,q_2\geq 0$, so that \eqref{eq:abstract_tau} applies, and gives an excess of risk of order $D^{1/2-1/d}n^{-1/2}$ up to polylogarithmic factors.
\end{proof}

\section{Besov spaces}\label{app:besov}

We now prove the results in \Cref{sec:holder} by recalling relevant facts about Besov spaces, which can be found in many textbooks (see e.g., \cite{gine2021mathematical}). The Besov spaces $B^s_{p,q}$ are a family of functional spaces, with $s$ representing a regularity index, that can be defined using a wavelet basis. A wavelet basis is an orthonormal basis $(\Gamma_{j,k})_{j\in \ZZ,\ k\in\ZZ^d}$ of $L^2(\Rd)$ that satisfy some desirable properties. Let $s>0$. We will always assume that the regularity of the wavelet basis is large enough with respect to $s$ and $d$. Let $1\leq p,q\leq\infty$. If a function $\phi$ admits a wavelet representation
\begin{equation}
    \phi = \sum_{j\in\ZZ} \sum_{k\in \ZZ^d}\gamma_{k,j}\Gamma_{k,j},
\end{equation}
then, the Besov norm $B^s_{p,q}$ of $\phi$ is defined as
\begin{equation}\label{eq:def_besov}
    \|\phi\|_{B^s_{p,q}} = \|\gamma\|_{b^s_{p,q}} = \pran{ \sum_{j\in \ZZ} 2^{jq(s+\frac{d}{2}-\frac{d}{p})} \pran{\sum_{k\in\ZZ^d} |\gamma_{j,k}|^p}^{\frac{q}{p}}}^{\frac{1}{q}},
\end{equation}
with generalizations when $p$ or $q$ equal to $\infty$.

Let $R>0$. Each wavelet $\Gamma_{k,j}$ has compact support of diameter of order $2^{-j}$, and, for a fixed $j$, the number of wavelets $\Gamma_{k,j}$  that intersect the ball $B(0;R)$ is of order $R^d2^{jd}$. Let $I_j(R)$ be the set of indexes $k$ such that this intersection is nonempty. Let $J_0\in\ZZ$ be such that $B(0;R)$ is included in the support of a single wavelet $\Gamma_{k,J_0}$. We can pick $J_0$ such that $J_0 \gtrsim -\log R$. 
Let $\eta(x)=\alpha\|x\|^2/8$. 
We define the set $\cF_J(R)$ as the set of functions of the form 
\[ \eta + \sum_{j=J_0}^J \sum_{k\in I_j(R)}\gamma_{k,j}\Gamma_{k,j}, \]
where, $\|\gamma\|_{b^s_{1,\infty}}\leq L$ for some $L>0$. 

To put it another way, functions $\phi\in \cF_J(R)$ are functions with finite wavelet expansions up to depth $J$ and bounded $B^s_{1,\infty}$ norm on a set of size roughly $R$, whereas we add a quadratic term to ensure that they are strongly convex at infinity. Note however that functions in $\cF_J(R)$ are not necessarily convex. 

Let us show that we can apply \Cref{thm:param} to obtain the rate displayed in \eqref{eq:rate_HR}. By hypothesis, we suppose that $P$ satisfies \ref{cond:poincare}, \ref{cond:loc_poincare} and \ref{cond:loc_doubling}. In particular, \ref{cond:covering_gradient} is satisfied. Furthermore, as the $\cC^2$-norm is controlled by the $B^2_{1,\infty}$-norm and that $s\geq 2$, all potentials in $\cF_J(R)$ are $(\beta,0)$-smooth for some $\beta$ of order $L$, so that \ref{cond:smooth} is satisfied. It remains to check the conditions \ref{cond:parametric} and \ref{cond:parametric_approx}.

Let $\phi_1,\phi_2\in\cF_J(R)$, with associated coordinates $\gamma^1$ and $\gamma^2$. We have for all $x\in\R^d$
\begin{align*}
    |\phi_1(x)-\phi_2(x)| &= |\sum_{j=J_0}^J \sum_{k\in I_j(R)} (\gamma_{j,k}^1-\gamma_{j,k}^2)\Gamma_{k,j}(x)| \\
    &\lesssim R^{d/2}2^{Jd/2}\|\gamma^1-\gamma^2\|_{\infty},
\end{align*}
where we use a standard bound on the $L^\infty$-norm in terms of wavelet coefficients (see e.g.,  \cite[Lemma 24]{hutter2021minimax}). Note that only the coefficients $\gamma^1_{k,j}-\gamma^2_{k,j}$ with $k\in I_j(R)$ are non zero. Therefore, the bracketing number of $\cF_J(R)$ at scale $h$ with respect to $L^\infty(\dotp{\ \cdot \ }^{-\eta})$ (for $\eta>2$) is controlled by the covering number at scale of order $h R^{-d/2}2^{-Jd/2}$ of the $\ell_\infty$-ball in dimension $\sum_{j=-J_0}^J| I_j(R)|\lesssim R^{d}2^{Jd}$. By \cite{schutt1984entropy}, the logarithm of such a covering number is of order
\[ R^{d}2^{Jd} \log(R^{d/2}2^{Jd/2}/h)\lesssim R^{d}2^{Jd} (\log(R) + J)\log_+(1/h),\]
that is condition \ref{cond:parametric} holds.

It remains to check \ref{cond:parametric_approx}.  Let $\gamma^0$ be the vector of coefficients of $\phi_0$ in the wavelet basis, and let $\phi_{0,J} = \eta + \psi_{0,J}$, where
\begin{equation}
    \psi_{0,J}= \sum_{j=J_0}^J \sum_{k\in I_j(R)} \gamma_{j,k}^0\Gamma_{jk}.
\end{equation}
As $s\geq 2$, the norm of the Hessian of $\psi_{0,J}-\phi_0$ at $x\in B(0;R)$, is bounded by $2^{(J_0-J)2}\|\phi_0\|_{\cC^s} \leq \alpha/4$ for $J$ large enough. Also, as $\vertiii{\eta} \leq \alpha/4$, we obtain that $\vertiii{\phi_{0,J}-\phi_0}\leq \alpha/2$.

Furthermore
\begin{align*}
    &\|\nabla\phi_0-\nabla\phi_{0,J}\|_{L^2(P)}^2= \int \|\nabla\phi_0(x)-\nabla\phi_{0,J}(x)\|^2 \dd P(x)\\
    &\leq \|p\|_\infty\int_{\|x\|\leq R} \|\nabla\phi_0(x)-\nabla\phi_{0,J}(x)\|^2 \dd x +\int_{\|x\|> R} \|\nabla\phi_0(x)-\nabla\phi_{0,J}(x)\|^2 \dd P(x).
\end{align*} 
The first integral is bounded by a quantity of order $2^{2(J_0-J)(s-1)}\|\phi_0\|_{\cC^s}^2$ (see e.g.,  \cite[Lemma 13]{hutter2021minimax}). The second integral is of order $e^{-cR}$ as $P$ is subexponential. We pick $R =2J(s-1)/c$ to conclude that \ref{cond:parametric_approx} holds.

\section{On the spectrum of kernels for RKHS}\label{app: rkhs_spec}
Here we provide some details about the spectrum of kernels, following the discussion of \cref{sec: rkhs}. To this end, recall that a kernel $\cK : \cX \times \cX \to \R$ defines a Reproducing Kernel Hilbert Space (RKHS) if for all $f \in \cH$ (the Hilbert space over $\cX$) and all $x \in \cX$, it holds that 
\begin{align*}
    f(x) = \langle f, \cK(\cdot,x)  \rangle_\cH\,.
\end{align*}
In \cref{sec: rkhs}, we provided log-covering bounds from \cite{yang2020function} that required that the kernel have either finite spectrum, or exponentially decaying eigenvalues; the following is borrowed from \cite[Section 2.2 and Appendix D]{yang2020function}. To this end, we note that $\cK$ induces an integral operator $T_\cK : L^2(\cX) \to L^2(\cX)$ defined as
\begin{align*}
    T_\cK f(y) = \int \cK(y,x) f(x) \dd x\,,
\end{align*}
for all $f \in L^2(\cX)$. Mercer's theorem (see e.g.,  \cite{scholkopf2002learning}) states that this operator has countably many, positive eigenvalues with corresponding eigenfunctions $\{(\sigma_i,\vartheta_i)\}_{i=1}^\infty$ which form an orthonormal basis of $L^2(\cX)$, whence the kernel admits the following spectral decomposition
\begin{align}
    \cK(x,y) = \sum_{i=1}^n \sigma_i \vartheta_i(x)\vartheta_i(y)\,.
\end{align}
The two conditions we presented are the following:
\begin{enumerate}
    \item \textit{$\gamma$-finite spectrum}: $\sigma_j = 0$ for all $j \geq \gamma$,
    \item \textit{$\gamma$-exponential decay}: there exist absolute constants $C_1$ and $C_2$ such that for all $j \geq 1$, $\sigma_j \leq C_1 \exp(-C_2 j^\gamma)$,
\end{enumerate}
where for the latter, we also require that there exist constants $\tau \in [0,\tfrac12), C_\vartheta > 0$ such that $\sup_{x \in \cX} \sigma_j^\tau |\vartheta_j(x)| \leq C_\vartheta $ for all $j \geq 1$. In fact, over the sphere in $\Rd$, it holds that the Gaussian kernel exhibits $\gamma$-exponential decay for any $\tau  > 0$ \cite[Appendix B.3]{yang2020function}, wherein several more examples of kernels are presented. The following is a proposition from \cite{yang2020function}, which is an upper bound on the covering numbers we employ in \Cref{sec: rkhs}.
\begin{prop}\label{prop: rkhs_prop}
Let $\cF_\cK$ be the unit ball in an RKHS corresponding to some kernel $\cK$ i.e.,  $\cF_{\cK} \defeq \{ f : \|f\|_{\cH} \leq 1\}$, and let $\cK$ be a kernel with $\gamma$-finite spectrum, or $\gamma$-exponentially decaying spectrum, as described above, such that $\sup_z \cK(z,z)\leq 1$. Then, 
\begin{align*}
    \log \cN(h,\cF_\cK,\|\cdot\|_\infty) \leq
    \begin{cases}
    C \gamma (\log(1/h) + c) & \gamma\text{-finite spectrum}\,, \\
    C (\log(1/h) + c)^{1+1/\gamma} & \gamma\text{-exponentially decaying spectrum}\,,
    \end{cases}
\end{align*}
where the absolute constants $c,C > 0$ depend on $C_\vartheta, C_1,C_2,\gamma$ and $\tau$.
\end{prop}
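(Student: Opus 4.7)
The strategy is to exploit Mercer's decomposition $\cK(x,y) = \sum_j \sigma_j \vartheta_j(x)\vartheta_j(y)$ to reduce the covering-number bound to a volumetric argument on a (possibly truncated) ellipsoid of coefficients. Every $f\in \cF_\cK$ admits a unique expansion $f = \sum_j \alpha_j \vartheta_j$ with $\|f\|_\cH^2 = \sum_j \alpha_j^2/\sigma_j \leq 1$. Combined with $\sup_x |\vartheta_j(x)| \leq C_\vartheta \sigma_j^{-\tau}$, Cauchy--Schwarz gives the pointwise control
\begin{equation*}
    |f(x) - g(x)| \leq \Bigl(\sum_j \frac{(\alpha_j - \beta_j)^2}{\sigma_j}\Bigr)^{1/2} \Bigl(\sum_j \sigma_j \vartheta_j(x)^2\Bigr)^{1/2} \leq \Bigl(\sum_j \frac{(\alpha_j-\beta_j)^2}{\sigma_j}\Bigr)^{1/2},
\end{equation*}
using $\cK(x,x)\leq 1$. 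Hence an $h$-covering of $\cF_\cK$ in $L^\infty$ follows from an $h$-covering of the ellipsoid $\{\alpha:\sum_j \alpha_j^2/\sigma_j \leq 1\}$ in a suitable weighted norm.

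In the $\gamma$-finite spectrum case, the RKHS is $\gamma$-dimensional and the unit ball is an ellipsoid in $\R^\gamma$; a classical volumetric bound then yields $\log \cN(h,\cF_\cK,L^\infty) \lesssim \gamma \log(1/h)$, which is exactly the claimed rate. For the exponentially decaying case, I would use a truncation-plus-covering scheme. Decompose $f = f_N + r_N$ where $f_N$ retains the first $N$ coefficients; the remainder is controlled via
\begin{equation*}
    |r_N(x)| \leq \sum_{j>N} |\alpha_j|\,|\vartheta_j(x)| \leq C_\vartheta \sum_{j>N} \sqrt{\sigma_j}\,\sigma_j^{-\tau} \lesssim \sum_{j>N} \exp\bigl(-C_2(\tfrac12-\tau) j^\gamma\bigr),
\end{equation*}
which is $O(h)$ as soon as $N \asymp \log(1/h)^{1/\gamma}$. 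A volumetric argument covers the truncated $N$-dimensional ellipsoid at scale $h$ with $\log$-covering number of order $N \log(1/h)$, producing the target bound $\log(1/h)^{1+1/\gamma}$.

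The main technical nuisance will be calibrating the truncation level so that the remainder is exactly at scale $h$ while the finite-dimensional covering is sharp, and absorbing the $\sigma_j^{-\tau}$ factor (which interacts with the $\sqrt{\sigma_j}$ from the constraint on $\alpha_j$) into the constants. The $\tau < 1/2$ assumption is precisely what makes the remainder series convergent and geometric in $N^\gamma$. Since these are entirely standard arguments in the analysis of Gaussian/kernel widths and are carried out carefully in \cite{yang2020function}, I would simply cite that reference rather than reproducing the bookkeeping; the role of this proposition in the present paper is only to instantiate $\gamma'$ in the entropy condition \ref{cond:covering_convex}.
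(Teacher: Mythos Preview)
Your proposal is correct and in fact more detailed than the paper: the paper does not prove this proposition at all but simply imports it verbatim from \cite{yang2020function} (Lemma D.2 there), with the sentence ``The following is a proposition from \cite{yang2020function}.'' Your Mercer-expansion/truncation sketch is the standard argument behind that result and is sound, so citing the reference as you suggest is exactly what the paper does.
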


\section{Spiked transport model}\label{sec: proof_spiked}
\begin{proof}[Proof of \Cref{prop:spiked}]
Recall that
\begin{align*}
    \cF_{\text{spiked}} = \left\{ x \mapsto \phi(Ux) + \tfrac12 x^\top(I - U^\top U)x \ : \ \phi \in \cF, U \in \cV_{k \times d} \right\}\,,
\end{align*}
where $\cV_{k \times d}$ is the Stiefel manifold, and $\cF$ is some function class. Log-covering number bounds for Stiefel manifold are already known (see \cite[Lemma 4]{niles2022estimation}):
\begin{align}\label{eq: covering_stiefel}
    \log \cN(h, \cV_{k\times d}, \|\cdot\|_{\text{op}}) \leq dk \log(c \sqrt{k}/h)\,,
\end{align}
where $c > 0$ is some absolute constant. The same bound holds for the space $L^\infty(\dotp{\ \cdot \ }^{-\eta})$ for $\eta\geq 2$ up to a constant depending on $\eta$, as in our previous calculations. To complete the proof, we can decompose a function $\phi' \in \cF_{\text{spiked}}$ into two parts; $\phi \circ U$ and a quadratic. The quadratic part contributes the same degree of complexity as a parametric class (see \cref{sec:parametric}): indeed we have that
\begin{equation}\label{eq:stiefel_bound}
\begin{split}
     \| x^\top(I - U^\top U)x - x^\top(I - V^\top V)x\| &= \|x^\top(U+V)^\top(U-V)x\| \\
    &\leq \|x\|^2 \|U+V\|_F\|U-V\|_F \\
    &\leq C_d \|x\|^2 \op{U-V},
\end{split}
\end{equation}
using that $\op{U},\op{V}\leq 1$ and that the Frobenius and operator norms are equivalent. 
Thus, we can apply known bounds on the covering numbers for the Stiefel manifold to determine the size of the (log-)covering numbers that the quadratic contributes to the class $\cF_{\text{spiked}}$. Finally, log-covering numbers for the composition $\phi \circ U$ can simply be bounded by considering the sum of the log-covering numbers of the two classes.  Thus, applying \cref{eq: covering_stiefel}, completes the proof.
\end{proof}

\begin{proof}[Proof of \Cref{prop:spiked_gradient}]
For $U\in \cV_{k\times d}$, let $P_U=U_\sharp P$. Remark the uniform measure $P_0$ on  a convex domain is a log-concave measure. Therefore, so is its  projection $(P_0)_U$ on a subspace. The distribution $P_U$ has a density bounded away from zero and infinity with respect to $(P_0)_U$. This implies that $P_U$ satisfies \ref{cond:loc_poincare} and \ref{cond:loc_doubling} for constants independent of $U\in \cV_{k\times d}$. Hence, the measure $P_U$ satisfies \ref{cond:covering_gradient} with $m=k$ for the class of functions $\cF$. To ease notation, we write $A(\phi,U): x\mapsto \phi(Ux) + \tfrac12 x^\top(I - U^\top U)x$ for $\phi\in\cF$ and $U\in\cV_{k\times d}$, so that 
\[ \cF_{\mathrm{spiked}} = \{A(\phi,U):\ \phi\in\cF, U\in\cV_{k\times d}\}.\]
 As in \ref{cond:covering_gradient}, let $\overline f\in \cF_{\mathrm{spiked}}$, let $\cG\subseteq  \{tf+(1-t)\overline f:\ f\in \cF_{\mathrm{spiked}},\ 0\leq t \leq 1\}$ be a set of $(\alpha,a)$-strongly convex potentials and let $\cG^*=\{g^*:\ g\in \cG\}$.  Let $B_\tau$ (resp.~$B^*_\tau$) be a ball of radius $\tau$ in $\cG$ for the pseudo-norm $g\mapsto \|\nabla g\|_{L^2(P)}$ centered at $\bar{f}$ (resp.~in $\cG^*$ for the pseudo-norm $g^*\mapsto \|\nabla g^*\|_{L^2(Q)}$ centered at $\bar{f}^*$). First, note that as in the proof of \Cref{lem:C2_satisfied}, we have for all $g\in B_\tau$, $\|\nabla g(0)-\nabla\overline f(0)\|\leq K$ where $K\lesssim 1+\tau^2$. 

 By assumption, if $tA(\phi,U)+(1-t)\overline f\in B_\tau$, then
 \[ t^2\E[\|U^\top \nabla\phi(UX)+(I-U^\top) X - \nabla \overline f(X)\|^2 ]\leq \tau^2.\]
 Let $\overline f_U$ be defined by $\overline f_U(y)= \E[\overline f(X)-\tfrac12 X^\top(I - U^\top U)X|UX=y]$. Then, using properties of the conditional expectation,
\begin{align*}
 &t^2\E[\|U^\top \nabla\phi(UX)-U^\top \nabla\overline f_U(UX)\|^2] \\
 &\qquad \leq t^2\E[\|U^\top \nabla\phi(UX)-\nabla(\overline f(X)-\tfrac12 X^\top(I - U^\top U)X) \|^2] \leq \tau^2. 
\end{align*}
 Hence, the set of functions $\{t\phi+(1-t)\overline f_U:\ tA(\phi,U)+(1-t)\overline f\in B_\tau\}$ is included in the ball $B_\tau^U$ centered at $\overline f_U$ in $L^2(P_U)$ of radius $\tau$. 
For each $U\in \cV_{k\times d}$, let $\cC_U$ be a minimal $(h/2)$-covering of $B^U_{\tau}$ with respect to $L^2(P_U)$. Also, for some parameter $\lambda>0$ to fix, let $\cU$ be a minimal $\lambda h$-covering of the Stiefel manifold $\cV_{k\times d}$ with respect to the operator norm and let $\cT$ be a  minimal $\lambda h$-covering of $[0,1]$.

Let $tA(\phi,U)+(1-t)\overline f\in B_\tau$. Let $t_0$ be the element in $\cT$ the closest to $t$, let $U_0$ be the element in $\cU$ the closest to $U$, and let $\phi_0$ be the element in $\cC_{U_0}$ the closest to $\phi$. Then, according to \Cref{eq:stiefel_bound},
\begin{align*}
   & \|tA(\phi,U)+(1-t)\overline f - t_0A(\phi_0,U_0)+(1-t_0)\overline f\|_{L^2(P)} \leq |t-t_0|\|A(\phi,U)-\overline f\|_{L^2(P)}\\
    &\qquad + \|\phi(U\cdot)-\phi(U_0 \cdot)\|_{L^2(P)} + \|\phi-\phi_0\|_{L^2(P_{U_0})} +  \op{U-U_0} \pran{\int C_d^2 \|x\|^4 \dd P(x)}^{1/2}.
\end{align*}
According to \Cref{lem:smooth_basic}, $\|\phi(U\cdot)-\phi(U_0 \cdot)\|_{L^2(P)} \lesssim (1+K)\op{U-U_0}$ and $\|A(\phi,U)-\overline f\|_{L^2(P)}\lesssim (1+K)$. Hence,
\begin{align*}
   & \|tA(\phi,U)+(1-t)\overline f - t_0A(\phi_0,U_0)+(1-t_0)\overline f\|_{L^2(P)} \lesssim (1+K)\lambda h+ h/2 \leq h.
\end{align*}
if $\lambda \leq c/(1+K)$ for some $c>0$ small enough. We have created a $h$-covering of $B_\tau$, of size
\[ |\cT|\times \prod_{U \in \cU} |\cC_{U}|.\]
  Recall that the measure $P_U$ satisfies \ref{cond:covering_gradient} with $m=k$ for the class of functions $\cF$. Hence, by \eqref{eq: covering_stiefel}, the log-size of this covering satisfies
  \[ \log\pran{|\cT|\times \prod_{U \in \cU} |\cC_{U}|}\lesssim_{\log_+(1/\tau),\log_+(1/h)} \pran{\frac{h}\tau}^{-k}.\]
\medskip

Let us now bound the covering numbers of $B_\tau^*$. Let $f=A(\phi,U)$. Then, for $y\in \R^d$,
\begin{align*}
    f^*(y) &= \sup_{x\in\R^d} \dotp{x,y}-\phi(Ux) -\frac 12 x^\top (I-U^\top U)x \\
    &=\sup_{x\in\R^d} \dotp{U^\top Ux,y}-\phi(Ux) + \dotp{(I-U^\top U)x,y}-\frac 12 x^\top (I-U^\top U)x \\
    &= \phi(U\cdot)^*(Uy) + \frac{1}{2} y^\top  (I-U^\top U)y,
\end{align*}
where we use the expression for the convex conjugate of a quadratic function. Hence, if $f=A(\phi,U)$ is a spiked potential, then so is $f^*$, with the same spike. Unfortunately, the ball $B_\tau^*$ also contains dual potentials of the form $(tA(\phi,U)+(1-t)A(\overline\phi,\overline U))^*$. There is no direct expression for this dual potential, although we can use the fact that $tA(\phi,U)+(1-t)A(\overline\phi,\overline U)$ is a spiked potential with spike of dimension $\dim(U+\overline U)\leq 2k$. Hence, $B_\tau^*$ is a family of spiked potentials with spikes of dimension $2k$.
Furthermore, for any $U\in \cV_{2k\times d}$, as the law $P_U$ satisfies \ref{cond:loc_poincare} and \ref{cond:loc_doubling}, \Cref{lem:C2_satisfied} ensures that the law $Q_U = (\nabla\phi_0)_{\sharp} P_U$ satisfies the estimates of condition \ref{cond:covering_gradient}. This is enough to conclude using the same strategy as the one used for bounding the covering numbers of $B_\tau$.
\end{proof}

\section{Barron spaces}\label{app:barron}

\begin{proof}[Proof of \Cref{thm:barron}]
By construction, every $\phi\in \cF_\sigma^1$ is $(\beta,a)$-smooth, so that assumptions \ref{cond:smooth}. Furthermore, \ref{cond:covering_gradient} is satisfied thanks to \Cref{lem:C2_satisfied}.  To conclude, it remains to check that \ref{cond:covering_convex} is satisfied by bounding the covering numbers of $\cF$.
\end{proof}

\begin{lem}\label{lem:bracket_barron}
Let $\eta>a+2$. It holds that, for $h>0$,
\begin{equation}
    \log \cN(h,\cF^1_\sigma, L^\infty(\dotp{\ \cdot \ }^{-\eta})) \lesssim_{\log_+(1/h)} h^{-\frac{2m}{2s+m}}.
\end{equation}
\end{lem}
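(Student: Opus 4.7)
The plan is to cover $\cF_\sigma^1$ by finite-term atomic approximations drawn from the dictionary, and then to count the number of such approximations.

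I would start by observing that $\cF_\sigma^1$ is contained in the closed absolute convex hull of the dictionary $D = \{\sigma(\cdot, v) : v \in \cM\}$, since every $\phi \in \cF_\sigma^1$ is of the form $\int \sigma(\cdot, v) \dd \theta(v)$ with $\|\theta\|_{TV} \leq 1$. Combining \Cref{lem:smooth_basic} with \ref{cond:barron_smooth} gives $\|\sigma(\cdot, v)\|_{L^\infty(\dotp{\cdot}^{-\eta})} \lesssim 1$ uniformly in $v \in \cM$ for $\eta \geq a + 2$, so the dictionary is uniformly bounded in the weighted norm, as required to make the subsequent entropy bounds meaningful.

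The heart of the argument is a Maurey-Jones-Barron approximation tailored to the smoothness of the parametrization: any $\phi \in \cF_\sigma^1$ admits an $N$-term expansion $\phi_N = \sum_{j=1}^N \lambda_j \sigma(\cdot, v_j)$ with the $v_j$ lying on a fixed grid of $\cM$, $\sum_j |\lambda_j| \lesssim 1$, and $\|\phi - \phi_N\|_{L^\infty(\dotp{\cdot}^{-\eta})} \leq h$, for $N \asymp h^{-\frac{2m}{2s+m}}$ (with $m$ being the manifold dimension). This combines two ingredients: a manifold-quadrature approximation with error of order $\rho^s$ at grid spacing $\rho$, powered by the uniform $\cC^s(\cM)$-regularity of $v \mapsto \sigma(x, v)$ granted by \ref{cond:barron_holder}; and Maurey's stochastic subsampling with error $N^{-1/2}$. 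The two sources of error balance at the stated exponent, which is the same rate one obtains from Carl's entropy inequality applied to the absolute convex hull of a dictionary whose entropy scales as $\eps^{-m/s}$.

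To conclude, I would count the admissible configurations $(v_j, \lambda_j)_{j=1}^N$: the locations are chosen from a grid of polynomial-in-$1/h$ cardinality, and the weights live in a bounded interval discretized at scale $h/N$. The resulting log-count is of order $N \log(1/h) \asymp h^{-\frac{2m}{2s+m}} \log(1/h)$, which is absorbed into the $\log_+(1/h)$ slack of the statement.

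The main obstacle is the sharp Maurey-type approximation in the weighted $L^\infty$ norm: Maurey's lemma is cleanest in Hilbert space, so a natural route is to prove the approximation first in $L^2(P)$ (where Hilbertian Maurey applies directly) and then upgrade to $L^\infty(\dotp{\cdot}^{-\eta})$ by interpolation, exploiting the uniform $(\beta,a)$-smoothness of the dictionary elements. A secondary subtlety is the balancing of the manifold-quadrature and Maurey subsampling errors: getting the optimal exponent requires carrying both improvements through simultaneously rather than using either alone, which would give only the weaker rates $h^{-m/s}$ or $h^{-2}$.
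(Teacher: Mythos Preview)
Your plan has a genuine gap in the ``balance'' that is supposed to deliver the exponent $\tfrac{2m}{2s+m}$. Plain Maurey subsampling gives an $N$-term error of order $N^{-1/2}$; if you want this error to be $\leq h$ with only $N \asymp h^{-\frac{2m}{2s+m}}$ terms, you need $N^{-1/2} = h^{\frac{m}{2s+m}} \leq h$, which is impossible for $s>0$. Said differently, balancing the two sources you describe --- quadrature at spacing $\rho$ (error $\rho^s$) and Maurey sampling of $N$ points (error $N^{-1/2}$) --- forces $N \asymp h^{-2}$, and your configuration count $N\log_+(1/h)$ then yields the exponent $2$, not $\tfrac{2m}{2s+m}$. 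Carl's bound is indeed the right rate, but it does \emph{not} follow from this additive balancing; one needs either Carl's inequality as a black box, or a stratified/hierarchical sampling scheme in the spirit of \citet{siegel2021sharp} that achieves the sharp $N$-term rate $N^{-1/2-s/m}$ by exploiting the smoothness of $v\mapsto\sigma(\cdot,v)$ across all scales simultaneously.

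There is a second issue with the $L^2 \to L^\infty(\dotp{\cdot}^{-\eta})$ upgrade. Using $(\beta,a)$-smoothness to pass from $L^2$ to $L^\infty$ on a ball costs a polynomial power in $h$ (roughly $h \mapsto h^{2/(d+2)}$, via the standard argument that a Lipschitz function with large sup-norm carries a large $L^2$-mass on a small ball). The paper avoids this loss by invoking the Siegel--Xu bound in $L^p(\rho)$ for \emph{every} finite $p$ (these spaces are type $2$ with constant $\lesssim \sqrt{p}$ by Khintchine), and then taking $p=\log_+(1/h)$ so that the $L^p \to L^\infty$ conversion is only logarithmically expensive. Working from $L^2$ alone, as you suggest, would degrade the final exponent by a $d$-dependent factor.
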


Let $B$ be a Banach space. We say that $B$ is of type $2$ if there exists a constant $T_B>0$ such that for all $x_1,\dots,x_n\in B$,
\begin{equation}
    \E \left[ \left\|\sum_{i=1}^n \eps_i x_i\right\|_B^2\right] \leq T_B^2 \sum_{i=1}^n \|x_i\|^2_B,
\end{equation}
where $\eps_1,\dots,\eps_n$ are i.i.d. Rademacher variables. Siegel and Xu show in \cite{siegel2021sharp} that, if $\sigma:\cM\to B$ is of regularity $s$ for some Banach space $B$ of type $2$ (where regularity is defined through dual maps, see \cite[Definition 2]{siegel2021sharp} for details), then the covering numbers of $\cF_\sigma^1$ satisfy for $h>0$
\begin{equation}\label{eq:control_siegel}
    \log \cN(h,\cF_\sigma^1,B) \leq C\pran{\frac h{T_B}}^{-\frac{2m}{2s+m}}.
\end{equation}
Such a result is almost enough to conclude. Indeed, the Banach space $L^\infty(\dotp{\ \cdot \ }^{-\eta})$ is not of type $2$. We use an approximation scheme, that shows that the covering numbers for the $\infty$-norm cannot be too different from the covering numbers for the $p$-norm, where $p$ is large. As spaces $L_p$ are of type $2$, this gives us the conclusion. 

\begin{lem}
Let $\cF$ be a set of $(\beta,a)$-smooth functions, with $\sup_{f\in\cF}\|\nabla f(0)\|\leq M$ and $f(0)=0$ for all $f\in\cF$. Assume that $\cF$ satisfies that, for all finite measures $\rho$ and $2\leq p<\infty$,
\begin{equation}\label{eq:type2_control}
    \log \cN(h,\cF,L_p(\rho))\leq C \pran{\frac{h}{T_{L_p(\rho)}}}^{-\gamma}.
\end{equation}
Then, for all $\eta>a+2$ and for all $h>0$,
\begin{equation}
    \log \cN(h,\cF,L^\infty(\dotp{\ \cdot \ }^{-\eta})) \lesssim_{\log_+(1/h)} h^{-\gamma},
\end{equation}
up to a constant depending on all the parameters involved, including $\eta$ and $a$.
\end{lem}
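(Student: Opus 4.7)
The plan is to reduce the $L^\infty(\dotp{\ \cdot\ }^{-\eta})$-covering problem for $\cF$ to an $L^p(\rho)$-covering problem, with $p$ large but only polylogarithmic in $1/h$, by using the smoothness assumption twice: once to truncate the integration to a bounded ball, and once to convert an $L^p$ bound into an $L^\infty$ bound via a Lipschitz packing argument.

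First I would truncate. By \Cref{lem:smooth_basic} applied with $y=0$, every $f\in\cF$ satisfies $|f(x)|\leq 2(2\beta+M)\dotp{x}^{a+2}$, so for any $f,g\in\cF$ and $\|x\|\geq R$,
\begin{equation*}
|f(x)-g(x)|\dotp{x}^{-\eta}\leq 4(2\beta+M)\dotp{R}^{a+2-\eta}.
\end{equation*}
Since $\eta>a+2$, this is at most $h/2$ once $R\asymp h^{-1/(\eta-a-2)}$, which is polynomial in $1/h$; it then suffices to cover $\cF$ restricted to $B(0,R)$ at scale $h/2$ in the $L^\infty(B(0,R))$-norm. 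To connect $L^\infty(B(0,R))$ with $L^p(\rho)$, I use the Lipschitz estimate from \Cref{lem:smooth_basic}: every $f\in\cF$ is Lipschitz on $B(0,R)$ with constant $L_R=2\beta\dotp{R}^{a+1}+M$, also polynomial in $1/h$. Let $\rho$ be the uniform probability measure on $B(0,R)$. If $\|f-g\|_{L^p(\rho)}\leq h_p$ and $\delta\defeq\|f-g\|_{L^\infty(B(0,R))}$ is attained at some $x_0$, then $|f-g|\geq\delta/2$ on $B(x_0,\delta/(4L_R))\cap B(0,R)$, whose $\rho$-mass is at least $c_d(\delta/(4L_R R))^d$. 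Rearranging
\begin{equation*}
h_p^p\geq c_d\pran{\delta/2}^p\pran{\delta/(4L_RR)}^d
\end{equation*}
yields $\delta\lesssim (L_RR)^{d/(p+d)}\,h_p^{p/(p+d)}$.

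Finally, choosing $p\asymp d\log_+(L_RR/h)\asymp\log_+(1/h)$ makes $(L_RR)^{d/(p+d)}$ bounded and $h_p^{p/(p+d)}\lesssim h_p$, so setting $h_p\asymp h$ gives $\delta\leq h/2$. Since $L^p(\rho)$ is of type $2$ with Khintchine constant $T_{L^p(\rho)}\lesssim\sqrt{p}$, the hypothesis supplies
\begin{equation*}
\log\cN(h/2,\cF,L^\infty(B(0,R)))\leq \log\cN(h_p,\cF,L^p(\rho))\leq C\pran{\sqrt{p}/h_p}^\gamma\lesssim \log_+(1/h)^{\gamma/2}\,h^{-\gamma},
\end{equation*}
which combined with the truncation step yields the desired bound $\lesssim_{\log_+(1/h)}h^{-\gamma}$.

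The main delicate point will be the balance in the $L^p\to L^\infty$ conversion: the prefactor $(L_RR)^{d/(p+d)}$ forces $p\gtrsim d\log_+(L_RR)$, while simultaneously $p$ must remain only logarithmic in $1/h$ so that the resulting factor $\sqrt{p}^{\gamma}$ from the type-$2$ constant is absorbed into the polylogarithmic slack. Once these constants are tracked carefully, the remainder of the argument reduces to routine applications of \Cref{lem:smooth_basic} and Khintchine's inequality.
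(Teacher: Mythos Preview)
Your proposal is correct and follows essentially the same route as the paper's proof: truncate to $B(0,R)$ with $R\asymp h^{-1/(\eta-a-2)}$ using the growth bound from \Cref{lem:smooth_basic}, upgrade an $L^p$-covering to an $L^\infty$-covering on $B(0,R)$ via the Lipschitz bound (your packing argument with the small ball $B(x_0,\delta/(4L_R))$ is exactly the paper's choice $r=m/(2L)$ in disguise), invoke Khintchine for the type-$2$ constant, and then take $p\asymp\log_+(1/h)$ so that all the polynomial-in-$1/h$ prefactors $(L_RR)^{d/(p+d)}$ and $h^{-d/(p+d)}$ become bounded. The only cosmetic difference is that the paper takes $\rho$ to be the unnormalized Lebesgue measure on $B(0,R)$ (so the factor $\rho(\RR^d)^{1/p}$ appears explicitly in $T_{L^p(\rho)}$), whereas you normalize $\rho$ to a probability; this shifts the $R^d$ factor from the type-$2$ constant into the volume ratio in your packing bound, but the arithmetic is identical.
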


Before writing a proof, let us remark that one can use this lemma with \eqref{eq:control_siegel} to obtain that \ref{cond:covering_convex} holds with $\gamma=\frac{2m}{2s+m}$, which gives \Cref{thm:barron} through \Cref{thm:strongly_convex}. 

\begin{proof}
Let us first recall Khintchine's inequality. It states that for any finite measure $\rho$ and $2\leq p <\infty$, the space $L_p(\rho)$ is of type $2$ with $T_{L_p(\rho)} \leq C \rho(\Rd)^{1/p} \sqrt{p}$, for some absolute constant $C$ (see e.g.,  \cite[Section 2.6]{vershynin2018high}). 
Let $\rho_R$ be the Lebesgue measure on $B(0;R)$ and let $\{f_1,\dots,f_K\}$ be an $h'$-covering of $\cF$ for the norm $L_p(\rho_R)$  for some parameter $h'$ to fix. By assumption, we can pick $K$ such that 
\[\log K\leq C \pran{\frac{h'}{\rho_R(\Rd)^{1/p} \sqrt{p}}}^{-\gamma}.\] 
Let $f\in \cF$, and let $f_k$ be such that $\|f-f_k\|_{L_p(\rho_R)}\leq h'$. By \Cref{lem:smooth_basic}, we have
\begin{equation}
    \|f-f_k\|_{L^\infty(\dotp{\ \cdot \ }^{-\eta})} \leq \|f-f_k\|_{L^\infty(\rho_R)} + 4(2\beta+M)\dotp{R}^{a+2-\eta}.
\end{equation}
We pick $R=ch^{-1/(a+2-\eta)}$ for $c$ large enough, so that the second term is smaller than $h/2$. Let us bound the first term. According to \Cref{lem:smooth_basic}, the function $f-f_k$ is $L$-Lipschitz continuous on $B(0;R)$, with $L=4\beta \dotp{R}^{a+1}+2M$. Let $x_0\in B(0;R)$ be such that $m=\sup_{\|x\|\leq R} |f(x)-f_k(x)|$ is attained at $x_0$. Therefore, $|f(x)-f_k(x)|\geq m-Lr$ for $x\in B(x_0;r)$. Then, for $r\leq R$
\begin{align*}
    (h')^p \geq \int_{B(0;R)} |f(x)-f_k(x)|^p \dd x \geq \int_{B(x_0;r)\cap B(0;R)}(m-Lr)^p\dd x \geq c_d r^d(m-Lr)^p.
\end{align*}
We pick $r=m/(2L)$ (that is smaller than $R$ for $h$ small enough) to obtain that
$h' \geq \pran{\frac{c_{d}}{(2L)^d}}^{\frac 1 p} \frac{m^{1+\frac{d}{p}}} 2$. Pick 
\[ h' = \pran{\frac{c_{d}}{(2L)^d}}^{\frac 1 p} \frac{(h/2)^{1+\frac{d}{p}}} 2\]

to obtain that $m= \|f-f_k\|_{L^\infty(\rho_R)} \leq h/2$. Therefore, the set $\{f_1,\dots,f_K\}$ is a $h$-covering of $\cF$ for the norm $L^\infty(\dotp{\ \cdot \ }^{-\eta})$ and we have shown that 
\begin{align*}
    \log \cN(h,\cF,L^\infty(\dotp{\ \cdot \ }^{-\eta})) &\leq \log K\leq C \pran{\frac{h'}{\rho_R(\Rd)^{1/p} \sqrt{p}}}^{-\gamma} \\
    &\leq  C\pran{\frac{ \pran{\frac{c_{d}}{(2L)^d}}^{\frac 1 p} (h/2)^{1+\frac{d}{p}}}{2(c_d' R^d)^{1/p} \sqrt{p}}}^{-\gamma}.
\end{align*} 
We pick $p=\log_+(1/h)$ to conclude, using that $h^{-1/p}\lesssim 1$ and that $R$ and $L$ grow polynomially in $h^{-1}$.
\end{proof}

At last, we prove \Cref{prop:sphere}.

\begin{proof}[Proof of \Cref{prop:sphere}]
The only difference with \Cref{thm:barron} consists in showing that \ref{cond:covering_gradient} holds with $m=d-1$.

As in Condition \ref{cond:covering_gradient}, let $\cG\subseteq  \{t\phi+(1-t)\phi':\ \phi,\phi'\in \cF,\ 0\leq t \leq 1\}$ be a set of $(\alpha,a)$-strongly convex potentials and let $\cG^*=\{g^*:\ g\in \cG\}$. Let $B_\tau$ (resp.~$B^*_\tau$) be a ball of radius $\tau$ in $\cG$ for the pseudo-norm $g\mapsto \|\nabla g\|_{L^2(P)}$ (resp.~in $\cG^*$ for the pseudo-norm $g^*\mapsto \|\nabla g^*\|_{L^2(Q)}$).

Let us prove that Condition \ref{cond:covering_gradient} is satisfied with $m=d-1$. Let $\phi_1,\phi_2\in \cG$. Note that for $x=r\theta$ with $r\geq 0$, we have $\phi_1(x)=r^s\phi_1(\theta)$. Hence,
   \begin{equation}\label{eq:lip_barron}
       \begin{split}
           \|\phi_1-\phi_2\|_{L^2(P)}^2 \leq &\int r^{2s} |\phi_1(u)-\phi_2(u)|^2 \dd P(ru)\\
       &\leq M_{\max} \|\phi_1-\phi_2\|_{L^2(P_U)}^2.
       \end{split}
   \end{equation}
 As we have
\begin{align*}
    \tau^2 \geq \|\nabla\phi_1-\nabla\phi_2\|_{L^2(P)}^2 = \int r^{2s-2}\|\nabla\phi_1(u)-\nabla\phi_2(u)\|^2 \dd P(ru) \geq M_{\min} \|\nabla\phi_1-\nabla\phi_2\|_{L^2(P_U)}^2,
\end{align*}
the ball $B_\tau$ is included in a ball $B'_\tau$ of radius $\tau/\sqrt{M_{\min}}$ for the pseudo-norm $g\mapsto \|\nabla g\|_{L^2(P_U)}$. By \eqref{eq:lip_barron}, the function 
\begin{align*}
   (B_\tau, \|\cdot\|_{L^2(P)}) &\to  (B'_\tau \|\cdot\|_{L^2(P_U)})\\
   \phi &\mapsto \phi_{|\S^{d-1}}
\end{align*}

is Lipschitz continuous. As $P_U$ satisfies \ref{cond:loc_poincare} and \ref{cond:loc_doubling}, it holds by \Cref{lem:C2_satisfied} that for all $h>0$,
\[ \log \cN(h,B'_\tau,L^2(P_U)) \lesssim \pran{\frac{h}{\tau}}^{-(d-1)}.\]
Therefore, the covering numbers of $B_\tau$ satisfy a similar inequality.

Let us now bound the covering numbers of $B^*_\tau$. Write $T_1=\nabla\phi_1$ and $T_2=\nabla \phi_2$. As in the proof of \Cref{lem:strict_brackets}, we remark that for $y\in\R^d$,
\[|\phi_1^*(y)-\phi_2^*(y)|\leq |\phi_1(T_1^{-1}(y))-\phi_2(T_1^{-1}(y))|+|\phi_1(T_2^{-1}(y))-\phi_2(T_2^{-1}(y))|.\] 
Remark that if $\phi\in \cG$, then $\nabla\phi(ru) = r^{s-1}\nabla\phi(u)$ and $(\nabla\phi)^{-1}(ru) = r^{1/(s-1)} (\nabla\phi)^{-1}(u)$. For $u\in \S^{d-1}$, we let $r(u)=\| T_1^{-1}\circ T_0(u)\|^{-1}$, so that  $\| T_1^{-1}\circ T_0(r(u)u)\|=1$.
\begin{equation}
    \begin{split}
     \int  & |\phi_1(T_1^{-1}(T_0(x)))-\phi_2(T_1^{-1}(T_0(x)))|^2 \dd P(x) \\
     &\qquad =\int   |\phi_1(T_1^{-1}(T_0(ru)))-\phi_2(T_1^{-1}(T_0(ru)))|^2 \dd P(x)\\
     &\qquad \leq \int \pran{\frac{r}{r(u)}}^{2s} |\phi_1(T_1^{-1}\circ T_0(r(u)u))- \phi_2(T_1^{-1}\circ T_0(r(u)u))|^2 \dd P(ru)\\
    & \qquad \leq \int \pran{r\| T_1^{-1}\circ T_0(u)\|}^{2s} |\phi_1(T_1^{-1}\circ T_0(r(u)u))- \phi_2(T_1^{-1}\circ T_0(r(u)u))|^2 \dd P(ru). 
    \end{split}
\end{equation}
By smoothness and strong convexity, $\| T_1^{-1}\circ T_0(u)\|$ is bounded over $u\in \S^{d-1}$. Hence,
\begin{equation}
    \begin{split}
     \int  & |\phi_1(T_1^{-1}(T_0(x)))-\phi_2(T_1^{-1}(T_0(x)))|^2 \dd P(x)\\
     &\qquad \lesssim CM_{\max}\int |\phi_1(T_1^{-1}\circ T_0(r(u)u))- \phi_2(T_1^{-1}\circ T_0(r(u)u))|^2 \dd P_U(u). 
    \end{split}
\end{equation}
We use the change of variables $\Psi(u)= r(u)T_1^{-1}\circ T_0(u)$. The function $\Psi$ is one-to-one. Indeed, if $u$ and $u'$ are such that $T_1^{-1}\circ T_0(u)$ is positively colinear to $T_1^{-1}\circ T_0(u')$, one can see by composing by $T_1$ and using the homogeneity properties of $T_1$ and $T_0$ that $u$ and $u'$ are also positively colinear (and therefore equal as they both belong to $\S^{d-1}$). As $\phi_1$ and $\phi_0$ are smooth and strongly convex on a ball, the function $\Psi$ is therefore a diffeomorphism onto its image,  whose Jacobian is bounded away from zero and infinity. As $P_U$ is assumed to have a density bounded away from zero on infinity on $\S^{d-1}$, this shows that 
\[  \int |\phi_1(\Psi(u))- \phi_2(\Psi(u))|^2 \dd P_U(u) \lesssim \|\phi_1-\phi_2\|^2_{L^2(P_U)}.\]
Applying the same argument to $  \int   |\phi_1(T_2^{-1}(T_0(x)))-\phi_2(T_2^{-1}(T_0(x)))|^2 \dd P(x)$, we obtain that
\[ \|\phi_1^*-\phi_2^*\|_{L^2(Q)} \lesssim \|\phi_1-\phi_2\|_{L^2(P_U)}.\]
A similar argument shows that $\|\nabla\phi_1^*-\nabla\phi_2^*\|\gtrsim \|\nabla\phi_1-\nabla\phi_2 \|_{L^2(P_U)}$. Hence, we conclude as for $B_\tau$ that the the covering numbers of $B_\tau^*$ satisfy the condition in \ref{cond:covering_gradient}.

Finally, we prove the minimax lower bound. Consider  $\cC^{s+\frac{d+1}2}(B(0;1))$ to be the space of functions of regularity $s+\frac{d+1}{2}$ on the unit ball $B(0;1)$ in $\R^d$. According to \cite[Theorem 2 and Remark 3]{hutter2021minimax}, the minimax rate of estimation for potentials $\phi\in \cC^{s+\frac{d+1}2}(B(0;1))$ is of order at least $n^{-\frac{2s+d-1}{2s+2d-3}}$. According to Proposition 5 and the homogeneous reformulation in \cite{bach2017breaking}, the space $\cC^{s+\frac{d+1}2}(B(0;1))$ is a subset of $\cF^1_\sigma$. Therefore, the minimax lower bound on $\cF^1_\sigma$ is larger than the one on $\cC^{s+\frac{d+1}2}(B(0;1))$, giving the minimax lower bound.
\end{proof}

\section{Proofs: Computational aspects}\label{sec: comp_proofs}
\begin{lem}\label{lem:conjugate_convex}
	For any $\phi_0, \phi_1$ and $\lambda \in [0, 1]$, the inequality
	\begin{equation*}
		((1-\lambda) \phi_0 + \lambda \phi_1)^* \leq (1-\lambda)\phi_0^* + \lambda \phi_1^*
	\end{equation*}
	holds.
\end{lem}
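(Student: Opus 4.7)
The statement is a standard fact about convex conjugation; it says that the Legendre transform is a convex operation on functions. The plan is to unpack the definition of the convex conjugate on the left-hand side and use sub-additivity of the supremum to split the $\sup$ into two separate $\sup$'s that recognize as $\phi_0^*$ and $\phi_1^*$.

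Concretely, I would fix $y \in \R^d$ and write
\begin{equation*}
	((1-\lambda)\phi_0 + \lambda \phi_1)^*(y) = \sup_{x} \bigl[\langle x, y \rangle - (1-\lambda)\phi_0(x) - \lambda \phi_1(x)\bigr].
\end{equation*}
The key trick is to split $\langle x, y \rangle = (1-\lambda)\langle x, y\rangle + \lambda \langle x, y \rangle$ so that the bracket becomes
\begin{equation*}
	(1-\lambda)\bigl(\langle x, y\rangle - \phi_0(x)\bigr) + \lambda\bigl(\langle x, y\rangle - \phi_1(x)\bigr).
\end{equation*}
Since $\sup_x (f(x) + g(x)) \leq \sup_x f(x) + \sup_x g(x)$ and both weights $(1-\lambda), \lambda$ are nonnegative, taking the supremum yields
\begin{equation*}
	((1-\lambda)\phi_0 + \lambda \phi_1)^*(y) \leq (1-\lambda)\sup_{x}\bigl(\langle x, y\rangle - \phi_0(x)\bigr) + \lambda \sup_x \bigl(\langle x, y\rangle - \phi_1(x)\bigr),
\end{equation*}
which is exactly $(1-\lambda)\phi_0^*(y) + \lambda \phi_1^*(y)$. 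As $y$ was arbitrary, this proves the pointwise inequality.

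There is no real obstacle here — the only subtlety is to make sure the inequality $\sup_x (f(x)+g(x)) \leq \sup_x f(x) + \sup_x g(x)$ is applied correctly (which requires the weights to be nonnegative, ensured by $\lambda \in [0,1]$) and that we handle extended real values in the trivial way if either conjugate is $+\infty$.
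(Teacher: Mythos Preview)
Your proof is correct and follows essentially the same approach as the paper: both unpack the definition of the conjugate, split $\langle x,y\rangle = (1-\lambda)\langle x,y\rangle + \lambda\langle x,y\rangle$, and invoke subadditivity of the supremum. Your version is slightly more explicit about the role of the nonnegative weights and the handling of extended-real values, but the argument is the same.
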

\begin{proof}
The proof follows from subadditivity of the supremum:
\begin{align*}
    ((1-\lambda) \phi_0 + \lambda \phi_1)^*(y) &= \sup_{x} \ \langle x , y \rangle - (1-\lambda)\phi_0(x) - \lambda \phi_1(x) \\
    &= \sup_{x} \ (1-\lambda)\left\{\langle x , y \rangle - \phi_0(x)\} + \lambda\{ \langle x , y \rangle - \phi_1(x)\right\} \\
    &\leq (1-\lambda)\phi_0^*(y) + \lambda \phi_1^*(y)\,.\qedhere
\end{align*}
\end{proof}
\begin{proof}[Proof of \cref{thm: Sn_convex_smooth}]
It suffices to prove convexity and smoothness of the functional $S_n(\cdot)$; as the constraint set $\Delta_J$ is clearly convex. The runtime result is standard (see \cite[Theorem 3.7]{Bub15} for example).

The convexity of $S_n(\lambda)$ in $\lambda$ is inherited from the convexity of $S_n(\phi)$ in $\phi$; see \cref{lem:conjugate_convex}.

To prove smoothness, we assume all the functions $\phi_j$ are strongly convex on $\Omega$.
Since $\Omega$ is compact and convex functions are locally Lipschitz, we may also assume that each $\phi_j$ is Lipschitz on $\Omega$.
Recall that for a given $\lambda \in \R^J_+$, the gradient of the $j^{\text{th}}$ component reads
\begin{align*}
        [\nabla_\lambda S_n(\lambda)]_j = \frac{1}{n}\sum_{i=1}^n \phi_{j}(X_i) - \frac{1}{n}\sum_{k=1}^n \phi_{j}(x^*_k)\,,
\end{align*}
where $x^*_k$ is the maximizer in \cref{eq: conj_maximizer}.
This expression depends on $\lambda$ only through $x^*_k$.
Thus, for fixed $y = Y_k$, it suffices to show that there exists $L_J$ such that
\begin{align}
    |\phi_j(x^*_\lambda) - \phi_j(x^*_\eta)| \leq L_J \| x^*_\lambda - x^*_\eta\| \,, 
\end{align}
where $x^*_\lambda$ and $x^*_\lambda$ are the maximizers for two different parameters $\lambda$ and $\eta$. Note that
\begin{align*}
    y = \nabla \phi_\lambda(x^*_\lambda) = \nabla \phi_\eta(x^*_\eta)\,,
\end{align*}
by virtue of $x^*_\lambda$ (resp. $x^*_\eta$) being maximizers to the conjugate objective. Since $\phi_\lambda$ and $\phi_\eta$ are convex combinations of strongly convex functions, they are both strongly convex, and the following two inqualities hold for some $\alpha > 0$:
\begin{align*}
    & \phi_\lambda(x^*_\eta) \geq \phi_\lambda(x^*_\lambda) + \langle y , x^*_\eta - x^*_\lambda \rangle + \frac{1}{2\alpha}\|x^*_\lambda - x^*_\eta\|^2 \\
    & \phi_\eta(x^*_\lambda) \geq \phi_\eta(x^*_\eta) + \langle y , x^*_\lambda - x^*_\eta \rangle + \frac{1}{2\alpha}\|x^*_\lambda - x^*_\eta\|^2\,.\\
\end{align*}
Adding these two inequalities and using the fact that each $\phi_j$ is $L$-Lipschitz for some $L$ gives the following chain of inequalities:
\begin{align*}
    \frac{1}{\alpha}\|x^*_\lambda - x^*_\eta\|^2 &\leq [\phi_\lambda - \phi_\eta](x^*_\eta) + [\phi_\eta - \phi_\lambda](x^*_\lambda) \\
    &= \sum_{j=1}^J (\lambda_j - \eta_j) (\phi_j(x^*_\eta) - \phi_j(x^*_\lambda)) \\
    &\leq L\|x^*_\lambda - x^*_\eta\|\sum_{j=1}^J|\lambda_j - \eta_j| \\
    &\leq \sqrt{J} L\|x^*_\lambda - x^*_\eta\|\|\lambda-\eta\|_2\,.
\end{align*}
Altogether, this reads
\begin{align*}
    \|x^*_\lambda - x^*_\eta\| \leq \sqrt{J}\alpha L \|\lambda-\eta\|_2\,.
\end{align*}
Thus, the Lipschitz constant of each coordinate $[\nabla_\lambda S_n(\lambda)]_j$ is $\sqrt{J}\alpha L$, and $\nabla_\lambda S_n(\lambda)$ is $L_J$ Lipschitz for $L_J = J \alpha L$.
\end{proof}
\subsection{Proof of Theorem~\ref{thm:hard}}
Following the notation of~\cite{BruRegSon21}, we first define the ``homogeneous CLWE (hCLWE)'' distribution.
Given a unit vector $u \in \RR^d$ and parameters $\beta, \gamma > 0$, denote by $Q_{u, \beta, \gamma}$ the distribution on $\RR^d$ with density proportional to
\begin{equation}\label{eq:q_density}
	e^{-\|x\|^2/2} \cdot \sum_{k \in \ZZ} e^{- (k - \gamma \langle u, x \rangle)^2/2\beta^2}\,.
\end{equation}
We denote by $Q_0$ the standard Gaussian distribution on $\RR^d$.

We require two lemmas about the hCLWE distribution.
\begin{lem}\label{lem:w2_far}
	Assume $\gamma \geq 1$.
	There exist universal constants $c, C > 0$ such that if $\beta < c$, then $W_2^2(Q_0, Q_{u, \beta, \gamma}) \geq C \gamma^{-2}$.
\end{lem}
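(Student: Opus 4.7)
The plan is to reduce to a one-dimensional problem by projecting onto the direction $u$, and then to obtain a lower bound on $W_1$ (which dominates $W_2$ from below via $W_2 \geq W_1$) using Kantorovich--Rubinstein duality applied to an oscillatory test function $\cos(2\pi\gamma \langle u, \cdot\rangle)$. Concretely, since $x \mapsto \langle u,x\rangle$ is $1$-Lipschitz, $W_2^2(Q_0, Q_{u,\beta,\gamma}) \geq W_2^2(\mu_0, \mu_1)$, where $\mu_0$ and $\mu_1$ are the laws of $\langle u, X\rangle$ under $Q_0$ and $Q_{u,\beta,\gamma}$. Integrating out the directions orthogonal to $u$ in~\eqref{eq:q_density}, the density of $\mu_1$ is proportional to $e^{-t^2/2}\sum_{k\in\mathbb{Z}} e^{-(k-\gamma t)^2/(2\beta^2)}$, while $\mu_0 = N(0,1)$.

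Next, using $W_2(\mu_0,\mu_1) \geq W_1(\mu_0,\mu_1)$ and Kantorovich--Rubinstein duality, for any $L$-Lipschitz $f$,
\[
W_2(\mu_0, \mu_1) \;\geq\; \frac{|\mathbb{E}_{\mu_0}[f] - \mathbb{E}_{\mu_1}[f]|}{L}.
\]
Choose $f(t) = \cos(2\pi\gamma t)$, which is $2\pi\gamma$-Lipschitz. The Gaussian transform gives $\mathbb{E}_{\mu_0}[\cos(2\pi\gamma t)] = e^{-2\pi^2\gamma^2}$, which is at most $e^{-2\pi^2}$ for $\gamma \geq 1$.

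The heart of the argument is evaluating $\mathbb{E}_{\mu_1}[\cos(2\pi\gamma t)]$, which is handled cleanly by Poisson summation:
\[
\sum_{k\in\mathbb{Z}} e^{-(k-\gamma t)^2/(2\beta^2)} \;=\; \sqrt{2\pi}\,\beta \sum_{m\in\mathbb{Z}} e^{-2\pi^2 m^2\beta^2}\cos(2\pi m\gamma t).
\]
Substituting into the normalizing integral shows that the partition function equals $Z = 2\pi\beta \sum_m e^{-2\pi^2 m^2(\beta^2+\gamma^2)}$, which is $2\pi\beta(1 + O(e^{-2\pi^2}))$ for $\gamma \geq 1$. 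Expanding $\cos(2\pi\gamma t)\cos(2\pi m\gamma t)$ by the product-to-sum formula and using the Gaussian integral $\int e^{-t^2/2}\cos(2\pi k\gamma t)\,dt = \sqrt{2\pi}\,e^{-2\pi^2 k^2\gamma^2}$, only the terms $m = \pm 1$ are not suppressed by a factor $e^{-2\pi^2\gamma^2}$, yielding
\[
\mathbb{E}_{\mu_1}[\cos(2\pi\gamma t)] \;=\; e^{-2\pi^2\beta^2} + O(e^{-2\pi^2\gamma^2}).
\]

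Combining these estimates: pick $c > 0$ small enough that $\beta < c$ gives $e^{-2\pi^2\beta^2} \geq 1/2$; since $e^{-2\pi^2\gamma^2} \leq e^{-2\pi^2}$ for $\gamma \geq 1$, the difference $|\mathbb{E}_{\mu_0}[f] - \mathbb{E}_{\mu_1}[f]|$ is bounded below by an absolute constant, say $1/4$. This yields $W_1(\mu_0,\mu_1) \geq 1/(8\pi\gamma)$, and therefore $W_2^2(Q_0, Q_{u,\beta,\gamma}) \geq C\gamma^{-2}$ with $C = 1/(64\pi^2)$. The only technical step requiring care is the tail control in the Poisson sum representation of $\mathbb{E}_{\mu_1}[\cos(2\pi\gamma t)]$, which is where the hypothesis $\gamma \geq 1$ is used to ensure all terms except $m = \pm 1$ are negligible.
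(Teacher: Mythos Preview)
Your proof is correct and takes a genuinely different route from the paper. Both arguments begin by projecting onto the direction $u$ to reduce to a one-dimensional problem, but then diverge. The paper rewrites the one-dimensional marginal of $Q_{u,\beta,\gamma}$ as a Gaussian mixture $\rho * \cN(0,\beta^2/(\beta^2+\gamma^2))$ with $\rho$ supported on the lattice $\{\gamma k/(\beta^2+\gamma^2):k\in\ZZ\}$, bounds $W_2(\cN(0,1),\rho)$ from below by a geometric argument (balls around the lattice points cover at most half of $[-1,1]$, so a constant fraction of Gaussian mass must move distance $\gtrsim \gamma^{-1}$), and then handles the convolution by the triangle inequality. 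Your approach instead uses $W_2\geq W_1$ and Kantorovich--Rubinstein duality with the $2\pi\gamma$-Lipschitz test function $\cos(2\pi\gamma t)$, evaluating both integrals via Poisson summation. Your method is cleaner in that it avoids the somewhat ad hoc covering argument and yields explicit constants directly; the paper's approach is more pictorial and would adapt more readily to non-periodic perturbations where no single Fourier mode dominates. One small remark: in your tail estimate it is worth noting explicitly that after the index shift the remainder is controlled by $\sum_{|m|\geq 1} e^{-2\pi^2 m^2\gamma^2}$ (the factor $e^{-2\pi^2 m^2\beta^2}\leq 1$ drops out), so the bound is uniform in $\beta$ and depends only on $\gamma\geq 1$.
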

\begin{lem}\label{lem:hclwe_smooth_map}
	For any $\beta > 0$, there exist $c = c(\beta)$ and $C = C(\beta)$ such that the optimal transport map between $P = \cN(0, I_d)$ and $Q_{u, \beta, \gamma}$ is given by $\nabla \phi_0$ for some $\phi_0 \in \cF_{\mathrm{spiked}, 1}(c, C)$.
\end{lem}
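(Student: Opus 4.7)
My plan is to exploit the product structure of the densities. Writing $x = tu + y$ with $t = \langle u, x\rangle$ and $y = (I - uu^\top)x \in u^\perp$, one has $\|x\|^2 = t^2 + \|y\|^2$, so the density \eqref{eq:q_density} of $Q_{u,\beta,\gamma}$ factorizes as $f(t) \cdot (2\pi)^{-(d-1)/2} e^{-\|y\|^2/2}$, where
\begin{equation*}
    f(s) = C_\beta\, e^{-s^2/2}\, g(s), \qquad g(s) = \sum_{k \in \ZZ} e^{-(k-\gamma s)^2/(2\beta^2)},
\end{equation*}
and $C_\beta^{-1} = \int e^{-r^2/2} g(r)\, dr$. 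Since $P = \cN(0,I_d)$ factorizes identically on $\RR u \oplus u^\perp$ with the same Gaussian factor on $u^\perp$, the standard product-Brenier argument (the sum of two convex potentials is convex, and Brenier uniqueness then pins down the map) identifies the OT map from $P$ to $Q_{u,\beta,\gamma}$ as the identity on $u^\perp$ together with the one-dimensional monotone rearrangement $T_1 : \cN(0,1) \to \nu$ along $\RR u$, where $\nu$ has density $f$. Writing $T_1 = \psi'$ for its unique convex antiderivative, the Brenier potential becomes $\phi_0(x) = \psi(\langle u, x\rangle) + \tfrac12\|x - \langle u, x\rangle u\|^2$, which already sits in the spiked form required by $\cF_{\mathrm{spiked},1}$.

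The core of the argument is then the bound $c(\beta) \leq \psi''(t) \leq C(\beta)$, uniformly in $t \in \RR$ and, crucially, uniformly in $\gamma$. First I would establish two-sided bounds $m(\beta) \leq g(s) \leq M(\beta)$: the lower bound $e^{-1/(8\beta^2)}$ comes from keeping only the term indexed by the nearest integer to $\gamma s$, and the upper bound follows because $a \mapsto \sum_{k\in\ZZ} e^{-(k-a)^2/(2\beta^2)}$ is a continuous $1$-periodic function of $a$, hence bounded on $\RR$. These estimates immediately pin $C_\beta$ and the ratio $f(s)/\varphi(s) = C_\beta \sqrt{2\pi}\, g(s)$ between two positive constants depending only on $\beta$, where $\varphi$ denotes the standard Gaussian density.

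Using $\psi''(t) = T_1'(t) = \varphi(t)/f(T_1(t))$, I would rewrite
\begin{equation*}
    \psi''(t) = \frac{1}{\sqrt{2\pi}\,C_\beta\, g(T_1(t))}\, \exp\!\left( \tfrac{1}{2}(T_1(t)^2 - t^2) \right),
\end{equation*}
so the remaining task is to show that $|T_1(t)^2 - t^2|$ is uniformly bounded. Integrating the pointwise bounds on $f/\varphi$ yields tail comparisons $1 - F(s) \asymp 1 - \Phi(s)$ as $s \to +\infty$ and $F(s) \asymp \Phi(s)$ as $s \to -\infty$, with constants depending only on $\beta$. Combined with the defining identity $\Phi(t) = F(T_1(t))$ this gives $1 - \Phi(T_1(t)) \asymp 1 - \Phi(t)$ at both tails, and the Mills-ratio expansion $1 - \Phi(s) = \varphi(s)(s^{-1} + O(s^{-3}))$ forces $T_1(t)/t \to 1$, whence $T_1(t)^2 - t^2$ tends to an $O_\beta(1)$ constant as $|t| \to \infty$; continuity handles bounded $t$. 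The hard part will be this last step: one must avoid a circular dependency when extracting $T_1(t) \sim t$ from the tail ratio. The cleanest route is to argue by contradiction, ruling out any limit point other than $1$ for $T_1(t)/t$ by exploiting the exponential factor $e^{(T_1^2 - t^2)/2}$ that appears when substituting the Mills-ratio asymptotic; any other limit point would force $\varphi(T_1(t))/\varphi(t)$ to grow or decay super-polynomially, contradicting its $\beta$-dependent boundedness. Once $|T_1(t)^2 - t^2| \leq K_0(\beta)$ is secured, the desired bounds $\psi''(t) \in [c(\beta), C(\beta)]$ follow immediately from the previous paragraph.
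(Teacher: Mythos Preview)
Your proposal is correct and follows essentially the same strategy as the paper: exploit the product structure to reduce to a one-dimensional transport problem, bound the theta-like sum $g$ above and below by constants depending only on $\beta$ (you use periodicity where the paper invokes the Jacobi identity), and then control $T_1'$ via Mills' ratio asymptotics. The only notable difference is in the final step: the paper packages the Mills' argument into a clean auxiliary lemma showing that $c\rho \leq \mu \leq C\rho$ forces $c' \leq T' \leq C'$, by reducing to $1+|F_\rho^{-1}(t)| \asymp 1+|F_\mu^{-1}(t)|$ via the sandwich $F_\rho^{-1}(t/C) \leq F_\mu^{-1}(t) \leq F_\rho^{-1}(t/c)$ and the fact that $F_\rho^{-1}(t/c)/F_\rho^{-1}(t/C) \to 1$; this sidesteps the direct analysis of $|T_1(t)^2 - t^2|$ that you carry out, though your contradiction argument for that quantity is also valid.
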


With these lemmas in hand, we can now complete the proof.
\cite[Lemma 4.1 and Corollary 4.2]{BruRegSon21} implies that there exists a polynomial-time reduction from CLWE and a polynomial-time quantum reduction from GapSVP to the following Bayesian testing problem: for any constant $\beta \in (0, 1)$ and $\gamma \geq 2\sqrt{d}$ and $n = \mathrm{poly}(d)$, distinguish between $n$ samples from $Q_0$ and $n$ samples from $Q_{u, \beta, \gamma}$, where $u$ is uniformly distributed on the unit sphere in $\RR^d$.

Fix $\gamma = 2 \sqrt{d}$, and let $\beta$ be small enough that Lemma~\ref{lem:w2_far} guarantees that $W_2(Q_0, Q_{u, \beta, \gamma}) \geq \frac{8c'}{\sqrt{d}}$ for some constant $c' > 0$.
Let $P$ be the standard Gaussian measure.
\Cref{lem:hclwe_smooth_map} implies that there exist constants $c, C$ such that the optimal transport map between $P$ and $Q_{u, \beta, \gamma}$ for any unit vector $u$ lies in $\cF_{\mathrm{spiked}, 1}(c, C)$.  We may assume without loss of generality that $c \leq 1 \leq C$, so that the function $\|x\|^2/2$ also lies in $\cF_{\mathrm{spiked}, 1}(c, C)$.

Suppose there exists an estimator $\hat \phi$ as in the statement of the theorem.
The optimal transport map between $P$ and $Q_0$ is the identity function, which is the gradient of $\|x\|^2/2 \in \cF_{\mathrm{spiked}, 1}(c, C)$, and by construction the optimal transport map between $P$ and $Q_{u, \beta, \gamma}$ is the gradient of a function that also lives in this class.
Finally, if $\phi_0$ is the potential whose gradient gives a map between $P$ and $Q_{u, \beta, \gamma}$, then
\begin{equation*}
	\| x - \nabla \phi_0\|_{L^2(P)} \geq W_2(Q_0, Q_{u, \beta, \gamma}) \geq \frac{8c'}{\sqrt{d}}\,.
\end{equation*}
Hence, a standard reduction based on Le Cam's argument~\cite[Section 2.2]{Tsy09} implies that any estimator of $\nabla \phi_0$ as in the statement of the theorem would give rise to a test distinguishing between samples from $Q_0$ and samples from $Q_{u, \beta, \gamma}$, for any unit vector $u$.
The aforementioned reduction then implies the existence of the claimed algorithms for CLWE and GapSVP, completing the proof.

\begin{proof}[Proof of \cref{lem:w2_far}]
	The Wasserstein distance between $Q_0$ and $Q_{u, \beta, \gamma}$ is bounded below by the Wasserstein distance of any of its one-dimensional marginals (since any coupling between $Q_0$ and $Q_{u, \beta, \gamma}$ give rise to a coupling between the one-dimensional marginals of each measure).
	Considering the projectsions of $Q_0$ and $Q_{u, \beta, \gamma}$ onto the subspace parallel to $u$, it therefore suffices to compare a standard Normal distribution on $\RR$ with the one-dimensional measure whose density is proportional to
	\begin{equation}\label{eq:one_dim_q_density}
		e^{-y^2/2}\cdot \sum_{k \in \ZZ} e^{- (k - \gamma y)^2/2\beta^2}
	\end{equation}
	This function can be rewritten
	\begin{equation}
		\sum_{k \in \ZZ} \alpha_k e^{- \frac{(\beta^2 + \gamma^2) \left(y - \frac{\gamma}{\beta^2 + \gamma^2} k \right)^2}{2 \beta^2}}\,, \quad \quad \alpha_k = e^{-\frac{k^2}{2(\beta^2 + \gamma^2)}}
	\end{equation}
	Therefore, if we write $\rho$ for the probability measure proportional to
	\begin{equation}
		\sum_{k \in \ZZ} \alpha_k \delta_{\frac{\gamma}{\beta^2 + \gamma^2} k}\,,
	\end{equation}
	then it suffices to compute $W_2^2(\cN(0, 1), \rho * \cN(0, \beta^2/(\beta^2 + \gamma^2)))$.
	
	The points in the support of $\rho$ are at distance $\frac{\gamma}{\beta^2 + \gamma^2}$ from each other, so the union of intervals of radius $\frac{\gamma}{4(\beta^2 + \gamma^2)}$ centered at the support of $\rho$ covers at most half the interval $[-1, 1]$.
	Since the density of $\cN(0, 1)$ is bounded below on $[-1, 1]$, this fact implies that a constant fraction of the mass of $\cN(0, 1)$ lies at distance at least $\frac{\gamma}{4(\beta^2 + \gamma^2)}$ from the support of $\rho$.
	Therefore there exists a positive constant $C$ such that
	\begin{equation*}
		W_2(\cN(0,1), \rho) \geq C \frac{\gamma}{(\beta^2 + \gamma^2)}\,.
	\end{equation*}
	Since $W_2(\rho * \cN(0, \beta^2/(\beta^2 + \gamma^2), \rho) \leq \beta/\sqrt{\beta^2 + \gamma^2}$, we obtain by the triangle inequality
	\begin{align*}
		W_2(\cN(0,1), \rho * \cN(0, \beta^2/(\beta^2 + \gamma^2)) & \geq C \frac{\gamma}{(\beta^2 + \gamma^2)} - \frac{\beta}{\sqrt{\beta^2 + \gamma^2}} \\
		& = \gamma^{-1} \left(\frac{C}{1+(\beta/\gamma)^2} - \frac{\beta}{\sqrt{1+(\beta/\gamma)^2}}\right)
	\end{align*}
	This quantity is at least a constant multiple of $\gamma^{-1}$ as long as $\beta$ is smaller than a universal constant, as claimed.
\end{proof}

\begin{proof}[Proof of \cref{lem:hclwe_smooth_map}]
	We first show that the density of $Q_{u, \beta, \gamma}$ is within a constant multiple of the density of $P$, with constants depending only on $\beta$.
	Let $f(y) =  \sum_{k \in \ZZ} e^{- (k - y)^2/2\beta^2}$.
	
	For any $y \in \RR$, we have
	\begin{equation}
		f(y) \geq e^{-(1/2)^2/2\beta^2} = e^{-1/8\beta^2}\,,
	\end{equation}
	which shows that $f$ is bounded below by a function of $\beta$.
	
	On the other hand, the Jacobi formula~\cite[Chapter 21.51]{WhiWat21} shows
	\begin{equation*}
		 f(y) = \sqrt{2 \pi} \beta \left(1 + 2 \sum_{k=1}^\infty e^{- 2 \beta^2 \pi^2 k^2} \cos 2 \pi k  y \right)
	\end{equation*}
	Since
	\begin{align*}
		\sum_{k=1}^\infty e^{- 2 \beta^2 \pi^2 k^2} \cos 2 \pi k y & \leq \sum_{k=1}^\infty e^{- 2 \beta^2 \pi^2 k^2} \\
		& \leq e^{-2 \beta^2\pi^2}/(1-e^{-2 \beta^2 \pi^2})\,,
	\end{align*}
	we obtain that $f$ is also bounded above by a function of $\beta$.
	
	If we denote the density of $Q_{u, \beta, \gamma}$ by $q_{u, \beta, \gamma}$ and the density of $P$ by $p$, then
	\begin{equation*}
		q_{u, \beta, \gamma}(x) = p(x) \cdot \frac{1}{Z_{\beta, \gamma}} f(\gamma \langle x, u \rangle)\,,
	\end{equation*}
	where $Z_{\beta, \gamma} = \int_{\RR^d} p(x) f(\gamma \langle x, u \rangle) \dd x$.
	Since we have established that $f$ is bounded above and below by constants depending only on $\beta$, the same holds for $Z_{\beta, \gamma}$.
	Therefore $q_{u, \beta, \gamma}(x) \asymp p(x)$ for all $x \in \RR^d$, where the implicit constant depends only on $\beta$.
	
	We now employ the following lemma, whose proof is deferred.
	\begin{lem}\label{lem:bounded_smooth_map}
		For any $C > c > 0$, there exists $C' > c' > 0$ such that the following holds:
		if $\rho$ is the standard Gaussian density on $\RR$ and $\mu$ is any probability density satisfying the  inequality $c \rho(x) \leq \mu(x) \leq C \rho(x)$ for all $x \in \RR$, then the optimal transport map $T$ between $\rho$ and $\mu$ satisfies $c' \leq T'(x) \leq C'$ for all $x \in \RR$.
	\end{lem}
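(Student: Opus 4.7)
The plan is to exploit the explicit formula for the optimal transport map in one dimension: since $\rho$ and $\mu$ have continuous positive densities on $\RR$, the map is $T = F_\mu^{-1}\circ F_\rho$, which is smooth, monotone, and satisfies $T'(x) = \rho(x)/\mu(T(x))$ by differentiating $F_\mu(T(x)) = F_\rho(x)$. Since the hypothesis gives $c\rho(T(x)) \le \mu(T(x)) \le C\rho(T(x))$, the problem immediately reduces to bounding the ratio
\[
\frac{\rho(x)}{\rho(T(x))} \;=\; \exp\!\left(\tfrac{1}{2}(T(x)^2 - x^2)\right)
\]
uniformly in $x \in \RR$ by constants depending only on $c$ and $C$.

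To get this uniform bound, I would first integrate the hypothesis to transfer it to the tails: for every $y\in\RR$,
\[
c\,(1 - F_\rho(y)) \le 1 - F_\mu(y) \le C\,(1 - F_\rho(y)),
\]
and an analogous inequality for $F_\rho$ itself. Setting $y = T(x)$ and using $1 - F_\mu(T(x)) = 1 - F_\rho(x)$ gives
\[
c\,(1 - F_\rho(T(x))) \;\le\; 1 - F_\rho(x) \;\le\; C\,(1 - F_\rho(T(x))).
\]
Mills' ratio asymptotics $1 - F_\rho(x) = (\rho(x)/x)(1 + O(1/x^2))$ for $x \to +\infty$ then convert this into
\[
\tfrac{1}{2}(T(x)^2 - x^2) \;=\; \log(x/T(x)) + O(1),
\]
with the implicit constant depending only on $c$ and $C$. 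Writing $u = T(x)/x$, this rearranges to $x^2(u^2-1) = -2\log u + O(1)$. Since the right-hand side is bounded whenever $u$ stays in a bounded interval, the equation forces $u \to 1$ as $x \to +\infty$; a short bootstrap then yields $u^2-1 = O(1/x^2)$, i.e.\ $T(x) = x + O(1/x)$, and in particular $T(x)^2 - x^2 = O(1)$ for large positive $x$. The case $x \to -\infty$ is handled symmetrically via $F_\rho$ in place of $1-F_\rho$.

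For bounded $x$ the argument is elementary: monotonicity of $T$ together with the tail control above shows that $T$ maps each compact interval to a compact interval, and since $\rho$ is continuous and strictly positive, $\rho(x)/\rho(T(x))$ is bounded above and below by positive constants there. Combining the compact and tail regimes yields $|T(x)^2 - x^2| \le M(c,C)$ for all $x\in\RR$, and then
\[
T'(x) \;=\; \frac{\rho(x)}{\mu(T(x))} \;=\; \frac{1}{\mu(T(x))/\rho(T(x))}\cdot \frac{\rho(x)}{\rho(T(x))} \;\in\; [c', C']
\]
for constants $c', C'$ depending only on $c, C$, as required.

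The main obstacle will be keeping the Mills' ratio estimate quantitative enough that the bootstrap gives a \emph{uniform} bound on $|T(x)^2 - x^2|$ rather than an asymptotic one, and making sure the constants absorb the two tails and the compact middle cleanly. Once that is done, the statement follows without further work.
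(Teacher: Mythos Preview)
Your proposal is correct and follows essentially the same approach as the paper: both start from $T'(x)=\rho(x)/\mu(T(x))$, use the integrated hypothesis $1-F_\rho(x)\asymp 1-F_\rho(T(x))$, and invoke Mills' ratio to control the tails. The only organizational difference is that the paper applies Mills' ratio to $\mu$ as well (via $\mu(y)\asymp(1+|y|)\min\{F_\mu(y),1-F_\mu(y)\}$), reducing the problem to the weaker statement $1+|x|\asymp 1+|T(x)|$ and thereby sidestepping your quadratic bootstrap; your route through $T(x)^2-x^2=O(1)$ is equally valid and in fact proves a slightly sharper intermediate claim.
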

	
	To conclude, it suffices to note that $Q_{u, \beta, \gamma}$ is the law of $Y u + (I-uu^\top) Z$, where $Y$ and $Z$ are independent random variables, $Z \sim P$, and $Y$ is a one-dimensional random variable with density $\frac{1}{\sqrt{2 \pi} Z_{\beta, \gamma}}e^{-y^2/2} f(y)$.
	Since $f$ and $Z_{\beta, \gamma}$ are bounded above and below, \cref{lem:bounded_smooth_map} implies that there exists an increasing function $\phi: \RR \to \RR$ such that $c' \leq \phi'' \leq C'$ and $\phi'_\sharp (\cN(0, 1)) = \mathrm{Law}(Y)$.
	
	Then $\phi_0(x) = \phi(\langle x, u \rangle) + \frac 12 \|x - \langle u, x \rangle x\|^2 \in \cF_{\mathrm{spiked}, 1}(c', C')$ and satisfies $(\nabla \phi_0)_\sharp P = Q_{u, \beta, \gamma}$, as desired.
\end{proof}
\begin{proof}[Proof of Lemma~\ref{lem:bounded_smooth_map}]
	Let $F_\rho(x) = \int_{-\infty}^x \rho(y) \dd y$ and $F_\mu(x) = \int_{-\infty}^x \mu(y) \dd y$.
	The optimal transport map between $\rho$ and $\mu$ is given by~\cite[Theorem 2.5]{San15}
	\begin{equation*}
		T(x) = F_\mu^{-1}(F_\rho(x))\,,
	\end{equation*}
	which implies that
	\begin{equation*}
		T'(x) = \frac{\rho(x)}{\mu(F_\mu^{-1}(F_\rho(x)))}\,.
	\end{equation*}
	
	For two functions $f$ and $g$ on $\RR$, we use the notation $f \asymp g$ to indicate that there exist positive constants (depending on $c$ and $C$) such that $c f(x) \leq g(x) \leq C f(x)$ for all $x \in \RR$.
	It therefore suffices to show that $\rho \asymp \mu(F_\mu^{-1}(F_\rho))$, or, equivalently, that
	\begin{equation*}
		\rho \circ F_\rho^{-1}(t) \asymp \mu \circ F_\mu^{-1}(t) \quad \forall t \in (0, 1)\,.
	\end{equation*}
	In the case of $\rho$, well known bounds on the Mills' ratio~\cite{Due10} imply
	\begin{equation*}
		\rho(x) \asymp (1+|x|) \min\{F_\rho(x), 1- F_\rho(x)\}
	\end{equation*}
	The assumption that $\rho \asymp \mu$ implies that $F_\rho \asymp F_\mu$ and $1 - F_\rho \asymp 1 - F_\mu$, which implies that
	\begin{equation*}
		\mu(x) \asymp (1+|x|) \min\{F_\mu(x), 1- F_\mu(x)\}
	\end{equation*}
	also holds.
	
	We therefore obtain that $\rho$ satisfies
	\begin{equation*}
		\rho  \circ F_\rho^{-1}(t) \asymp (1+ |F_\rho^{-1}(t)|) \min\{t, 1-t\} \quad \forall t\in (0, 1)
	\end{equation*}
	and the analogous bound for $\mu$.
	Hence, the desired claim is equivalent to
	\begin{equation*}
		1 + |F_\rho^{-1}(t)| \asymp 1 + |F_\mu^{-1}(t)|\,.
	\end{equation*}
	It suffices to verify this claim for $t \to 0$ and $t \to 1$, because when $t$ is bounded away from $0$ and $1$, both sides are of constant order.
	By symmetry, we focus without loss of generality on the case $t \to 0$.
	
	To conclude, we observe that since $F_\rho$ and $F_\mu$ are increasing, a bound of the form $c \rho \leq \mu \leq C \rho$ implies
	\begin{equation*}
		F_\rho^{-1}(t/C) \leq F_\mu^{-1}(t) \leq F_\rho^{-1}(t/c)\,.
	\end{equation*}
	Since $F_\rho(x) \asymp \frac{1}{(1+|x|)} e^{-x^2/2}$ for $x \leq 0$, we have
	\begin{equation*}
		\lim_{t \to 0} \frac{F_\rho^{-1}(t/c)}{F_\rho^{-1}(t/C)} = 1\,. 
	\end{equation*}
	Therefore, as $t \to 0$, $F_{\mu}^{-1}(t) \asymp  F_{\rho}^{-1}(t)$, proving the claim.
\end{proof}

\bibliography{ref}

\end{document}